\newcommand{\R}{\hbox{\rm I \kern -5pt R}}     
\newcommand{\p} {\hbox{\rm I \kern -5pt P}}
\def\x        {\textit{\textbf{x}}}
\def\H        {{\boldsymbol H}}
\def\L       {{\boldsymbol L}}
\newtheorem{prop}{Proposition}[section]
\newtheorem{defi}[prop]{Definition}
\newtheorem{tma}[prop]{Theorem}
\newtheorem{cor}[prop]{Corollary}
\newtheorem{obs}[prop]{Remark}
\newtheorem{lem}[prop]{Lemma}
\begin{document}

\title{Analysis of a chemo-repulsion model with nonlinear production: The continuous problem and unconditionally energy stable fully discrete schemes}
\author{F.~Guillén-González\thanks{Dpto. Ecuaciones Diferenciales y An\'alisis Num\'erico and IMUS, 
Universidad de Sevilla, Facultad de Matemáticas, C/ Tarfia, S/N, 41012 Sevilla (SPAIN). Email: guillen@us.es, angeles@us.es},
M.~A.~Rodríguez-Bellido$^*$~and
 D.~A.~Rueda-Gómez$^*$\thanks{Escuela de Matemáticas, Universidad Industrial de Santander, A.A. 678, Bucaramanga (COLOMBIA). Email:  diaruego@uis.edu.co}}

\date{}
\maketitle

\begin{abstract}
We consider the following repulsive-productive chemotaxis model: Let $p\in (1,2)$, find $u \geq 0$, the cell density, and $v \geq 0$, the chemical concentration, satisfying
\begin{equation}\label{C5:Am}
\left\{
\begin{array}
[c]{lll}%
\partial_t u - \Delta u - \nabla\cdot (u\nabla v)=0 \ \ \mbox{ in}\ \Omega,\ t>0,\\
\partial_t v - \Delta v + v =  u^p \ \ \mbox{ in}\ \Omega,\ t>0,
\end{array}
\right.
\end{equation}
in a bounded domain $\Omega\subseteq \mathbb{R}^d$, $d=2,3$. By using a regula\-ri\-za\-tion technique, we prove the existence of solutions for problem (\ref{C5:Am}). Moreover, we propose three fully discrete Finite Element (FE) nonlinear approximations of problem (\ref{C5:Am}), where the first one is defined in the variables $(u,v)$, and the second and third ones introduce ${\boldsymbol\sigma}=\nabla v$ as auxiliary variable. We prove some unconditional properties such as mass-conservation, energy-stability and solvability of the schemes. Finally, we compare the behavior of these schemes throughout several numerical simulations and give some conclusions.
\end{abstract}

\noindent{\bf 2010 Mathematics Subject Classification.}  35K51, 35Q92, 65M12, 65M60, 92C17. 

\noindent{\bf Keywords: } Chemorepulsion-production model, finite element approximation, unconditional energy-stability, nonlinear production. 

\section{Introduction}
Chemotaxis is the biological process of the movement of living organisms in res\-pon\-se to a chemical stimulus, which can be given towards a higher (chemo-attraction) or lower (chemo-repulsion) concentration of a chemical substance. At the same time, the presence of living organisms can produce or consume chemical substance. A repulsive-productive chemotaxis model can be given by the following parabolic PDE's system:
$$
\left\{
\begin{array}
[c]{lll}%
\partial_t u - \Delta u = \nabla\cdot (u\nabla v)\quad \mbox{ in}\ \Omega,\ t>0,\\
\partial_t v - \Delta v + v =  f(u)
 \quad \mbox{ in}\ \Omega,\ t>0,
\end{array}
\right.$$
where $\Omega\subseteq \mathbb{R}^d$, $d=2,3$, is a bounded domain with boundary $\partial \Omega$. The unknowns for this model are $u(\x , t) \geq 0$, the cell density, and $v(\x , t) \geq 0$, the chemical concentration. Moreover, $f(u)\geq 0$ (if $u\geq 0$) is the production term. In this paper, we consider the particular case in which $f(u)=u^p$, with $1<p<2$, and then we focus on the following initial-boundary value problem:
\begin{equation}
\left\{
\begin{array}
[c]{lll}%
\partial_t u - \Delta u = \nabla\cdot (u\nabla v)\ \ \mbox{ in}\ \Omega,\ t>0,\\
\partial_t v - \Delta v + v =  u^{p} \ \ \mbox{ in}\ \ \Omega,\ t>0,\\
\displaystyle\frac{\partial u}{\partial \mathbf{n}}=\frac{\partial v}{\partial \mathbf{n}}=0\ \ \mbox{on}\ \partial\Omega,\ t>0,\\
u(\x,0)=u_0(\x)>0,\ v(\x,0)=v_0(
\x)>0\ \ \mbox{in}\ \Omega.
\end{array}
\right.  \label{C5:modelf00}
\end{equation}
In the case of linear ($p=1$) and quadratic ($p=2$) production terms, the problem (\ref{C5:modelf00}) is well-posed (see \cite{C5:Cristian,C5:FMD} respectively) in the following sense: there exist global in time weak solutions (based on an energy inequality) and, for $2D$ domains, there exists a unique global in time strong solution. However, as far as we know,  there are not works studying problem  
(\ref{C5:modelf00}) with production $u^p$, with $1<p<2$.\\
Problem (\ref{C5:modelf00}) is conservative in $u$, because the total mass $\int_\Omega u(\cdot,t)$ remains constant in time, as we can check  integrating equation (\ref{C5:modelf00})$_1$ in $\Omega$, 
\begin{equation}\label{C5:consucont}
\frac{d}{dt}\left(\int_\Omega u(\cdot, t)\right)=0, \ \ \mbox{ i.e. } \ 
\int_\Omega u(\cdot,t)=\int_\Omega u_0 := m_0, \ \ \forall t>0.
\end{equation}
The first aim of this work is to study the existence of weak-strong solutions for problem (\ref{C5:modelf00}) (in the sense of Definition \ref{C5:ws00p} below), satisfying in particular the energy inequality (\ref{C5:wsd}) below. The second aim of this work is to design numerical methods for model (\ref{C5:modelf00}) conserving, at the discrete level, the mass-conservation and energy-stability properties of the continuous model (see (\ref{C5:consucont}) and (\ref{C5:wsd}), respectively).\\

There are only a few works about numerical analysis for  chemotaxis models. For instance, for the Keller-Segel system (i.e.~with chemo-attraction and linear production),  in  \cite{C5:Filbet} Filbet proved the existence of discrete solutions and the convergence of a finite volume scheme. Saito, in \cite{C5:Saito1,C5:Saito2}, studied error estimates for a conservative Finite Element (FE) approximation. In \cite{C5:Eps}, some error estimates are proved for a fully discrete discontinuous FE method, and a mixed FE approximation is studied in \cite{C5:Marrocco}.\\

Energy stable numerical schemes have also been studied in the chemotaxis framework. An energy-stable finite volume scheme for a Keller-Segel model with an additional cross-diffusion term has been studied in \cite{C5:BJ}. In \cite{C5:FMD,C5:FMD2}, unconditionally energy stable time-discrete numerical schemes and fully discrete FE schemes for a chemo-repulsion model with quadratic production have been analyzed. In \cite{C5:FMD4}, the authors studied unconditionally energy stable fully discrete FE schemes for a chemo-repulsion model with linear production.  However, as far as we know, for the chemo-repulsion model with production term $u^p$ (\ref{C5:modelf00}) there are not works studying energy-stable numerical schemes.\\

The outline of this paper is as follows: In Section \ref{C5:S2C5:NPR}, we give the notation and some preliminary results that will be used throughout the paper. In Section \ref{C5:S3C5:CM}, we prove the existence of weak-strong solutions of model (\ref{C5:modelf00}) (in the sense of Definition \ref{C5:ws00p} below) by using a regularization technique. In Section \ref{C5:S4C5:NS}, we propose three fully discrete FE nonlinear approximations of problem (\ref{C5:modelf00}), where the first one is defined in the variables $(u,v)$, and the second and third ones introduce ${\boldsymbol\sigma}=\nabla v$ as an auxiliary variable. We prove some unconditional properties such as mass-conservation, energy-stability and solvability of the schemes. In Section \ref{C5:S5C5:NSi}, we compare the behavior of the schemes throughout several numerical simulations; and in Section \ref{C5:S6C5:C}, the main conclusions obtained in this paper are sumarized.

\section{Notation and preliminary results}\label{C5:S2C5:NPR}
We recall some functional spaces which will be used throughout this paper. We will consider the usual  Lebesgue spaces $L^q(\Omega),$
$1\leq q\leq \infty,$ with norm $\Vert\cdot \Vert_{L^q}$. In particular,  the $L^2(\Omega)$-norm will be denoted by $\Vert
\cdot\Vert_0$. From now on, $(\cdot,\cdot)$ will denote the standard $L^2$-inner product over $\Omega$. We also consider the usual Sobolev
spaces $W^{m,p}(\Omega) = \{u \in  L^p(\Omega) : \Vert \partial^\alpha u\Vert_{L^p} < +\infty, \ \forall\vert \alpha \vert \le m\}$, for a multi-index $\alpha$ and $m \in \mathbb{N}$,
with norm denoted by $\Vert \cdot\Vert_{W^{m,p}}$. In the case when $p=2$, we denote $H^m(\Omega):= W^{m,2}(\Omega)$, with respective  norm $\Vert\cdot\Vert_{m}$. Moreover, we denote by 
$$W^{m,p}_{\mathbf{n}}(\Omega) := \left\{u \in W^{m,p}(\Omega) : \frac{\partial{u}}{\partial \mathbf{n}}=0 \ \mbox{ on } \ \partial\Omega\right\},$$
$$\H^{1}_{\sigma}(\Omega):=\{\mathbf{\boldsymbol\sigma}\in \H^{1}(\Omega): \mathbf{\boldsymbol\sigma}\cdot \mathbf{n}=0 \mbox{ on } \partial\Omega\},$$ 
and we will use the following equivalent norms in $H^1(\Omega)$  and ${\H}_{\sigma}^1(\Omega)$, respectively (see \cite{C5:necas} and \cite[Corollary 3.5]{C5:Nour}, respectively):
\begin{equation*}
\Vert u \Vert_{1}^2=\Vert \nabla u\Vert_{0}^2 + \left( \int_\Omega u\right)^2, \ \ \forall u\in H^1(\Omega),
\end{equation*}
\begin{equation}\label{C5:H1div}
\Vert {\boldsymbol\sigma} \Vert_{1}^2=\Vert {\boldsymbol\sigma}\Vert_{0}^2 + \Vert \mbox{rot }{\boldsymbol\sigma}\Vert_0^2 + \Vert \nabla \cdot {\boldsymbol\sigma}\Vert_0^2, \ \ \forall {\boldsymbol\sigma}\in \H^{1}_{\sigma}(\Omega),
\end{equation}
where rot ${\boldsymbol\sigma}$ denotes the well-known rotational operator (also called curl) which is scalar for 2D domains and vectorial for 3D ones. In particular, (\ref{C5:H1div}) implies that, for all $\boldsymbol\sigma=\nabla v\in \H^{1}_{\sigma}(\Omega)$,
\begin{equation}\label{C5:H1divGrad}
\Vert\nabla v\Vert_{1}^2=\Vert \nabla v\Vert_{0}^2  + \Vert \Delta v\Vert_0^2.
\end{equation}
If $Z$ is a
general Banach space, its topological dual space will be denoted by $Z'$.
Moreover, the letters $C,K$ will denote different positive
constants which may change from line to line. \\
We will use the following results:
\begin{tma} (\cite{C5:Fe})\label{C5:nn}
Let $1<q<+\infty$ and suppose that $f\in L^q(0,T; L^q(\Omega))$, $u_0\in \widehat{W}^{2-\frac{2}{q},q}(\Omega)$, where
$$
\widehat{W}^{2-\frac{2}{q},q}(\Omega)\quad := \quad 
\left\{\begin{array}{l}
{W}^{2-\frac{2}{q},q}(\Omega) \ \ \mbox{ if }\ \ q<3 ,\\
{W}_{\mathbf{n}}^{2-\frac{2}{q},q}(\Omega) \ \ \mbox{ if }\ \ q>3.\\
\end{array}\right.
$$
Then, the problem
$$
\left\{
\begin{array}
[c]{lll}%
\partial_t u - \Delta u = f\ \ \mbox{ in}\ \Omega,\ t>0,\\
\displaystyle\frac{\partial u}{\partial \mathbf{n}}=0\ \ \mbox{on}\ \partial\Omega,\ t>0,\\
u({\x},0)=u_0({\x})\ \ \mbox{in}\ \Omega,
\end{array}
\right. 
$$
admits a unique solution $u$ in the class
$$
u\in L^q(0,T; W^{2,q}(\Omega)) \cap C([0,T];\widehat{W}^{2-\frac{2}{q},q}(\Omega)), \ \ \partial_t u \in L^q(0,T; L^q(\Omega)).
$$
Moreover, there exists a positive constant $C=C(q,\Omega,T)$ such that
$$
\Vert u\Vert_{C([0,T];\widehat{W}^{2-\frac{2}{q},q}(\Omega))} + \Vert \partial_t u\Vert_{L^q(0,T; L^q(\Omega))} + \Vert u\Vert_{L^q(0,T; W^{2,q}(\Omega))} \leq C(\Vert f\Vert_{L^q(0,T; L^q(\Omega))} + \Vert u_0\Vert_{\widehat{W}^{2-\frac{2}{q},q}(\Omega)}).
$$
\end{tma}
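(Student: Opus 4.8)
The statement is the standard maximal $L^q$-regularity theorem for the heat equation under homogeneous Neumann boundary conditions, so my plan is to cast it into the abstract framework of analytic semigroups and maximal regularity. I would set $X = L^q(\Omega)$ and introduce the Neumann Laplacian $A := -\Delta$ with domain $D(A) = W^{2,q}_{\mathbf{n}}(\Omega) = \{w\in W^{2,q}(\Omega): \partial w/\partial\mathbf{n} = 0 \text{ on } \partial\Omega\}$, so that the problem becomes the abstract Cauchy problem $u'(t) + Au(t) = f(t)$ on $(0,T)$, $u(0) = u_0$. Since the zero eigenvalue of the Neumann Laplacian (constants) is harmless on a bounded interval, I would work with the shifted operator $A_\omega := A + \omega I$ for some $\omega > 0$: maximal regularity for $A_\omega$ transfers to the original equation because the difference is the lower-order term $\omega u$, which is absorbed on $[0,T]$ by a Neumann-series/Gronwall argument.

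First I would establish that $A_\omega$ is sectorial and generates a bounded analytic semigroup on $L^q(\Omega)$. This reduces to the resolvent estimate $\|(\lambda + A_\omega)^{-1}\|_{\mathcal{L}(L^q)} \le C/(1+|\lambda|)$ for $\lambda$ in a sector $|\arg\lambda| < \pi - \theta$, which is the classical $L^q$ a priori bound for the elliptic resolvent problem $\lambda w - \Delta w = g$, $\partial w/\partial\mathbf{n} = 0$ (Agmon–Douglis–Nirenberg type estimates for the Neumann problem, assuming $\partial\Omega$ sufficiently smooth).

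Second — and this is the crux — I would upgrade sectoriality to maximal $L^q$-regularity. Since $L^q(\Omega)$ is a UMD space for $1<q<\infty$, the Weis characterization reduces the task to showing that $A_\omega$ is $\mathcal{R}$-sectorial of angle $<\pi/2$; equivalently, by the Dore–Venni theorem, that $A_\omega$ admits bounded imaginary powers with power angle $<\pi/2$. Both properties are known for the Neumann Laplacian (e.g.\ through its bounded $\mathcal{H}^\infty$-calculus). This yields a unique $u$ with $u',\,Au \in L^q(0,T;L^q(\Omega))$ together with the estimate $\|u'\|_{L^q(L^q)} + \|u\|_{L^q(W^{2,q})} \le C\big(\|f\|_{L^q(L^q)} + \|u_0\|\big)$.

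Finally I would identify the trace space. The maximal-regularity class $\{u : u,\,u',\,Au \in L^q(0,T;X)\}$ embeds continuously into $C([0,T];(X,D(A))_{1-1/q,\,q})$, and the real interpolation space $(L^q(\Omega), W^{2,q}_{\mathbf{n}}(\Omega))_{1-1/q,\,q}$ is exactly $\widehat{W}^{2-2/q,\,q}(\Omega)$: it coincides with the Besov/Slobodeckij space $W^{2-2/q,q}(\Omega)$, and the Neumann condition enters the space precisely when the normal-derivative trace is well defined, i.e.\ when $2-2/q > 1 + 1/q$, that is $q>3$ — which is the origin of the case distinction at $q=3$ in the definition of $\widehat{W}$. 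The main obstacle is therefore not existence/uniqueness, which is semigroup-theoretic once $\mathcal{R}$-sectoriality is available, but rather the verification of $\mathcal{R}$-sectoriality (or bounded imaginary powers) for the Neumann Laplacian and the precise identification of the trace space with its compatibility threshold at $q=3$; in practice one simply invokes \cite{C5:Fe}.
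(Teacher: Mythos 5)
There is nothing in the paper to compare your argument against: Theorem \ref{C5:nn} is stated as an external result, quoted from \cite{C5:Fe}, and the paper gives no proof of it (it is used as a black box in the fixed-point argument for the regularized problem). Judged on its own, your outline is the standard modern route and is essentially sound: the shift $A_\omega=-\Delta+\omega$ to dispose of the constant eigenfunctions, with the perturbation $\omega u$ absorbed on a bounded time interval, is the right move; sectoriality via Agmon--Douglis--Nirenberg resolvent bounds, then maximal $L^q$-regularity via the UMD property of $L^q(\Omega)$ together with $\mathcal{R}$-sectoriality (or bounded imaginary powers / bounded $\mathcal{H}^\infty$-calculus) of the Neumann Laplacian, is exactly how this theorem is proved in the maximal-regularity literature; and your identification of the trace space is the key point done correctly, since the embedding of the maximal-regularity class into $C([0,T];(L^q(\Omega),W^{2,q}_{\mathbf{n}}(\Omega))_{1-1/q,q})$ together with the fact that the normal-derivative trace on $W^{2-2/q,q}(\Omega)$ is meaningful precisely when $2-2/q>1+1/q$, i.e.\ $q>3$, reproduces the case distinction in the definition of $\widehat{W}^{2-2/q,q}(\Omega)$ (and explains why $q=3$ is omitted). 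Two caveats you should make explicit: first, every heavy ingredient you invoke (Weis' characterization, the $\mathcal{H}^\infty$-calculus of the Neumann Laplacian, the interpolation identity with boundary conditions) is itself a substantial theorem, so what you have is a correct reduction to the literature rather than a self-contained proof --- which is fair, since the paper treats the statement the same way; second, this machinery requires some smoothness of $\partial\Omega$ (say $C^2$), a hypothesis that is implicit in the statement and should be recorded, particularly because the paper later works on polygonal domains where such regularity results need separate justification.
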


\begin{prop}(\cite{C5:GA})
Let $X$ be a Banach space, $\Omega\subseteq X$ an open subset,
$U\subseteq \Omega$ a nonempty convex subset and $J:
\Omega\rightarrow \mathbb{R}$ a functional. Suppose that $J$ is
$G-$differentiable in $\Omega$. Then, $J$ is convex over $U$ if and only if the following relation holds
\begin{equation}\label{C5:Gdif2}
J(x_1)-J(x_2)\leq \delta J(x_1,x_1-x_2), \ \forall x_1,x_2\in U, \
x_1\neq x_2.
\end{equation}
\end{prop}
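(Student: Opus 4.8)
The plan is to establish both implications directly from the definition of $G$-differentiability, writing the directional derivative as $\delta J(x,h)=\lim_{t\to 0^+}\frac{1}{t}\big(J(x+th)-J(x)\big)$, which under the hypothesis is linear in $h$; in particular $\delta J(x,-h)=-\delta J(x,h)$. This oddness is the workhorse in both directions, so I would record it first, together with the observation that the convexity of $U$ keeps every segment $\{x_1+t(x_2-x_1):t\in[0,1]\}$ inside $U$.

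For the forward implication, assume $J$ is convex on $U$ and fix $x_1,x_2\in U$ with $x_1\neq x_2$. For $t\in(0,1)$, convexity gives $J\big(x_1+t(x_2-x_1)\big)\leq (1-t)J(x_1)+tJ(x_2)$, hence $\frac{1}{t}\big(J(x_1+t(x_2-x_1))-J(x_1)\big)\leq J(x_2)-J(x_1)$. Since the limit as $t\to 0^+$ exists by $G$-differentiability and the inequality holds for every fixed $t$, it passes to the limit to yield $\delta J(x_1,x_2-x_1)\leq J(x_2)-J(x_1)$. Rewriting with the oddness property, $\delta J(x_1,x_1-x_2)=-\delta J(x_1,x_2-x_1)\geq J(x_1)-J(x_2)$, which is exactly (\ref{C5:Gdif2}).

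For the converse, assume (\ref{C5:Gdif2}) holds throughout $U$ and fix $x,y\in U$, $\lambda\in(0,1)$. Set $z:=\lambda x+(1-\lambda)y\in U$. I would apply (\ref{C5:Gdif2}) twice at the base point $z$: once with target $x$, giving $J(z)-J(x)\leq \delta J(z,z-x)$, and once with target $y$, giving $J(z)-J(y)\leq \delta J(z,z-y)$. The key algebraic step is that $z-x=(1-\lambda)(y-x)$ and $z-y=-\lambda(y-x)$, so by linearity of $\delta J(z,\cdot)$ the two right-hand sides equal $(1-\lambda)\,\delta J(z,y-x)$ and $-\lambda\,\delta J(z,y-x)$. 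Forming $\lambda\cdot[\text{first}]+(1-\lambda)\cdot[\text{second}]$, the directional terms cancel exactly, leaving $J(z)-\lambda J(x)-(1-\lambda)J(y)\leq 0$, i.e. the convexity inequality.

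The main obstacle, and the only place the full strength of $G$-differentiability (rather than mere one-sided directional differentiability) is needed, is the linearity of $\delta J(x,\cdot)$ in its second argument: it underlies the rewriting $\delta J(x_1,x_1-x_2)=-\delta J(x_1,x_2-x_1)$ in the forward direction and the cancellation of the directional terms in the converse. The only other technical point, the passage to the limit $t\to 0^+$, is elementary once existence of $\delta J(x_1,x_2-x_1)$ is granted, since the bound $\leq J(x_2)-J(x_1)$ holds uniformly in $t$.
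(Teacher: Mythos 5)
Your proof is correct. Note that the paper itself gives no proof of this proposition --- it is quoted directly from the reference \cite{C5:GA} --- so there is no in-paper argument to compare against; what you wrote is the standard textbook proof (difference quotients along the segment plus passage to the limit for the forward direction; applying (\ref{C5:Gdif2}) at the intermediate point $z=\lambda x+(1-\lambda)y$ toward both endpoints and cancelling the directional terms by linearity of $\delta J(z,\cdot)$ for the converse). One trivial point to tidy up: since (\ref{C5:Gdif2}) is only assumed for $x_1\neq x_2$, in the converse you should first dispose of the case $x=y$ (where the convexity inequality is an identity); when $x\neq y$ and $\lambda\in(0,1)$ the point $z$ indeed differs from both endpoints, so your two applications of (\ref{C5:Gdif2}) are legitimate.
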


Finally, we will use the following result to get large time estimates \cite{C5:He}:
\begin{lem} \label{C5:tmaD}
Assume that $\delta,\beta,k>0$ and $d^n\geq 0$ satisfy
\begin{equation*}
 (1+\delta k)d^{n+1} \leq d^n + k\beta, \ \ \ \forall n\geq 0.
\end{equation*}
Then, for any $n_0\geq 0$,
\begin{equation*}
d^n \leq (1+\delta k)^{-(n-n_0)} d^{n_0} + \delta^{-1} \beta, \ \ \  \forall n\geq n_0.
\end{equation*}
\end{lem}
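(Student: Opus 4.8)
The plan is to reduce the one-step inequality to a standard linear recurrence with a contractive ratio, iterate it, and sum the resulting geometric series. First I would divide the hypothesis by the positive factor $1+\delta k$ and set $r:=(1+\delta k)^{-1}$, which lies in $(0,1)$ since $\delta,k>0$; this gives
\begin{equation*}
d^{n+1}\le r\,d^n+r\,k\beta,\qquad \forall n\ge 0.
\end{equation*}

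Next I would iterate this relation from an arbitrary fixed index $n_0$. A short induction on $m:=n-n_0\ge 0$ (multiplying the inductive bound by $r>0$ preserves the inequality) yields
\begin{equation*}
d^{n}\le r^{\,n-n_0}d^{n_0}+r\,k\beta\sum_{j=0}^{n-n_0-1}r^{\,j},\qquad \forall n\ge n_0,
\end{equation*}
with the empty sum interpreted as $0$ in the base case $n=n_0$.

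The final step is to bound the finite geometric sum by the convergent series $\sum_{j=0}^{\infty}r^{\,j}=(1-r)^{-1}$, which is valid because $0<r<1$ and only enlarges the right-hand side. Using $1-r=\delta k/(1+\delta k)$ the remainder collapses cleanly,
\begin{equation*}
r\,k\beta\,(1-r)^{-1}=\frac{1}{1+\delta k}\,k\beta\,\frac{1+\delta k}{\delta k}=\delta^{-1}\beta,
\end{equation*}
and substituting this back gives exactly $d^n\le(1+\delta k)^{-(n-n_0)}d^{n_0}+\delta^{-1}\beta$, as claimed.

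I do not expect any genuine obstacle: the result is an elementary discrete Gronwall/comparison estimate. The only points needing attention are verifying $r<1$, so that the geometric series converges and extending the finite sum to the infinite one keeps the bound valid, and carrying out the algebraic simplification that produces the clean, $k$-independent constant $\delta^{-1}\beta$. The standing hypothesis $d^n\ge 0$ is not needed for the iteration itself but ensures the estimate is meaningful in the intended applications, where $d^n$ represents a nonnegative energy or norm.
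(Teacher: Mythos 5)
Your proof is correct, and it is worth noting that the paper itself gives no proof of this lemma: it is quoted directly from the reference [He--Li, \emph{Numer.\ Math.}\ \textbf{98} (2004)], so your argument supplies the missing details rather than competing with an argument in the text. The reduction to $d^{n+1}\le r\,d^n+r\,k\beta$ with $r=(1+\delta k)^{-1}\in(0,1)$, the iteration, and the bound $r\,k\beta\sum_{j\ge 0}r^j=\delta^{-1}\beta$ are all sound, and the algebra producing the $k$-independent constant is exactly right. A marginally shorter route, which you may prefer, is to induct directly on the claimed estimate: assuming $d^n\le(1+\delta k)^{-(n-n_0)}d^{n_0}+\delta^{-1}\beta$, the hypothesis gives
\begin{equation*}
(1+\delta k)\,d^{n+1}\le (1+\delta k)^{-(n-n_0)}d^{n_0}+\delta^{-1}\beta+k\beta=(1+\delta k)^{-(n-n_0)}d^{n_0}+\delta^{-1}\beta\,(1+\delta k),
\end{equation*}
and dividing by $1+\delta k$ closes the induction in one line, avoiding the geometric series entirely; but this is a matter of taste, not a gap in what you wrote.
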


\section{Analysis of the continuous model}\label{C5:S3C5:CM}
In this section, we will prove the existence of weak-strong solutions of problem (\ref{C5:modelf00}) in the sense of the following definition.
\begin{defi} \label{C5:ws00p}{\bf (Weak-strong solutions of (\ref{C5:modelf00}))} 
Let $1<p<2$. Given $(u_0, v_0)\in L^p(\Omega)\times H^1(\Omega)$ with $u_0\geq 0$, $v_0\geq 0$ a.e. \hspace{-0.3 cm} in $\Omega$, a pair $(u,v)$ is called weak-strong solution of problem (\ref{C5:modelf00}) in $(0,+\infty)$, if $u\geq 0$, $v\geq 0$ a.e. \hspace{-0.3 cm} in $(0,+\infty)\times \Omega$,
\begin{equation*}\label{C5:wsa}
\begin{array}{ccc}
u \in L^{\infty}(0,+\infty;L^p(\Omega))  \cap L^{\frac{5p}{p+3}}(0,T;W^{1,\frac{5p}{p+3}}(\Omega)),  \ \ \forall T>0,
\\
v \in L^{\infty}(0,+\infty;H^1(\Omega))  \cap L^{2}(0,T; H^2(\Omega)),  \ \ \forall T>0,
\\
\partial_t u \in L^{\frac{10p}{3p+6}}(0,T;W^{1,\frac{10p}{7p-6}}(\Omega)'), \ \ \partial_t v \in L^{\frac{5}{3}}(0,T;L^{\frac{5}{3}}(\Omega)), \ \ \forall T>0,
\end{array}
\end{equation*}
the following variational formulation for the $u$-equation holds
\begin{equation}\label{C5:wf01}
\int_0^T \langle \partial_t u,\bar{u}\rangle + \int_0^T (\nabla u,  \nabla \bar{u}) +\int_0^T (u\nabla v,\nabla \bar{u})=0, \ \ \forall \bar{u}\in L^{\frac{10p}{7p-6}}(0,T;W^{1,\frac{10p}{7p-6}}(\Omega)), \ \ \forall T>0,
\end{equation}
the $v$-equation holds pointwisely
\begin{equation}\label{C5:wf02}
\partial_t v -\Delta v + v=u^p \ \ \mbox{ a.e. } (t,\x)\in (0,+\infty)\times\Omega,
\end{equation}
the boundary condition $\displaystyle\frac{\partial v}{\partial \mathbf{n}}=0$ and the initial conditions $(\ref{C5:modelf00})_4$ are satisfied, and the following energy inequality (in integral version) holds  for a.e.~$t_0,t_1$ with $t_1\geq t_0\geq 0$:
\begin{equation}\label{C5:wsd}
\mathcal{E}(u(t_1),v(t_1)) - \mathcal{E}(u(t_0),v(t_0))
 + \int_{t_0}^{t_1} \left( \frac{4}{p}\Vert \nabla (u^{p/2}(s))  \Vert_{0}^2 + \Vert \nabla v(s) \Vert_{1}^2
 \right)\ ds \leq0,
\end{equation}
 where
\begin{equation}\label{C5:eneruva}
\mathcal{E}(u,v)=\displaystyle
\frac{1}{p-1}\Vert u\Vert_{p}^p + \frac{1}{2}\Vert \nabla v\Vert_{0}^{2}.
\end{equation}
\end{defi}

Observe that any weak-strong solution of (\ref{C5:modelf00}) is conservative in $u$ (see (\ref{C5:consucont})). In addition, integrating (\ref{C5:modelf00})$_2$ in $\Omega$, we deduce 
\begin{equation}\label{C5:nuevo-1}
\frac{d}{dt}\left(\int_\Omega v\right)  + \int_\Omega
v=\int_\Omega u^p.
\end{equation}

\subsection{Regularized problem}
In order to prove the existence of weak-strong solution of problem (\ref{C5:modelf00}) in the sense of Definition \ref{C5:ws00p}, we introduce the following regularized problem associated to model (\ref{C5:modelf00}): Let $\varepsilon\in (0,1)$, find $(u^\varepsilon, z^\varepsilon)$, with $u^\varepsilon\geq 0$ a.e. in $(0,+\infty)\times \Omega$, such that, for all $T>0$,
\begin{equation}\label{C5:wsaREG}
u^\varepsilon, z^\varepsilon \in \widetilde{\mathcal{X}}:= \{w\in L^\infty(0,T; W^{\frac{4}{5},\frac{5}{3}}(\Omega))\cap L^{\frac{5}{3}}(0,T; W^{2,\frac{5}{3}}(\Omega)): \partial_t w\in L^{\frac{5}{3}}(0,T; L^{\frac{5}{3}}(\Omega)) \},
\end{equation}
and
\begin{equation}
\left\{
\begin{array}
[c]{lll}%
\partial_t u^\varepsilon - \Delta u^\varepsilon = \nabla\cdot (u^\varepsilon\nabla v(z^\varepsilon))\ \ \mbox{in}\ \Omega,\ t>0,\\
\partial_t z^\varepsilon - \Delta z^\varepsilon + z^\varepsilon =  (u^\varepsilon)^{p} \ \mbox{in}\ \ \Omega,\ t>0,\\
\displaystyle\frac{\partial u^\varepsilon}{\partial \mathbf{n}}=\frac{\partial z^\varepsilon}{\partial \mathbf{n}}=0\ \ \mbox{on}\ \partial\Omega,\ t>0,\\
u^\varepsilon(\x,0)=u^\varepsilon_0(\x)\geq 0,\ z^\varepsilon(\x,0)=v^\varepsilon_0(
\x)- \varepsilon\Delta v^\varepsilon_0(\x)\ \ \mbox{in}\ \Omega,
\end{array}
\right.  \label{C5:modelf00reg}
\end{equation}
where $v^\varepsilon=v(z^\varepsilon)$ is the unique solution of the elliptic-Newman problem 
\begin{equation}\label{C5:CVar1}
\left\{
\begin{array}
[c]{lll}
v^\varepsilon - \varepsilon\Delta v^\varepsilon=z^\varepsilon \ \ \mbox{in } \Omega,\\
\displaystyle\frac{\partial v^\varepsilon}{\partial \mathbf{n}}=0\ \ \mbox{on}\ \partial\Omega,
\end{array}
\right. 
\end{equation}
and $(u^\varepsilon_0,z^\varepsilon_0) \in W^{\frac{4}{5},\frac{5}{3}}(\Omega)^2$  with
\begin{equation}\label{C5:ccp}
(u^\varepsilon_0,z^\varepsilon_0) \rightarrow (u_0,z_0) \ \ \mbox{ in } L^2(\Omega)\times L^2(\Omega), \ \mbox{ as } \varepsilon\rightarrow 0. 
\end{equation}
Taking into account (\ref{C5:wsaREG}), system (\ref{C5:modelf00reg}) is satisfied a.e. \hspace{-0.3 cm} in $(0,+\infty)\times \Omega$. From now on in this section, we will denote $v^\varepsilon(z^\varepsilon)$ solution of (\ref{C5:CVar1}) only by $v^\varepsilon$. Observe that if $(u^\varepsilon,z^\varepsilon)$ is any solution of (\ref{C5:modelf00reg}), then (\ref{C5:consucont}) and (\ref{C5:nuevo-1}) are satisfied for $(u,v)=(u^\varepsilon,v^\varepsilon)$.

\begin{tma}
Let $\varepsilon\in (0,1)$. Then, there exists at least one solution of problem (\ref{C5:wsaREG})-(\ref{C5:modelf00reg}).
\end{tma}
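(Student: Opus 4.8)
The plan is to solve the (linearized, decoupled) regularized system by a Leray--Schauder fixed point argument, using the maximal regularity result Theorem~\ref{C5:nn} with the exponent $q=5/3$ (note $2-2/q=4/5$, and $q<3$ so no compatibility condition is needed on the data) to produce \emph{both} $u^\varepsilon$ and $z^\varepsilon$ in the class $\widetilde{\mathcal{X}}$. First I would introduce the operator $R:[0,1]\times\mathcal{B}\to\mathcal{B}$ on $\mathcal{B}:=L^{5p/3}(0,T;L^{5p/3}(\Omega))\cap L^{5/3}(0,T;W^{1,5/3}(\Omega))$, defined by going once around the coupling: given $(\l,\widehat{u})$, solve the linear problem $\partial_t z-\Delta z+z=\l\,((\widehat{u})^+)^p$ with homogeneous Neumann data and initial datum $\l\,z^\varepsilon_0$; recover $v=v(z)$ from the elliptic regularization (\ref{C5:CVar1}); then solve $\partial_t u-\Delta u=\l\,\nabla\cdot(\widehat{u}\,\nabla v)$ with initial datum $\l\,u^\varepsilon_0$; and set $R(\l,\widehat{u}):=u$. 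By construction $R(0,\cdot)\equiv 0$, and a fixed point of $R(1,\cdot)$ is exactly a solution of (\ref{C5:modelf00reg}) (the positivity $u^\varepsilon\ge0$ being checked a posteriori, which also turns $((\widehat{u})^+)^p$ into $u^p$).

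To see that $R$ is well defined I must place the two source terms in $L^{5/3}(0,T;L^{5/3}(\Omega))$ so that Theorem~\ref{C5:nn} returns $z,u\in\widetilde{\mathcal{X}}$. The embedding $\mathcal{B}\hookrightarrow L^{5p/3}(L^{5p/3})$ gives $((\widehat{u})^+)^p\in L^{5/3}(L^{5/3})$ since $1<p<2$. For the advective source I exploit the role of the regularization: for \emph{fixed} $\varepsilon$, problem (\ref{C5:CVar1}) gains two spatial derivatives, so $\nabla v$ and $\Delta v$ are as regular as $z\in\widetilde{\mathcal{X}}$ plus two derivatives, and then $\nabla\cdot(\widehat{u}\,\nabla v)=\nabla\widehat{u}\cdot\nabla v+\widehat{u}\,\Delta v\in L^{5/3}(L^{5/3})$ follows from Hölder's inequality and the three-dimensional Sobolev embeddings of $W^{2,5/3}$, once $\widehat{u}\in L^{5/3}(W^{1,5/3})$. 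Continuity of $R$ comes from continuity of these linear solution maps together with the Nemytskii map $s\mapsto(s^+)^p$; compactness comes from the fact that $R$ sends bounded sets of $\mathcal{B}$ into bounded sets of $\widetilde{\mathcal{X}}$, while $\widetilde{\mathcal{X}}\hookrightarrow\hookrightarrow\mathcal{B}$ by Aubin--Lions--Simon (here $\widetilde{\mathcal{X}}\hookrightarrow L^{14/3}(L^{14/3})$ and $5p/3<14/3$ leaves the needed room in the time exponent).

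The core of the argument is the $\l$-uniform a priori bound on fixed points $u=R(\l,u)$. For such $u$ I would first prove positivity: testing the $u$-equation with $u^-=\min\{u,0\}$, using $\Delta v\in L^1(0,T;L^\infty(\Omega))$ (available since $u\in\widetilde{\mathcal{X}}$ forces $z\in\widetilde{\mathcal{X}}$), $u(0)=\l u^\varepsilon_0\ge0$ and Gronwall, yields $u\ge0$, so $((u)^+)^p=u^p$. Next I reproduce, at the regularized level, the energy law underlying (\ref{C5:wsd}): testing the $u$-equation by $\frac{p}{p-1}u^{p-1}$ produces $\frac{d}{dt}\frac{1}{p-1}\Vert u\Vert_p^p+\frac{4}{p}\Vert\nabla(u^{p/2})\Vert_0^2=\l\int_\Omega u^p\Delta v$, while testing the $z$-equation by $-\Delta v$ produces the term $-\l\int_\Omega u^p\Delta v$ together with $\frac{d}{dt}\frac12\Vert\nabla v\Vert_0^2+\Vert\nabla v\Vert_1^2$ plus nonnegative $\varepsilon$-contributions; the two cross terms cancel (the repulsive structure), giving uniform bounds for $u$ in $L^\infty(L^p)$, for $\nabla(u^{p/2})$ in $L^2(L^2)$ and for $v$ in $L^\infty(H^1)\cap L^2(H^2)$. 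The parabolic interpolation $L^\infty(L^2)\cap L^2(H^1)\hookrightarrow L^{10/3}(L^{10/3})$ applied to $u^{p/2}$ then gives $u^p=(u^{p/2})^2\in L^{5/3}(L^{5/3})$ uniformly in $\l$ --- which is precisely what dictates the choice $q=5/3$ --- and a final bootstrap through Theorem~\ref{C5:nn} bounds $u,z$ in $\widetilde{\mathcal{X}}$, hence in $\mathcal{B}$, independently of $\l$. Leray--Schauder then furnishes a fixed point of $R(1,\cdot)$, i.e.\ a solution of (\ref{C5:wsaREG})--(\ref{C5:modelf00reg}).

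The step I expect to be the main obstacle is exactly this regularity bookkeeping, namely closing the loop so that both nonlinearities land in $L^{5/3}(L^{5/3})$: the production term $u^p$ is controlled only through the energy-based interpolation of $u^{p/2}$ (not directly from membership in $\mathcal{B}$ along fixed points), whereas the advection $\nabla\cdot(u\nabla v)$ is controlled only thanks to the two-derivative smoothing of (\ref{C5:CVar1}). The delicate point is to verify that all intermediate Hölder and Sobolev exponents are admissible in $d=3$ and that every constant is independent of $\l$ --- although they may, and will, depend on $\varepsilon$, since this is an existence result only at the $\varepsilon$-regularized level.
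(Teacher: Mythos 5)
Your overall strategy---Leray--Schauder, maximal regularity (Theorem~\ref{C5:nn}) with $q=5/3$, positivity by testing with $u_-$, and the energy law produced by the test pair $\left(\frac{p}{p-1}u^{p-1},-\Delta v\right)$ with cancellation of the cross terms---is the same as the paper's, but your choice of fixed-point space creates a genuine gap in the $\lambda$-uniform bound. Leray--Schauder requires all fixed points of $R(\lambda,\cdot)$ to be bounded in $\mathcal{B}$, hence in particular $\nabla u$ bounded in $L^{5/3}(0,T;L^{5/3}(\Omega))$ uniformly in $\lambda$. Your estimates do not deliver this for every $p\in(1,2)$: the energy law plus the interpolation of $u^{p/2}$ give $\nabla u=\frac{2}{p}u^{1-p/2}\nabla(u^{p/2})$ only in $L^{5p/(p+3)}$ (equal space--time exponents), and $5p/(p+3)<5/3$ precisely when $p<3/2$. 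The proposed ``final bootstrap'' cannot repair this: to apply Theorem~\ref{C5:nn} to the $u$-equation you need $\nabla u\cdot\nabla v\in L^{5/3}(L^{5/3})$, but $\nabla v$ is at best $L^\infty$ in time (even after the two-derivative gain of (\ref{C5:CVar1})), so H\"older cannot raise the time exponent of the product above that of $\nabla u$; the argument is circular for $p\in(1,3/2)$. There is in fact a borderline failure already in the well-definedness of $R$: with only $\nabla\widehat{u}\in L^{5/3}(L^{5/3})$ you would need $\nabla v\in L^\infty_t(L^\infty_x)$, whereas $z\in\widetilde{\mathcal{X}}$ yields $\nabla v\in L^\infty_t(W^{9/5,5/3}(\Omega))$, which embeds in every $L^q$, $q<\infty$, but not in $L^\infty$ (the case $sp=d$ in dimension $3$).

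The paper closes exactly this gap by (i) taking the fixed-point space to be the energy space $\mathcal{X}\times\mathcal{X}$, $\mathcal{X}=L^\infty(0,T;L^2)\cap L^2(0,T;H^1)$, for the \emph{pair} $(u,z)$, so that well-definedness follows from the clean pairings $\nabla\widetilde{u}\in L^2$ with $\nabla\widetilde{v}\in L^{10}(L^{10})$ and $\widetilde{u},\,\Delta\widetilde{v}\in L^{10/3}(L^{10/3})$, and no $W^{1,5/3}$ bound on fixed points is ever needed; and (ii) adding the one a priori estimate you omit: testing the $u$-equation by $u$ itself and applying Gronwall, which bounds $u$ in $\mathcal{X}$ uniformly in $\lambda$ because $\Vert\nabla v\Vert_{1}\in L^\infty(0,T)$---and this last fact is available at fixed $\varepsilon$ precisely because the regularized energy contains the $\varepsilon$-weighted term $\frac{\varepsilon}{2}\Vert\Delta v\Vert_0^2$ (the $z$-bound in $\mathcal{X}$ is then read off from $z=v-\varepsilon\Delta v$). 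If you add this $L^\infty(L^2)\cap L^2(H^1)$ estimate, your bookkeeping does close, since $\nabla u\in L^2(L^2)$ against $\nabla v\in L^\infty_t(L^q_x)$ for large $q$ lands in $L^{5/3}(L^{5/3})$; as written, however, the $\lambda$-uniform bound fails for $p\in(1,3/2)$. The remaining differences are harmless: your positivity argument (Gronwall with $\Delta v\in L^1(L^\infty)$, legitimate at fixed $\varepsilon$ since $\Delta v=(v-z)/\varepsilon$ and $W^{2,5/3}\hookrightarrow L^\infty$) is more laborious than the paper's device of putting $\widetilde{u}_+$ in the advective term of the operator, which makes $(\widetilde{u}_+\nabla v,\nabla u_-)$ vanish identically at fixed points.
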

\begin{proof}
We will use the Leray-Schauder fixed point theorem. With this aim, we denote 
$$\mathcal{X}:= L^\infty(0,T; L^2(\Omega))\cap L^2(0,T; H^1(\Omega)),$$ 
and we define the operator $R:\mathcal{X}\times \mathcal{X}\rightarrow \widetilde{\mathcal{X}}\times \widetilde{\mathcal{X}}\hookrightarrow \mathcal{X}\times \mathcal{X}$ by 
$R(\widetilde{u}^\varepsilon,\widetilde{z}^\varepsilon)=(u^\varepsilon,z^\varepsilon)$, such that $(u^\varepsilon,z^\varepsilon)$ solves the following linear decoupled problem
\begin{equation}
\left\{
\begin{array}
[c]{lll}%
\partial_t u^\varepsilon - \Delta u^\varepsilon = \nabla\cdot (\widetilde{u}^\varepsilon_+\nabla\widetilde{v}^\varepsilon)\ \ \mbox{in}\ \Omega,\ t>0,\\
\partial_t z^\varepsilon - \Delta z^\varepsilon =  (\widetilde{u}^\varepsilon)^{p} -  \widetilde{z}^\varepsilon \ \mbox{in}\ \ \Omega,\ t>0,\\
\displaystyle\frac{\partial u^\varepsilon}{\partial \mathbf{n}}=\frac{\partial z^\varepsilon}{\partial \mathbf{n}}=0\ \ \mbox{on}\ \partial\Omega,\ t>0,\\
u^\varepsilon(\x,0)=u^\varepsilon_0(\x)\geq 0,\ z^\varepsilon(\x,0)=v^\varepsilon_0(
\x)- \varepsilon\Delta v^\varepsilon_0(\x)\ \ \mbox{in}\ \Omega,
\end{array}
\right.  \label{C5:modelfexist0a}
\end{equation}
where  $\widetilde{v}^\varepsilon={v}(\widetilde{z}^\varepsilon)$ and, in general, we denote $a_+:= \max\{a,0 \}$. Then, $(u^\varepsilon,z^\varepsilon)$ is a solution of 
(\ref{C5:modelf00reg}) iff $(u^\varepsilon,z^\varepsilon)$ is a fixed point of  the operator $R$ defined in (\ref{C5:modelfexist0a}). Let us check every hypotheses of Leray-Schauder Theorem:

\begin{enumerate}
\item{$R$ is well defined}. Observe that if $\widetilde{z}_\varepsilon \in \mathcal{X}$, from the $H^2$ and $H^3$-regularity of problem (\ref{C5:CVar1}) (see \cite[Theorems 2.4.2.7 and 2.5.1.1]{C5:Gris} respectively), we have that  
\begin{equation}\label{nuevo_1}
\widetilde{v}^\varepsilon\in L^\infty(0,T;H^{2}(\Omega))\cap L^2(0,T;H^{3}(\Omega)).
\end{equation}
Thus, we deduce that $\nabla \widetilde{v}^\varepsilon\in L^\infty(0,T;H^{1}(\Omega))\cap L^2(0,T;H^{2}(\Omega))\hookrightarrow L^{10}(0,T; L^{10}(\Omega))$. Then, using this fact and taking into account that $(\widetilde{u}^\varepsilon,\widetilde{z}^\varepsilon)\in \mathcal{X}\times \mathcal{X} \hookrightarrow L^{10/3}(0,T;L^{10/3}(\Omega))^2$, we obtain that $\nabla \cdot (\widetilde{u}^\varepsilon_+\nabla \widetilde{v}^\varepsilon)=\nabla \widetilde{u}^\varepsilon_+\nabla \widetilde{v}^\varepsilon + \widetilde{u}^\varepsilon_+\Delta \widetilde{v}^\varepsilon \in L^{\frac{5}{3}}(0,T; L^{\frac{5}{3}}(\Omega))$ and $(\widetilde{u}^\varepsilon)^p+\widetilde{z}^\varepsilon \in L^{\frac{5}{3}}(0,T; L^{\frac{5}{3}}(\Omega))$ for any $p\in (1,2)$ (using that $\widetilde{u}^\varepsilon_+,\Delta \widetilde{v}^\varepsilon \in L^{\frac{10}{3}}(0,T; L^{\frac{10}{3}}(\Omega))$). Thus, applying Theorem \ref{C5:nn} to (\ref{C5:modelfexist0a}), we deduce that there exists a unique solution $(u^\varepsilon,z^\varepsilon)$ of (\ref{C5:modelfexist0a}), $(u^\varepsilon,z^\varepsilon)\in \widetilde{\mathcal{X}}\times \widetilde{\mathcal{X}}$ (where $\widetilde{\mathcal{X}}$ is defined in (\ref{C5:wsaREG})).
\item{All possible fixed points of $\lambda R$ (with $\lambda \in (0,1]$) are bounded in $\mathcal{X}\times \mathcal{X}$ and $u^\varepsilon\geq 0$.}
In fact, observe that if $({u}^\varepsilon,{z}^\varepsilon)$ is a fixed point of $\lambda R$, then $({u}^\varepsilon,{z}^\varepsilon)$ satisfies
\begin{equation}
\left\{
\begin{array}
[c]{lll}%
\partial_t u^\varepsilon - \Delta u^\varepsilon = \lambda \nabla\cdot ({u}^\varepsilon_+\nabla{v}^\varepsilon)\ \ \mbox{in}\ \Omega,\ t>0,\\
\partial_t z^\varepsilon - \Delta z^\varepsilon   = \lambda ({u}^\varepsilon)^{p} - \lambda z^\varepsilon\ \mbox{in}\ \ \Omega,\ t>0,\\
\displaystyle\frac{\partial u^\varepsilon}{\partial \mathbf{n}}=\frac{\partial z^\varepsilon}{\partial \mathbf{n}}=0\ \ \mbox{on}\ \partial\Omega,\ t>0,\\
u^\varepsilon(\x,0)=u^\varepsilon_0(\x)\geq 0,\ z^\varepsilon(\x,0)=v^\varepsilon_0(
\x)- \varepsilon\Delta v^\varepsilon_0(\x)\ \ \mbox{in}\ \Omega,
\end{array}
\right.  \label{C5:modelfexist0b}
\end{equation}
Multiplying (\ref{C5:modelfexist0b})$_1$ by $u^\varepsilon_-:= \min\{u^\varepsilon,0 \}$ and integrating in $\Omega$,  we have 
\begin{equation*}
\displaystyle\frac{1}{2}\frac{d}{dt}\Vert u^\varepsilon_-\Vert_0^2+ \Vert \nabla u^\varepsilon_- \Vert_{0}^2 = \lambda ({u}^\varepsilon_+\nabla{v}^\varepsilon, \nabla u^\varepsilon_-) = 0,
\end{equation*}
which, taking into account that $u^\varepsilon_0(\x)\geq 0$ a.e. in $\Omega$, implies that $u^\varepsilon\geq 0$ a.e.\hspace{-0.05 cm} in $(0,+\infty)\times \Omega$. Thus, $u^\varepsilon_+= u^\varepsilon$. Now, 
we test (\ref{C5:modelfexist0b})$_1$ and (\ref{C5:modelfexist0b})$_2$ by $\displaystyle\frac{p}{p-1}(u^\varepsilon)^{p-1}$ and $-\Delta v^\varepsilon$ respectively, and adding both equations, the terms $-\lambda \displaystyle\frac{p}{p-1} ({u}^\varepsilon\nabla{v}^\varepsilon,\nabla (u^\varepsilon)^{p-1})$ and $ \lambda (\nabla ({u}^\varepsilon)^{p},\nabla v^\varepsilon)$ cancel, and taking into account (\ref{C5:CVar1}), we obtain 
\begin{eqnarray}\label{C5:pf0a}
&\displaystyle\frac{d}{dt}&\!\!\!\mathcal{E}_\varepsilon(u^\varepsilon,v^\varepsilon)+ \frac{4}{p}\int_\Omega \vert
\nabla ((u^\varepsilon)^{p/2})\vert^{2} \nonumber\\
&&+\varepsilon \Vert \nabla(\Delta v^\varepsilon) \Vert_0^{2} + \Vert \Delta v^\varepsilon
\Vert_0^{2}= -\lambda \Vert \nabla v^\varepsilon \Vert_0^2 -\lambda\varepsilon \Vert \Delta v^\varepsilon \Vert_0^2\leq  0,
\end{eqnarray}
where 
$$\mathcal{E}_\varepsilon(u^\varepsilon,v^\varepsilon):=\displaystyle\frac{1}{p-1} \Vert u^\varepsilon\Vert_{L^p}^{p}   +
\frac{1}{2} \Vert \nabla v^\varepsilon\Vert_0^{2} +
\frac{\varepsilon}{2} \Vert \Delta v^\varepsilon\Vert_0^{2}.$$
Moreover, we observe that the function $y^\varepsilon(t)=\Big(\displaystyle\int_{\Omega} v^\varepsilon(\x ,t) \, d\x\Big)^2$ satisfies  $(y^\varepsilon)'(t)+y^\varepsilon(t)\leq w^\varepsilon(t)$, with $w^\varepsilon(t) = \Vert u^\varepsilon(t) \Vert_{L^p}^{2p}$. In fact, it follows by multiplying (\ref{C5:nuevo-1}) (for $(u,v)=(u^\varepsilon,v^\varepsilon)$) by $\displaystyle\int_{\Omega} v^\varepsilon(\x ,t) \, d\x$ and using the Young inequality.
Therefore, $y^\varepsilon(t)=y^\varepsilon(0) \, e^{-t} + \displaystyle\int_0^t e^{-(t-s)} \, w^\varepsilon(s) \, ds$, which implies that
\begin{equation}\label{C5:e6b}
\Big(\displaystyle\int_{\Omega} v^\varepsilon(\x ,t) \, d\x\Big)^2 \le   \Big(\displaystyle\int_{\Omega} v^\varepsilon_0(\x) \, d\x\Big)^2 + \Vert u^\varepsilon\Vert_{ L^\infty(0,+\infty; L^p)}^{2p}, \ \ \forall t\geq 0.
\end{equation}
Then, from (\ref{C5:pf0a})-(\ref{C5:e6b}) and using  (\ref{C5:H1divGrad}), we deduce  the following estimates with respect to $\lambda$:
\begin{equation}\label{C5:e2}
\left\{\begin{array}{l}
(u^\varepsilon, v^\varepsilon)\  \mbox{ is bounded in }\ L^{\infty}(0,+\infty;L^p(\Omega)\times \H^2(\Omega)) , \\
(u^\varepsilon)^{\frac{p}{2}}\  \mbox{ is bounded in }\ L^{\infty}(0,+\infty;L^2(\Omega))\cap L^{2}(0,T;H^1(\Omega))\hookrightarrow L^{\frac{10}{3}}(0,T; L^{\frac{10}{3}}(\Omega)), \\
u^\varepsilon \ \mbox{ is bounded in }\ L^{p}(0,T;L^{3p}(\Omega)) \ \ \mbox{ and } \  v^\varepsilon \  \mbox{ is bounded in }\ L^{2}(0,T; \H^3(\Omega)).
\end{array}\right.
\end{equation}
Then, from (\ref{C5:e2}) we conclude that $z^\varepsilon$ is bounded in $\mathcal{X}$. Moreover, testing (\ref{C5:modelfexist0b})$_1$ by $u^\varepsilon$, we have
$$
 \displaystyle\frac{1}{2}\frac{d}{dt} \Vert u^\varepsilon\Vert_{0}^{2} + \Vert u^\varepsilon\Vert_1^2 = -\lambda ({u}^\varepsilon\nabla{v}^\varepsilon,\nabla u^\varepsilon) + \Vert u^\varepsilon\Vert_0^2 \leq \frac{1}{2} \Vert u^\varepsilon\Vert_1^2 +  C\Big(\Vert \nabla{v}^\varepsilon\Vert_1^4 + 1\Big) \Vert u^\varepsilon\Vert_0^2,
$$ 
from which, taking into account (\ref{C5:e2}) and using the Gronwall Lemma, we deduce that $u^\varepsilon$ is bounded in $\mathcal{X}$.

\item{$R$ is compact.}\label{C5:3} Let $\{(\widetilde{u}^\varepsilon_n,\widetilde{z}^\varepsilon_n)\}_{n\in\mathbb{N}}$ be a bounded sequence in $\mathcal{X}\times \mathcal{X}$. Then $(u^\varepsilon_n,z^\varepsilon_n)=R(\widetilde{u}^\varepsilon_n,\widetilde{z}^\varepsilon_n)$ solves (\ref{C5:modelfexist0a}) (with $(\widetilde{u}_n^\varepsilon,\widetilde{z}^\varepsilon_n)$ and $(u^\varepsilon_n, z^\varepsilon_n)$ instead of $(\widetilde{u}^\varepsilon,\widetilde{z}^\varepsilon)$ and $(u^\varepsilon,z^\varepsilon)$ respectively). Therefore, analogously as in item 1, we obtain that  $\nabla \cdot (\widetilde{u}^\varepsilon_{n+}\nabla \widetilde{v}^\varepsilon_n)$ and $(\widetilde{u}^\varepsilon_{n})^p + \widetilde{z}^\varepsilon_n$ are bounded in $L^{\frac{5}{3}}(0,T; L^{\frac{5}{3}}(\Omega))$; and therefore, from Theorem \ref{C5:nn} we conclude that $\{R(\widetilde{u}^\varepsilon_n,\widetilde{z}^\varepsilon_n)\}_{n\in\mathbb{N}}$ is bounded in $\widetilde{\mathcal{X}}\times \widetilde{\mathcal{X}}$ which is compactly embedded in $\mathcal{X}\times \mathcal{X}$, and thus $R$ is compact. Observe that the compactness embedding comes from the continuous embedding (using embeddings $W^{k,p}(\Omega) \hookrightarrow H^s(\Omega)$, see \cite[Theorem 9.6]{C5:LM}):
$$
\widetilde{\mathcal{X}}  \hookrightarrow
L^{\infty}(0,T;H^{1/2}(\Omega)) 
\cap
L^{5/3}(0,T;H^{17/10}(\Omega)) \hookrightarrow
L^2(0,T;H^{3/2}(\Omega)).
$$
Then
$u^\varepsilon, \, z^\varepsilon \in  L^{\infty}(0,T;H^{1/2}(\Omega)) 
\cap
L^{2}(0,T;H^{3/2}(\Omega))$ and 
$
\partial_t u^\varepsilon, \, \partial_tz^\varepsilon \in L^{5/3}(0,T;L^{5/3}(\Omega))$, hence the compactness holds by applying the Aubin-Lions Lemma (see \cite{C5:simon}).

\item{$R$ is continuous from $\mathcal{X}\times \mathcal{X}$ into $\mathcal{X}\times \mathcal{X}$.} Let $\{(\widetilde{u}^\varepsilon_n,\widetilde{z}^\varepsilon_n)\}_{n\in\mathbb{N}}\subset \mathcal{X}\times \mathcal{X}$ be a sequence such that 
\begin{equation}\label{C5:c001cont}
(\widetilde{u}^\varepsilon_n,\widetilde{z}^\varepsilon_n)\rightarrow (\widetilde{u}^\varepsilon,\widetilde{z}^\varepsilon) \ \mbox{ in }  \mathcal{X}\times \mathcal{X},
\quad \hbox{as $n\to +\infty$}.
\end{equation}
Therefore, $\{(\widetilde{u}^\varepsilon_n,\widetilde{z}^\varepsilon_n)\}_{n\in\mathbb{N}}$ is bounded in $\mathcal{X}\times \mathcal{X}$, and from item \ref{C5:3} we deduce that $\{({u}^\varepsilon_n,{z}^\varepsilon_n)=R(\widetilde{u}^\varepsilon_n,\widetilde{z}^\varepsilon_n)\}_{n\in\mathbb{N}}$ is bounded in $\widetilde{\mathcal{X}}\times \widetilde{\mathcal{X}}$. Then, there exist $(\hat{u}^\varepsilon,\hat{z}^\varepsilon)$ and a subsequence of $\{R(\widetilde{u}^\varepsilon_n,\widetilde{z}^\varepsilon_n)\}_{n\in\mathbb{N}}$ still denoted by $\{R(\widetilde{u}^\varepsilon_n,\widetilde{z}^\varepsilon_n)\}_{n\in\mathbb{N}}$ such that 
\begin{equation}\label{C5:c002cont}
R(\widetilde{u}^\varepsilon_n,\widetilde{z}^\varepsilon_n)\rightarrow (\hat{u}^\varepsilon,\hat{z}^\varepsilon) \ \ \mbox{ weakly in } \widetilde{\mathcal{X}}\times \widetilde{\mathcal{X}} \ \mbox{ and strongly  in } {\mathcal{X}}\times{\mathcal{X}}.
\end{equation}
Then, from (\ref{C5:c001cont})-(\ref{C5:c002cont}), a standard procedure allows us to pass to the limit, as $n$ goes to $+\infty$, in (\ref{C5:modelfexist0a}) (with $(\widetilde{u}_n^\varepsilon,\widetilde{z}^\varepsilon_n)$ and $(u^\varepsilon_n, z^\varepsilon_n)$ instead of $(\widetilde{u}^\varepsilon,\widetilde{z}^\varepsilon)$ and $(u^\varepsilon,z^\varepsilon)$ respectively), and we deduce that $R(\widetilde{u}^\varepsilon,\widetilde{z}^\varepsilon)=(\hat{u}^\varepsilon,\hat{z}^\varepsilon)$. Therefore, we have proved that any convergent subsequence of  $\{R(\widetilde{u}^\varepsilon_n,\widetilde{z}^\varepsilon_n)\}_{n\in\mathbb{N}}$ converges to $R(\widetilde{u}^\varepsilon,\widetilde{z}^\varepsilon)$ strong in ${\mathcal{X}}\times{\mathcal{X}}$, and from uniqueness of $R(\widetilde{u}^\varepsilon,\widetilde{z}^\varepsilon)$, we conclude that the whole sequence $R(\widetilde{u}^\varepsilon_n,\widetilde{z}^\varepsilon_n)\rightarrow R(\widetilde{u}^\varepsilon,\widetilde{z}^\varepsilon)$ in ${\mathcal{X}}\times{\mathcal{X}}$. Thus, $R$ is continuous.
\end{enumerate}
Therefore, the hypotheses of the Leray-Schauder fixed point theorem are satisfied and we conclude that the map $R(\widetilde{u}^\varepsilon,\widetilde{z}^\varepsilon)$ has a fixed point $(u^\varepsilon,z^\varepsilon)$, that is, $R({u}^\varepsilon,{z}^\varepsilon)=(u^\varepsilon,z^\varepsilon)$, which is a solution of problem (\ref{C5:wsaREG})-(\ref{C5:modelf00reg}). 
\end{proof}

\subsection{Existence of weak-strong solutions of (\ref{C5:modelf00})}
\begin{tma}
There exists at least one $(u,v)$ weak-strong solution of problem (\ref{C5:modelf00}).
\end{tma}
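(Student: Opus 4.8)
The plan is to pass to the limit $\varepsilon\to 0$ in the family of regularized solutions $(u^\varepsilon,z^\varepsilon)$ furnished by the previous theorem, recovering a weak-strong solution of (\ref{C5:modelf00}) in the sense of Definition~\ref{C5:ws00p}. Since each $(u^\varepsilon,z^\varepsilon)$ is now a genuine solution of (\ref{C5:modelf00reg}) (not merely a fixed point of $\lambda R$), the differential energy law (\ref{C5:pf0a}) holds with $\lambda=1$; integrating it in time and combining with (\ref{C5:e6b}) produces bounds that are \emph{uniform in} $\varepsilon$, and it is precisely this $\varepsilon$-uniformity that the Leray-Schauder step did not need but the limit passage does.

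First I would collect these $\varepsilon$-independent a priori estimates. From (\ref{C5:pf0a}) one reads off that $u^\varepsilon$ is bounded in $L^\infty(0,+\infty;L^p(\Omega))$, that $(u^\varepsilon)^{p/2}$ is bounded in $L^\infty(0,+\infty;L^2)\cap L^2(0,T;H^1)\hookrightarrow L^{10/3}(0,T;L^{10/3})$, and that $v^\varepsilon$ is bounded in $L^\infty(0,+\infty;H^1)\cap L^2(0,T;H^2)$ via the equivalent norm (\ref{C5:H1divGrad}); meanwhile the regularizing contributions $\varepsilon^{1/2}\Delta v^\varepsilon$ and $\varepsilon^{1/2}\nabla\Delta v^\varepsilon$ stay bounded, so that $\varepsilon\Delta v^\varepsilon\to 0$ in $L^2$. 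Writing $\nabla u^\varepsilon=\frac{2}{p}(u^\varepsilon)^{1-p/2}\nabla((u^\varepsilon)^{p/2})$ and interpolating, a standard argument converts the gradient bound into $u^\varepsilon$ bounded in $L^{5p/(p+3)}(0,T;W^{1,5p/(p+3)})$, exactly the class of Definition~\ref{C5:ws00p}. Testing the two equations then yields the time-derivative bounds $\partial_t u^\varepsilon$ in $L^{10p/(3p+6)}(0,T;W^{1,10p/(7p-6)}(\Omega)')$ and $\partial_t z^\varepsilon$ in $L^{5/3}(0,T;L^{5/3})$.

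Next I would extract, along a subsequence, weak limits $(u,v)$ in the spaces above and upgrade to \emph{strong} convergence by the Aubin-Lions lemma: $u^\varepsilon\to u$ in $L^s(0,T;L^s)$ for a suitable $s$ and a.e.\ in $(0,T)\times\Omega$, and $v^\varepsilon\to v$ in $L^2(0,T;L^2)$, which together with the uniform $L^2(0,T;H^2)$ bound improves by interpolation to $\nabla v^\varepsilon\to\nabla v$ strongly in $L^2(0,T;L^2)$. Since $v^\varepsilon-z^\varepsilon=\varepsilon\Delta v^\varepsilon\to 0$, the variables $v^\varepsilon$ and $z^\varepsilon$ share the same limit $v$; passing to the limit in (\ref{C5:modelf00reg})$_2$ (the $\varepsilon$-terms dropping out) shows that $v$ solves (\ref{C5:wf02}) pointwise. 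Nonnegativity $u\geq 0$ is inherited from $u^\varepsilon\geq 0$ and a.e.\ convergence, while $v\geq 0$ follows from $v_0\geq 0$ and the sign of $u^p$ through the maximum principle applied to (\ref{C5:wf02}).

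The delicate point, and the one I expect to be the main obstacle, is the passage to the limit in the two nonlinear terms given the \emph{low integrability} available for $p\in(1,2)$. For the production term, $(u^\varepsilon)^p$ is bounded in $L^{5/3}(0,T;L^{5/3})$ (because $u^\varepsilon\in L^{5p/3}$), so a.e.\ convergence together with this bound gives $(u^\varepsilon)^p\to u^p$ strongly in $L^1$ and weakly in $L^{5/3}$, which suffices for (\ref{C5:wf02}). For the chemotactic flux one treats $(u^\varepsilon\nabla v^\varepsilon,\nabla\bar u)$ by combining the strong convergence of $u^\varepsilon$ with the strong convergence of $\nabla v^\varepsilon$ in $L^2(0,T;L^2)$, and then verifying that the resulting H\"older exponents are compatible with the test space $L^{10p/(7p-6)}(0,T;W^{1,10p/(7p-6)})$ appearing in (\ref{C5:wf01}); this precise bookkeeping of exponents, tuned so that the products close across the admissible range $1<p<2$, is the crux of the argument. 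Finally, the energy inequality (\ref{C5:wsd}) is recovered by integrating (\ref{C5:pf0a}) between $t_0$ and $t_1$ and letting $\varepsilon\to 0$, using weak lower semicontinuity of the dissipation norms $\Vert\nabla((u^\varepsilon)^{p/2})\Vert_0^2$ and $\Vert\nabla v^\varepsilon\Vert_1^2$ and the convergence $\mathcal{E}_\varepsilon(u^\varepsilon,v^\varepsilon)\to\mathcal{E}(u,v)$, the $O(\varepsilon)$ regularization energy vanishing in the limit.
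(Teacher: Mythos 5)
Your proposal follows essentially the same route as the paper: uniform-in-$\varepsilon$ estimates from the energy identity (\ref{C5:pf0ann}) (i.e.\ (\ref{C5:pf0a}) with $\lambda=1$) combined with (\ref{C5:e6b}), Aubin--Lions compactness for $u^\varepsilon$, the observation that $z^\varepsilon-v^\varepsilon=\varepsilon\Delta v^\varepsilon\to 0$ so both variables share the limit $v$, weak/strong convergence arguments to pass to the limit in $u^\varepsilon\nabla v^\varepsilon$ and $(u^\varepsilon)^p$, and recovery of (\ref{C5:wsd}) by weak lower semicontinuity of the dissipation together with $\mathcal{E}_\varepsilon(u^\varepsilon,v^\varepsilon)\to\mathcal{E}(u,v)$.

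One claim should be corrected, though it does not break the argument: testing the $z$-equation cannot yield a uniform bound on $\partial_t z^\varepsilon$ in $L^{5/3}(0,T;L^{5/3}(\Omega))$. That norm would require uniform maximal parabolic regularity, hence uniform control of $\Delta z^\varepsilon$ and of the initial datum $z^\varepsilon_0=v^\varepsilon_0-\varepsilon\Delta v^\varepsilon_0$ in $W^{4/5,5/3}(\Omega)$, neither of which is available (assumption (\ref{C5:ccp}) only gives convergence in $L^2$). The paper instead extracts the weaker dual-space bound, $\partial_t z^\varepsilon$ bounded in $[L^{5/2}(0,T;H^1(\Omega))]'$, from the variational formulation (\ref{C5:modelfexist0w}); this is all that Aubin--Lions requires, and the regularity $\partial_t v\in L^{5/3}(0,T;L^{5/3}(\Omega))$ demanded by Definition \ref{C5:ws00p} is recovered only \emph{after} the limit, from the pointwise equation (\ref{C5:nnn22}) (since $u^p\in L^{5/3}$ and $\Delta v,\,v\in L^2\hookrightarrow L^{5/3}$ on bounded space-time sets). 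With that substitution your argument is sound; in fact your interpolation step giving $\nabla v^\varepsilon\to\nabla v$ strongly in $L^2(0,T;L^2)$ (strong $L^2$ convergence of $v^\varepsilon$ plus the uniform $L^2(0,T;H^2)$ bound) supplies a justification that the paper leaves implicit: the paper passes to the limit in the flux term using only the weak $L^{10/3}$ convergence (\ref{C5:new33}), but later asserts exactly this strong convergence when proving (\ref{C5:FIN}).
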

\begin{proof}
Observe that a variational problem associated to (\ref{C5:modelf00reg}) is:
\begin{equation}
\left\{
\begin{array}
[c]{lll}%
\displaystyle\int_0^T \langle \partial_t u^\varepsilon,\bar{u}\rangle + \int_0^T (\nabla u^\varepsilon,  \nabla \bar{u}) +\int_0^T (u^\varepsilon\nabla v^\varepsilon,\nabla \bar{u})=0, \ \ \forall \bar{u}\in L^{\frac{10p}{7p-6}}(0,T;W^{1,\frac{10p}{7p-6}}(\Omega))\\
\displaystyle\int_0^T \langle \partial_t z^\varepsilon,\bar{z}\rangle + \int_0^T (\nabla z^\varepsilon,  \nabla \bar{z}) + \int^T_0 (z^\varepsilon, \bar{z}) = \displaystyle\int_0^T ((u^\varepsilon)^p,\bar{z}), \ \ \forall  \bar{z}\in L^{\frac{5}{2}}(0,T;H^{1}(\Omega)).
\end{array}
\right.  \label{C5:modelfexist0w}
\end{equation}
Recall that $v^\varepsilon=v(z^\varepsilon)$ is the unique solution of problem (\ref{C5:CVar1}). From (\ref{C5:pf0a}) we have that $(u^\varepsilon,v^\varepsilon)$ satisfies the following energy equality:
\begin{equation}\label{C5:pf0ann}
\displaystyle\frac{d}{dt}\mathcal{E}_\varepsilon(u^\varepsilon,v^\varepsilon)+ \frac{4}{p} \Vert
\nabla ((u^\varepsilon)^{p/2})\Vert_0^{2}+ \varepsilon \Vert \Delta v^\varepsilon \Vert_1^{2}+ \Vert \nabla v^\varepsilon
\Vert_1^{2}=0.
\end{equation}
Then, from (\ref{C5:pf0ann}) and using (\ref{C5:e6b}) we deduce the following estimates (independent of $\varepsilon$)
\begin{equation}\label{C5:e2nn}
\left\{\begin{array}{l}
\{(u^\varepsilon)^{\frac{p}{2}}\}\  \mbox{ is bounded in }\ L^{\infty}(0,+\infty;L^2(\Omega))\cap L^{2}(0,T;H^{1}(\Omega))\hookrightarrow L^{\frac{10}{3}}(0,T;L^{\frac{10}{3}}(\Omega)), \\
\{v^\varepsilon\}\  \mbox{ is bounded in }\ L^{\infty}(0,+\infty;H^1(\Omega))\cap L^{2}(0,T;H^2(\Omega)), \\
\{\sqrt{\varepsilon} \Delta v^\varepsilon\} \ \mbox{ is bounded in }\  L^{\infty}(0,+\infty;L^2(\Omega))\cap L^{2}(0,T;H^1(\Omega)),
\end{array}\right.
\end{equation}
and therefore,
\begin{equation}\label{C5:e2nna}
\left\{\begin{array}{l}
\{u^\varepsilon\}\  \mbox{ is bounded in }\ L^{\infty}(0,+\infty;L^p(\Omega))\cap L^{p}(0,T;L^{3p}(\Omega))\hookrightarrow L^{\frac{5p}{3}}(0,T;L^{\frac{5p}{3}}(\Omega)), \\
\{z^\varepsilon\} \ \mbox{ is bounded in }\  L^{\infty}(0,+\infty;L^2(\Omega))\cap L^{2}(0,T;H^1(\Omega)),\\
\{\partial_t u^\varepsilon\}\  \mbox{ is bounded in }\  [L^{\frac{10p}{7p-6}}(0,T;W^{1,\frac{10p}{7p-6}}(\Omega))]',\\
\{\partial_t z^\varepsilon\}\  \mbox{ is bounded in }\  [L^{\frac{5}{2}}(0,T;H^1(\Omega))]'.
\end{array}\right.
\end{equation}
Moreover, taking into account that from  (\ref{C5:e2nn})$_1$ we have that $\nabla ((u^\varepsilon)^{p/2})$ is bounded in $L^{2}(0,T;L^{2}(\Omega))$ and from  (\ref{C5:e2nna})$_1$  $u^{1-\frac{p}{2}}$ is bounded in $L^{\frac{10p}{6-3p}}(0,T;L^{\frac{10p}{6-3p}}(\Omega))$, we conclude that $\nabla u^\varepsilon= \displaystyle\frac{2}{p} u^{1-\frac{p}{2}} \nabla ((u^\varepsilon)^{p/2})$ is bounded in $L^{\frac{5p}{p+3}}(0,T;L^{\frac{5p}{p+3}}(\Omega))$. Therefore, we deduce that
\begin{equation}\label{C5:ees}
\{u^\varepsilon\}\  \mbox{ is bounded in }\  L^{\frac{5p}{p+3}}(0,T;W^{1,\frac{5p}{p+3}}(\Omega)).
\end{equation}
Notice that from (\ref{C5:CVar1}) and (\ref{C5:e2nn})$_3$, we can deduce that 
\begin{equation}\label{C5:ner2}
\Vert z^\varepsilon - v^\varepsilon\Vert_{L^\infty L^2\cap L^2H^1}\leq \varepsilon\Vert \Delta v^\varepsilon\Vert_{L^\infty L^2\cap L^2H^1} \rightarrow 0\  \mbox{ as } \varepsilon\rightarrow 0.
\end{equation}
Then, from (\ref{C5:e2nn})-(\ref{C5:ner2}), we deduce that there exists $(u,v)$,  with
\begin{equation*}
\left\{\begin{array}{l}
u\in L^{\infty}(0,+\infty;L^p(\Omega)) \cap L^{\frac{5p}{3}}(0,T;L^{\frac{5p}{3}}(\Omega))\cap L^{\frac{5p}{p+3}}(0,T;W^{1,\frac{5p}{p+3}}(\Omega)),\\
v\in L^{\infty}(0,+\infty;H^1(\Omega))\cap L^{2}(0,T;H^2(\Omega)),
\end{array}\right.
\end{equation*}
such that for some subsequence of $\{u^\varepsilon,z^\varepsilon, v^\varepsilon\}$ still denoted by $\{u^\varepsilon,z^\varepsilon, v^\varepsilon\}$, the following weak convergences hold when $\varepsilon\rightarrow 0$,
\begin{equation}\label{C5:e2nnb}
\left\{\begin{array}{l}
u^\varepsilon\rightarrow u \ \ \mbox{ weakly in }\  L^{\frac{5p}{3}}(0,T;L^{\frac{5p}{3}}(\Omega))\cap L^{\frac{5p}{p+3}}(0,T;W^{1,\frac{5p}{p+3}}(\Omega)), \\
v^\varepsilon\rightarrow v \ \  \mbox{ weakly in }\  L^{2}(0,T;H^2(\Omega)),\\
z^\varepsilon\rightarrow v \ \ \mbox{ weakly in }\  L^{2}(0,T;H^1(\Omega)),\\
\partial_t u^\varepsilon\rightarrow \partial_t u \ \  \mbox{ weakly-}\star \mbox{ in }\  [L^{\frac{10p}{7p-6}}(0,T;W^{1,\frac{10p}{7p-6}}(\Omega))]',\\
\partial_t z^\varepsilon\rightarrow \partial_t v  \ \ \mbox{ weakly-}\star \mbox{ in }\  [L^{\frac{5}{2}}(0,T;H^1(\Omega))]'.
\end{array}\right.
\end{equation}
On the other hand, taking into account (\ref{C5:e2nna})$_3$ and (\ref{C5:ees}), the Aubin-Lions Lemma implies that 
\begin{equation}\label{Nf1}
\{u^\varepsilon\} \ \mbox{ is relatively compact in } L^{\frac{5p}{p+3}}(0,T;L^{2}(\Omega))
\end{equation} 
(and also in $L^{r}(0,T;L^{r}(\Omega))$, for all  $r<\frac{5p}{3}$). In particular, since  $u^\varepsilon\geq 0$ then $u\geq 0$ a.e. in $(0,+\infty)\times \Omega$. 
Moreover, since the embedding $L^{\infty}(0,T;L^2(\Omega))\cap L^{2}(0,T;H^1(\Omega)) \hookrightarrow  L^{\frac{10}{3}}(0,T;L^{\frac{10}{3}}(\Omega))$ is continuous, from (\ref{C5:e2nn})$_2$ we deduce that 
\begin{equation}\label{C5:new33}
\nabla v^\varepsilon\rightarrow \nabla v \ \mbox{ weakly in } L^{\frac{10}{3}}(0,T;\L^{\frac{10}{3}}(\Omega)).
\end{equation}
Thus, from (\ref{Nf1})-(\ref{C5:new33}) and using that ${u^\varepsilon\nabla v^\varepsilon}$ is bounded in $L^{\frac{10p}{3p+6}}(0,T;\L^{\frac{10p}{3p+6}}(\Omega))$, we deduce that
\begin{equation}\label{C5:nnne}
u^\varepsilon\nabla v^\varepsilon\rightarrow u\nabla v \ \mbox{ weakly in } L^{\frac{10p}{3p+6}}(0,T;\L^{\frac{10p}{3p+6}}(\Omega)).
\end{equation}
Moreover, since $u^\varepsilon \rightarrow u$ strongly in $L^p(0,T;L^p(\Omega))$, we have that 
\begin{equation}\label{Nf2}
(u^\varepsilon)^p \rightarrow u^p \ \mbox{ strongly in }  L^1(0,T;L^1(\Omega)).
\end{equation}
 Thus, taking to the limit when $\varepsilon\rightarrow 0$ in (\ref{C5:modelfexist0w}), and using (\ref{C5:e2nnb}) and (\ref{C5:nnne})-(\ref{Nf2}), we obtain that $(u,v)$ satisfies 
\begin{equation}\label{C5:CCC1}
\int_0^T \langle \partial_t u,\bar{u}\rangle + \int_0^T (\nabla u,  \nabla \bar{u}) +\int_0^T (u\nabla v,\nabla \bar{u})=0, \ \ \forall \bar{u}\in L^{\frac{10p}{7p-6}}(0,T;W^{1,\frac{10p}{7p-6}}(\Omega)),
\end{equation}
\begin{equation}\label{C5:CCC2}
\displaystyle\int_0^T \langle \partial_t v,\bar{z}\rangle + \int_0^T (\nabla v,  \nabla \bar{z}) + \int^T_0 (v, \bar{z}) = \displaystyle\int_0^T (u^p,\bar{z}), \ \ \forall  \bar{z}\in L^{\frac{5}{2}}(0,T;H^{1}(\Omega)),
\end{equation}
and therefore, integrating by parts in (\ref{C5:CCC2}) and taking into account that $u^p\in L^{\frac{5}{3}}(0,T; L^{\frac{5}{3}}(\Omega))$ and $v\in L^2(0,T;H^2(\Omega))$, we arrive at
\begin{equation}\label{C5:nnn22}
\partial_t v - \Delta v  + v= u^p \  \mbox{ in } L^{\frac{5}{3}}(0,T; L^{\frac{5}{3}}(\Omega)),
\end{equation}
with $\displaystyle\frac{\partial v}{\partial \mathbf{n}}=0$ on $\partial\Omega$. Notice that the limit function $v$ is nonnegative. In fact, it follows by testing (\ref{C5:nnn22}) by $v_{-}$ and taking into account that $v_0\geq 0$.   Finally, we will prove that $(u,v)$ satisfies the energy inequality (\ref{C5:wsd}). Indeed, integrating (\ref{C5:pf0ann}) in time from $t_0$ to $t_1$, with $t_1>t_0\geq 0$, and taking into account that 
$$
\int_{t_0}^{t_1} \displaystyle\frac{d}{dt}\mathcal{E}_\varepsilon(u^\varepsilon,v^\varepsilon)= \mathcal{E}_\varepsilon(u^\varepsilon(t_1),v^\varepsilon(t_1)) - \mathcal{E}_\varepsilon(u^\varepsilon(t_0),v^\varepsilon(t_0)) \quad  \forall t_0<t_1,
$$
since $\mathcal{E}_\varepsilon(u^\varepsilon(t),v^\varepsilon(t))\in W^{1,1}(0,T)$ for all $T>0$, is continuous in time, we deduce
\begin{eqnarray}\label{C5:eq13}
&&\!\!\!\!\!\!\mathcal{E}_\varepsilon(u^\varepsilon(t_1),v^\varepsilon(t_1)) - \mathcal{E}_\varepsilon(u^\varepsilon(t_0),v^\varepsilon(t_0)) \nonumber\\
&&\!\!\! + \int_{t_0}^{t_1} \Big( \frac{4}{p} \Vert
\nabla ((u^\varepsilon(t))^{p/2})\Vert_0^{2}+ \varepsilon \Vert \Delta v^\varepsilon(t) \Vert_1^{2}  +
\Vert \nabla v^\varepsilon(t)
\Vert_1^{2}\Big) dt = 0, \ \ \ \forall t_0<t_1.
\end{eqnarray}
Now, we will prove that 
\begin{equation}\label{C5:FIN}
\mathcal{E}_\varepsilon(u^\varepsilon(t),v^\varepsilon(t))\rightarrow \mathcal{E}({u}(t),v(t)), \ \ \mbox{a.e.} \  t\in[0,+\infty).
\end{equation}
Since $u^\varepsilon$ is relatively compact in $L^{p}(0,T;L^{p}(\Omega))$, we have
\begin{equation}\label{C5:cccc}
u^\varepsilon \rightarrow u \ \mbox{ strongly in } L^{p}(0,T;L^{p}(\Omega)).
\end{equation} 
Moreover, for any $T>0$,
\begin{eqnarray}\label{C5:cc00}
 & \Vert \mathcal{E}_\varepsilon&\!\!\!\!\!(u^\varepsilon(t),v^\varepsilon(t))  - \mathcal{E}({u}(t),v(t)) \Vert_{L^1(0,T)} =\displaystyle\int_0^T  \vert \mathcal{E}_\varepsilon(u^\varepsilon(t),v^\varepsilon(t)) - \mathcal{E}({u}(t),v(t)) \vert dt\nonumber\\
 && \!\!\!\!  \leq  \int_0^T 
 \left\vert  \frac{1}{p-1}\left(\Vert u^\varepsilon(t) \Vert_{L^p}^p- \Vert {u}(t) \Vert_{L^p}^p \right) + 
 \frac{1}{2} \left( \Vert\nabla v^\varepsilon(t)\Vert_0^2 -  \Vert\nabla v(t)\Vert_0^2\right)  + \frac{\varepsilon}{2} \Vert \Delta v^\varepsilon\Vert_0^{2}
  \right\vert dt
  \nonumber\\
 && \!\!\!\!  \leq  C\frac{p}{p-1} \Vert u^\varepsilon  -  {u} \Vert_{L^p(0,T;L^p)} (\Vert u^\varepsilon  \Vert_{L^p(0,T;L^p)} + \Vert {u}\Vert_{L^p(0,T;L^p)})^{p-1}
 \nonumber\\
 && \!\!\!\! + \frac{1}{2}\Vert\nabla v^\varepsilon- \nabla v\Vert_{L^2(0,T;L^2)}  (\Vert\nabla v^\varepsilon\Vert_{L^2(0,T;L^2)} + \Vert\nabla v\Vert_{L^2(0,T;L^2)}) + 
 \frac{\varepsilon}{2} \Vert \Delta v^\varepsilon\Vert_{L^2(0,T;L^2)}^{2}.
\end{eqnarray}
Then, taking into account that $u^\varepsilon \rightarrow u$ strongly in $L^p(0,T;L^p(\Omega))$, $\nabla v^\varepsilon \rightarrow \nabla v$ strongly in $L^2(0,T;L^2(\Omega))$ for any $T>0$, and $\Delta v^\varepsilon$ is bounded in $L^2(0,T;L^2(\Omega))$, from (\ref{C5:cc00}) we conclude that $\mathcal{E}_\varepsilon(u^\varepsilon(t),v^\varepsilon(t)) \rightarrow \mathcal{E}({u}(t),v(t))$ strongly in $L^1(0,T)$ for all $T>0$, which implies in particular (\ref{C5:FIN}). Finally, observe that from (\ref{C5:cccc}) we have that $(u^\varepsilon)^{p/2} \rightarrow u^{p/2}$ strongly in $L^{2}(0,T;L^{2}(\Omega))$; and since $\nabla((u^\varepsilon)^{p/2}$ is bounded in $L^{2}(0,T;L^{2}(\Omega))$ we deduce that 
$$
\nabla((u^\varepsilon)^{p/2})\rightarrow \nabla(u^{p/2}) \ \mbox{ weakly in } L^{2}(0,T;L^{2}(\Omega)).
$$
Then, on the one hand
\begin{eqnarray*}
&&\underset{\varepsilon\rightarrow 0}{\lim \mbox{inf} } \int_{t_0}^{t_1}\Big( \frac{4}{p} \Vert
\nabla ((u^\varepsilon(t))^{p/2})\Vert_0^{2}+ \varepsilon \Vert \Delta v^\varepsilon(t) \Vert_1^{2} +
 \Vert \nabla v^\varepsilon(t)
\Vert_1^{2}\Big)  dt\nonumber\\
&&\hspace{3 cm} \ge \int_{t_0}^{t_1} \Big(   \frac{4}{p}\Vert \nabla (u(t)^{p/2})\Vert_{0}^{2} +
\displaystyle\Vert \nabla v(t)\Vert_{1}^{2}\Big) dt \quad \ \forall t_1\geq t_0\geq 0,
\end{eqnarray*}
and on the other hand, owing to (\ref{C5:FIN}),
\begin{eqnarray*}
\underset{\varepsilon\rightarrow 0}{\lim \mbox{inf} }\ \Big[\mathcal{E}_\varepsilon(u^\varepsilon(t_1),v^\varepsilon(t_1)) - \mathcal{E}_\varepsilon(u^\varepsilon(t_0),v^\varepsilon(t_0))\Big]  = \mathcal{E}({u}(t_1),v(t_1)) - \mathcal{E}({u}(t_0),v(t_0)),
\end{eqnarray*}
for a.e. $t_1,t_0: t_1\geq t_0\geq 0$. Thus, taking  $\liminf$ as $\varepsilon\rightarrow 0$ in inequality (\ref{C5:eq13}), we deduce the energy inequality (\ref{C5:wsd}) for a.e. $t_0,t_1:t_1\geq t_0\geq 0$.

\end{proof}

\section{Fully discrete numerical schemes}\label{C5:S4C5:NS}
In this section we will propose three fully discrete numerical schemes associated to model (\ref{C5:modelf00}). We prove some unconditional properties such as mass-conservation, energy-stability and solvability of the schemes.

\subsection{Scheme UV$\varepsilon$}\label{C5:Suv}
In this section, in order to construct an energy-stable fully discrete scheme for model (\ref{C5:modelf00}), we are going to make a regularization procedure, in which we will adapt the ideas of \cite{C5:BB} (see also \cite{C5:GR}). With this aim, given $\varepsilon\in (0,1)$ we consider a function $F_\varepsilon:\mathbb{R}\rightarrow [0,+\infty)$, approximation of $f(s)=s^p$, such that  $F_\varepsilon\in C^2(\mathbb{R})$ and
\begin{equation} \label{C5:F2pE}
F''_\varepsilon(s) \ \ := \ \ 
\left\{\begin{array}{lcl}
\varepsilon^{p-2} & \mbox{ if } & s\leq \varepsilon,\\
s^{p-2} & \mbox{ if } & \varepsilon\leq s\leq \varepsilon^{-1},\\
\varepsilon^{2-p} & \mbox{ if } & s\geq \varepsilon^{-1}.
\end{array}\right.  
\end{equation}
Then, $F_\varepsilon$ is obtained by
integrating in (\ref{C5:F2pE}) and imposing the conditions $F'_\varepsilon(1)=\frac{1}{p-1}$ and $F_\varepsilon(1)=\frac{1}{p(p-1)} + \frac{p^3 - 4p^2+3p+2}{2p(p-1)^2}\varepsilon^p$ (see Figure \ref{C5:fig:Fe}); and 
\begin{equation} \label{C5:lE}
a_\varepsilon(s) \ := \ (p-1)\frac{F'_\varepsilon(s)}{F''_\varepsilon(s)} \ = \ 
\left\{\begin{array}{lcl}
(p-1) s + (2-p)\varepsilon & \mbox{ if } & s\leq \varepsilon,\\
s & \mbox{ if } & \varepsilon\leq s\leq \varepsilon^{-1},\\
(p-1)s + (2-p)\varepsilon^{-1} & \mbox{ if } & s\geq \varepsilon^{-1}.
\end{array}\right. 
\end{equation}
\begin{figure}[htbp]
\centering 
\subfigure[$F_\varepsilon(s)$ vs  $F(s):= \frac{1}{p(p-1)} s^p + \frac{p^3 - 4p^2+3p+2}{2p(p-1)^2}\varepsilon^p$]{\includegraphics[width=72mm]{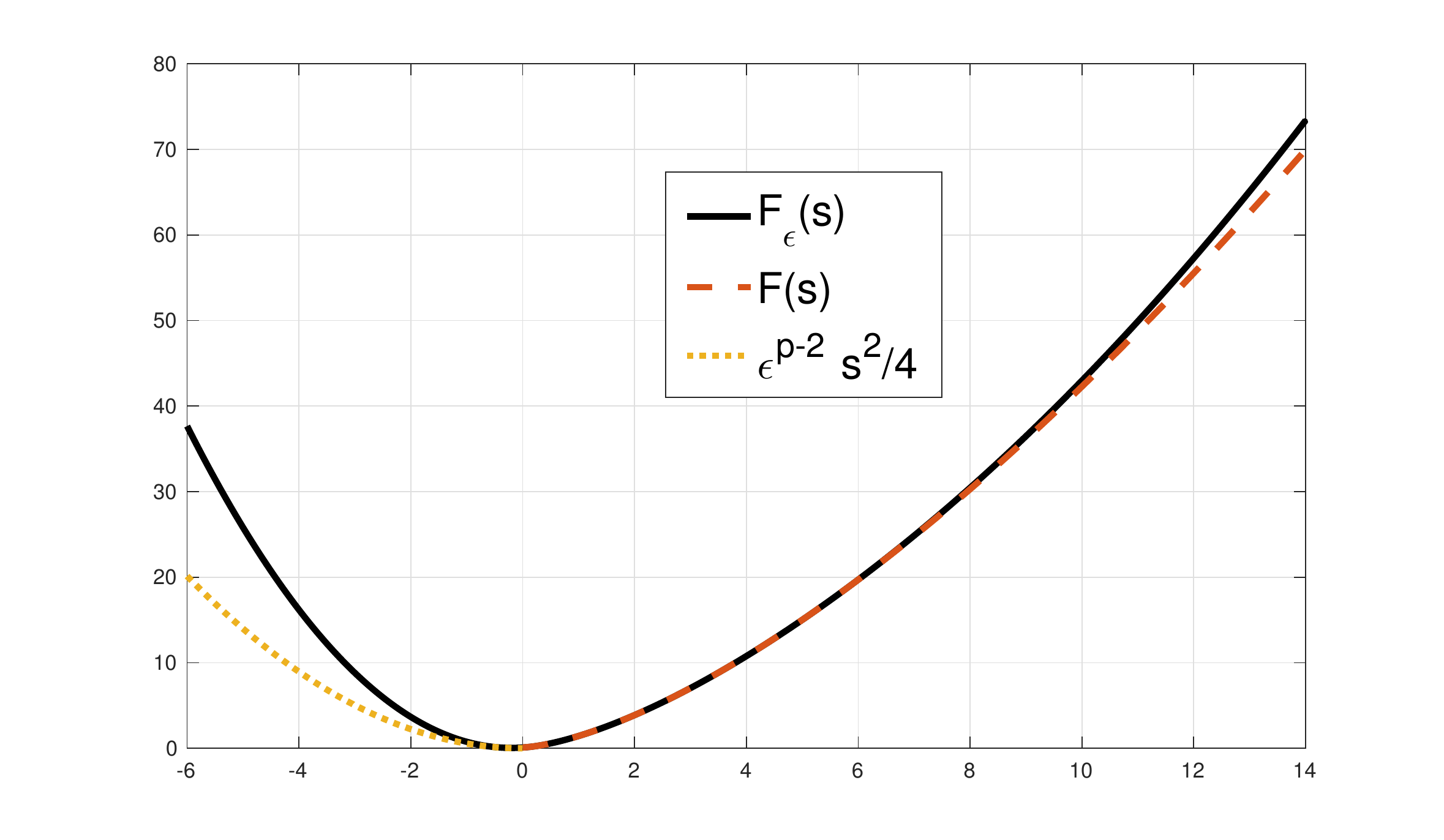}} \hspace{1,2 cm} 
\subfigure[$F'_\varepsilon(s)$ vs  $F'(s):= \frac{1}{p-1} s^{p-1}$]{\includegraphics[width=72mm]{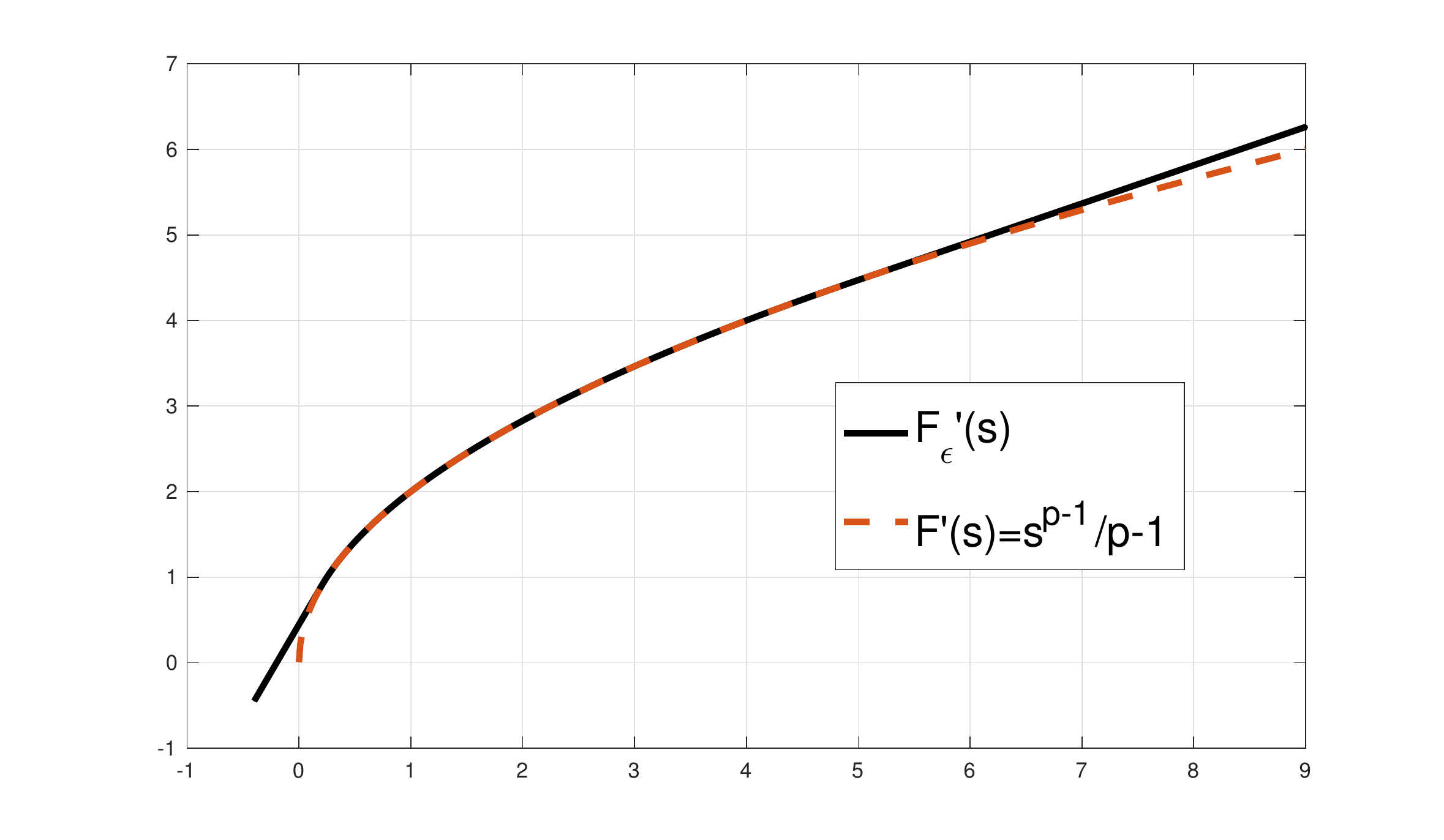}} 
\subfigure[$F''_\varepsilon(s)$ vs  $F''(s):= s^{p-2}$]{\includegraphics[width=72mm]{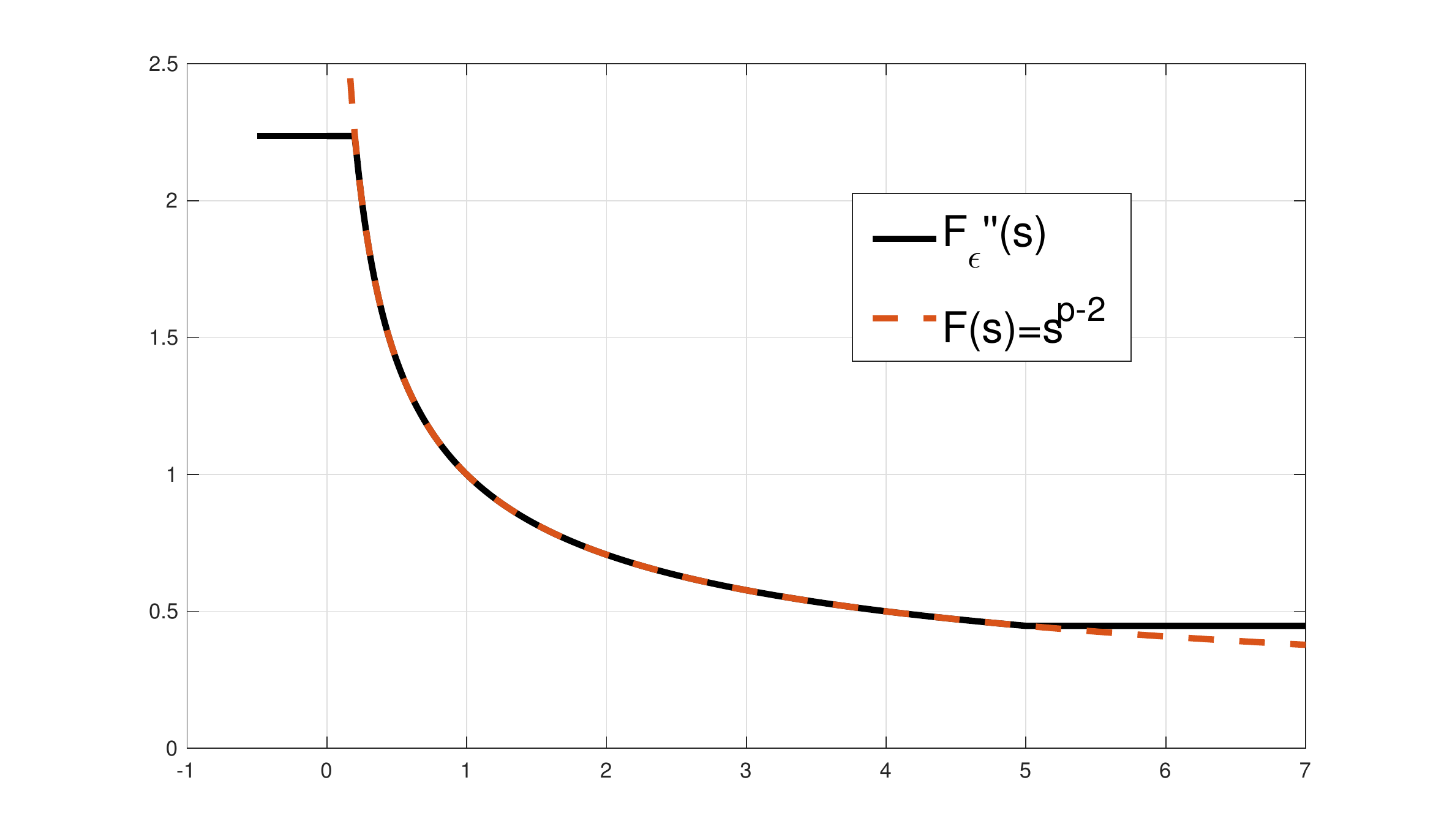}}
\caption{The function $F_\varepsilon$ and its derivatives.} \label{C5:fig:Fe}
\end{figure}
\begin{lem}\label{C5:eFe}
The function $F_\varepsilon$ satisfies 
\begin{equation}\label{C5:es11}
F_\varepsilon(s)\geq \frac{\varepsilon^{p-2}s^2}{4} \qquad \forall s\leq \varepsilon \ \ \mbox{ and } \ \ F_\varepsilon(s)\geq C s^p \qquad \forall s>\varepsilon,
\end{equation}
where the constant $C>0$ is independent of $\varepsilon$.
\end{lem}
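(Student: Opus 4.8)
The plan is to make everything explicit, exploiting that $F_\varepsilon$ is determined by integrating the piecewise second derivative (\ref{C5:F2pE}) twice. First I would write down the closed form of $F_\varepsilon$ on each of the three intervals $s\le\varepsilon$, $\varepsilon\le s\le\varepsilon^{-1}$, $s\ge\varepsilon^{-1}$. On the middle interval, integrating $F''_\varepsilon(s)=s^{p-2}$ and imposing the normalizations $F'_\varepsilon(1)=\frac{1}{p-1}$ and $F_\varepsilon(1)=\frac{1}{p(p-1)}+A_\varepsilon$, with $A_\varepsilon:=\frac{p^3-4p^2+3p+2}{2p(p-1)^2}\varepsilon^p$, gives $F'_\varepsilon(s)=\frac{1}{p-1}s^{p-1}$ and $F_\varepsilon(s)=\frac{1}{p(p-1)}s^p+A_\varepsilon$. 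On the two outer intervals the four integration constants are then fixed by continuity of $F_\varepsilon$ and $F'_\varepsilon$ at the breakpoints $s=\varepsilon$ and $s=\varepsilon^{-1}$. A short computation gives, for $s\le\varepsilon$,
\[
F_\varepsilon(s)=\tfrac12\varepsilon^{p-2}s^2+\tfrac{2-p}{p-1}\varepsilon^{p-1}s+\tfrac{(p-2)^2}{(p-1)^2}\varepsilon^p,
\]
and, for $s\ge\varepsilon^{-1}$, the analogous expression with leading term $\frac12\varepsilon^{2-p}s^2$, a nonnegative linear term, and an additive constant $D_2=\frac{p-2}{2p}\varepsilon^{-p}+A_\varepsilon$ (negative for small $\varepsilon$).

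For the first inequality, on $s\le\varepsilon$ I would consider
\[
F_\varepsilon(s)-\tfrac14\varepsilon^{p-2}s^2=\varepsilon^{p-2}\Big(\tfrac14 s^2+\tfrac{2-p}{p-1}\varepsilon s+\tfrac{(p-2)^2}{(p-1)^2}\varepsilon^2\Big).
\]
The decisive observation---and the reason the value of $F_\varepsilon(1)$ was tuned exactly as above---is that the bracketed quadratic in $s$ has vanishing discriminant, so it equals the perfect square $\frac14\big(s+\frac{2(2-p)}{p-1}\varepsilon\big)^2\ge0$. This yields $F_\varepsilon(s)\ge\frac14\varepsilon^{p-2}s^2$ directly.

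For the second inequality I would split $s>\varepsilon$ into the middle and the outer interval. On $\varepsilon\le s\le\varepsilon^{-1}$ it suffices that $A_\varepsilon\ge0$: factoring $p^3-4p^2+3p+2=(p-2)(p^2-2p-1)$, both factors are negative for $p\in(1,2)$, hence $A_\varepsilon\ge0$ and $F_\varepsilon(s)\ge\frac{1}{p(p-1)}s^p$. On $s\ge\varepsilon^{-1}$ the key elementary bounds are $\frac12\varepsilon^{2-p}s^2\ge\frac12 s^p$ (because $s^{2-p}\ge\varepsilon^{p-2}$ there) and $\varepsilon^{-p}\le s^p$; dropping the nonnegative linear term and using $D_2\ge-\frac{2-p}{2p}\varepsilon^{-p}\ge-\frac{2-p}{2p}s^p$ then gives $F_\varepsilon(s)\ge\big(\frac12-\frac{2-p}{2p}\big)s^p=\frac{p-1}{p}s^p$. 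Taking $C:=\min\{\frac{1}{p(p-1)},\frac{p-1}{p}\}>0$, independent of $\varepsilon$, closes this case.

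I expect the outer interval $s\ge\varepsilon^{-1}$ to be the main obstacle: there $F_\varepsilon$ grows only quadratically, with the small coefficient $\frac12\varepsilon^{2-p}$, and carries a negative additive constant $D_2$ of size $\varepsilon^{-p}$, so any estimate that treats $D_2$ as a fixed loss destroys the $\varepsilon$-uniformity of $C$. The whole case rests on using the constraint $s\ge\varepsilon^{-1}$ to convert the $\varepsilon$-weights into powers of $s$---turning $\frac12\varepsilon^{2-p}s^2$ into $\frac12 s^p$ and $\varepsilon^{-p}$ into $s^p$---so that the offending constant is absorbed at the scale of $s^p$ rather than appearing separately. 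Tracking the normalization constants precisely enough to see the discriminant cancellation in the first inequality is the other delicate bookkeeping point.
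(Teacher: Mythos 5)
Your proof is correct, and for the first inequality it is essentially the paper's argument in different clothing: the paper expands $F_\varepsilon$ by the Taylor formula at $0$ --- which, since $F''_\varepsilon\equiv\varepsilon^{p-2}$ on $(-\infty,\varepsilon]$, is exactly your closed form $\frac12\varepsilon^{p-2}s^2+\frac{2-p}{p-1}\varepsilon^{p-1}s+\big(\frac{2-p}{p-1}\big)^2\varepsilon^p$ --- and then absorbs the linear term by Young's inequality, $\frac{2-p}{p-1}\varepsilon^{p-1}s\ge -\frac14\varepsilon^{p-2}s^2-\big(\frac{2-p}{p-1}\big)^2\varepsilon^p$, which is algebraically identical to your vanishing-discriminant/perfect-square observation (the paper additionally notes that for $s\in[0,\varepsilon]$ one even gets the better constant $\frac12$). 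Where you genuinely add something is the second inequality: the paper dismisses it with ``follows directly from the definition of $F_\varepsilon$ for $s\geq\varepsilon$'', which is fair on the middle interval $[\varepsilon,\varepsilon^{-1}]$ (granted $A_\varepsilon\ge 0$, which you verify via the factorization $(p-2)(p^2-2p-1)$, both factors negative on $(1,2)$), but is not a one-liner on $[\varepsilon^{-1},+\infty)$, where $F_\varepsilon$ grows only quadratically with the small coefficient $\frac12\varepsilon^{2-p}$ and carries the negative constant $D_2=\frac{p-2}{2p}\varepsilon^{-p}+A_\varepsilon$. Your conversion of the $\varepsilon$-weights into powers of $s$ via $\varepsilon s\ge 1$ (so that $\frac12\varepsilon^{2-p}s^2\ge\frac12 s^p$ and $\varepsilon^{-p}\le s^p$), which yields $F_\varepsilon(s)\ge\frac{p-1}{p}s^p$ there and the explicit $\varepsilon$-independent constant $C=\min\{\frac{1}{p(p-1)},\frac{p-1}{p}\}$, supplies exactly the content the paper leaves implicit. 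In short: same mechanism for the degenerate region $s\le\varepsilon$, and a complete proof, rather than an assertion, for $s>\varepsilon$.
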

\begin{proof}
Since $F_\varepsilon \in  C^{2}(\mathbb{R})$, using the Taylor formula as well as the definition of $F_\varepsilon$ and $F'_\varepsilon$, we have that, for some $s_0\in \mathbb{R}$ between $0$ and $s$,
\begin{equation}\label{C5:Feeq}
F_\varepsilon(s)=F_\varepsilon(0) + F'_\varepsilon(0) s + \frac{1}{2} F''_\varepsilon(s_0) s^2= \Big(\frac{2-p}{p-1} \Big)^2 \varepsilon^{p} + \frac{2-p}{p-1} \varepsilon^{p-1} s + \frac{1}{2} F''_\varepsilon(s_0) s^2.
\end{equation}
Then, taking into account that $F''_\varepsilon(s)=\varepsilon^{p-2}$ for all $s\leq \varepsilon$, from (\ref{C5:Feeq}) we have that: (a) if $s\in [0,\varepsilon]$, $F_\varepsilon(s)\geq \frac{1}{2} \varepsilon^{p-2} s^2$; and (b) if $s<0$, by using the Young inequality, 
\begin{eqnarray*}\label{C5:Feeq1}
F_\varepsilon(s)\geq \Big(\frac{2-p}{p-1} \Big)^2 \varepsilon^{p} - \frac{1}{4} \varepsilon^{p-2} s^2 - \Big(\frac{2-p}{p-1} \Big)^2 \varepsilon^{p} + \frac{1}{2} \varepsilon^{p-2} s^2 = \frac{1}{4} \varepsilon^{p-2} s^2,
\end{eqnarray*}
from which we deduce (\ref{C5:es11})$_1$. Finally, (\ref{C5:es11})$_2$ follows directly from the definition of $F_\varepsilon$ for $s\geq \varepsilon$.
\end{proof}
\begin{obs}\label{C5:OBSr}
Notice that estimates in (\ref{C5:es11}) imply that $\vert s\vert^p\leq K_1 F_\varepsilon(s) + K_2$ for all $s\in\mathbb{R}$, where the constants $K_1,K_2>0$ are independent of $\varepsilon$.
\end{obs}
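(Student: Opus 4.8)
The plan is to prove the inequality by a direct case analysis on the sign and size of $s$, combining the two lower bounds in (\ref{C5:es11}) with the elementary fact that $F_\varepsilon\geq 0$ everywhere. The only genuinely delicate point is to produce constants $K_1,K_2$ that do \emph{not} depend on $\varepsilon$, and the key observation that makes this possible is that, since $\varepsilon\in(0,1)$ and $p\in(1,2)$, the exponent $p-2$ is negative, so that $\varepsilon^{p-2}\geq 1$. Consequently the apparently $\varepsilon$-dependent coefficient $\frac{1}{4}\varepsilon^{p-2}$ appearing in (\ref{C5:es11})$_1$ may simply be bounded below by $\frac{1}{4}$, which is exactly what keeps the final constants uniform in $\varepsilon$.

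First I would split $\mathbb{R}$ into the three regions $\{|s|\leq 1\}$, $\{s>1\}$ and $\{s<-1\}$, which cover all of $\mathbb{R}$ precisely because $\varepsilon<1$. On $\{|s|\leq 1\}$ there is nothing to prove: $|s|^p\leq 1$, so the constant term $K_2=1$ absorbs it (here one uses that $K_1 F_\varepsilon(s)\geq 0$). For $s>1$ one has in particular $s>\varepsilon$, so (\ref{C5:es11})$_2$ gives $|s|^p=s^p\leq C^{-1}F_\varepsilon(s)$. For $s<-1$ one has $s<0<\varepsilon$, hence (\ref{C5:es11})$_1$ applies; using $\varepsilon^{p-2}\geq 1$ it yields $F_\varepsilon(s)\geq \frac{1}{4}s^2$, and since $|s|>1$ with $p<2$ we have $|s|^p\leq s^2\leq 4\,F_\varepsilon(s)$.

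Collecting the three cases and choosing $K_1=\max\{C^{-1},4\}$ and $K_2=1$ then gives $|s|^p\leq K_1 F_\varepsilon(s)+K_2$ for every $s\in\mathbb{R}$, with both constants independent of $\varepsilon$ (recall that $C$ in (\ref{C5:es11})$_2$ is itself $\varepsilon$-independent). I do not anticipate any real obstacle beyond the bookkeeping of constants; the one point that must be handled correctly is the monotonicity inequality $|s|^p\leq s^2$ valid for $|s|\geq 1$ (which holds exactly because $p<2$), since it is what allows the quadratic lower bound of (\ref{C5:es11})$_1$ to dominate the $p$-th power growth in the far negative region.
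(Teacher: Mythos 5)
Your proof is correct and is essentially the argument the paper leaves implicit (the remark is stated without proof as an immediate consequence of Lemma \ref{C5:eFe}): the case split $|s|\leq 1$, $s>1$, $s<-1$, together with the observations that $\varepsilon^{p-2}\geq 1$ for $\varepsilon\in(0,1)$, $p<2$, and that $|s|^p\leq s^2$ for $|s|\geq 1$, is exactly how the two estimates of (\ref{C5:es11}) combine to give $\varepsilon$-independent constants $K_1,K_2$. One minor wording slip, which does not affect correctness: the three regions cover $\mathbb{R}$ trivially, irrespective of $\varepsilon$; what $\varepsilon\in(0,1)$ actually buys is that $s>1$ implies $s>\varepsilon$ (so (\ref{C5:es11})$_2$ applies) and $s<-1$ implies $s<\varepsilon$ (so (\ref{C5:es11})$_1$ applies), which is precisely how you use it in the two outer cases.
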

Then, taking into account the functions $F_\varepsilon$, its derivatives and $a_\varepsilon$, a regularized version of problem (\ref{C5:modelf00}) reads: Find $u_\varepsilon:\Omega\times [0,T]\rightarrow \mathbb{R}$ and $v_\varepsilon:\Omega\times [0,T]\rightarrow \mathbb{R}$, with $u_\varepsilon,v_\varepsilon \geq 0$, such that
\begin{equation}
\left\{
\begin{array}
[c]{lll}%
\partial_t u_\varepsilon -\Delta  u_\varepsilon  -\nabla\cdot(a_\varepsilon(u_\varepsilon) \nabla v_\varepsilon)=0\ \ \mbox{in}\ \Omega,\ t>0,\\
\partial_t {v}_\varepsilon -\Delta v_\varepsilon + {v}_\varepsilon = p (p-1) F_\varepsilon(u_\varepsilon)\ \ \mbox{in}\ \Omega,\ t>0,\\
\displaystyle\frac{\partial u_\varepsilon}{\partial \mathbf{n}}=\frac{\partial v_\varepsilon}{\partial \mathbf{n}}=0\ \ \mbox{on}\ \partial\Omega,\ t>0,\\
u_\varepsilon(\x,0)=u_0(\x)\geq 0,\ v_\varepsilon(\x,0)=v_0(\x)\geq 0\ \ \mbox{in}\ \Omega.
\end{array}
\right.  \label{C5:modelf02acontUVreg}
\end{equation}
\begin{obs}
The idea is to define a fully discrete scheme associated to (\ref{C5:modelf02acontUVreg}), taking in general $\varepsilon=\varepsilon(k,h)$, such that $\varepsilon(k,h)\rightarrow 0$ as $(k,h)\rightarrow 0$, where $k$ is the time step and $h$ the mesh size.
\end{obs}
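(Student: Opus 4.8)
The plan is to realise the idea of this remark through a backward--Euler--in--time, conforming--in--space discretisation of the regularised system (\ref{C5:modelf02acontUVreg}), transferring to the discrete level the two structural facts that make (\ref{C5:modelf02acontUVreg}) energy dissipative: the cancellation of the chemotactic terms (which rests on the identity $a_\varepsilon(s)F_\varepsilon''(s)=(p-1)F_\varepsilon'(s)$ built into (\ref{C5:lE})) and the convexity of $F_\varepsilon$ (guaranteed by $F_\varepsilon''>0$ in (\ref{C5:F2pE})). Concretely, I would fix a shape-regular triangulation $\mathcal{T}_h$ of $\Omega$ with mesh size $h$, take $\mathbb{P}_1$-continuous finite element spaces $U_h,V_h\subset H^1(\Omega)$ for $u$ and $v$, a uniform time step $k>0$, and define Scheme UV$\varepsilon$ as follows: given $(u_h^n,v_h^n)\in U_h\times V_h$, find $(u_h^{n+1},v_h^{n+1})\in U_h\times V_h$ such that
\begin{equation*}
\left(\frac{u_h^{n+1}-u_h^n}{k},\bar u\right)_h+(\nabla u_h^{n+1},\nabla\bar u)+(a_\varepsilon(u_h^{n+1})\nabla v_h^{n+1},\nabla\bar u)=0,
\end{equation*}
\begin{equation*}
\left(\frac{v_h^{n+1}-v_h^n}{k},\bar v\right)+(\nabla v_h^{n+1},\nabla\bar v)+(v_h^{n+1},\bar v)=p(p-1)(F_\varepsilon(u_h^{n+1}),\bar v),
\end{equation*}
for all $(\bar u,\bar v)\in U_h\times V_h$, where $(\cdot,\cdot)_h$ is the lumped-mass inner product and $\varepsilon=\varepsilon(k,h)\to0$ as $(k,h)\to0$.

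The three target properties would be established in order. \textbf{Mass conservation} is immediate: taking $\bar u\equiv1\in U_h$ in the $u$-equation kills the two gradient terms and yields $\int_\Omega u_h^{n+1}=\int_\Omega u_h^n$, hence $\int_\Omega u_h^n=\int_\Omega u_h^0$ for all $n$, the discrete analogue of (\ref{C5:consucont}). \textbf{Energy stability} is the heart of the matter: I would test the $u$-equation with the discrete chemical potential $p\,I_h(F_\varepsilon'(u_h^{n+1}))\in U_h$, where $I_h$ is the nodal interpolant (this is precisely what renders a nonlinear function of a $\mathbb{P}_1$ field an admissible discrete test function once the lumped product is used), and the $v$-equation with the discrete Laplacian $-\Delta_h v_h^{n+1}\in V_h$. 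Using the convexity inequality $F_\varepsilon(a)-F_\varepsilon(b)\le F_\varepsilon'(a)(a-b)$ for the time-derivative term, the nonnegativity of $F_\varepsilon''$ for the diffusion term, and the cancellation of the chemotactic contribution against the production term $p(p-1)(F_\varepsilon(u_h^{n+1}),-\Delta_h v_h^{n+1})$, one arrives at the discrete dissipation law
\begin{equation*}
\widehat{\mathcal{E}}_\varepsilon(u_h^{n+1},v_h^{n+1})-\widehat{\mathcal{E}}_\varepsilon(u_h^n,v_h^n)+k\,D_h^{n+1}\le0,\qquad \widehat{\mathcal{E}}_\varepsilon(u,v):=\int_\Omega pF_\varepsilon(u)+\tfrac12\|\nabla v\|_0^2,
\end{equation*}
with $D_h^{n+1}\ge0$ collecting the discrete versions of $\tfrac4p\|\nabla(u^{p/2})\|_0^2$ and $\|\nabla v\|_1^2$, mirroring (\ref{C5:wsd})--(\ref{C5:pf0ann}). \textbf{Solvability} would then follow at each step from a finite-dimensional fixed-point argument (Leray--Schauder or Brouwer): for fixed $\varepsilon$ the maps $a_\varepsilon$ and $F_\varepsilon$ are continuous with at most linear and quadratic growth (since $\varepsilon^{p-2}\le F_\varepsilon''\le\varepsilon^{2-p}$), so the nonlinear operator is continuous, while the a priori bound supplied by the energy law confines every possible solution to a fixed ball.

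The main obstacle is the energy estimate, and within it the faithful discretisation of the chemotactic cancellation. At the continuous level the terms $(a_\varepsilon(u_\varepsilon)\nabla v_\varepsilon,\nabla(pF_\varepsilon'(u_\varepsilon)))$ and $p(p-1)(F_\varepsilon(u_\varepsilon),-\Delta v_\varepsilon)$ coincide, because $a_\varepsilon F_\varepsilon''=(p-1)F_\varepsilon'$ turns the first into $p(p-1)(\nabla v_\varepsilon,\nabla F_\varepsilon(u_\varepsilon))$; reproducing this identity after nodal interpolation and mass lumping is delicate, since $\nabla I_h(F_\varepsilon'(u_h))$ and $F_\varepsilon''(u_h)\nabla u_h$ no longer agree pointwise, and $(F_\varepsilon(u_h),-\Delta_h v_h)$ is no longer the exact integration by parts of $(\nabla F_\varepsilon(u_h),\nabla v_h)$. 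This is exactly the difficulty that motivates introducing $\boldsymbol\sigma=\nabla v$ as an auxiliary variable in the second and third schemes, and for Scheme UV$\varepsilon$ I would expect the cancellation to be recovered by adapting the nodal/mass-lumping technique of \cite{C5:BB}. Finally, the prescription $\varepsilon(k,h)\to0$ enters only in the eventual passage to the limit: the uniform-in-$\varepsilon$ control $|s|^p\le K_1F_\varepsilon(s)+K_2$ of Remark \ref{C5:OBSr} converts $\widehat{\mathcal{E}}_\varepsilon$-boundedness into an $\varepsilon$-independent bound on $\|u_h^n\|_{L^p}$, which is what makes the discrete solutions approximate a weak-strong solution of the original problem (\ref{C5:modelf00}) as $(k,h)\to0$.
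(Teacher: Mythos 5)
Your blueprint (backward Euler in time, $\mathbb{P}_1$ finite elements with mass lumping for $u$, testing the $u$-equation with $p\,\Pi^h(F'_\varepsilon(u))$ and the $v$-equation with a discrete Laplacian, convexity of $F_\varepsilon$ for the time term, a fixed-point argument for solvability, and the uniform bound of Remark \ref{C5:OBSr} to let $\varepsilon(k,h)\rightarrow 0$) matches in outline the paper's realization of this remark, namely scheme \textbf{UV$\varepsilon$} in (\ref{C5:modelf02aUVreg}). However, there is a genuine gap at exactly the point you flag and then defer. With the chemotaxis term written as $(a_\varepsilon(u_h^{n+1})\nabla v_h^{n+1},\nabla\bar u)$, the discrete cancellation fails: the pointwise identity $a_\varepsilon F''_\varepsilon=(p-1)F'_\varepsilon$ does not survive nodal interpolation, i.e. $a_\varepsilon(u_h)\nabla\Pi^h(F'_\varepsilon(u_h))\neq (p-1)\nabla\Pi^h(F_\varepsilon(u_h))$ in general, so testing with $p\,\Pi^h(F'_\varepsilon(u_h^{n+1}))$ leaves an uncontrolled remainder and your claimed dissipation law does not follow. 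The paper's scheme is not yours: it replaces $a_\varepsilon(u^n_\varepsilon)$ by the element-wise constant matrix $\Lambda^2_\varepsilon(u^n_\varepsilon)$, constructed following \cite{C5:BB} under the additional mesh hypothesis ({\bf H}) of right-angled simplices (absent from your setup), precisely so that the discrete chain rules (\ref{C5:PL15uv})--(\ref{C5:PL16uv}) hold \emph{exactly}; then the chemotaxis term tested with $p\,\Pi^h(F'_\varepsilon(u^n_\varepsilon))$ equals $p(p-1)(\nabla v^n_\varepsilon,\nabla\Pi^h(F_\varepsilon(u^n_\varepsilon)))$ and cancels with the production term $p(p-1)(\Pi^h(F_\varepsilon(u^n_\varepsilon)),(A_h-I)v^n_\varepsilon)$ --- note the interpolant $\Pi^h$ in the production term, also missing from your scheme and needed so that $\Pi^h(F_\varepsilon(u^n_\varepsilon))\in V_h$ is an admissible test function for $A_h$. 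The companion matrix $\Lambda^1_\varepsilon$ with the spectral bounds (\ref{C5:D5uv}) is what converts the discrete diffusion term into the dissipation $p\,\varepsilon^{2-p}\Vert\nabla u^n_\varepsilon\Vert_0^2$ of (\ref{C5:deluvreg}); without it your nonnegative dissipation functional $D_h^{n+1}$ is unsubstantiated (in particular the paper's discrete law contains $p\,\varepsilon^{2-p}\Vert\nabla u^n_\varepsilon\Vert_0^2$, not a discrete version of $\frac{4}{p}\Vert\nabla(u^{p/2})\Vert_0^2$).

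Three smaller points. First, your bounds on $F''_\varepsilon$ are reversed: from (\ref{C5:F2pE}), since $1<p<2$ and $\varepsilon\in(0,1)$, one has $\varepsilon^{2-p}\le F''_\varepsilon(s)\le\varepsilon^{p-2}$; the paper uses the lower bound $\varepsilon^{2-p}$ in (\ref{C5:I01creguv}). Second, the paper uses the exact Taylor identity (\ref{C5:I01breguv}) rather than bare convexity, which produces the extra numerical dissipation $\frac{k\varepsilon^{2-p}p}{2}\Vert\delta_t u^n_\varepsilon\Vert_0^2$ in (\ref{C5:deluvreg}); your convexity inequality still gives energy decrease but loses this term. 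Third, once $\Lambda^2_\varepsilon$ replaces $a_\varepsilon$, the Leray--Schauder solvability argument requires continuity of the map $u^h\mapsto\Lambda^2_\varepsilon(u^h)$, which is not automatic for the element-wise difference-quotient construction (\ref{C5:ConLa2}) and is precisely the content of Lemma \ref{C5:lemconv}; your continuity claim for the Lipschitz function $a_\varepsilon$ does not cover it.
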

Observe that (formally) multiplying (\ref{C5:modelf02acontUVreg})$_1$ by $pF'_\varepsilon(u_\varepsilon)$, (\ref{C5:modelf02acontUVreg})$_2$ by $-\Delta v_\varepsilon$, integrating over $\Omega$ and adding, the chemotaxis and production terms cancel and we obtain the following energy law
$$
\frac{d}{dt} \displaystyle \int_\Omega \Big( p F_\varepsilon(u_\varepsilon) + \frac{1}{2} \vert \nabla v_\varepsilon\vert^2\Big) d \x + \int_\Omega p F''_\varepsilon(u_\varepsilon)\vert \nabla u_\varepsilon\vert^2 d\x+\Vert \nabla v_\varepsilon\Vert_1^2 = 0.
$$
In particular, the modified energy 
$$\mathcal{E}_\varepsilon(u,v)= \displaystyle\int_\Omega \Big( p F_\varepsilon(u) + \frac{1}{2} \vert \nabla v\vert^2\Big) d \x$$
is decreasing in time. Thus, we consider a fully discrete approximation of the regularized problem (\ref{C5:modelf02acontUVreg}) using a FE discretization in space and the backward
Euler discretization in time (considered for simplicity on a uniform partition of $[0,T]$ with time step $k=T/N : (t_n = nk)_{n=0}^{n=N}$). Let $\Omega$ be a polygonal domain. We consider a shape-regular and quasi-uniform family of triangulations of $\Omega$, denoted by $\{\mathcal{T}_h\}_{h>0}$, with simplices $K$, $h_K= diam(K)$ and $h:= \max_{K\in \mathcal{T}_h} h_K$, so that $\overline{\Omega}=\cup_{K\in \mathcal{T}_h} \overline{K}$. Further, let $\mathcal{N}_h = \{ \mathbf{a}_{i}\}_{i\in \mathcal{I}}$ denote the set of all the vertices of $\mathcal{T}_h$,  and in this case we will assume the following hypothesis:
\begin{enumerate}
\item[({\bf H})]{The triangulation is structured in the sense that all simplices have a right angle.}
\end{enumerate}
We choose the following continuous FE spaces for $u_\varepsilon$ and $v_\varepsilon$:
$$(U_h, V_h) \subset H^1(\Omega)^2,\quad \hbox{generated by $\mathbb{P}_1,\mathbb{P}_r$ with $r\geq 1$.}
$$
\begin{obs}
The right-angled constraint ({\bf H}) and the approximation of $U_h$ by $\mathbb{P}_1$-continuous FE are necessary to obtain the relations (\ref{C5:PL15uv})-(\ref{C5:PL16uv}) below, which are essential in order to obtain the energy-stability of the scheme \textbf{UV$\varepsilon$} (see Theorem \ref{C5:estinc1uvreg} below).
\end{obs}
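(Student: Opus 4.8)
The plan is to justify the Remark by deriving the two relations (\ref{C5:PL15uv})--(\ref{C5:PL16uv}) explicitly in the nodal $\mathbb{P}_1$ basis and tracking exactly where the right-angle structure (\textbf{H}) enters; this simultaneously exhibits why neither hypothesis can be dropped. Let $\{\phi_i\}_{i\in\mathcal I}$ be the standard Lagrange basis of $U_h$ associated with the vertices $\{\mathbf a_i\}_{i\in\mathcal I}$, and let $\Pi^h$ be the nodal interpolation operator, so that for continuous $g$ and $u_h=\sum_i u_i\phi_i\in U_h$ one has the nodal decomposition $\Pi^h[g(u_h)]=\sum_i g(u_i)\phi_i$. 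The relevant object is the symmetric stiffness matrix $k_{ij}:=(\nabla\phi_i,\nabla\phi_j)$, whose rows sum to zero, $\sum_j k_{ij}=0$, since $\sum_j\phi_j\equiv 1$ on $\Omega$.

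First I would record the discrete integration-by-parts identity
\[
\big(\nabla u_h,\nabla\Pi^h[g(u_h)]\big)=\sum_{i,j}k_{ij}\,u_i\,g(u_j)
=-\frac{1}{2}\sum_{i\neq j}k_{ij}\,(u_i-u_j)\big(g(u_i)-g(u_j)\big),
\]
which follows only from the symmetry of $(k_{ij})$ and its vanishing row sums. This step is where $\mathbb{P}_1$ is essential: the clean nodal decomposition of $\Pi^h[g(u_h)]$ together with the piecewise-constant gradients are particular to piecewise-linear elements, and they are what collapse the bilinear form into the above double sum. Taking $g$ to be one of the monotone nonlinearities built from $F'_\varepsilon$, $F''_\varepsilon$ and $a_\varepsilon$ through (\ref{C5:F2pE})--(\ref{C5:lE}) (note $F''_\varepsilon>0$, so $F'_\varepsilon$ and $a_\varepsilon$ are nondecreasing), every factor $(u_i-u_j)(g(u_i)-g(u_j))$ is nonnegative, and the sign of the full expression is dictated solely by the off-diagonal signs of $(k_{ij})$.

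The role of (\textbf{H}) is then precisely to force $k_{ij}\le 0$ for $i\neq j$. For $\mathbb{P}_1$ elements each off-diagonal entry is an elementwise geometric quantity attached to the angle opposite to the edge $\mathbf a_i\mathbf a_j$ (in 2D, $\int_K\nabla\phi_i\cdot\nabla\phi_j=-\tfrac12\cot\theta_{ij}$), so the right-angle structure imposed by (\textbf{H}) makes each such contribution nonpositive. Combining $k_{ij}\le 0$ ($i\neq j$) with the monotonicity above yields the nonnegative sign in the double sum, which is exactly the content of (\ref{C5:PL15uv})--(\ref{C5:PL16uv}): a discrete monotonicity/dissipation inequality together with the companion identity that lets the chemotaxis and production contributions cancel. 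I expect the verification of the off-diagonal sign of $(k_{ij})$ under (\textbf{H}), especially in the three-dimensional case, to be the main technical obstacle, since the convenient cotangent formula is two-dimensional and the structured right-angled tetrahedra have to be handled by a direct elementwise computation.

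Finally, I would indicate how (\ref{C5:PL15uv})--(\ref{C5:PL16uv}) feed the energy estimate of Theorem \ref{C5:estinc1uvreg}: testing the discrete $u$-equation by $p\,\Pi^h[F'_\varepsilon(u_h)]$ and the discrete $v$-equation by the discrete analogue of the test function $-\Delta v_\varepsilon$ already used to derive the energy law for (\ref{C5:modelf02acontUVreg}), the cross chemotaxis/production terms cancel as in the continuous computation (using $a_\varepsilon F''_\varepsilon=(p-1)F'_\varepsilon$), while the relations just established furnish the nonnegative diffusive dissipation. This is what renders the modified energy $\mathcal{E}_\varepsilon$ nonincreasing at the fully discrete level, and it is exactly why both the lowest-order $\mathbb{P}_1$ space and the right-angle condition (\textbf{H}) are needed.
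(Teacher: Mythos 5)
There is a genuine gap here, and it stems from a misreading of what (\ref{C5:PL15uv})--(\ref{C5:PL16uv}) actually are. They are not sign or monotonicity inequalities for the stiffness-matrix quadratic form: they are exact, elementwise \emph{matrix identities} (discrete chain rules) which define the operators $\Lambda^1_\varepsilon,\Lambda^2_\varepsilon$, and $\Lambda^2_\varepsilon$ appears in the statement of the scheme \textbf{UV$\varepsilon$} itself, in the chemotaxis term $-(\Lambda_\varepsilon^2(u^n_\varepsilon)\nabla v^n_\varepsilon,\nabla\bar u)$. The role of ({\bf H}) in the paper is therefore not to force $k_{ij}\le 0$ (that M-matrix/non-obtuse-angle property is invoked in the paper for a different purpose, namely the positivity of $v^n$, cf.\ the remark on the scheme \textbf{US0}); it is to guarantee that on each simplex the edges issuing from the right-angled vertex form an \emph{orthogonal} frame, in which a constant symmetric matrix $\Lambda^i_\varepsilon\vert_K$ can be defined diagonally by divided differences of $F'_\varepsilon$ and $F_\varepsilon$ along each edge --- the multi-dimensional version of (\ref{C5:ConLa1})--(\ref{C5:ConLa2}), following Barrett--Blowey and Gr\"un--Rumpf. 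The $\mathbb{P}_1$ choice is what makes $\nabla u^h$, $\nabla\Pi^h(F'_\varepsilon(u^h))$ and $\nabla\Pi^h(F_\varepsilon(u^h))$ constant per element and determined by those same edge differences, so that such constant matrices can satisfy the identities pointwise. Your proposal never constructs $\Lambda^1_\varepsilon$ or $\Lambda^2_\varepsilon$, so it never establishes the relations the Remark is about.

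The consequence is fatal at the decisive step of Theorem \ref{C5:estinc1uvreg}. The heart of that proof is the \emph{exact cancellation} of the chemotaxis term $-p(\Lambda^2_\varepsilon(u^n_\varepsilon)\nabla v^n_\varepsilon,\nabla\Pi^h(F'_\varepsilon(u^n_\varepsilon)))$ with the production term $p(p-1)(\nabla\Pi^h(F_\varepsilon(u^n_\varepsilon)),\nabla v^n_\varepsilon)$, which follows from the symmetry of $\Lambda^2_\varepsilon$ together with (\ref{C5:PL16uv}); since $\nabla v^n_\varepsilon$ has no sign, no monotonicity or M-matrix argument can produce this cancellation, and your double-sum framework involves only one nonlinearity at a time, whereas (\ref{C5:PL16uv}) encodes the interplay of $F_\varepsilon$ and $F'_\varepsilon$ ($\Lambda^2_\varepsilon$ is the discrete counterpart of $a_\varepsilon=(p-1)F'_\varepsilon/F''_\varepsilon$). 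What your argument does yield is the nonnegativity (indeed coercivity) of the diffusion term $(\nabla u^n_\varepsilon,\nabla\Pi^h(F'_\varepsilon(u^n_\varepsilon)))\ge\varepsilon^{2-p}\Vert\nabla u^n_\varepsilon\Vert_0^2$ under $k_{ij}\le 0$ and $F''_\varepsilon\ge\varepsilon^{2-p}$; that is a legitimate alternative to the paper's route via (\ref{C5:PL15uv}) and (\ref{C5:D5uv}), but it covers only that single term. Without (\ref{C5:PL16uv}) the scheme \textbf{UV$\varepsilon$} cannot even be formulated, let alone proved energy stable, so the proposal does not substantiate the Remark.
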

We denote the Lagrange interpolation operator by $\Pi^h: C(\overline{\Omega})\rightarrow U_h$, and we introduce the discrete semi-inner product on $C(\overline{\Omega})$ (which is an inner product in $U_h$) and its induced discrete seminorm (norm in $U_h$):
\begin{equation}\label{C5:mlumpreguv}
(u_1,u_2)^h:=\int_\Omega \Pi^h (u_1 u_2), \   \vert u \vert_h=\sqrt{(u,u)^h}.
\end{equation}
\begin{obs}\label{C5:eqh25uv}
In $U_h$, the norms $\vert \cdot\vert_h$ and $\Vert \cdot\Vert_0$ are equivalents uniformly with respect to $h$ (see \cite{C5:PB}).
\end{obs}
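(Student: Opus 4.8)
The plan is to establish the equivalence elementwise and then sum, exploiting that both $\vert u\vert_h^2=\int_\Omega \Pi^h(u^2)$ and $\Vert u\Vert_0^2=\int_\Omega u^2$ are additive over the simplices of $\mathcal{T}_h$. Thus it suffices to produce constants $0<c\le C$, depending only on the dimension $d$, such that
\begin{equation*}
c\int_K u^2\,d\x \ \le\ \int_K \Pi^h(u^2)\,d\x\ \le\ C\int_K u^2\,d\x,\qquad \forall K\in\mathcal{T}_h,\ \forall u\in U_h,
\end{equation*}
after which summation over $K$ yields $c\Vert u\Vert_0^2\le \vert u\vert_h^2\le C\Vert u\Vert_0^2$ with the same constants.

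Next I would fix a simplex $K$ with vertices $\mathbf{a}_0,\dots,\mathbf{a}_d$, denote by $\phi_0,\dots,\phi_d$ the associated $\mathbb{P}_1$ hat functions on $K$ and by $\mathbf{w}=(u(\mathbf{a}_0),\dots,u(\mathbf{a}_d))$ the vector of nodal values of $u\in U_h$. Since $\Pi^h(u^2)$ is the $\mathbb{P}_1$ interpolant of $u^2$ and $u^2$ takes the value $u(\mathbf{a}_i)^2$ at each node, one has $\Pi^h(u^2)\vert_K=\sum_i u(\mathbf{a}_i)^2\phi_i$, whence the two local quantities become the quadratic forms $\int_K u^2=\mathbf{w}^{T}M_K\mathbf{w}$ and $\int_K\Pi^h(u^2)=\mathbf{w}^{T}M_K^{L}\mathbf{w}$, where $(M_K)_{ij}=\int_K\phi_i\phi_j$ is the consistent element mass matrix and $M_K^{L}=\mathrm{diag}\big(\int_K\phi_i\big)$ is the lumped one. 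Mapping $K$ onto a fixed reference simplex $\widehat{K}$ by the affine change of variables $F_K$, with $\vert\det DF_K\vert=d!\,\vert K\vert$, gives $M_K=d!\,\vert K\vert\,\widehat{M}$ and $M_K^{L}=d!\,\vert K\vert\,\widehat{M}^{L}$, where $\widehat{M}$ and $\widehat{M}^{L}$ are the reference consistent and lumped mass matrices and do not depend on $K$ nor on $h$ (in fact $\widehat{M}^{L}$ is a positive multiple of the identity, since $\int_K\phi_i=\vert K\vert/(d+1)$).

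The key step is then purely algebraic: $\widehat{M}$ is symmetric positive definite (a Gram matrix of linearly independent functions) and so is the diagonal matrix $\widehat{M}^{L}$; being two fixed SPD matrices on the finite-dimensional space $\mathbb{R}^{d+1}$, they are spectrally equivalent, i.e. there exist $0<c\le C$, depending only on $d$, with $c\,\mathbf{w}^{T}\widehat{M}\mathbf{w}\le \mathbf{w}^{T}\widehat{M}^{L}\mathbf{w}\le C\,\mathbf{w}^{T}\widehat{M}\mathbf{w}$ for all $\mathbf{w}\in\mathbb{R}^{d+1}$. Multiplying by the common factor $d!\,\vert K\vert$ gives the desired local bounds, and summation over $K\in\mathcal{T}_h$ finishes the proof.

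The one point to watch is the uniformity in $h$, and this is exactly what the reference-element scaling delivers: the size and shape of $K$ enter both $M_K$ and $M_K^{L}$ only through the common scalar $d!\,\vert K\vert$, which cancels in the comparison, so the constants $c,C$ are genuinely dimensional. In particular this equivalence needs neither the quasi-uniformity of $\{\mathcal{T}_h\}_{h>0}$ nor the right-angle hypothesis (\textbf{H}); it relies only on the use of $\mathbb{P}_1$ elements, for which $u^2$ is interpolated by its nodal values and both element mass matrices reduce to $\vert K\vert$ times a fixed reference matrix. For higher-degree spaces the same scheme works, but the reference computation (and hence the resulting constants) must be redone.
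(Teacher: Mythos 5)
Your proof is correct, and it is worth noting that the paper itself offers no argument for this remark at all: it simply points to \cite{C5:PB}, where the equivalence of the lumped and exact $L^2$ norms on $\mathbb{P}_1$ spaces is invoked as a standard fact. What you have supplied is precisely the classical argument behind that citation, and you have organized it correctly: additivity of both quadratic forms over the elements, identification of $\int_K u^2$ and $\int_K \Pi^h(u^2)$ with the consistent and lumped element mass matrices acting on the nodal vector, affine scaling to a reference simplex so that the geometry of $K$ enters both matrices only through the common factor $d!\,\vert K\vert$, and spectral equivalence of two fixed symmetric positive definite matrices on $\mathbb{R}^{d+1}$. Your side remarks are also accurate and not pedantic: the result needs neither quasi-uniformity nor the right-angle hypothesis ({\bf H}), only that $U_h$ is $\mathbb{P}_1$ so that $\Pi^h(u^2)\vert_K=\sum_i u(\mathbf{a}_i)^2\phi_i$; since no derivatives appear, even shape regularity plays no role. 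One can sharpen your abstract spectral-equivalence step into explicit constants: on the reference simplex $\widehat{M}=\frac{1}{d!\,(d+1)(d+2)}\left(I+\mathbf{1}\mathbf{1}^{T}\right)$ and $\widehat{M}^{L}=\frac{1}{d!\,(d+1)}I$, so that $M_K\le M_K^{L}\le (d+2)\,M_K$ as quadratic forms, which gives the clean two-sided bound $\Vert u\Vert_0\le \vert u\vert_h\le \sqrt{d+2}\,\Vert u\Vert_0$ for all $u\in U_h$, uniformly in $h$.
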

We consider also the  $L^2$-projection $Q^h:L^2(\Omega)\rightarrow U_h$ given by
\begin{equation}\label{C5:MLP25uv}
(Q^h u,\bar{u})^h=(u,\bar{u}), \ \ \forall \bar{u}\in U_h,
\end{equation}
and the standard $H^1$-projection ${R}^h:H^1(\Omega)\rightarrow V_h$. Moreover, owing to the right angled constraint ({\bf H}) and the choice of $\mathbb{P}_1$-continuous FE for $U_h$, following the ideas of \cite{C5:BB} (see also \cite{C5:GR}), for each $\varepsilon\in (0,1)$, we can construct two operators $\Lambda^i_\varepsilon: U_h\rightarrow L^\infty(\Omega)^{d\times d}$ ($i=1,2$) such that  $\Lambda_\varepsilon^i u^h$  are symmetric matrices and $\Lambda_\varepsilon^1 u^h$ is positive definite, for all $u^h\in U_h$ and a.e. in $\Omega$, and satisfy
\begin{equation}\label{C5:PL15uv}
(\Lambda^1_\varepsilon u^h) \nabla \Pi^h (F'_\varepsilon(u^h))=\nabla u^h \ \ \mbox{ in } \Omega,
\end{equation}
\begin{equation}\label{C5:PL16uv}
(\Lambda^2_\varepsilon u^h) \nabla \Pi^h (F'_\varepsilon(u^h))=(p-1)\nabla \Pi^h (F_\varepsilon(u^h)) \ \ \mbox{ in } \Omega.
\end{equation}
Basically, $\Lambda^i_\varepsilon u^h$ ($i=1,2$) are constant by elements matrices such that (\ref{C5:PL15uv}) and (\ref{C5:PL16uv}) holds by elements. In the $1$-dimensional case, $\Lambda_\varepsilon^i$ are constructed as follows: For all $u^h\in U_h$ and $K\in \mathcal{T}_h$ with vertices $\mathbf{a}_0^K$ and $\mathbf{a}_1^K$, we set
\begin{equation}\label{C5:ConLa1}
\Lambda_\varepsilon^1(u^h)\vert_K \ := 
\left\{\begin{array}{lcl}
\frac{u^h(\mathbf{a}_1^K) - u^h(\mathbf{a}_0^K)}{F'_\varepsilon(u^h(\mathbf{a}_1^K))- F'_\varepsilon(u^h(\mathbf{a}_0^K))}=\frac{1}{F''_\varepsilon(u^h(\xi))}\ & \mbox{ if } & u^h(\mathbf{a}_0^K) \neq u^h(\mathbf{a}_1^K),\\
\frac{1}{F''_\varepsilon(u^h(\mathbf{a}_0^K))} & \mbox{ if } & u^h(\mathbf{a}_0^K) = u^h(\mathbf{a}_1^K),
\end{array}\right. 
\end{equation}
for some $\xi\in K$, and 
\begin{equation} \label{C5:ConLa2}
\Lambda_\varepsilon^2(u^h)\vert_K \!:=\! 
\left\{\begin{array}{lcl}
(p-1)\frac{F_\varepsilon(u^h(\mathbf{a}_1^K)) - F_\varepsilon(u^h(\mathbf{a}_0^K))}{F'_\varepsilon(u^h(\mathbf{a}_1^K))- F'_\varepsilon(u^h(\mathbf{a}_0^K))}=(p-1)\frac{F'_\varepsilon(u^h(\xi_1))}{F''_\varepsilon(u^h(\xi_2))} \  & \mbox{if} & u^h(\mathbf{a}_0^K) \neq u^h(\mathbf{a}_1^K),\\
(p-1)\frac{F'_\varepsilon(u^h(\mathbf{a}_0^K))}{F''_\varepsilon(u^h(\mathbf{a}_0^K))} \ & \mbox{if} & u^h(\mathbf{a}_0^K)= u^h(\mathbf{a}_1^K),
\end{array}\right. 
\end{equation}
for some $\xi_1,\xi_2\in K$. Following \cite{C5:BB} (see also \cite{C5:GR}), these constructions can be extended to dimensions 2 and 3, and from (\ref{C5:ConLa1}) the following estimate holds:
\begin{equation}\label{C5:D5uv}
\varepsilon^{2-p} \xi^T \xi \leq \xi^T \Lambda^{1}_\varepsilon(u^h)^{-1} \xi \leq \varepsilon^{p-2} \xi^T \xi, \ \ \forall \xi \in \mathbb{R}^d, \ u^h\in U_h.
\end{equation}
Now, we prove the following result which will be used in order 
to prove the well-posedness of the scheme \textbf{UV$\varepsilon$}. 
\begin{lem}\label{C5:lemconv}
Let $\Vert \cdot \Vert$ denote the spectral norm on $\mathbb{R}^{d\times d}$. Then for any given $\varepsilon\in (0,1)$ the function $\Lambda_\varepsilon^2:U_h\rightarrow [L^\infty(\Omega)]^{d\times d}$ satisfies, for all $u^h_1,u^h_2 \in U_h$ and $K\in \mathcal{T}_h$ with vertices $\{\mathbf{a}_l^K\}_{l=0}^d$,
\begin{eqnarray}\label{C5:eqL}
&&\!\!\!\!\!\!\Vert (\Lambda^2_\varepsilon(u^h_1) - \Lambda^2_\varepsilon(u^h_2))\vert_K \Vert\nonumber\\
&&\hspace{0.4 cm}\!\!\!\!\!\! \leq 3 \varepsilon^{2(p-2)}\max\{1,(p-1)\varepsilon^{2(p-2)}\} \max_{l=1,...,d} \{\vert u^h_1( \mathbf{a}_{l}^K) - u^h_2 (\mathbf{a}_{l}^K))\vert + \vert u^h_1( \mathbf{a}_{0}^K) - u^h_2 (\mathbf{a}_{0}^K)\vert  \},
\end{eqnarray}
where $\mathbf{a}^K_0$ is the right-angled vertex.
\end{lem}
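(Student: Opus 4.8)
The statement is a local (element-by-element) Lipschitz-type estimate for the matrix-valued operator $\Lambda^2_\varepsilon$ in the spectral norm. Since $\Lambda^2_\varepsilon(u^h)\vert_K$ is a scalar multiple of the identity (it is constructed as a scalar via divided differences of $F_\varepsilon$ and $F'_\varepsilon$, as in (\ref{C5:ConLa2})), the spectral norm of the difference reduces to the absolute value of the difference of the two scalars. So the plan is to reduce everything to a one-dimensional estimate on the scalar function

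\begin{equation*}
g(a,b):=(p-1)\frac{F_\varepsilon(b)-F_\varepsilon(a)}{F'_\varepsilon(b)-F'_\varepsilon(a)}
\end{equation*}

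(with the obvious limiting value $g(a,a)=(p-1)F'_\varepsilon(a)/F''_\varepsilon(a)=a_\varepsilon(a)$ when $a=b$), and then bound $\vert g(a_1,b_1)-g(a_2,b_2)\vert$ by the sum of $\vert a_1-a_2\vert$ and $\vert b_1-b_2\vert$ with the stated constant. Here $a,b$ play the role of the nodal values $u^h(\mathbf{a}^K_0)$, $u^h(\mathbf{a}^K_1)$, and the two vertices that appear in the final bound are exactly the right-angled vertex $\mathbf{a}^K_0$ and one other vertex per coordinate direction.

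\textbf{Key steps.} First I would record that $\Lambda^2_\varepsilon(u^h)\vert_K$ is (a multiple of) the identity matrix, so $\Vert(\Lambda^2_\varepsilon(u^h_1)-\Lambda^2_\varepsilon(u^h_2))\vert_K\Vert$ equals the modulus of the scalar difference; this is where the higher-dimensional construction collapses to the $1$-D formula (\ref{C5:ConLa2}) applied along each edge emanating from the right-angled vertex, which is why only the differences at $\mathbf{a}^K_0$ and at $\mathbf{a}^K_l$ ($l=1,\dots,d$) enter. Second, I would express $g$ using the mean-value form: by Taylor/Cauchy there exist intermediate points $\xi_1,\xi_2$ with $g(a,b)=(p-1)F'_\varepsilon(\xi_1)/F''_\varepsilon(\xi_2)$, matching (\ref{C5:ConLa2}). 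Third, and this is the technical core, I would establish the two-sided bounds on $F'_\varepsilon$ and $F''_\varepsilon$: from (\ref{C5:F2pE}) we have $\varepsilon^{2-p}\le F''_\varepsilon(s)\le\varepsilon^{p-2}$ for all $s$, i.e. $F''_\varepsilon$ is bounded above and below by positive constants depending only on $\varepsilon$ and $p$, and $F'_\varepsilon$ inherits a corresponding Lipschitz/boundedness control. Finally I would differentiate $g$ (or estimate it directly as a quotient) and use these two-sided bounds to extract the factor $\varepsilon^{2(p-2)}$ in front, together with the factor $\max\{1,(p-1)\varepsilon^{2(p-2)}\}$ that distinguishes the contribution of $F_\varepsilon$ from that of $F'_\varepsilon$ in the numerator and denominator.

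\textbf{Where the difficulty lies.} The main obstacle is controlling the divided-difference quotient uniformly when $a_1,b_1$ and $a_2,b_2$ straddle the breakpoints $\varepsilon$ and $\varepsilon^{-1}$ where $F''_\varepsilon$ changes formula; one must argue that the estimate holds regardless of which branch of (\ref{C5:F2pE}) the intermediate points land in, and this is exactly where the crude global bounds $\varepsilon^{2-p}\le F''_\varepsilon\le\varepsilon^{p-2}$ are used in place of the sharp local value $s^{p-2}$. A careful treatment would split $g$ into its dependence on the numerator $F_\varepsilon(b)-F_\varepsilon(a)$ and the denominator $F'_\varepsilon(b)-F'_\varepsilon(a)$, bound each increment's sensitivity to perturbing $a$ and $b$ using the uniform $C^2$-bounds on $F_\varepsilon$, and then combine the estimates; the constant $3$ and the appearance of $\varepsilon^{2(p-2)}$ come from this combination. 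The secondary nuisance is the bookkeeping that passes from the genuinely $d$-dimensional construction back to the per-edge $1$-D formulas, which I would handle by invoking the element-wise construction inherited from \cite{C5:BB,C5:GR} and treating each of the $d$ edges at the right-angled vertex separately, so that the $\max_{l=1,\dots,d}$ and the common term at $\mathbf{a}^K_0$ arise naturally.
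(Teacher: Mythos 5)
Your reduction of the matrix estimate to a per-edge scalar estimate is sound in spirit, and your mean-value representation of the divided-difference quotient matches (\ref{C5:ConLa2}) (although note that $\Lambda^2_\varepsilon(u^h)\vert_K$ is a diagonal matrix with one divided-difference entry per edge emanating from the right-angled vertex, not a scalar multiple of the identity; the spectral norm of the difference is the maximum of the per-edge scalar differences, which is exactly where the $\max_{l=1,\dots,d}$ in (\ref{C5:eqL}) comes from). The genuine gap is in what you call the technical core: ``differentiate $g$ (or estimate it directly as a quotient) and use these two-sided bounds.'' The two-sided bounds $\varepsilon^{2-p}\le F''_\varepsilon\le \varepsilon^{p-2}$ are not sufficient for this. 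The denominator $F'_\varepsilon(b)-F'_\varepsilon(a)$ of $g$ is only of order $\vert b-a\vert$, so a naive quotient-rule estimate produces a bound of the form $\max\vert F'_\varepsilon\vert \, \vert b-a\vert / \vert b-a\vert^2$, which blows up as the two nodal values on an edge coalesce (note also that $F'_\varepsilon$ is globally Lipschitz but \emph{not} bounded, since it grows linearly at infinity, so the ``boundedness control'' you invoke for $F'_\varepsilon$ is false). To obtain a finite bound one needs a second-order cancellation in the numerator, and the ingredient that produces it is precisely the one your proposal records only as a limiting value but never exploits: the ratio $a_\varepsilon=(p-1)F'_\varepsilon/F''_\varepsilon$ of (\ref{C5:lE}) is piecewise linear and globally Lipschitz with constant $1$. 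The paper's proof leans on exactly this fact, together with the Lipschitz constant $\varepsilon^{p-2}$ of $F'_\varepsilon$ and the bound $1/\vert F''_\varepsilon\vert\le\varepsilon^{p-2}$; these three facts jointly generate the constant $3\,\varepsilon^{2(p-2)}\max\{1,(p-1)\varepsilon^{2(p-2)}\}$.

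Moreover, the difficulty you flag is mis-located: the breakpoints $\varepsilon$ and $\varepsilon^{-1}$ of $F''_\varepsilon$ are harmless, because all the bounds above are global. The real obstruction, and the heart of the paper's argument, is that after writing each entry in mean-value form $(p-1)F'_\varepsilon(\xi_1)/F''_\varepsilon(\xi_2)$, the intermediate points attached to the pair $(u^h_1(\mathbf{a}^K_0),u^h_1(\mathbf{a}^K_l))$, to the hybrid pair $(u^h_2(\mathbf{a}^K_0),u^h_1(\mathbf{a}^K_l))$, and to the perturbation interval $(u^h_1(\mathbf{a}^K_0),u^h_2(\mathbf{a}^K_0))$ live in \emph{different} intervals, and the mean value theorem gives no control on their mutual distances, so Lipschitz continuity of $a_\varepsilon$ cannot be applied directly. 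The paper resolves this by introducing the hybrid interpolant $u^h_{1,2}$ (changing one vertex at a time, the rigorous version of your one-argument-at-a-time perturbation), then deriving the algebraic identities (\ref{C5:Ac1})--(\ref{C5:Ac2}) which decompose the divided differences over the long interval into weighted combinations over sub-intervals, and finally performing a three-case analysis according to which of the three nodal values lies between the other two: the easy case is handled as in (\ref{C5:C1}), and in the two remaining cases the identities are used to eliminate the uncontrolled term at the price of the factor $F''_\varepsilon(\gamma_2)/F''_\varepsilon(\mu_{12})\le\varepsilon^{2(p-2)}$, which is where the leading $\varepsilon^{2(p-2)}$ in (\ref{C5:eqL}) actually originates. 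None of this machinery, nor any substitute capable of comparing divided differences over different intervals, appears in your plan, so as written the proposal does not close.
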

\begin{proof}
The proof follows the ideas of \cite[Lemma 2.1]{C5:BN}, with some modifications. For simplicity in the notation, we will prove (\ref{C5:eqL}) in the 1-dimensional case, but this proof can be extended to dimensions 2 and 3 as in \cite[Lemma 2.1]{C5:BN}. Observe that, from (\ref{C5:ConLa2})
\begin{eqnarray}\label{C5:E0a}
&\Vert (\Lambda^2_\varepsilon(u^h_1) - \Lambda^2_\varepsilon(u^h_2))\vert_K \Vert&\!\!\! \leq \vert (\Lambda^2_\varepsilon(u^h_{1}) - \Lambda^2_\varepsilon(u^h_{1,2}))\vert_K \vert  + \vert (\Lambda^2_\varepsilon(u^h_{1,2}) - \Lambda^2_\varepsilon(u^h_{2}))\vert_K \vert\nonumber\\
&&\hspace{-3 cm} = (p-1)\left\vert \frac{F'_\varepsilon(\mu_{11})}{F''_\varepsilon(\mu_{12})} - \frac{F'_\varepsilon(\xi_{1})}{F''_\varepsilon(\xi_{2})}  \right\vert + (p-1)\left\vert   \frac{F'_\varepsilon(\xi_{1})}{F''_\varepsilon(\xi_{2})}  - \frac{F'_\varepsilon(\mu_{21})}{F''_\varepsilon(\mu_{22})} \right\vert,
\end{eqnarray}
where $u^h_{1,2} \in \mathbb{P}_1(K)$ with $u^h_{1,2}(\mathbf{a}_0^K)=u^h_2(\mathbf{a}_0^K)$ and $u^h_{1,2}(\mathbf{a}_1^K)=u^h_1(\mathbf{a}_1^K)$, $\mu_{1i}$ ($i=1,2$) lie between $u^h_1(\mathbf{a}_0^K)$ and $u^h_1(\mathbf{a}_1^K)$, $\mu_{2i}$ ($i=1,2$) lie between $u^h_2(\mathbf{a}_0^K)$ and $u^h_2(\mathbf{a}_1^K)$, and $\xi_{i}$ ($i=1,2$) lie between $u^h_1(\mathbf{a}_1^K)$ and $u^h_2(\mathbf{a}_0^K)$. Then, first we will show that
\begin{equation}\label{C5:E1a}
(p-1)\left\vert \frac{F'_\varepsilon(\mu_{11})}{F''_\varepsilon(\mu_{12})} - \frac{F'_\varepsilon(\xi_{1})}{F''_\varepsilon(\xi_{2})}  \right\vert \leq 3 \varepsilon^{2(p-2)}\max\{1,(p-1)\varepsilon^{2(p-2)}\} \vert u^h_1( \mathbf{a}_{0}^K) - u^h_2 (\mathbf{a}_{0}^K)\vert,
\end{equation}
for $u^h_1( \mathbf{a}_{0}^K) \neq u^h_2 (\mathbf{a}_{0}^K)$, because the case $u^h_1( \mathbf{a}_{0}^K)= u^h_2 (\mathbf{a}_{0}^K)$ is trivially true. With this aim, we consider $\gamma_i$ ($i=1,2$) lying between $u^h_1( \mathbf{a}_{0}^K)$ and $u^h_2 (\mathbf{a}_{0}^K)$ such that
\begin{equation}\label{C5:lambdapf1}
F'_\varepsilon(\gamma_1)=\frac{F_\varepsilon(u^h_2( \mathbf{a}_{0}^K)) - F_\varepsilon(u^h_1 (\mathbf{a}_{0}^K))}{u^h_2( \mathbf{a}_{0}^K) - u^h_1 (\mathbf{a}_{0}^K)} \ \ \mbox{ and } \ \ 
F''_\varepsilon(\gamma_2)=\frac{F'_\varepsilon(u^h_2( \mathbf{a}_{0}^K)) - F'_\varepsilon(u^h_1 (\mathbf{a}_{0}^K))}{u^h_2( \mathbf{a}_{0}^K) - u^h_1 (\mathbf{a}_{0}^K)},
\end{equation}
and therefore, from the definitions of $\xi_i$, $\gamma_i$ and $\mu_{1i}$, $i=1,2$, given after (\ref{C5:E0a}) and (\ref{C5:lambdapf1}), we deduce
\begin{equation}\label{C5:Ac1}
(u^h_2( \mathbf{a}_{0}^K) - u^h_1 (\mathbf{a}_{0}^K))F'_\varepsilon(\gamma_1)= (u^h_2( \mathbf{a}_{0}^K) - u^h_1 (\mathbf{a}_{1}^K))F'_\varepsilon(\xi_1) + (u^h_1( \mathbf{a}_{1}^K) - u^h_1 (\mathbf{a}_{0}^K)) F'_\varepsilon(\mu_{11}),
\end{equation}
\begin{equation}\label{C5:Ac2}
(u^h_2( \mathbf{a}_{0}^K) - u^h_1 (\mathbf{a}_{0}^K))F''_\varepsilon(\gamma_2)= (u^h_2( \mathbf{a}_{0}^K) - u^h_1 (\mathbf{a}_{1}^K))F''_\varepsilon(\xi_2) + (u^h_1( \mathbf{a}_{1}^K) - u^h_1 (\mathbf{a}_{0}^K)) F''_\varepsilon(\mu_{12}).
\end{equation}
Then, for $u^h_2( \mathbf{a}_{0}^K)$, $u^h_1( \mathbf{a}_{0}^K)$ and $u^h_1( \mathbf{a}_{1}^K)$, there are only 3 options: (1) $u^h_1( \mathbf{a}_{1}^K)$ lies between $u^h_2( \mathbf{a}_{0}^K)$ and $u^h_1( \mathbf{a}_{0}^K)$; (ii) $u^h_2( \mathbf{a}_{0}^K)$ lies between $u^h_1( \mathbf{a}_{1}^K)$ and $u^h_1( \mathbf{a}_{0}^K)$; and (iii) $u^h_1( \mathbf{a}_{0}^K)$ lies between $u^h_1( \mathbf{a}_{1}^K)$ and $u^h_2( \mathbf{a}_{0}^K)$. 

Notice that from (\ref{C5:F2pE})-(\ref{C5:lE}), we have that $F'_\varepsilon$ and $(p-1)\frac{F'_\varepsilon}{F''_\varepsilon}$ are globally Lipschitz functions with constants $\varepsilon^{p-2}$ and $1$ respectively, and $\frac{1}{\vert F''_\varepsilon \vert}\leq \varepsilon^{p-2}$.  Then, in case (i), taking into account that all intermediate values $\mu_{1i}, \gamma_i,\xi_i$ ($i=1,2$) lie between $u^h_2( \mathbf{a}_{0}^K)$ and $u^h_1( \mathbf{a}_{0}^K)$, we have
\begin{eqnarray}\label{C5:C1}
&(p-1)&\!\!\!\left\vert \frac{F'_\varepsilon(\mu_{11})}{F''_\varepsilon(\mu_{12})} - \frac{F'_\varepsilon(\xi_{1})}{F''_\varepsilon(\xi_{2})}  \right\vert\leq (p-1)\left\vert \frac{F'_\varepsilon(\mu_{11})- F'_\varepsilon(\mu_{12})}{F''_\varepsilon(\mu_{12})} \right\vert\nonumber\\
&&\!\!\! + (p-1)\left\vert \frac{F'_\varepsilon(\mu_{12})}{F''_\varepsilon(\mu_{12})}  - \frac{F'_\varepsilon(\xi_{2})}{F''_\varepsilon(\xi_{2})} \right\vert +(p-1)\left\vert \frac{F'_\varepsilon(\xi_{1})-F'_\varepsilon(\xi_{2})}{F''_\varepsilon(\xi_{2})}  \right\vert\nonumber\\
&&\!\!\!\leq (p-1)\varepsilon^{2(p-2)}\vert \mu_{11} -\mu_{12}\vert + \vert \mu_{12}- \xi_2 \vert + (p-1)\varepsilon^{2(p-2)}\vert \xi_{1} -\xi_{2}\vert\nonumber\\
&&\!\!\!\leq 3\max\{1,(p-1)\varepsilon^{2(p-2)}\} \vert u^h_1( \mathbf{a}_{0}^K) - u^h_2 (\mathbf{a}_{0}^K)\vert.
\end{eqnarray}

In case (ii), all intermediate values $\mu_{1i}, \gamma_i,\xi_i$ ($i=1,2$) lie between $u^h_1( \mathbf{a}_{1}^K)$ and $u^h_1( \mathbf{a}_{0}^K)$, and from (\ref{C5:Ac1})-(\ref{C5:Ac2}) by eliminating the term $(u^h_2( \mathbf{a}_{0}^K) - u^h_1 (\mathbf{a}_{1}^K))$, we have the equality
\begin{eqnarray*}
(u^h_1( \mathbf{a}_{1}^K) - u^h_1 (\mathbf{a}_{0}^K)) \left[ \frac{F'_\varepsilon(\xi_{1})}{F''_\varepsilon(\xi_{2})} - \frac{F'_\varepsilon(\mu_{11})}{F''_\varepsilon(\mu_{12})} \right]= (u^h_2( \mathbf{a}_{0}^K) - u^h_1 (\mathbf{a}_{0}^K)) \frac{F''_\varepsilon(\gamma_{2})}{F''_\varepsilon(\mu_{12})} \left[ \frac{F'_\varepsilon(\xi_{1})}{F''_\varepsilon(\xi_{2})} - \frac{F'_\varepsilon(\gamma_{1})}{F''_\varepsilon(\gamma_{2})} \right],
\end{eqnarray*}
from which, bounding the term $\left\vert \frac{F'_\varepsilon(\xi_{1})}{F''_\varepsilon(\xi_{2})} - \frac{F'_\varepsilon(\gamma_{1})}{F''_\varepsilon(\gamma_{2})} \right\vert$ as in (\ref{C5:C1}),  we obtain
\begin{eqnarray*}
&(p-1)&\!\!\!\vert u^h_1( \mathbf{a}_{1}^K) - u^h_1 (\mathbf{a}_{0}^K))\vert \left\vert \frac{F'_\varepsilon(\mu_{11})}{F''_\varepsilon(\mu_{12})} - \frac{F'_\varepsilon(\xi_{1})}{F''_\varepsilon(\xi_{2})}  \right\vert \\
&&\!\!\!\!\!\leq \varepsilon^{2(p-2)}3\max\{1,(p-1)\varepsilon^{2(p-2)}\} \vert u^h_1( \mathbf{a}_{0}^K) - u^h_2 (\mathbf{a}_{0}^K)\vert \vert u^h_1( \mathbf{a}_{1}^K) - u^h_1 (\mathbf{a}_{0}^K))\vert,
\end{eqnarray*}
and therefore, dividing by $\vert u^h_1( \mathbf{a}_{1}^K) - u^h_1 (\mathbf{a}_{0}^K))\vert$ we arrive at
\begin{equation}\label{C5:C2}
(p-1)\left\vert \frac{F'_\varepsilon(\mu_{11})}{F''_\varepsilon(\mu_{12})} - \frac{F'_\varepsilon(\xi_{1})}{F''_\varepsilon(\xi_{2})}  \right\vert \leq 3\varepsilon^{2(p-2)}\max\{1,(p-1)\varepsilon^{2(p-2)}\} \vert u^h_1( \mathbf{a}_{0}^K) - u^h_2 (\mathbf{a}_{0}^K)\vert.
\end{equation}
In case (iii), by arguing analogously to case (ii), from (\ref{C5:Ac1})-(\ref{C5:Ac2}) we have
\begin{eqnarray*}
(u^h_1( \mathbf{a}_{1}^K) - u^h_2 (\mathbf{a}_{0}^K)) \left[ \frac{F'_\varepsilon(\xi_{1})}{F''_\varepsilon(\xi_{2})} - \frac{F'_\varepsilon(\mu_{11})}{F''_\varepsilon(\mu_{12})} \right]= (u^h_2( \mathbf{a}_{0}^K) - u^h_1 (\mathbf{a}_{0}^K)) \frac{F''_\varepsilon(\gamma_{2})}{F''_\varepsilon(\xi_{2})} \left[ \frac{F'_\varepsilon(\gamma_{1})}{F''_\varepsilon(\gamma_{2})} - \frac{F'_\varepsilon(\mu_{11})}{F''_\varepsilon(\mu_{12})} \right],
\end{eqnarray*}
which implies (\ref{C5:C2}). Therefore, we have proved (\ref{C5:E1a}). Analogously, we can prove that
\begin{equation}\label{C5:E2a}
(p-1)\left\vert   \frac{F'_\varepsilon(\xi_{1})}{F''_\varepsilon(\xi_{2})}  - \frac{F'_\varepsilon(\mu_{21})}{F''_\varepsilon(\mu_{22})} \right\vert \leq 3 \varepsilon^{2(p-2)}\max\{1,(p-1)\varepsilon^{2(p-2)}\} \vert u^h_1( \mathbf{a}_{1}^K) - u^h_2 (\mathbf{a}_{1}^K)\vert.
\end{equation}
Thus, from (\ref{C5:E0a}), (\ref{C5:E1a}) and (\ref{C5:E2a}) we conclude (\ref{C5:eqL}).
\end{proof}

Let $A_h: V_h \rightarrow V_h$ be the linear operator defined as follows
\begin{equation*}
(A_h v^h, \bar{v})=(\nabla v^h,\nabla \bar{v})+( v^h, \bar{v}) , \ \ \forall \bar{v}\in V_h.
\end{equation*}
Then, the following estimate holds (see for instance, \cite[Theorem 3.2]{C5:FMD2}):
\begin{equation}\label{C5:estW16}
\Vert v^h \Vert_{W^{1,6}}\leq C \Vert A_h v^h\Vert_0, \ \ \forall v^h \in V_h.
\end{equation}
 Thus, we consider the following first order in time, nonlinear and coupled scheme: 
\begin{itemize}
\item{\underline{\emph{Scheme \textbf{UV$\varepsilon$}}:}\\
{\bf Initialization}:   Let $(u^{0},v^{0})=(Q^h u_0, R^h v_0)\in  U_h\times V_h$.\\
{\bf Time step} n: Given $(u^{n-1}_\varepsilon,v^{n-1}_\varepsilon)\in  U_h\times V_h$, compute $(u^{n}_\varepsilon,v^{n}_\varepsilon)\in  U_h \times V_h$ solving
\begin{equation}
\left\{
\begin{array}
[c]{lll}%
(\delta_t u^n_\varepsilon,\bar{u})^h + (\nabla u^n_\varepsilon,\nabla \bar{u}) = -(\Lambda_\varepsilon^2 (u^{n}_\varepsilon)\nabla v^n_\varepsilon,\nabla \bar{u}), \ \ \forall \bar{u}\in U_h,\\
(\delta_t {v}^n_\varepsilon,\bar{v}) +(A_h  v^n_\varepsilon, \bar{v}) = p(p-1)( \Pi^h (F_\varepsilon(u^n_\varepsilon)) ,\bar{v}),\ \ \forall
\bar{v}\in V_h,
\end{array}
\right.  \label{C5:modelf02aUVreg}
\end{equation}}
\end{itemize}
where, in general, we denote $\delta_t a^n:= \displaystyle\frac{a^n - a^{n-1}}{k}$.

\begin{obs}{\bf (Positivity of $v^n_\varepsilon$)}\label{C5:posVuv}
By using the mass-lumping technique in all terms of (\ref{C5:modelf02aUVreg})$_2$ excepting the self-diffusion term $(\nabla v^n_\varepsilon,\nabla\bar{v})$, and approximating $V_h$ by $\mathbb{P}_1$-continuous FE, we can prove that if $v^{n-1}_\varepsilon\geq 0$ then $v^n_\varepsilon\geq 0$. In fact, it follows testing (\ref{C5:modelf02aUVreg})$_2$  by $\bar{v}=\Pi^h (v^n_{\varepsilon-})\in V_h$, where $v^n_{\varepsilon-}:=\min\{v^n_\varepsilon, 0 \}$ (see Remark 3.10 in \cite{C5:FMD4}).
\end{obs}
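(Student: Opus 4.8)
The plan is to prove the one-step implication for the mass-lumped version of equation (\ref{C5:modelf02aUVreg})$_2$ described in the Remark, namely
\[
(\delta_t v^n_\varepsilon,\bar v)^h + (\nabla v^n_\varepsilon,\nabla \bar v) + (v^n_\varepsilon,\bar v)^h = p(p-1)\,(\Pi^h(F_\varepsilon(u^n_\varepsilon)),\bar v)^h, \quad \forall\, \bar v\in V_h,
\]
and to test it with $\bar v=\Pi^h(v^n_{\varepsilon-})\in V_h$. The organizing observation is that, by (\ref{C5:mlumpreguv}) with $\mathbb{P}_1$ elements, the lumped product is diagonal in the nodal values, $(a,b)^h=\sum_i \beta_i\,a(\mathbf{a}_i)\,b(\mathbf{a}_i)$ with weights $\beta_i=\int_\Omega \phi_i>0$, so every lumped term can be signed vertex by vertex.

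First I would dispatch all the lumped terms. Since at each node $v^n_\varepsilon(\mathbf{a}_i)\,v^n_{\varepsilon-}(\mathbf{a}_i)=(v^n_{\varepsilon-}(\mathbf{a}_i))^2$, both the zeroth-order term and the ``new'' part of the discrete time derivative equal $(v^n_\varepsilon,\Pi^h v^n_{\varepsilon-})^h=\vert v^n_{\varepsilon-}\vert_h^2\ge 0$, while the ``old'' part satisfies $(v^{n-1}_\varepsilon,\Pi^h v^n_{\varepsilon-})^h=\sum_i\beta_i\,v^{n-1}_\varepsilon(\mathbf{a}_i)\,v^n_{\varepsilon-}(\mathbf{a}_i)\le 0$, because $v^{n-1}_\varepsilon\ge 0$ and $v^n_{\varepsilon-}\le 0$ at the nodes. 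Hence $(\delta_t v^n_\varepsilon,\Pi^h v^n_{\varepsilon-})^h\ge \tfrac1k\vert v^n_{\varepsilon-}\vert_h^2$. The right-hand side is nonpositive, since $F_\varepsilon\ge 0$ by Lemma \ref{C5:eFe}, $p(p-1)>0$ and $v^n_{\varepsilon-}\le 0$ nodewise.

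The main obstacle is the self-diffusion term $(\nabla v^n_\varepsilon,\nabla\Pi^h v^n_{\varepsilon-})$, the only term kept with the exact $L^2$-product; this is where the structural hypotheses are indispensable. Expanding $v^n_\varepsilon=\sum_i v_i\phi_i$ and $\Pi^h v^n_{\varepsilon-}=\sum_j (v_j)_-\phi_j$ with $(v_j)_-=\min\{v_j,0\}$, and setting $a_{ij}=(\nabla\phi_i,\nabla\phi_j)$, I would invoke the sign structure of the $\mathbb{P}_1$ stiffness matrix under the right-angled hypothesis (\textbf{H}): $a_{ij}\le 0$ for $i\neq j$ and $\sum_j a_{ij}=0$. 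Then the identity $(\nabla v^n_\varepsilon,\nabla\Pi^h v^n_{\varepsilon-})-\Vert\nabla\Pi^h v^n_{\varepsilon-}\Vert_0^2=\sum_{i\neq j}(v_i)_+(v_j)_-a_{ij}$ displays a sum of nonnegative contributions (as $(v_i)_+\ge 0$, $(v_j)_-\le 0$, $a_{ij}\le 0$), whence $(\nabla v^n_\varepsilon,\nabla\Pi^h v^n_{\varepsilon-})\ge \Vert\nabla\Pi^h v^n_{\varepsilon-}\Vert_0^2\ge 0$. This is the step I expect to be most delicate, precisely because it is where (\textbf{H}) and the choice of $\mathbb{P}_1$ for $V_h$ cannot be weakened.

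Collecting the three lower bounds, the left-hand side of the tested identity is at least $(\tfrac1k+1)\,\vert v^n_{\varepsilon-}\vert_h^2$, while its right-hand side is $\le 0$; hence $(\tfrac1k+1)\,\vert v^n_{\varepsilon-}\vert_h^2\le 0$, forcing $\vert v^n_{\varepsilon-}\vert_h=0$. Since $\vert w\vert_h^2=\sum_i\beta_i\,w(\mathbf{a}_i)^2$ vanishes only when all nodal values vanish (and $\vert\cdot\vert_h$ is a norm on $U_h$ by Remark \ref{C5:eqh25uv}), this gives $v^n_\varepsilon(\mathbf{a}_i)\ge 0$ for every vertex, and therefore $v^n_\varepsilon\ge 0$ on $\Omega$ as a $\mathbb{P}_1$ function with nonnegative nodal values. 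Iterating this one-step implication from a nonnegative datum then propagates the positivity of $v^n_\varepsilon$ for all $n$.
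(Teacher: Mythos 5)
Your proposal is correct and follows essentially the same route the paper takes (and defers to Remark 3.10 of \cite{C5:FMD4}): test the mass-lumped $v$-equation with $\bar v=\Pi^h(v^n_{\varepsilon-})$, sign all lumped terms nodewise using $v^{n-1}_\varepsilon\geq 0$ and $F_\varepsilon\geq 0$, and control the non-lumped stiffness term through the nonpositivity of the off-diagonal entries of the $\mathbb{P}_1$ stiffness matrix guaranteed by the right-angle hypothesis ({\bf H}). Your write-up simply makes explicit the details the paper leaves to the cited reference, so no discrepancy arises.
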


\subsubsection{Mass-conservation, Energy-stability and Solvability}\label{C5:ELusreg}
Since $\bar{u}=1\in U_h$ and $\bar{v}=1 \in V_h$, we deduce that the scheme \textbf{UV$\varepsilon$} is conservative in $u^n_\varepsilon$, that is,
\begin{equation}\label{C5:conu1regUV}
(u_\varepsilon^n,1)=(u^n_\varepsilon,1)^h= (u^{n-1}_\varepsilon,1)^h=\cdot\cdot\cdot= (u^{0},1)^h=(u^0,1)=(Q^h u_0,1)=(u_0,1):=m_0,
\end{equation}
and we have the following behavior for $\int_\Omega v^n_\varepsilon$: 
\begin{equation}\label{C5:conv1UV}
\delta_t \left(\int_\Omega v^n_\varepsilon \right)= p(p-1)\int_\Omega \Pi^h (F_\varepsilon(u^{n}_{\varepsilon}))  - \int_\Omega v^n_\varepsilon.
\end{equation}

\begin{defi}\label{C5:enesfUVreg}
A numerical scheme with solution $(u^n_\varepsilon,v^n_\varepsilon)$ is called energy-stable with respect to the  energy 
\begin{equation}\label{C5:ENuvreg}
\mathcal{E}_\varepsilon^h(u,v)=p(F_\varepsilon(u),1)^h + \frac{1}{2} \Vert \nabla v\Vert_0^2
\end{equation}
if this energy is time decreasing, that is $\mathcal{E}_\varepsilon^h(u^n_\varepsilon,v^n_\varepsilon)\leq \mathcal{E}_\varepsilon^h(u^{n-1}_\varepsilon,v^{n-1}_\varepsilon)$ for all $n\geq 1$.
\end{defi}

\begin{tma} {\bf (Unconditional stability)} \label{C5:estinc1uvreg}
The scheme \textbf{UV$\varepsilon$} is unconditionally
energy stable with respect to $\mathcal{E}_\varepsilon^h(u,v)$. In fact, if $(u^n_\varepsilon,v^n_\varepsilon)$ is a solution of \textbf{UV$\varepsilon$}, then the following discrete energy law holds
\begin{equation}\label{C5:deluvreg}
\delta_t \mathcal{E}_\varepsilon^h(u^n_\varepsilon,v^n_\varepsilon) + \frac{k\varepsilon^{2-p}p}{2}\Vert \delta_t u^n_\varepsilon\Vert_0^2 + \frac{k}{2} \Vert \delta_t \nabla v^n_\varepsilon\Vert_0^2 +  p\varepsilon^{2-p}\Vert \nabla u^n_\varepsilon\Vert_0^2 +\Vert (A_h - I) \nabla v^n_\varepsilon\Vert_0^2 + \Vert \nabla v^n_\varepsilon\Vert_0^2 \leq 0.
\end{equation}
\end{tma}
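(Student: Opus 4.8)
The plan is to reproduce at the discrete level the formal energy computation that led to the law following (\ref{C5:modelf02acontUVreg}), by testing the two equations of \textbf{UV$\varepsilon$} with the discrete analogues of $pF'_\varepsilon(u_\varepsilon)$ and $-\Delta v_\varepsilon$. Concretely, I would take $\bar u = p\,\Pi^h(F'_\varepsilon(u^n_\varepsilon))\in U_h$ in (\ref{C5:modelf02aUVreg})$_1$ and $\bar v = (A_h-I)v^n_\varepsilon\in V_h$ in (\ref{C5:modelf02aUVreg})$_2$, and then add the two resulting identities. The whole point of these choices is that the chemotaxis term of the first equation and the production term of the second will cancel exactly, leaving only dissipative contributions.

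For the $u$-equation, the time-derivative term $p(\delta_t u^n_\varepsilon,\Pi^h(F'_\varepsilon(u^n_\varepsilon)))^h$ is treated nodewise: since $(\cdot,\cdot)^h$ is the mass-lumped product, a Taylor expansion of $F_\varepsilon$ between $u^{n-1}_\varepsilon$ and $u^n_\varepsilon$ at each vertex, together with the uniform lower bound $F''_\varepsilon\ge\varepsilon^{2-p}$ from (\ref{C5:F2pE}), gives $p(\delta_t u^n_\varepsilon,\Pi^h(F'_\varepsilon(u^n_\varepsilon)))^h\ge p\,\delta_t(F_\varepsilon(u^n_\varepsilon),1)^h+\tfrac{k\varepsilon^{2-p}p}{2}\vert\delta_t u^n_\varepsilon\vert_h^2$, and the last seminorm is bounded below by $\Vert\delta_t u^n_\varepsilon\Vert_0^2$ using the mass-lumping inequality of Remark \ref{C5:eqh25uv}. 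The diffusion term is rewritten via (\ref{C5:PL15uv}) as $p(\nabla u^n_\varepsilon,\nabla\Pi^h(F'_\varepsilon(u^n_\varepsilon)))=p\int_\Omega\nabla u^n_\varepsilon\cdot(\Lambda^1_\varepsilon u^n_\varepsilon)^{-1}\nabla u^n_\varepsilon$ and bounded below by $p\varepsilon^{2-p}\Vert\nabla u^n_\varepsilon\Vert_0^2$ by the spectral estimate (\ref{C5:D5uv}). Finally, using the symmetry of $\Lambda^2_\varepsilon u^n_\varepsilon$ and the key identity (\ref{C5:PL16uv}), the chemotaxis term becomes $-p(p-1)(\nabla v^n_\varepsilon,\nabla\Pi^h(F_\varepsilon(u^n_\varepsilon)))$.

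For the $v$-equation, testing with $(A_h-I)v^n_\varepsilon$ and using the defining relation $((A_h-I)v^n_\varepsilon,\bar v)=(\nabla v^n_\varepsilon,\nabla\bar v)$ for all $\bar v\in V_h$ gives, for the discrete time derivative, $(\delta_t v^n_\varepsilon,(A_h-I)v^n_\varepsilon)=(\nabla v^n_\varepsilon,\nabla\delta_t v^n_\varepsilon)=\delta_t\tfrac12\Vert\nabla v^n_\varepsilon\Vert_0^2+\tfrac{k}{2}\Vert\delta_t\nabla v^n_\varepsilon\Vert_0^2$, while the operator term expands as $(A_h v^n_\varepsilon,(A_h-I)v^n_\varepsilon)=\Vert(A_h-I)v^n_\varepsilon\Vert_0^2+\Vert\nabla v^n_\varepsilon\Vert_0^2$ (the discrete analogue of $\Vert\nabla v\Vert_1^2$, recall (\ref{C5:H1divGrad})). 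Since $U_h\subset V_h$ (because $r\ge1$), the production term becomes $p(p-1)(\Pi^h(F_\varepsilon(u^n_\varepsilon)),(A_h-I)v^n_\varepsilon)=p(p-1)(\nabla v^n_\varepsilon,\nabla\Pi^h(F_\varepsilon(u^n_\varepsilon)))$, which is exactly the negative of the chemotaxis contribution obtained above.

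Adding the two tested equations, the coupling terms cancel and the remaining lower bounds assemble, after recognizing $p\,\delta_t(F_\varepsilon(u^n_\varepsilon),1)^h+\delta_t\tfrac12\Vert\nabla v^n_\varepsilon\Vert_0^2=\delta_t\mathcal{E}_\varepsilon^h(u^n_\varepsilon,v^n_\varepsilon)$, into exactly (\ref{C5:deluvreg}). I expect the main obstacle to be the exact cancellation of the chemotaxis and production terms, which relies entirely on the discrete chain-rule identities (\ref{C5:PL15uv})--(\ref{C5:PL16uv}) for the matrices $\Lambda^i_\varepsilon u^n_\varepsilon$ (whose construction uses the right-angle hypothesis (\textbf{H}) and the $\mathbb{P}_1$ choice for $U_h$) and on the symmetry of $\Lambda^2_\varepsilon u^n_\varepsilon$; the delicate point is to verify that $\nabla\Pi^h(F'_\varepsilon(u^n_\varepsilon))$ is handled consistently in both the diffusion and the chemotaxis terms. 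The secondary technical point is the discrete-in-time argument, where convexity of $F_\varepsilon$ alone only yields energy decay, and the uniform bound $F''_\varepsilon\ge\varepsilon^{2-p}$ is what extracts the genuinely positive numerical-dissipation term $\tfrac{k\varepsilon^{2-p}p}{2}\Vert\delta_t u^n_\varepsilon\Vert_0^2$.
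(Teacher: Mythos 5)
Your proof is correct and follows essentially the same route as the paper's own: the same test functions $\bar u = p\,\Pi^h(F'_\varepsilon(u^n_\varepsilon))$ and $\bar v=(A_h-I)v^n_\varepsilon$, the same cancellation of the chemotaxis and production terms via (\ref{C5:PL15uv})--(\ref{C5:PL16uv}) and the symmetry of $\Lambda^2_\varepsilon(u^n_\varepsilon)$, and the same Taylor-expansion/lower-bound treatment of the time-derivative and diffusion terms using $F''_\varepsilon\ge\varepsilon^{2-p}$, the spectral estimate (\ref{C5:D5uv}) and Remark \ref{C5:eqh25uv}. The only discrepancy is cosmetic: your computation produces $\Vert(A_h-I)v^n_\varepsilon\Vert_0^2$, which agrees with the paper's own derivation (\ref{C5:I01aUVreg}), whereas the theorem statement prints this term as $\Vert(A_h-I)\nabla v^n_\varepsilon\Vert_0^2$, an apparent misprint.
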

\begin{proof}
Testing (\ref{C5:modelf02aUVreg})$_1$ by $\bar{u}= p\Pi^h (F'_\varepsilon (u^n_\varepsilon))$ and (\ref{C5:modelf02aUVreg})$_2$ by $\bar{v}=(A_h - I) v^n_\varepsilon$, adding and taking into account that $\Lambda_\varepsilon^i(u^n_\varepsilon)$ are symmetric as well as (\ref{C5:PL15uv})-(\ref{C5:PL16uv}), the terms $-p(\Lambda^2_\varepsilon (u^{n}_\varepsilon) \nabla v^n_\varepsilon, \nabla  \Pi^h(F'_\varepsilon(u^n_\varepsilon)))=-p( \nabla v^n_\varepsilon, \Lambda^2_\varepsilon (u^{n}_\varepsilon)\nabla  \Pi^h(F'_\varepsilon(u^n_\varepsilon)))=- p(p-1)(\nabla v^n_\varepsilon,\nabla  \Pi^h (F_\varepsilon(u^n_\varepsilon))) $ and $p(p-1)(\Pi^h (F_\varepsilon(u^n_\varepsilon)),(A_h-I) v^n_\varepsilon)=p(p-1)(\nabla  \Pi^h (F_\varepsilon(u^n_\varepsilon)) ,\nabla v^n_\varepsilon)$ cancel, and using that $\nabla \Pi^h (F'_\varepsilon (u^n_\varepsilon))=\Lambda_\varepsilon^1(u^{n}_\varepsilon)^{-1} \nabla u^n_\varepsilon$ we obtain
\begin{eqnarray}\label{C5:I01aUVreg}
&p(\delta_t u^n_\varepsilon,F'_\varepsilon (u^n_\varepsilon))^h&\!\!\!\! + p\int_\Omega (\nabla u^n_\varepsilon)^T\!\cdot\!\Lambda_\varepsilon^1(u^{n}_\varepsilon)^{-1}\!\cdot\! \nabla u^n_\varepsilon d\x  \nonumber\\
&&\!\!\!\! + \delta_t \Big(\frac{1}{2} \Vert \nabla v^n_\varepsilon\Vert_0^2\Big) + \frac{k}{2} \Vert \delta_t \nabla v^n_\varepsilon\Vert_0^2 +\Vert  (A_h - I) v^n_\varepsilon\Vert_0^2+ \Vert \nabla v^n_\varepsilon\Vert_0^2  = 0.
\end{eqnarray}
Moreover, observe that from the Taylor formula we have
\begin{equation*}
F_\varepsilon(u^{n-1}_\varepsilon)=F_\varepsilon(u^n_\varepsilon)+F'_\varepsilon(u^n_\varepsilon)(u^{n-1}_\varepsilon-u^n_\varepsilon)+\frac{1}{2}F''_\varepsilon(\theta
u^n_\varepsilon+(1-\theta)u^{n-1}_\varepsilon)(u^{n-1}_\varepsilon-u^n_\varepsilon)^2,
\end{equation*}
and therefore,
\begin{equation}\label{C5:I01breguv}
\delta_t u^n_\varepsilon \cdot F'_\varepsilon (u^n_\varepsilon) = \delta_t \Big( F_\varepsilon(u^n_\varepsilon) \Big) + \frac{k}{2} F''_\varepsilon(\theta
u^n_\varepsilon+(1-\theta)u^{n-1}_\varepsilon)(\delta_t u^n_\varepsilon)^2.
\end{equation}
Then, using (\ref{C5:I01breguv}) and taking into account that $\Pi^h$ is linear and $F''_\varepsilon(s)\geq \varepsilon^{2-p}$ for all $s\in \mathbb{R}$, we have
\begin{eqnarray}\label{C5:I01creguv}
&(\delta_t u^n_\varepsilon,F'_\varepsilon (u^n_\varepsilon))^h&\!\!\!= \delta_t \Big( \int_\Omega \Pi^h (F_\varepsilon(u^n_\varepsilon)) \Big) + \frac{k}{2} \int_\Omega \Pi^h(F''_\varepsilon(\theta
u^n_\varepsilon+(1-\theta)u^{n-1}_\varepsilon)(\delta_t u^n_\varepsilon)^2)\nonumber\\
&& \!\!\! \geq \delta_t \Big((F_\varepsilon(u^n_\varepsilon),1)^h \Big) + \frac{k\varepsilon^{2-p}}{2}\vert \delta_t u^n_\varepsilon\vert_h^2.
\end{eqnarray}
Thus, from (\ref{C5:I01aUVreg}),  (\ref{C5:D5uv}), (\ref{C5:I01creguv}) and Remark \ref{C5:eqh25uv}, we arrive at (\ref{C5:deluvreg}).
\end{proof}

\begin{cor} \label{C5:welemUVreg} {\bf(Uniform estimates) }
Assume that $(u_0,v_0)\in L^2(\Omega)\times H^1(\Omega)$. Let $(u^n_\varepsilon,v^n_\varepsilon)$ be a solution of scheme \textbf{UV$\varepsilon$}. Then, it holds 
\begin{equation}\label{C5:weak01uv-aUVreg1}
p(F_\varepsilon(u^n_\varepsilon),1)^h + \frac{1}{2}\Vert v^n_\varepsilon\Vert_{1}^{2}+k \underset{m=1}{\overset{n}{\sum}}\left( p\varepsilon^{2-p}\Vert \nabla u^m_\varepsilon\Vert_0^2 +\Vert  (A_h - I) v^m_\varepsilon\Vert_0^2 + \Vert \nabla v^m_\varepsilon\Vert_0^2\right)\leq \frac{C_0}{(p-1)^2}, \ \ \forall n\geq 1,
\end{equation}
\begin{equation}\label{C5:weak01uv-aUVreg2}
k \underset{m=n_0 + 1}{\overset{n+n_0}{\sum}} \Vert v^m_\varepsilon\Vert_{W^{1,6}}^2 \leq \frac{C_1}{(p-1)^2}(1+kn), \ \ \forall n\geq 1,
\end{equation}
where the integer $n_0\geq 0$ is arbitrary, with the constants $C_0,C_1>0$ depending on the data $(\Omega,u_0, v_0)$, but independent of $k,h,n$ and $\varepsilon$. Moreover,
\begin{equation}\label{C5:pneg1-aUVreg}
\Vert \Pi^h (u^n_{\varepsilon-})\Vert_0^2 \leq \frac{C_0}{(p-1)^2}\varepsilon^{2-p} \ \ \mbox{ and} \ \
\Vert u^n_\varepsilon\Vert_{L^p}^p \leq \frac{C_0K}{(p-1)^2} + K, \ \ \forall n\geq 1,
\end{equation}
where $u^n_{\varepsilon-}:=\min\{u^n_\varepsilon, 0 \}\leq 0$ and the constant $K>0$ is independent of $k,h,n$ and $\varepsilon$.
\end{cor}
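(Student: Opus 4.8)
The plan is to read all three estimates off the discrete energy law (\ref{C5:deluvreg}), upgraded where necessary by the mass balance (\ref{C5:conv1UV}). First I would sum (\ref{C5:deluvreg}) over $m=1,\dots,n$ and multiply by $k$: the term $k\sum_m \delta_t\mathcal{E}_\varepsilon^h(u^m_\varepsilon,v^m_\varepsilon)$ telescopes into $\mathcal{E}_\varepsilon^h(u^n_\varepsilon,v^n_\varepsilon)-\mathcal{E}_\varepsilon^h(u^0,v^0)$, and discarding the nonnegative quantities $\frac{k\varepsilon^{2-p}p}{2}\Vert\delta_t u^m_\varepsilon\Vert_0^2$ and $\frac{k}{2}\Vert\delta_t\nabla v^m_\varepsilon\Vert_0^2$ already yields (\ref{C5:weak01uv-aUVreg1}) with $\frac12\Vert\nabla v^n_\varepsilon\Vert_0^2$ in place of $\frac12\Vert v^n_\varepsilon\Vert_1^2$, provided the initial energy is bounded. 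The step needing care here is bounding $\mathcal{E}_\varepsilon^h(u^0,v^0)=p(F_\varepsilon(Q^hu_0),1)^h+\frac12\Vert\nabla R^hv_0\Vert_0^2$ by $C_0/(p-1)^2$ uniformly in $\varepsilon$: using the Taylor identity (\ref{C5:Feeq}) and $F''_\varepsilon(s)\le\varepsilon^{2-p}\le 1$ for large $s$, one gets a growth bound $F_\varepsilon(s)\le C(p)(1+s^2)$ whose constant degenerates no worse than $(p-1)^{-2}$ (traced to the explicit values $F_\varepsilon(1),F'_\varepsilon(1)$ fixed in the construction), and then invokes the $L^2$-stability of $Q^h$ and $H^1$-stability of $R^h$ against $u_0\in L^2(\Omega)$, $v_0\in H^1(\Omega)$.

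To replace $\Vert\nabla v^n_\varepsilon\Vert_0^2$ by the full $\Vert v^n_\varepsilon\Vert_1^2=\Vert\nabla v^n_\varepsilon\Vert_0^2+(\int_\Omega v^n_\varepsilon)^2$, I would extract an independent bound for $(\int_\Omega v^n_\varepsilon)^2$ from (\ref{C5:conv1UV}). Multiplying (\ref{C5:conv1UV}) by $\int_\Omega v^n_\varepsilon$, using $a\,\delta_t a\ge\delta_t(\tfrac12 a^2)$ and Young's inequality gives $(1+k)\tfrac12(\int_\Omega v^n_\varepsilon)^2\le\tfrac12(\int_\Omega v^{n-1}_\varepsilon)^2+kC\big((F_\varepsilon(u^n_\varepsilon),1)^h\big)^2$; since the already-proven part of (\ref{C5:weak01uv-aUVreg1}) makes $(F_\varepsilon(u^n_\varepsilon),1)^h\le C_0/(p(p-1)^2)$ uniform in $n$, the forcing is a constant of size $C/(p-1)^2$, and Lemma \ref{C5:tmaD} (with $\delta=1$) yields $(\int_\Omega v^n_\varepsilon)^2\le C/(p-1)^2$ for all $n$. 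Adding this to the energy estimate closes (\ref{C5:weak01uv-aUVreg1}).

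For the $W^{1,6}$-estimate (\ref{C5:weak01uv-aUVreg2}) I would start from (\ref{C5:estW16}), $\Vert v^m_\varepsilon\Vert_{W^{1,6}}^2\le C\Vert A_hv^m_\varepsilon\Vert_0^2$, split $A_hv^m_\varepsilon=(A_h-I)v^m_\varepsilon+v^m_\varepsilon$ and bound $\Vert v^m_\varepsilon\Vert_0^2$ by Poincar\'e with $C\Vert\nabla v^m_\varepsilon\Vert_0^2+C(\int_\Omega v^m_\varepsilon)^2$. Summing over the window $m=n_0+1,\dots,n+n_0$ and multiplying by $k$, the contributions of $\Vert(A_h-I)v^m_\varepsilon\Vert_0^2$ and $\Vert\nabla v^m_\varepsilon\Vert_0^2$ are dominated by their full sums $\le C_0/(p-1)^2$ from (\ref{C5:weak01uv-aUVreg1}), giving the $O(1)$ part independently of $n_0$, while the $n$ terms $(\int_\Omega v^m_\varepsilon)^2$, each uniformly $\le C/(p-1)^2$, contribute $knC/(p-1)^2$, which is precisely the factor $(1+kn)$. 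The arbitrariness of $n_0$ is exactly what the uniform-in-$m$ bound on the means provides.

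Finally, for (\ref{C5:pneg1-aUVreg}) I would argue nodewise and then invoke Remark \ref{C5:eqh25uv}. At each vertex $\mathbf{a}_i$, Lemma \ref{C5:eFe} gives $F_\varepsilon(u^n_\varepsilon(\mathbf{a}_i))\ge\tfrac14\varepsilon^{p-2}\big(u^n_{\varepsilon-}(\mathbf{a}_i)\big)^2$ (trivial where $u^n_\varepsilon(\mathbf{a}_i)\ge0$); applying $\Pi^h$ and integrating yields $\tfrac14\varepsilon^{p-2}|\Pi^h u^n_{\varepsilon-}|_h^2\le(F_\varepsilon(u^n_\varepsilon),1)^h\le C_0/(p(p-1)^2)$, which after Remark \ref{C5:eqh25uv} is the first bound in (\ref{C5:pneg1-aUVreg}); the surviving positive power $\varepsilon^{2-p}$ (recall $p<2$) is what will force $u\ge0$ in the limit $\varepsilon\to0$. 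For the $L^p$-bound I would apply Remark \ref{C5:OBSr}, $|s|^p\le K_1F_\varepsilon(s)+K_2$, nodewise, integrate the interpolant to get $\int_\Omega\Pi^h(|u^n_\varepsilon|^p)\le K_1(F_\varepsilon(u^n_\varepsilon),1)^h+K_2|\Omega|$, and conclude with the $h$-uniform equivalence of $\Vert\cdot\Vert_{L^p}$ and the lumped $L^p$ functional on $\mathbb{P}_1$. The main obstacle I anticipate is not any single inequality but keeping every constant uniform in $\varepsilon$ while correctly tracking the powers $(p-1)^{-2}$ and $\varepsilon^{2-p}$ through the initial-energy bound and the nodewise use of Lemma \ref{C5:eFe}.
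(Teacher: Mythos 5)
Your overall strategy coincides with the paper's own proof: telescope the discrete energy law (\ref{C5:deluvreg}) against a bound on the initial energy, control $\big(\int_\Omega v^n_\varepsilon\big)^2$ through the mass balance (\ref{C5:conv1UV}) and Lemma \ref{C5:tmaD}, obtain (\ref{C5:weak01uv-aUVreg2}) from (\ref{C5:estW16}) plus window sums dominated by (\ref{C5:weak01uv-aUVreg1}), and derive (\ref{C5:pneg1-aUVreg}) nodewise from Lemma \ref{C5:eFe} and Remark \ref{C5:OBSr}. Your small variants (squaring the mean and applying Young's inequality where the paper works with $\big\vert\int_\Omega v^n_\varepsilon\big\vert$ directly; invoking a lumped-$L^p$ norm equivalence where the paper uses the pointwise convexity inequality $\vert\Pi^h u\vert^p\le\Pi^h(\vert u\vert^p)$) are harmless.

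However, the one step you yourself single out as ``needing care'' --- the bound $\mathcal{E}^h_\varepsilon(u^0,v^0)\le C_0/(p-1)^2$ uniformly in $\varepsilon$ --- is justified incorrectly, and as written it fails. You invoke the Taylor identity (\ref{C5:Feeq}), which expands $F_\varepsilon$ around $s=0$, together with ``$F''_\varepsilon(s)\le\varepsilon^{2-p}\le 1$ for large $s$''. But the intermediate point $s_0$ in (\ref{C5:Feeq}) can lie anywhere between $0$ and $s$, and on $(-\infty,\varepsilon]$ one has $F''_\varepsilon\equiv\varepsilon^{p-2}$, which blows up as $\varepsilon\to 0$ (since $1<p<2$, it is $\varepsilon^{p-2}$, not $\varepsilon^{2-p}$, that is large); so this argument only yields $F_\varepsilon(s)\le C\big(\varepsilon^p+\varepsilon^{p-1}\vert s\vert+\varepsilon^{p-2}s^2\big)$, which is not uniform in $\varepsilon$. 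Worse, the claimed uniform bound $F_\varepsilon(s)\le C(p)(1+s^2)$ is genuinely false for $s<0$, where (\ref{C5:es11})$_1$ gives $F_\varepsilon(s)\ge\frac14\varepsilon^{p-2}s^2$ --- this penalization of negative values is precisely what makes (\ref{C5:pneg1-aUVreg})$_1$ work, so it cannot be bounded away. The repair needs two ingredients present in the paper's proof (see (\ref{C5:ee55uv})) but absent from yours: (i) $u^0=Q^h u_0\ge 0$, which holds because the mass-lumped projection $Q^h$ is nonnegativity-preserving (the lumped mass matrix is diagonal with positive entries and $(u_0,\phi_i)\ge 0$ for each nodal basis function $\phi_i$); and (ii) a quadratic growth bound for $F_\varepsilon$ restricted to $[0,+\infty)$, proved branch by branch from the explicit construction: on $[0,\varepsilon]$ all three terms of (\ref{C5:Feeq}) are of size $O(\varepsilon^p)(p-1)^{-2}$; on $[\varepsilon,\varepsilon^{-1}]$ one has $F_\varepsilon(s)=\frac{s^p}{p(p-1)}+O(\varepsilon^p)(p-1)^{-2}\le C(1+s^2)/(p-1)^2$; and on $[\varepsilon^{-1},+\infty)$ one expands around $\varepsilon^{-1}$, where indeed $F''_\varepsilon=\varepsilon^{2-p}\le 1$, and uses $\varepsilon^{-1}\le s$. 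With (i)--(ii) in place, the $L^2$-stability of $Q^h$ and $H^1$-stability of $R^h$ give the initial-energy bound, and the rest of your argument goes through as in the paper.
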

\begin{obs}\label{C5:NNuh1uv}{\bf (Approximated positivity of $u^n_\varepsilon$)}
From (\ref{C5:pneg1-aUVreg})$_1$, the following estimate holds 
$$\max_{n\geq 0} \Vert \Pi^h (u^n_{\varepsilon-})\Vert_0^2 \leq \frac{C_0}{(p-1)^2}\varepsilon^{2-p}.$$
 \end{obs}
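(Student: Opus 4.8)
The plan is to obtain the stated uniform bound directly from estimate (\ref{C5:pneg1-aUVreg})$_1$ of Corollary \ref{C5:welemUVreg}, which is taken as given. That inequality reads
\[
\Vert \Pi^h(u^n_{\varepsilon-})\Vert_0^2 \leq \frac{C_0}{(p-1)^2}\,\varepsilon^{2-p}
\]
for every $n \geq 1$, and the decisive feature is that its right-hand side is independent of $n$ (as well as of $k$ and $h$). Hence the supremum of the left-hand side over all $n \geq 1$ is bounded by the very same constant, so the assertion of the Remark holds immediately on the range $n \geq 1$.

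The only index not literally covered by (\ref{C5:pneg1-aUVreg})$_1$ is $n = 0$. There the iterate is the initialization $u^0 = Q^h u_0 \in U_h$, so $\Pi^h(u^0_{-})$ is determined solely by the fixed datum $u_0$ together with the mesh, independently of the time marching. I would estimate $\Vert \Pi^h(u^0_{-})\Vert_0^2$ by a constant depending only on $(\Omega, u_0)$ and absorb it into $C_0$---recall $C_0$ is already permitted to depend on $(\Omega, u_0, v_0)$---so that, after enlarging $C_0$ if necessary, the same bound is valid at $n = 0$. Taking the maximum over $n \geq 0$ of two quantities each dominated by $\frac{C_0}{(p-1)^2}\varepsilon^{2-p}$ then gives the claim. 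There is no genuine obstacle here: the Remark is an immediate corollary of (\ref{C5:pneg1-aUVreg})$_1$, the $n = 0$ term being the sole item that requires the brief separate check indicated above.
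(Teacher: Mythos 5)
Your handling of $n\geq 1$ is correct and is exactly the paper's (implicit) argument: the right-hand side of (\ref{C5:pneg1-aUVreg})$_1$ does not depend on $n$, so passing to the supremum over $n\geq 1$ is immediate. The gap is in your treatment of $n=0$. The bound $\frac{C_0}{(p-1)^2}\varepsilon^{2-p}$ is not a fixed constant: since $1<p<2$, the factor $\varepsilon^{2-p}$ tends to $0$ as $\varepsilon\to 0$, while $C_0$ is required to be independent of $\varepsilon$ --- and this $\varepsilon$-uniformity is the entire content of the remark, since it is what yields $\Pi^h(u^n_{\varepsilon-})\to 0$ as $\varepsilon\to 0$ (the ``approximated positivity'' invoked later in Section \ref{C5:S5C5:NSi}). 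Consequently, if $\Vert \Pi^h(u^0_{-})\Vert_0^2$ were any fixed positive number depending only on $(\Omega,u_0)$, no enlargement of an $\varepsilon$-independent $C_0$ could dominate it by $\frac{C_0}{(p-1)^2}\varepsilon^{2-p}$ uniformly for $\varepsilon\in(0,1)$. Your ``absorb it into $C_0$'' step therefore fails unless the $n=0$ term is actually zero.

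And it is zero; that is the missing observation. The initialization is $u^0=Q^h u_0$ with $u_0\geq 0$, and the mass-lumped projection $Q^h$ onto $\mathbb{P}_1$-continuous FE preserves nonnegativity: testing (\ref{C5:MLP25uv}) with a nodal basis function $\phi_i$ gives
\begin{equation*}
(Q^h u_0)(\mathbf{a}_i)=\frac{(u_0,\phi_i)}{\int_\Omega \phi_i}\geq 0,
\end{equation*}
since $\phi_i\geq 0$. The paper records this in the proof of Corollary \ref{C5:welemUVreg} (``$u_0\geq 0$ (and therefore, $u^{0}\geq 0$)''). Hence $u^0_{\varepsilon-}\equiv 0$, so $\Pi^h(u^0_{\varepsilon-})=0$ and the estimate holds trivially at $n=0$. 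With this replacement for your $n=0$ step, the argument is complete and coincides with the paper's.
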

\begin{proof}
First, taking into account that $(u^{0},{v}^{0})=(Q^h u_0, R^h v_0)$, $u_0\geq 0$ (and therefore, $u^{0}\geq 0$), as well as the definition of $F_\varepsilon$, we have that
\begin{eqnarray}\label{C5:ee55uv}
&\mathcal{E}^h_\varepsilon(u^0,v^0)&\!\!\!= p\int_\Omega \Pi^h (F_\varepsilon(u^0)) +  \frac{1}{2}\Vert \nabla v^0\Vert_{0}^{2} \leq \frac{C}{p-1}\int_\Omega \Pi^h \Big((u^0)^2 + \frac{1}{p-1}\Big) +  \frac{1}{2}\Vert  \nabla v^0\Vert_{0}^{2}\nonumber\\
&&\!\!\!\!\!\!\!\!\!\!\!\!\!\!\!\!\! \leq \frac{C}{p-1}\Big(\Vert u^0\Vert_0^2 + \Vert  \nabla v^0\Vert_{0}^{2} + \frac{1}{p-1}\Big) \leq \frac{C}{p-1}\Big(\Vert u_0\Vert_0^2 + \Vert v_0\Vert_{1}^{2} + \frac{1}{p-1}\Big)\leq \frac{C_0}{(p-1)^2},
\end{eqnarray}
where the constant $C_0>0$ depends on the data $(\Omega, u_0, v_0)$, but is independent of $k,h,n$ and $\varepsilon$. Therefore,  from the discrete energy law (\ref{C5:deluvreg}) and estimate (\ref{C5:ee55uv}), we have
\begin{equation}\label{C5:ni2}
\mathcal{E}^h_\varepsilon(u^n_\varepsilon,v^n_\varepsilon)+k \underset{m=1}{\overset{n}{\sum}}\left( p\varepsilon^{2-p}\Vert\nabla u^m_\varepsilon\Vert_0^2 +\Vert (A_h - I) v^m_\varepsilon\Vert_0^2 + \Vert \nabla v^m_\varepsilon\Vert_0^2\right) \leq  \mathcal{E}^h_\varepsilon(u^0,v^0)\leq \frac{C_0}{(p-1)^2}.
\end{equation}
Moreover, from (\ref{C5:conv1UV}), the definition of $F_\varepsilon$, Remark \ref{C5:OBSr} and (\ref{C5:ni2}), we have 
\begin{eqnarray}\label{C5:esV1}
&(1+k)\displaystyle \left\vert \int_\Omega v^n_\varepsilon \right\vert - \left\vert \int_\Omega v^{n-1}_\varepsilon \right\vert&\!\!\! \leq kp(p-1)\int_\Omega \Pi^h (F_\varepsilon(u^{n}_{\varepsilon}))\leq  k \frac{C}{p-1},
\end{eqnarray}
where the constant $C>0$ is independent of $k,h,n$ and $\varepsilon$. Then, applying Lemma \ref{C5:tmaD} in (\ref{C5:esV1}) (for $\delta=1$ and $\beta=\frac{C}{p-1}$), we arrive at
$$
\left\vert \int_\Omega v^n_\varepsilon \right\vert \leq (1+k)^{-n}  \left\vert \int_\Omega v^0_h \right\vert+ \frac{C}{p-1}= (1+k)^{-n}  \left\vert \int_\Omega R^h v_0 \right\vert+\frac{C}{p-1},
$$
which, together with (\ref{C5:ni2}), imply (\ref{C5:weak01uv-aUVreg1}). Moreover, adding (\ref{C5:deluvreg}) from $m=n_0+1$ to $m=n+n_0$, and using (\ref{C5:estW16}) and (\ref{C5:weak01uv-aUVreg1}), we deduce (\ref{C5:weak01uv-aUVreg2}). On the other hand, from (\ref{C5:es11})$_1$, we have $\frac{1}{4} \varepsilon^{p-2} (u^n_{\varepsilon-}(\x))^2 \leq F_\varepsilon (u^n_\varepsilon(\x))$ for all $u^n_\varepsilon \in U_h$; and therefore, using that $(\Pi^h u)^2\leq \Pi^h (u^2)$ for all $u\in C(\overline{\Omega})$, we have
\begin{equation*}
 \frac{1}{4} \varepsilon^{p-2}\int_\Omega (\Pi^h (u^n_{\varepsilon-}))^2 \leq \frac{1}{4}\varepsilon^{p-2}\int_\Omega \Pi^h ((u^n_{\varepsilon-})^2) \leq  \int_\Omega \Pi^h (F_\varepsilon(u^n_\varepsilon)) \leq \frac{C_0}{(p-1)^2},
\end{equation*}
where estimate (\ref{C5:weak01uv-aUVreg1}) was used in the last inequality. Thus, we obtain (\ref{C5:pneg1-aUVreg})$_1$. Finally, taking into account that $\vert\Pi^h u\vert^p\leq \Pi^h (\vert u\vert ^p)$ for all $u\in C(\overline{\Omega})$, as well as Remark \ref{C5:OBSr} and (\ref{C5:weak01uv-aUVreg1}),  we have
\begin{eqnarray*}
\Vert u^n_\varepsilon\Vert_{L^p}^p = \int_\Omega \vert \Pi^h u^n_\varepsilon\vert^p \leq \int_\Omega \Pi^h (\vert u^n_\varepsilon\vert^p)\leq \int_\Omega \Pi^h (K_1 F_\varepsilon(u^n_\varepsilon)+K_2) \leq \frac{C_0K}{(p-1)^2} + K,
\end{eqnarray*}
arriving at (\ref{C5:pneg1-aUVreg})$_2$.
\end{proof}

\begin{tma} {\bf (Unconditional existence)} \label{uncond}
There exists at least one solution $(u^n_\varepsilon,v^n_\varepsilon)$  of scheme \textbf{UV$\varepsilon$}.
\end{tma}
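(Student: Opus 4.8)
The plan is to apply the Leray--Schauder fixed point theorem at each time step, exploiting that, for fixed $n$, $\varepsilon$, $k$ and $h$, the scheme \textbf{UV$\varepsilon$} is a finite-dimensional nonlinear system for the unknown $(u^n_\varepsilon,v^n_\varepsilon)\in U_h\times V_h$, with $(u^{n-1}_\varepsilon,v^{n-1}_\varepsilon)$ given. Since $U_h\times V_h$ is finite-dimensional, compactness of the fixed-point operator is automatic, so the two substantive points are the continuity of the operator and a uniform a priori bound on the fixed points of an associated homotopy.

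Concretely, for $\lambda\in[0,1]$ I would introduce an operator $R_\lambda:U_h\times V_h\to U_h\times V_h$, $R_\lambda(\widetilde u,\widetilde v)=(u,v)$, defined by solving two decoupled linear problems: first find $v\in V_h$ with
\begin{equation*}
\tfrac{1}{k}(v,\bar v)+(A_h v,\bar v)=\lambda\, p(p-1)(\Pi^h(F_\varepsilon(\widetilde u)),\bar v)+\tfrac{1}{k}(v^{n-1}_\varepsilon,\bar v),\quad\forall\,\bar v\in V_h,
\end{equation*}
and then find $u\in U_h$ with
\begin{equation*}
\tfrac{1}{k}(u,\bar u)^h+(\nabla u,\nabla\bar u)+\lambda(\Lambda^2_\varepsilon(\widetilde u)\nabla v,\nabla\bar u)=\tfrac{1}{k}(u^{n-1}_\varepsilon,\bar u)^h,\quad\forall\,\bar u\in U_h.
\end{equation*}
Both bilinear forms are coercive (using Remark \ref{C5:eqh25uv} for the mass-lumped term), so $R_\lambda$ is well defined, and its continuity follows from $F_\varepsilon\in C^2$ together with the Lipschitz continuity of $\widetilde u\mapsto\Lambda^2_\varepsilon(\widetilde u)$ established in Lemma \ref{C5:lemconv}. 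Note that $R_0$ is a \emph{constant} map (its value is independent of $(\widetilde u,\widetilde v)$), that fixed points of $R_\lambda$ are exactly the solutions of the homotopy family obtained by multiplying both coupling terms of \textbf{UV$\varepsilon$} by $\lambda$, and that $R_1$ corresponds to the scheme itself.

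The core of the argument is to bound the fixed points of $R_\lambda$ uniformly in $\lambda$. Given such a fixed point $(u,v)$, I would repeat the computation of Theorem \ref{C5:estinc1uvreg}, testing the $u$-equation by $p\Pi^h(F'_\varepsilon(u))$ and the $v$-equation by $(A_h-I)v$. The decisive point is that the \emph{same} factor $\lambda$ multiplies both the chemotaxis term and the production term, so that—using the identities (\ref{C5:PL15uv})--(\ref{C5:PL16uv}), the symmetry of $\Lambda^2_\varepsilon(u)$ and the inclusion $U_h\subseteq V_h$ (which yields $\Pi^h(F_\varepsilon(u))\in V_h$ and hence $(\Pi^h(F_\varepsilon(u)),(A_h-I)v)=(\nabla\Pi^h(F_\varepsilon(u)),\nabla v)$)—these two terms cancel for every $\lambda\in[0,1]$. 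Consequently one recovers precisely the discrete energy inequality (\ref{C5:deluvreg}) (the only occurrences of $\lambda$ lying on the two cancelling terms), and therefore a bound on $\mathcal{E}^h_\varepsilon(u,v)$, $\Vert\nabla u\Vert_0$ and $\Vert\nabla v\Vert_0$ in terms of $\mathcal{E}^h_\varepsilon(u^{n-1}_\varepsilon,v^{n-1}_\varepsilon)$ alone. Taking $\bar u=1$ gives the mass identity $(u,1)^h=(u^{n-1}_\varepsilon,1)^h$, which together with the gradient bound controls $\Vert u\Vert_1$; taking $\bar v=1$ controls $\int_\Omega v$ and hence, with the gradient bound, $\Vert v\Vert_1$. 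Using Lemma \ref{C5:eFe} and the equivalence of norms on the finite-dimensional space $U_h$, this produces a constant $M$—independent of $\lambda$, though possibly depending on $\varepsilon,k,h$—with $\Vert(u,v)\Vert_{U_h\times V_h}\le M$.

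With the continuity of the compact homotopy $R_\lambda$, the uniform bound on its fixed points, and $R_0$ constant (so that the Leray--Schauder degree of $I-R_0$ on a ball of radius larger than $M$ equals $1$), homotopy invariance of the degree yields a fixed point of $R_1$, i.e.\ a solution of \textbf{UV$\varepsilon$}. I expect the main obstacle to be exactly the preservation of the energy cancellation under the homotopy: the naive ``$\lambda R$'' scaling used in the continuous analysis would place $\lambda$ on the data terms and destroy the balance between the chemotaxis and production terms, so the homotopy must instead be chosen to scale both coupling terms simultaneously while leaving the data unscaled. Checking that the test-function computation of Theorem \ref{C5:estinc1uvreg} then goes through verbatim for all $\lambda$ is the delicate step; once it is in place, the a priori estimate and the degree argument are routine.
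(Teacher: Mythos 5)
Your proposal is correct, and at its core it is the same argument the paper gives (and delegates to Theorem 3.11 of \cite{C5:FMD4}): a Leray--Schauder fixed-point argument for a decoupled linearized solve on $U_h\times V_h$, with the cancellation of the chemotaxis and production terms from Theorem \ref{C5:estinc1uvreg} supplying the $\lambda$-uniform a priori bound, and Lemma \ref{C5:lemconv} supplying the continuity of $\widetilde u\mapsto\Lambda^2_\varepsilon(\widetilde u)$. The differences are organizational rather than substantive. First, the paper's operator $R$ is decoupled in the Jacobi sense (its $u$-problem uses $\Lambda_\varepsilon^2(\widetilde u)\nabla\widetilde v$), whereas yours is Gauss--Seidel (the $u$-problem uses the freshly computed $v$); both choices work and have the same fixed points at $\lambda=1$. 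Second, the paper invokes the standard form of Leray--Schauder, bounding all fixed points of $\lambda R$ for $\lambda\in(0,1]$, while you construct a bespoke homotopy that scales only the two coupling terms and then run a degree argument. That is legitimate, but your stated reason for doing so is not accurate: under the naive scaling $(u,v)=\lambda R(u,v)$, \emph{both} the chemotaxis and the production term sit inside $R$, hence both acquire the same factor $\lambda$, and the cancellation between them survives verbatim (this is exactly what happens in the paper's continuous analysis, see (\ref{C5:modelfexist0b})--(\ref{C5:pf0a})). What the naive scaling actually perturbs are the data terms, which become $\frac{\lambda}{k}(u^{n-1}_\varepsilon,\cdot)^h$ and $\frac{\lambda}{k}(v^{n-1}_\varepsilon,\cdot)$; these are handled by the same convexity inequality you use elsewhere, namely $F'_\varepsilon(u)\,(u-\lambda u^{n-1}_\varepsilon)\ge F_\varepsilon(u)-F_\varepsilon(\lambda u^{n-1}_\varepsilon)$, with $F_\varepsilon(\lambda u^{n-1}_\varepsilon)$ bounded uniformly for $\lambda\in[0,1]$, and by absorbing $\frac{\lambda}{k}(\nabla v^{n-1}_\varepsilon,\nabla v)$ via Young's inequality. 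So your homotopy buys a cleaner estimate---fixed points satisfy exactly (\ref{C5:deluvreg})---at the price of stepping outside the off-the-shelf theorem, whereas the paper's route yields a slightly messier but still $\lambda$-uniform bound with the standard statement; the remaining ingredients of your argument (coercivity of the two bilinear forms via Remark \ref{C5:eqh25uv}, the inclusion $U_h\subseteq V_h$ needed for $(\Pi^h(F_\varepsilon(u)),(A_h-I)v)=(\nabla\Pi^h(F_\varepsilon(u)),\nabla v)$, mass conservation, Lemma \ref{C5:eFe} plus finite-dimensional norm equivalence, and automatic compactness in finite dimension) are all sound and match what the cited reference carries out.
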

\begin{proof}
The proof follows by using the Leray-Schauder fixed point theorem. With this aim, given $(u^{n-1}_\varepsilon,v^{n-1}_\varepsilon)\in U_h\times V_h$, we define the operator $R:U_h\times V_h\rightarrow U_h\times V_h$ by  $R(\widetilde{u},\widetilde{v})=(u,{v})$, such that $(u,v)\in U_h\times V_h$ solves the following linear decoupled problems
$$
u\in U_h \ \ \mbox{s.t. } \ \displaystyle\frac{1}{k}(u,\bar{u})^h + (\nabla u, \nabla\bar{u}) =\displaystyle\frac{1}{k}(u^{n-1}_\varepsilon,\bar{u})^h -(\Lambda_\varepsilon^2(\widetilde{u})\nabla \widetilde{v},\nabla \bar{u}), \ \ \forall \bar{u}\in U_h,
$$
$$
v\in V_h \ \ \mbox{s.t. } \ \displaystyle\frac{1}{k}(v,\bar{v}) + (A_h v, \bar{v}) =\displaystyle\frac{1}{k}(v^{n-1}_\varepsilon,\bar{v})
+p(p-1)(\Pi^h(F_\varepsilon(\widetilde{u})), \bar{v}), \ \ \forall \bar{v}\in V_h.
$$
The hypotheses of the Leray-Schauder fixed point theorem are satisfied as in Theorem 3.11 of \cite{C5:FMD4}, but applying in this case Lemma \ref{C5:lemconv} in order to prove the continuity of the operator $R$. Thus, we conclude that the map $R$ has a fixed point $(u,v)$, that is
$R(u,{v})=(u,{v})$, which is a solution of the scheme \textbf{UV$\varepsilon$}.
\end{proof}

\subsection{Scheme US$\varepsilon$}
In this section, in order to construct another energy-stable fully discrete scheme 
{\color{blue} for}
(\ref{C5:modelf00}), we are going to use the regularized functions $F_\varepsilon$, $F'_\varepsilon$ and $F''_\varepsilon$ defined in Section \ref{C5:Suv} and we will consider the auxiliary variable ${\boldsymbol\sigma}=\nabla v$. Then, another regularized version of problem (\ref{C5:modelf00}) reads: Find $u_\varepsilon:\Omega\times [0,T]\rightarrow \mathbb{R}$ and ${\boldsymbol\sigma}_\varepsilon:\Omega\times [0,T]\rightarrow \mathbb{R}^d$, with $u_\varepsilon\geq 0$, such that
\begin{equation}
\left\{
\begin{array}
[c]{lll}%
\partial_t u_\varepsilon -\Delta  u_\varepsilon  -\nabla\cdot( u_\varepsilon {\boldsymbol\sigma}_\varepsilon)=0\ \ \mbox{in}\ \Omega,\ t>0,\\
\partial_t {\boldsymbol \sigma}_\varepsilon + \mbox{rot(rot } {\boldsymbol \sigma}_\varepsilon \mbox{)} -\nabla(\nabla \cdot {\boldsymbol \sigma}_\varepsilon) + {\boldsymbol \sigma}_\varepsilon = p\; u_\varepsilon\nabla (F'_\varepsilon(u_\varepsilon))\ \ \mbox{in}\ \Omega,\ t>0,\\
\displaystyle\frac{\partial u_\varepsilon}{\partial \mathbf{n}}=0,\ \ {\boldsymbol \sigma}_\varepsilon\cdot \mathbf{n}=0, \ \ \left[\mbox{rot }{\boldsymbol \sigma}_\varepsilon \times \mathbf{n}\right]_{tang}=0 \quad \mbox{on}\ \partial\Omega,\ t>0,\\
u_\varepsilon(\x ,0)=u_0(\x )\geq 0,\ {\boldsymbol \sigma}_\varepsilon(\x ,0)=\nabla v_0(
\x ),\quad \mbox{in}\ \Omega.
\end{array}
\right.  \label{C5:modelf02acontUSreg}
\end{equation}
This kind of formulation considering ${\boldsymbol\sigma}=\nabla v$ as auxiliary variable has been used in the construction of numerical schemes for other chemotaxis models (see for instance \cite{C5:FMD2,C5:FMD4,C5:Z1}). Once problem (\ref{C5:modelf02acontUSreg}) is solved, we can recover $v_\varepsilon$ from $u_\varepsilon$  by solving 
\begin{equation*} \left\{
\begin{array}
[c]{lll}%
\partial_t v_\varepsilon -\Delta v_\varepsilon + v_\varepsilon = u^p_\varepsilon  \quad \mbox{in}\ \Omega,\ t>0,\\
\displaystyle
\frac{\partial v_\varepsilon}{\partial \mathbf{n}}=0\quad\mbox{on}\ \partial\Omega,\ t>0,\\
 v_\varepsilon(\x ,0)=v_0(\x )\geq 0\quad \mbox{in}\ \Omega.
\end{array}
\right.  \label{C5:modelf01eqvreg}
\end{equation*}
Observe that (formally) multiplying (\ref{C5:modelf02acontUSreg})$_1$ by $p F'_\varepsilon(u_\varepsilon)$, (\ref{C5:modelf02acontUSreg})$_2$ by ${\boldsymbol\sigma}_\varepsilon$, integrating over $\Omega$ and adding both equations, the terms $p(u_\varepsilon\nabla (F'_\varepsilon(u_\varepsilon)),{\boldsymbol \sigma}_\varepsilon)$ cancel, and we obtain the following energy law
$$
\frac{d}{dt} \displaystyle \int_\Omega \Big( p F_\varepsilon(u_\varepsilon) + \frac{1}{2} \vert {\boldsymbol\sigma}_\varepsilon\vert^2\Big) d \x + \int_\Omega p F''_\varepsilon(u_\varepsilon)\vert \nabla u_\varepsilon\vert^2 d\x+\Vert {\boldsymbol\sigma}_\varepsilon\Vert_1^2 = 0.
$$
In particular, the modified energy $\mathcal{E}_\varepsilon(u,{\boldsymbol\sigma})=\displaystyle \int_\Omega \Big( p F_\varepsilon(u) + \frac{1}{2} \vert {\boldsymbol\sigma}\vert^2\Big) d \x $ is decreasing in time. Then, we consider a fully discrete approximation of the regularized problem (\ref{C5:modelf02acontUSreg}) using a FE discretization in space and the backward
Euler discretization in time (considered for simplicity on a uniform partition of $[0,T]$ with time step $k=T/N : (t_n = nk)_{n=0}^{n=N}$). Concerning the space discretization, we consider the triangulation as in the scheme \textbf{UV$\varepsilon$}, imposing again the constraint ({\bf H}) related with the right angled simplices. We choose the following continuous FE spaces for $u_\varepsilon$, ${\boldsymbol\sigma}_\varepsilon$, and $v_\varepsilon$:
$$(U_h,{\boldsymbol\Sigma}_h, V_h) \subset H^1(\Omega)^3,\quad \hbox{generated by $\mathbb{P}_1,\mathbb{P}_{m},\mathbb{P}_r$ with $m,r\geq 1$.}
$$
\begin{obs}
The right-angled constraint ({\bf H}) and the approximation of $U_h$ by $\mathbb{P}_1$-continuous FE are again necessary in order to obtain the relation (\ref{C5:PL15uv}) and estimate (\ref{C5:D5uv}) for $\Lambda_\varepsilon^1$, which are essential in order to obtain the energy-stability of the scheme \textbf{US$\varepsilon$} (see Theorem \ref{C5:estinc1usreg} below).
\end{obs}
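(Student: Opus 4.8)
I expect Theorem \ref{C5:estinc1usreg} to assert that Scheme \textbf{US$\varepsilon$} is unconditionally energy stable with respect to $\mathcal{E}_\varepsilon^h(u,\boldsymbol\sigma)=p(F_\varepsilon(u),1)^h+\frac{1}{2}\Vert\boldsymbol\sigma\Vert_0^2$, with the discrete energy law
\begin{equation*}
\delta_t \mathcal{E}_\varepsilon^h(u^n_\varepsilon,\boldsymbol\sigma^n_\varepsilon)+\frac{k\varepsilon^{2-p}p}{2}\Vert\delta_t u^n_\varepsilon\Vert_0^2+\frac{k}{2}\Vert\delta_t\boldsymbol\sigma^n_\varepsilon\Vert_0^2+p\varepsilon^{2-p}\Vert\nabla u^n_\varepsilon\Vert_0^2+\Vert\boldsymbol\sigma^n_\varepsilon\Vert_1^2\leq 0,
\end{equation*}
the exact analogue of (\ref{C5:deluvreg}) with $\boldsymbol\sigma$ in place of $\nabla v$. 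The plan is to reproduce the argument of Theorem \ref{C5:estinc1uvreg}: I would test the discrete $u$-equation of \textbf{US$\varepsilon$} by $\bar u=p\,\Pi^h(F'_\varepsilon(u^n_\varepsilon))\in U_h$ and the discrete $\boldsymbol\sigma$-equation by $\bar{\boldsymbol\sigma}=\boldsymbol\sigma^n_\varepsilon\in\boldsymbol\Sigma_h$, and then add the two identities.

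The conceptual heart is the cancellation of the chemotaxis and production terms. With the above choices both contributions reduce to the single quantity $p(u^n_\varepsilon\,\boldsymbol\sigma^n_\varepsilon,\nabla\Pi^h(F'_\varepsilon(u^n_\varepsilon)))$ and therefore cancel upon addition, mirroring the cancellation of $p(u_\varepsilon\nabla(F'_\varepsilon(u_\varepsilon)),\boldsymbol\sigma_\varepsilon)$ in the formal energy law for (\ref{C5:modelf02acontUSreg}). Unlike in the continuous case, this cancellation at the discrete level is what forces the use of the interpolant $\Pi^h(F'_\varepsilon(u^n_\varepsilon))$ as test function and, through it, the operator $\Lambda^1_\varepsilon$; I expect this to be the main point requiring care, and it is the reason the right-angle hypothesis (\textbf{H}) and the $\mathbb{P}_1$ choice for $U_h$ are imposed (cf. the Remark above).

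For the dissipative terms I would argue exactly as in Theorem \ref{C5:estinc1uvreg}. The self-diffusion term becomes, by (\ref{C5:PL15uv}) followed by the spectral bound (\ref{C5:D5uv}),
\begin{equation*}
p(\nabla u^n_\varepsilon,\nabla\Pi^h(F'_\varepsilon(u^n_\varepsilon)))=p\int_\Omega(\nabla u^n_\varepsilon)^T\Lambda^1_\varepsilon(u^n_\varepsilon)^{-1}\nabla u^n_\varepsilon\geq p\varepsilon^{2-p}\Vert\nabla u^n_\varepsilon\Vert_0^2.
\end{equation*}
The discrete time-derivative of $u$ is handled with the Taylor identity (\ref{C5:I01breguv}), the bound $F''_\varepsilon(s)\geq\varepsilon^{2-p}$ and the norm equivalence $\vert\cdot\vert_h\sim\Vert\cdot\Vert_0$ of Remark \ref{C5:eqh25uv}, giving $(\delta_t u^n_\varepsilon,pF'_\varepsilon(u^n_\varepsilon))^h\geq p\,\delta_t(F_\varepsilon(u^n_\varepsilon),1)^h+\frac{k\varepsilon^{2-p}p}{2}\Vert\delta_t u^n_\varepsilon\Vert_0^2$, while $(\delta_t\boldsymbol\sigma^n_\varepsilon,\boldsymbol\sigma^n_\varepsilon)=\delta_t(\frac{1}{2}\Vert\boldsymbol\sigma^n_\varepsilon\Vert_0^2)+\frac{k}{2}\Vert\delta_t\boldsymbol\sigma^n_\varepsilon\Vert_0^2$ by the usual discrete-in-time identity.

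The only genuinely new ingredient relative to Scheme \textbf{UV$\varepsilon$} is the vector equation. Testing the term $\mbox{rot}(\mbox{rot }\boldsymbol\sigma^n_\varepsilon)-\nabla(\nabla\cdot\boldsymbol\sigma^n_\varepsilon)+\boldsymbol\sigma^n_\varepsilon$ by $\boldsymbol\sigma^n_\varepsilon$ produces $\Vert\mbox{rot }\boldsymbol\sigma^n_\varepsilon\Vert_0^2+\Vert\nabla\cdot\boldsymbol\sigma^n_\varepsilon\Vert_0^2+\Vert\boldsymbol\sigma^n_\varepsilon\Vert_0^2$, with no boundary contribution thanks to the conditions $\boldsymbol\sigma^n_\varepsilon\cdot\mathbf{n}=0$ and $[\mbox{rot }\boldsymbol\sigma^n_\varepsilon\times\mathbf{n}]_{tang}=0$ built into (\ref{C5:modelf02acontUSreg}); by the equivalent norm (\ref{C5:H1div}) on $\H^1_\sigma(\Omega)$ this equals $\Vert\boldsymbol\sigma^n_\varepsilon\Vert_1^2$, which requires the conformity $\boldsymbol\Sigma_h\subset\H^1_\sigma(\Omega)$. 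Collecting all the terms and invoking the cancellation yields the stated discrete energy law; in particular $\mathcal{E}_\varepsilon^h(u^n_\varepsilon,\boldsymbol\sigma^n_\varepsilon)\leq\mathcal{E}_\varepsilon^h(u^{n-1}_\varepsilon,\boldsymbol\sigma^{n-1}_\varepsilon)$, i.e. unconditional energy-stability.
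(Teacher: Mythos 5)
Your proposal is correct and follows essentially the same route as the paper's proof of Theorem \ref{C5:estinc1usreg}: test the $u$-equation by $p\,\Pi^h(F'_\varepsilon(u^n_\varepsilon))$ and the ${\boldsymbol\sigma}$-equation by ${\boldsymbol\sigma}^n_\varepsilon$, cancel the chemotaxis/production terms, use (\ref{C5:PL15uv}) with (\ref{C5:D5uv}) for the diffusion term, and handle the discrete time derivatives via (\ref{C5:I01breguv})--(\ref{C5:I01creguv}) and Remark \ref{C5:eqh25uv}. Your additional observation that writing $(B_h{\boldsymbol\sigma}^n_\varepsilon,{\boldsymbol\sigma}^n_\varepsilon)=\Vert{\boldsymbol\sigma}^n_\varepsilon\Vert_1^2$ rests on the equivalent norm (\ref{C5:H1div}), hence on ${\boldsymbol\Sigma}_h\subset\H^1_\sigma(\Omega)$, is a precision the paper leaves implicit, but it does not change the argument.
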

Then, we consider the following first order in time, nonlinear and coupled scheme: 
\begin{itemize}
\item{\underline{\emph{Scheme \textbf{US$\varepsilon$}}:}\\
{\bf Initialization}:   Let $(u^{0},{\boldsymbol \sigma}^{0})=(Q^h u_0, \widetilde{Q}^h (\nabla v_0))\in  U_h\times {\boldsymbol\Sigma}_h$.\\
{\bf Time step} n: Given $(u^{n-1}_\varepsilon,{\boldsymbol \sigma}^{n-1}_\varepsilon)\in  U_h\times {\boldsymbol\Sigma}_h$, compute $(u^{n}_\varepsilon,{\boldsymbol \sigma}^{n}_\varepsilon)\in  U_h \times {\boldsymbol\Sigma}_h$ solving
\begin{equation}
\left\{
\begin{array}
[c]{lll}%
(\delta_t u^n_\varepsilon,\bar{u})^h + (\nabla u^n_\varepsilon,\nabla \bar{u}) = -(u^{n}_\varepsilon {\boldsymbol\sigma}^n_\varepsilon,\nabla \bar{u}), \ \ \forall \bar{u}\in U_h,\\
(\delta_t {\boldsymbol \sigma}^n_\varepsilon,\bar{\boldsymbol \sigma}) + (B_h {\boldsymbol \sigma}^n_\varepsilon,\bar{\boldsymbol \sigma}) =
p(u^{n}_\varepsilon \nabla  \Pi^h(F'_\varepsilon(u^n_\varepsilon)),\bar{\boldsymbol \sigma}),\ \ \forall
\bar{\boldsymbol \sigma}\in \Sigma_h,
\end{array}
\right.  \label{C5:modelf02aUSreg}
\end{equation}}
\end{itemize}
where $Q^h$ is the $L^2$-projection on $U_h$ defined in (\ref{C5:MLP25uv}), $\widetilde{Q}^h$ is the standard $L^2$-projection on ${\boldsymbol\Sigma}_h$, and the operator $B_h$ is defined as
$$(B_h  {\boldsymbol \sigma}^n_\varepsilon,\bar{\boldsymbol \sigma}) = (\mbox{rot } {\boldsymbol\sigma}_\varepsilon^n,\mbox{rot } \bar{\boldsymbol\sigma}) + (\nabla \cdot {\boldsymbol\sigma}_\varepsilon^n, \nabla \cdot \bar{\boldsymbol\sigma}) + ({\boldsymbol\sigma}_\varepsilon^n, \bar{\boldsymbol\sigma}), \ \ \forall \bar{\boldsymbol\sigma}\in {\boldsymbol\Sigma}_h.$$
We recall that $\Pi^h: C(\overline{\Omega})\rightarrow U_h$ is the Lagrange interpolation operator, and the discrete semi-inner product $(\cdot,\cdot)^h$ was defined in (\ref{C5:mlumpreguv}). 
\begin{obs}
Notice that the right-angled constraint ({\bf H}) is necessary in the implementation of the scheme \textbf{UV$\varepsilon$} (in order to construct the matricial function $\Lambda^2_\varepsilon(u^n_\varepsilon)$); but, for the implementation of the scheme \textbf{US$\varepsilon$}, this hypothesis ({\bf H}) is not necessary.
\end{obs}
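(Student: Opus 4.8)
The plan is to establish this observation not by a chain of estimates but by direct inspection of the two discrete systems (\ref{C5:modelf02aUVreg}) and (\ref{C5:modelf02aUSreg}), supplemented by recalling how the matricial operators $\Lambda^i_\varepsilon$ are built. The guiding principle is to separate cleanly what must be \emph{assembled} in order to run each scheme from what is only invoked in the \emph{analysis} of its energy-stability.

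For the scheme \textbf{UV$\varepsilon$}, I would first point out that the chemotaxis coupling enters the discrete equation (\ref{C5:modelf02aUVreg})$_1$ precisely through the term $(\Lambda^2_\varepsilon(u^n_\varepsilon)\nabla v^n_\varepsilon,\nabla\bar{u})$. Hence, already to form the nonlinear algebraic system solved at each time step, one must evaluate $\Lambda^2_\varepsilon(u^n_\varepsilon)\vert_K$ on every simplex $K$. The construction of this constant-by-element matrix (formulas (\ref{C5:ConLa1})--(\ref{C5:ConLa2}) in one dimension and their extensions to $d=2,3$ following \cite{C5:BB,C5:GR}) is tied to the existence of a right-angled vertex, so that (\ref{C5:PL16uv}) holds element by element; it is exactly here that hypothesis ({\bf H}) is used. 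Thus, without ({\bf H}) the right-hand side of (\ref{C5:modelf02aUVreg})$_1$ cannot be assembled via the prescribed construction, which is the sense in which ({\bf H}) is needed to implement \textbf{UV$\varepsilon$}.

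For the scheme \textbf{US$\varepsilon$}, I would next inspect (\ref{C5:modelf02aUSreg}) and observe that no operator $\Lambda^i_\varepsilon$ appears in it at all. The coupling is carried by the two terms $(u^n_\varepsilon{\boldsymbol\sigma}^n_\varepsilon,\nabla\bar{u})$ and $(u^n_\varepsilon\nabla\Pi^h(F'_\varepsilon(u^n_\varepsilon)),\bar{\boldsymbol\sigma})$. Both are directly computable on any shape-regular mesh: the first is a pointwise product of the finite element functions $u^n_\varepsilon$ and ${\boldsymbol\sigma}^n_\varepsilon$; in the second, $\Pi^h(F'_\varepsilon(u^n_\varepsilon))$ is merely the nodal Lagrange interpolant and, since $U_h$ is generated by $\mathbb{P}_1$, its gradient is piecewise constant. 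None of these operations requires a right angle, so the discrete system (\ref{C5:modelf02aUSreg}) can be assembled and solved without ({\bf H}).

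The point to guard against --- and the only delicate aspect of the argument --- is the apparent tension with the preceding remark, where ({\bf H}) was declared necessary for the \emph{energy-stability} of \textbf{US$\varepsilon$}. I would resolve this by stressing that in the stability analysis the operator $\Lambda^1_\varepsilon$ (and estimate (\ref{C5:D5uv})) is used only as a theoretical device, through the identity $\nabla\Pi^h(F'_\varepsilon(u^n_\varepsilon))=\Lambda^1_\varepsilon(u^n_\varepsilon)^{-1}\nabla u^n_\varepsilon$ coming from (\ref{C5:PL15uv}), which is never formed when running the code. Hence ({\bf H}) is genuinely required for the \emph{proof} of stability of \textbf{US$\varepsilon$} but not for its \emph{implementation}, whereas for \textbf{UV$\varepsilon$} the matrix $\Lambda^2_\varepsilon$ sits inside the scheme itself and is therefore indispensable already at the implementation stage.
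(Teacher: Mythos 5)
Your proposal is correct and follows essentially the same reasoning the paper relies on: the remark is justified by direct inspection of the two discrete systems, noting that $\Lambda^2_\varepsilon(u^n_\varepsilon)$ appears explicitly in the cross-diffusion term of (\ref{C5:modelf02aUVreg})$_1$ and its element-by-element construction via (\ref{C5:ConLa1})--(\ref{C5:ConLa2}) and (\ref{C5:PL16uv}) requires ({\bf H}), whereas no $\Lambda^i_\varepsilon$ operator appears in (\ref{C5:modelf02aUSreg}), where $\Lambda^1_\varepsilon$ and (\ref{C5:PL15uv}), (\ref{C5:D5uv}) enter only as theoretical devices in the energy-stability proof of Theorem \ref{C5:estinc1usreg}. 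Your explicit resolution of the apparent tension with the preceding remark (implementation versus analysis) is exactly the distinction the paper intends.
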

\begin{obs}
Following the ideas of \cite{C5:FMD4}, we can construct another unconditionally energy-stable nonlinear scheme in the variables $(u^n_\varepsilon,{\boldsymbol\sigma}^n_\varepsilon)$ without imposing the right-angled constraint ({\bf H}), replacing the self-diffusion term $(\nabla u^n_\varepsilon,\nabla \bar{u})$ by $\nabla \cdot (\frac{1}{F''_\varepsilon(u^n_\varepsilon)}\nabla \Pi^h (F'_\varepsilon(u^n_\varepsilon)))$. However, this scheme has convergence problems for the linear iterative method as $p\rightarrow 1$ and $\varepsilon\rightarrow 0$.
\end{obs}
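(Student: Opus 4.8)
The plan is to write the scheme down explicitly and to verify that the energy argument of Theorem~\ref{C5:estinc1uvreg} carries over verbatim, the only change being that the matrix operator $\Lambda^1_\varepsilon$ (and hence hypothesis~({\bf H})) never appears. Replacing in the first equation of \textbf{US$\varepsilon$} the self-diffusion term $(\nabla u^n_\varepsilon,\nabla\bar u)$ by the variational form of $-\nabla\cdot\big(\frac{1}{F''_\varepsilon(u^n_\varepsilon)}\nabla\Pi^h(F'_\varepsilon(u^n_\varepsilon))\big)$, the modified scheme reads: find $(u^n_\varepsilon,{\boldsymbol\sigma}^n_\varepsilon)\in U_h\times{\boldsymbol\Sigma}_h$ such that
\begin{equation*}
\left\{
\begin{array}{l}
(\delta_t u^n_\varepsilon,\bar u)^h + \Big(\frac{1}{F''_\varepsilon(u^n_\varepsilon)}\nabla\Pi^h(F'_\varepsilon(u^n_\varepsilon)),\nabla\bar u\Big) = -(u^n_\varepsilon{\boldsymbol\sigma}^n_\varepsilon,\nabla\bar u),\quad\forall\bar u\in U_h,\\[1mm]
(\delta_t{\boldsymbol\sigma}^n_\varepsilon,\bar{\boldsymbol\sigma}) + (B_h{\boldsymbol\sigma}^n_\varepsilon,\bar{\boldsymbol\sigma}) = p(u^n_\varepsilon\nabla\Pi^h(F'_\varepsilon(u^n_\varepsilon)),\bar{\boldsymbol\sigma}),\quad\forall\bar{\boldsymbol\sigma}\in{\boldsymbol\Sigma}_h.
\end{array}
\right.
\end{equation*}
This diffusion is consistent, since formally $\frac{1}{F''_\varepsilon(u)}\nabla\Pi^h(F'_\varepsilon(u))\approx\nabla u$, and it is written precisely so that testing against $\Pi^h(F'_\varepsilon(u^n_\varepsilon))$ returns a manifestly positive quadratic form.

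For the energy-stability, I would test the first equation by $\bar u=p\,\Pi^h(F'_\varepsilon(u^n_\varepsilon))\in U_h$ and the second by $\bar{\boldsymbol\sigma}={\boldsymbol\sigma}^n_\varepsilon\in{\boldsymbol\Sigma}_h$ and add. As in the proof of Theorem~\ref{C5:estinc1uvreg}, the chemotaxis and production contributions $-p(u^n_\varepsilon{\boldsymbol\sigma}^n_\varepsilon,\nabla\Pi^h(F'_\varepsilon(u^n_\varepsilon)))$ and $p(u^n_\varepsilon\nabla\Pi^h(F'_\varepsilon(u^n_\varepsilon)),{\boldsymbol\sigma}^n_\varepsilon)$ cancel exactly. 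The decisive point is that the diffusion term becomes
\begin{equation*}
p\int_\Omega\frac{1}{F''_\varepsilon(u^n_\varepsilon)}\,\big|\nabla\Pi^h(F'_\varepsilon(u^n_\varepsilon))\big|^2\,d\x\ \geq\ p\,\varepsilon^{2-p}\,\big\|\nabla\Pi^h(F'_\varepsilon(u^n_\varepsilon))\big\|_0^2\ \geq\ 0,
\end{equation*}
where I used $F''_\varepsilon\leq\varepsilon^{p-2}$ from (\ref{C5:F2pE}); this positivity is obtained \emph{directly}, without $\Lambda^1_\varepsilon$ and hence without ({\bf H}). For the remaining terms, the Taylor identity (\ref{C5:I01breguv}) together with the linearity of $\Pi^h$ and $F''_\varepsilon\geq\varepsilon^{2-p}$ gives $p(\delta_t u^n_\varepsilon,F'_\varepsilon(u^n_\varepsilon))^h\geq\delta_t\big(p(F_\varepsilon(u^n_\varepsilon),1)^h\big)+\frac{kp\varepsilon^{2-p}}{2}|\delta_t u^n_\varepsilon|_h^2$, while $(\delta_t{\boldsymbol\sigma}^n_\varepsilon,{\boldsymbol\sigma}^n_\varepsilon)=\delta_t\big(\frac12\|{\boldsymbol\sigma}^n_\varepsilon\|_0^2\big)+\frac{k}{2}\|\delta_t{\boldsymbol\sigma}^n_\varepsilon\|_0^2$ and $(B_h{\boldsymbol\sigma}^n_\varepsilon,{\boldsymbol\sigma}^n_\varepsilon)=\|{\boldsymbol\sigma}^n_\varepsilon\|_1^2$ by the norm equivalence (\ref{C5:H1div}). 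Collecting these yields a discrete energy law for $\mathcal{E}^h_\varepsilon(u,{\boldsymbol\sigma})=p(F_\varepsilon(u),1)^h+\frac12\|{\boldsymbol\sigma}\|_0^2$ of the same form as (\ref{C5:deluvreg}), i.e.\ unconditional energy-stability. Existence of discrete solutions would then follow as in Theorem~\ref{uncond} by Leray--Schauder, using that $\frac{1}{F''_\varepsilon}$, $F'_\varepsilon$ and $F_\varepsilon$ are continuous and Lipschitz on bounded sets.

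The failure of the linear iterative solver is a matter of conditioning, not of stability, and I expect this to be the genuine obstacle of the statement. Unlike the standard stiffness term $(\nabla u^n_\varepsilon,\nabla\bar u)$ of \textbf{US$\varepsilon$}, which is \emph{linear} in $u^n_\varepsilon$, the new diffusion is strongly nonlinear through the coefficient $\frac{1}{F''_\varepsilon(u^n_\varepsilon)}$, which by (\ref{C5:F2pE}) ranges over $[\varepsilon^{2-p},\varepsilon^{p-2}]$; its condition number thus behaves like $\varepsilon^{2(p-2)}$, blowing up as $\varepsilon\to0$, and as $p\to1$ the profile $F''_\varepsilon(s)=s^{p-2}\to s^{-1}$ makes the effective diffusion $\tfrac{1}{F''_\varepsilon}\approx s^{2-p}$ degenerate near $s=0$. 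A Picard/Newton linearization of $\frac{1}{F''_\varepsilon(u)}\nabla\Pi^h(F'_\varepsilon(u))$ has a contraction constant governed by the Lipschitz constants of $F'_\varepsilon$ (of order $\varepsilon^{p-2}$) and of $\frac{1}{F''_\varepsilon}$, both of which diverge in the joint limit $p\to1,\ \varepsilon\to0$, so the iteration loses contractivity. This is a numerical degeneracy of the solver rather than a defect of the energy estimate; a rigorous non-convergence statement is not available, which is why the point is recorded here as a remark and documented experimentally in Section~\ref{C5:S5C5:NSi} rather than proved.
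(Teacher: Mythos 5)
Your reconstruction is correct and follows essentially the route the paper intends: the remark carries no proof of its own (it defers to \cite{C5:FMD4}), and your argument is precisely the proof of Theorem~\ref{C5:estinc1usreg} with the single modification that the weighted diffusion term $\big(\frac{1}{F''_\varepsilon(u^n_\varepsilon)}\nabla\Pi^h(F'_\varepsilon(u^n_\varepsilon)),\nabla\Pi^h(F'_\varepsilon(u^n_\varepsilon))\big)\geq p^{-1}\,p\,\varepsilon^{2-p}\Vert\nabla\Pi^h(F'_\varepsilon(u^n_\varepsilon))\Vert_0^2$ is nonnegative by inspection, so the detour through $\Lambda^1_\varepsilon$, the relation (\ref{C5:PL15uv}) and hypothesis ({\bf H}) is unnecessary (the price, which you implicitly accept, being that the dissipation is measured in $\nabla\Pi^h(F'_\varepsilon(u^n_\varepsilon))$ rather than $\nabla u^n_\varepsilon$). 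You also correctly treat the final claim about the linear iterative method degenerating as $p\to1$ and $\varepsilon\to0$ as an empirical observation supported by the conditioning of the coefficient $1/F''_\varepsilon\in[\varepsilon^{2-p},\varepsilon^{p-2}]$ rather than a provable statement, which is exactly the status it has in the paper.
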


Once the scheme \textbf{US$\varepsilon$} is solved, given $v^{n-1}_\varepsilon\in V_h$, we can recover $v^n_\varepsilon=v^n_\varepsilon(u^n_\varepsilon) \in V_h$ solving: 
\begin{equation}\label{C5:edovfreg}
(\delta_t v^n_\varepsilon,\bar{v})  +(\nabla v^n_\varepsilon,\nabla \bar{v}) + (v^n_\varepsilon,\bar{v}) =p(p-1)(F_\varepsilon(u^n_\varepsilon),\bar{v}), \ \ \forall
\bar{v}\in V_h.
\end{equation}

Given $u^n_\varepsilon\in U_h$ and $v^{n-1}_\varepsilon\in V_h$, Lax-Milgram theorem implies that there exists a unique $v^n_\varepsilon \in V_h$ solution of (\ref{C5:edovfreg}). Moreover, notice that the result concerning to the positivity of $v^n_\varepsilon$ solution of scheme \textbf{UV$\varepsilon$} established in Remark \ref{C5:posVuv} remains true for $v^n_\varepsilon$ in the scheme \textbf{US$\varepsilon$}.

\subsubsection{Mass-conservation and Energy-stability}\label{C5:ELusreg}
Observe that the scheme \textbf{US$\varepsilon$}  is also conservative in $u$ (satisfying (\ref{C5:conu1regUV})), and we have the following behavior for $\int_\Omega v^n_\varepsilon$: 
$$
\delta_t \left(\int_\Omega v^n_\varepsilon \right)= p(p-1) \int_\Omega F_\varepsilon(u^n_\varepsilon)  - \int_\Omega v^n_\varepsilon.
$$

\begin{defi}\label{C5:enesfUSreg}
A numerical scheme with solution $(u^n_\varepsilon,{\boldsymbol\sigma}^n_\varepsilon)$ is called energy-stable with respect to the  energy 
\begin{equation}\label{C5:ENusreg}
\mathcal{E}_\varepsilon^h(u,{\boldsymbol\sigma})=p(F_\varepsilon(u),1)^h + \frac{1}{2} \Vert {\boldsymbol\sigma}\Vert_0^2
\end{equation}
if this energy is time decreasing, that is $\mathcal{E}_\varepsilon^h(u^n_\varepsilon,{\boldsymbol\sigma}^n_\varepsilon)\leq \mathcal{E}_\varepsilon^h(u^{n-1}_\varepsilon,{\boldsymbol\sigma}^{n-1}_\varepsilon)$ for all $n\geq 1$.
\end{defi}

\begin{tma} {\bf (Unconditional stability)} \label{C5:estinc1usreg}
The scheme \textbf{US$\varepsilon$} is unconditionally energy stable with respect to $\mathcal{E}_\varepsilon^h(u,{\boldsymbol\sigma})$. In fact, if $(u^n_\varepsilon,{\boldsymbol\sigma}^n_\varepsilon)$ is a solution of \textbf{US$\varepsilon$}, then the following discrete energy law holds
\begin{equation}\label{C5:delusreg}
\delta_t \mathcal{E}_\varepsilon^h(u^n_\varepsilon,{\boldsymbol\sigma}^n_\varepsilon) + \frac{k \varepsilon^{2-p}p}{2}\Vert \delta_t u^n_\varepsilon \Vert_0^2 + \frac{k}{2} \Vert \delta_t  {\boldsymbol\sigma}^n_\varepsilon\Vert_0^2 +  p\varepsilon^{2-p}\Vert \nabla u^n_\varepsilon\Vert_0^2 +\Vert  {\boldsymbol\sigma}^n_\varepsilon\Vert_1^2 \leq 0.
\end{equation}
\end{tma}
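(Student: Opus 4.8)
The plan is to mirror the proof of Theorem \ref{C5:estinc1uvreg}, exploiting that the production term in the ${\boldsymbol\sigma}$-equation is already written in the ``gradient form'' $p\,u^n_\varepsilon\nabla\Pi^h(F'_\varepsilon(u^n_\varepsilon))$, which makes the cancellation of the chemotaxis and production contributions immediate. Concretely, I would test (\ref{C5:modelf02aUSreg})$_1$ with $\bar u = p\,\Pi^h(F'_\varepsilon(u^n_\varepsilon))\in U_h$ and (\ref{C5:modelf02aUSreg})$_2$ with $\bar{\boldsymbol\sigma}={\boldsymbol\sigma}^n_\varepsilon\in{\boldsymbol\Sigma}_h$, then add the two identities. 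The convective term on the right of the first equation becomes $-p({\boldsymbol\sigma}^n_\varepsilon, u^n_\varepsilon\nabla\Pi^h(F'_\varepsilon(u^n_\varepsilon)))$, while the production term on the right of the second becomes $+p(u^n_\varepsilon\nabla\Pi^h(F'_\varepsilon(u^n_\varepsilon)),{\boldsymbol\sigma}^n_\varepsilon)$; these are identical with opposite signs and cancel. Note that, unlike in the scheme \textbf{UV$\varepsilon$}, this step needs neither $\Lambda^2_\varepsilon$ nor relation (\ref{C5:PL16uv}).

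Next I would treat the remaining diffusive and temporal terms. For the self-diffusion term of the $u$-equation, using (\ref{C5:PL15uv}) I rewrite $\nabla\Pi^h(F'_\varepsilon(u^n_\varepsilon))=\Lambda^1_\varepsilon(u^n_\varepsilon)^{-1}\nabla u^n_\varepsilon$, so that $p(\nabla u^n_\varepsilon,\nabla\Pi^h(F'_\varepsilon(u^n_\varepsilon)))=p\int_\Omega(\nabla u^n_\varepsilon)^T\Lambda^1_\varepsilon(u^n_\varepsilon)^{-1}\nabla u^n_\varepsilon$, which by the lower spectral bound in (\ref{C5:D5uv}) is $\geq p\varepsilon^{2-p}\Vert\nabla u^n_\varepsilon\Vert_0^2$. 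For the ${\boldsymbol\sigma}$-equation, the definition of $B_h$ gives $(B_h{\boldsymbol\sigma}^n_\varepsilon,{\boldsymbol\sigma}^n_\varepsilon)=\Vert\mbox{rot }{\boldsymbol\sigma}^n_\varepsilon\Vert_0^2+\Vert\nabla\cdot{\boldsymbol\sigma}^n_\varepsilon\Vert_0^2+\Vert{\boldsymbol\sigma}^n_\varepsilon\Vert_0^2=\Vert{\boldsymbol\sigma}^n_\varepsilon\Vert_1^2$ (the equivalent norm (\ref{C5:H1div})), and the standard backward-Euler identity $(\delta_t{\boldsymbol\sigma}^n_\varepsilon,{\boldsymbol\sigma}^n_\varepsilon)=\delta_t(\tfrac12\Vert{\boldsymbol\sigma}^n_\varepsilon\Vert_0^2)+\tfrac{k}{2}\Vert\delta_t{\boldsymbol\sigma}^n_\varepsilon\Vert_0^2$ supplies the temporal terms.

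It remains to handle the discrete time derivative of the potential, exactly as in (\ref{C5:I01breguv})--(\ref{C5:I01creguv}): the second-order Taylor expansion of $F_\varepsilon$ gives $\delta_t u^n_\varepsilon\,F'_\varepsilon(u^n_\varepsilon)=\delta_t(F_\varepsilon(u^n_\varepsilon))+\tfrac{k}{2}F''_\varepsilon(\theta u^n_\varepsilon+(1-\theta)u^{n-1}_\varepsilon)(\delta_t u^n_\varepsilon)^2$; applying $\Pi^h$, integrating, and using $F''_\varepsilon\geq\varepsilon^{2-p}$ together with the norm equivalence of Remark \ref{C5:eqh25uv} yields $p(\delta_t u^n_\varepsilon,F'_\varepsilon(u^n_\varepsilon))^h\geq p\,\delta_t((F_\varepsilon(u^n_\varepsilon),1)^h)+\tfrac{k\varepsilon^{2-p}p}{2}\Vert\delta_t u^n_\varepsilon\Vert_0^2$. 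Recalling $\mathcal{E}_\varepsilon^h(u,{\boldsymbol\sigma})=p(F_\varepsilon(u),1)^h+\tfrac12\Vert{\boldsymbol\sigma}\Vert_0^2$, collecting all the bounds turns the summed identity into the inequality (\ref{C5:delusreg}). Since every mechanism is already isolated in the \textbf{UV$\varepsilon$} argument, I expect no real obstacle here; the only points requiring care are to verify the exact sign-matching in the chemotaxis/production cancellation, and to keep track of the constant in the mass-lumping equivalence so that the coefficient $\tfrac{k\varepsilon^{2-p}p}{2}$ in front of $\Vert\delta_t u^n_\varepsilon\Vert_0^2$ comes out as stated.
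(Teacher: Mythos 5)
Your proposal is correct and coincides with the paper's own proof: testing (\ref{C5:modelf02aUSreg})$_1$ by $p\,\Pi^h(F'_\varepsilon(u^n_\varepsilon))$ and (\ref{C5:modelf02aUSreg})$_2$ by ${\boldsymbol\sigma}^n_\varepsilon$, cancelling the chemotaxis/production pair, handling the diffusion term via (\ref{C5:PL15uv}) and the spectral bound (\ref{C5:D5uv}), and treating the discrete time derivative of the potential exactly as in (\ref{C5:I01breguv})--(\ref{C5:I01creguv}) together with Remark \ref{C5:eqh25uv}. Your observation that neither $\Lambda^2_\varepsilon$ nor (\ref{C5:PL16uv}) is needed here is also consistent with the paper, which only invokes $\Lambda^1_\varepsilon$ in this proof.
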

\begin{proof}
Testing (\ref{C5:modelf02aUSreg})$_1$ by $\bar{u}= p\Pi^h (F'_\varepsilon (u^n_\varepsilon))$, (\ref{C5:modelf02aUSreg})$_2$ by $\bar{\boldsymbol\sigma}={\boldsymbol\sigma}^n_\varepsilon$ and adding, the terms\\ $p(u^{n}_\varepsilon \nabla  \Pi^h(F'_\varepsilon(u_\varepsilon)),{\boldsymbol \sigma}^n_\varepsilon)$ cancel, and using that $\nabla \Pi^h (F'_\varepsilon (u^n_\varepsilon))=\Lambda_\varepsilon^1 (u^{n}_\varepsilon)^{-1} \nabla u^n_\varepsilon$, we arrive at
\begin{equation*}
p(\delta_t u^n_\varepsilon, F'_\varepsilon (u^n_\varepsilon))^h + p\int_\Omega (\nabla u^n_\varepsilon)^T\!\cdot\!\Lambda_\varepsilon^1 (u^{n}_\varepsilon)^{-1}\!\cdot\! \nabla u^n_\varepsilon d\x  +
\delta_t \Big(\frac{1}{2} \Vert {\boldsymbol\sigma}^n_\varepsilon\Vert_0^2\Big) + \frac{k}{2} \Vert \delta_t  {\boldsymbol\sigma}^n_\varepsilon\Vert_0^2 +\Vert  {\boldsymbol\sigma}^n_\varepsilon\Vert_1^2 = 0,
\end{equation*}
which, proceeding as in (\ref{C5:I01breguv})-(\ref{C5:I01creguv}) and using Remark \ref{C5:eqh25uv} and estimate (\ref{C5:D5uv}), implies (\ref{C5:delusreg}).
\end{proof}

\begin{cor} \label{C5:welemUSreg} {\bf(Uniform estimates) }
Assume that $(u_0,v_0)\in L^2(\Omega)\times H^1(\Omega)$. Let $(u^n_\varepsilon,{\boldsymbol\sigma}^n_\varepsilon)$ be a solution of scheme \textbf{US$\varepsilon$}. Then, it holds 
\begin{equation}\label{C5:weak01uv-aUSreg}
p(F_\varepsilon(u^n_\varepsilon),1)^h + \frac{1}{2}\Vert {\boldsymbol\sigma}^n_\varepsilon\Vert_{0}^{2}+k \underset{m=1}{\overset{n}{\sum}}\left( p\varepsilon^{2-p}\Vert \nabla u^m_\varepsilon\Vert_0^2 +\Vert  {\boldsymbol\sigma}^m_\varepsilon\Vert_1^2\right)\leq \frac{C_0}{(p-1)^2}, \ \ \forall n\geq 1,
\end{equation}
with the constant $C_0>0$ depending on the data $(\Omega, u_0, v_0)$, but independent of $k,h,n$ and $\varepsilon$; and the estimates given in (\ref{C5:pneg1-aUVreg}) also hold.
\end{cor}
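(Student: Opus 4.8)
The plan is to follow the pattern of the proof of Corollary~\ref{C5:welemUVreg}, relying on the discrete energy law (\ref{C5:delusreg}) proved in Theorem~\ref{C5:estinc1usreg}. In fact the argument is somewhat simpler here: the energy $\mathcal{E}_\varepsilon^h(u,{\boldsymbol\sigma})$ defined in (\ref{C5:ENusreg}) already controls $\Vert{\boldsymbol\sigma}^n_\varepsilon\Vert_0^2$ directly, and the dissipation furnishes the full $\Vert{\boldsymbol\sigma}^m_\varepsilon\Vert_1^2$, so that no auxiliary bound for $\int_\Omega v^n_\varepsilon$ (and hence no appeal to Lemma~\ref{C5:tmaD}) is needed, contrary to the \textbf{UV$\varepsilon$} case.

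First I would multiply (\ref{C5:delusreg}) by $k$, sum over $m=1,\dots,n$, and telescope the term $k\,\delta_t\mathcal{E}_\varepsilon^h$. Discarding the nonnegative contributions $\frac{k\varepsilon^{2-p}p}{2}\Vert\delta_t u^m_\varepsilon\Vert_0^2$ and $\frac{k}{2}\Vert\delta_t{\boldsymbol\sigma}^m_\varepsilon\Vert_0^2$, this yields
\[
\mathcal{E}_\varepsilon^h(u^n_\varepsilon,{\boldsymbol\sigma}^n_\varepsilon) + k\sum_{m=1}^n \Big(p\varepsilon^{2-p}\Vert\nabla u^m_\varepsilon\Vert_0^2 + \Vert{\boldsymbol\sigma}^m_\varepsilon\Vert_1^2\Big) \leq \mathcal{E}_\varepsilon^h(u^0,{\boldsymbol\sigma}^0).
\]
It then remains to bound the initial energy uniformly in $\varepsilon$, which is done exactly as in (\ref{C5:ee55uv}): since $u^0=Q^h u_0\geq 0$ (the mass-lumped projection preserves the sign of $u_0\geq 0$) and $F_\varepsilon$ grows at most quadratically with $\varepsilon$-independent constants (a consequence of the construction of $F_\varepsilon$ in Section~\ref{C5:Suv} together with Lemma~\ref{C5:eFe} and Remark~\ref{C5:OBSr}), one has $p\int_\Omega\Pi^h(F_\varepsilon(u^0))\leq \frac{C}{p-1}(\Vert u_0\Vert_0^2+\frac{1}{p-1})$. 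Combining this with the contractivity of the $L^2$-projection, namely $\Vert{\boldsymbol\sigma}^0\Vert_0=\Vert\widetilde{Q}^h(\nabla v_0)\Vert_0\leq\Vert v_0\Vert_1$, gives $\mathcal{E}_\varepsilon^h(u^0,{\boldsymbol\sigma}^0)\leq\frac{C_0}{(p-1)^2}$ with $C_0$ depending only on $(\Omega,u_0,v_0)$. Recalling that $(F_\varepsilon(u^n_\varepsilon),1)^h=\int_\Omega\Pi^h(F_\varepsilon(u^n_\varepsilon))$, the previous two displays together are precisely (\ref{C5:weak01uv-aUSreg}).

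Finally, the two estimates in (\ref{C5:pneg1-aUVreg}) involve only $u^n_\varepsilon$ and follow verbatim from the $F_\varepsilon$-control just obtained, so nothing specific to \textbf{US$\varepsilon$} enters. For the first one I would use (\ref{C5:es11})$_1$, i.e.\ $\frac{1}{4}\varepsilon^{p-2}(u^n_{\varepsilon-})^2\leq F_\varepsilon(u^n_\varepsilon)$, together with the pointwise inequality $(\Pi^h u)^2\leq\Pi^h(u^2)$, to deduce $\frac{1}{4}\varepsilon^{p-2}\Vert\Pi^h(u^n_{\varepsilon-})\Vert_0^2\leq\int_\Omega\Pi^h(F_\varepsilon(u^n_\varepsilon))\leq\frac{C_0}{(p-1)^2}$. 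For the second one I would invoke Remark~\ref{C5:OBSr}, $\vert s\vert^p\leq K_1 F_\varepsilon(s)+K_2$, with $\vert\Pi^h u\vert^p\leq\Pi^h(\vert u\vert^p)$, to get $\Vert u^n_\varepsilon\Vert_{L^p}^p\leq\int_\Omega\Pi^h(K_1 F_\varepsilon(u^n_\varepsilon)+K_2)\leq\frac{C_0 K}{(p-1)^2}+K$.

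As for the main obstacle, there is essentially no new difficulty, since the delicate part (the cancellation of the chemotaxis and production terms and the handling of $\Lambda^1_\varepsilon$ through estimate (\ref{C5:D5uv}) and Remark~\ref{C5:eqh25uv}) was already resolved in Theorem~\ref{C5:estinc1usreg}. The only point demanding care is the $\varepsilon$-uniformity of the initial-energy bound, which hinges on the at-most-quadratic control of $F_\varepsilon$ (indeed $p$-growth for $s\geq\varepsilon$) provided by Lemma~\ref{C5:eFe} and Remark~\ref{C5:OBSr}, and on the sign-preservation and $L^2$-contractivity of the projections $Q^h$ and $\widetilde{Q}^h$ used in the initialization.
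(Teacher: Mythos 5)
Your proposal is correct and follows essentially the same route as the paper: telescope the discrete energy law (\ref{C5:delusreg}), bound the initial energy $\mathcal{E}_\varepsilon^h(u^0,{\boldsymbol\sigma}^0)$ uniformly in $\varepsilon$ exactly as in (\ref{C5:ee55uv}) (with $\widetilde{Q}^h(\nabla v_0)$ in place of $\nabla R^h v_0$), and then repeat verbatim the arguments of Corollary \ref{C5:welemUVreg} for the two estimates in (\ref{C5:pneg1-aUVreg}). The only cosmetic slip is attributing the $\varepsilon$-uniform quadratic \emph{upper} bound of $F_\varepsilon$ on $[0,\infty)$ partly to Lemma \ref{C5:eFe} and Remark \ref{C5:OBSr}, which provide \emph{lower} bounds; that upper bound comes directly from the construction (\ref{C5:F2pE}) of $F_\varepsilon''$, as implicitly used in (\ref{C5:ee55uv}).
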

\begin{obs}\label{C5:NNuh1}{\bf (Approximated positivity of $u^n_\varepsilon$)}
The approximated positivity result for $u^n_\varepsilon$ established in Remark \ref{C5:NNuh1uv} remains true for the scheme \textbf{US$\varepsilon$}.
\end{obs}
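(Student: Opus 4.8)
The plan is to mirror the argument already carried out for Corollary \ref{C5:welemUVreg}, now using the discrete energy law (\ref{C5:delusreg}) for scheme \textbf{US$\varepsilon$} in place of (\ref{C5:deluvreg}). The essential simplification is that the energy $\mathcal{E}_\varepsilon^h(u,{\boldsymbol\sigma})$ already controls $\frac{1}{2}\Vert{\boldsymbol\sigma}^n_\varepsilon\Vert_0^2$ directly, so, unlike in the \textbf{UV$\varepsilon$} case, no auxiliary estimate on $\int_\Omega v^n_\varepsilon$ (and hence no appeal to Lemma \ref{C5:tmaD}) is needed to close (\ref{C5:weak01uv-aUSreg}); this is precisely why the target norm here is $\Vert{\boldsymbol\sigma}^n_\varepsilon\Vert_0$ rather than the full $\Vert v^n_\varepsilon\Vert_1$ that appeared in (\ref{C5:weak01uv-aUVreg1}).

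First I would bound the initial energy. Since $(u^0,{\boldsymbol\sigma}^0)=(Q^h u_0,\widetilde{Q}^h(\nabla v_0))$ with $u_0\geq 0$, and hence $u^0\geq 0$, the definition of $F_\varepsilon$ (cf. (\ref{C5:es11})) together with the uniform equivalence of $\vert\cdot\vert_h$ and $\Vert\cdot\Vert_0$ (Remark \ref{C5:eqh25uv}) and the $L^2$-stability of the projections $Q^h$ and $\widetilde{Q}^h$ gives, exactly as in (\ref{C5:ee55uv}),
\[
\mathcal{E}^h_\varepsilon(u^0,{\boldsymbol\sigma}^0)=p(F_\varepsilon(u^0),1)^h+\frac{1}{2}\Vert{\boldsymbol\sigma}^0\Vert_0^2\leq \frac{C}{p-1}\Big(\Vert u_0\Vert_0^2+\Vert\nabla v_0\Vert_0^2+\frac{1}{p-1}\Big)\leq \frac{C_0}{(p-1)^2},
\]
where I use $\Vert{\boldsymbol\sigma}^0\Vert_0=\Vert\widetilde{Q}^h(\nabla v_0)\Vert_0\leq\Vert\nabla v_0\Vert_0\leq\Vert v_0\Vert_1$. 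Then, multiplying the energy law (\ref{C5:delusreg}) by $k$, summing from $m=1$ to $m=n$ (so that the $\delta_t\mathcal{E}^h_\varepsilon$ term telescopes into $\mathcal{E}^h_\varepsilon(u^n_\varepsilon,{\boldsymbol\sigma}^n_\varepsilon)-\mathcal{E}^h_\varepsilon(u^0,{\boldsymbol\sigma}^0)$), and discarding the nonnegative numerical-dissipation terms $\frac{k\varepsilon^{2-p}p}{2}\Vert\delta_t u^m_\varepsilon\Vert_0^2$ and $\frac{k}{2}\Vert\delta_t{\boldsymbol\sigma}^m_\varepsilon\Vert_0^2$, yields (\ref{C5:weak01uv-aUSreg}) at once, with $C_0$ independent of $k,h,n$ and $\varepsilon$.

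The pointwise estimates (\ref{C5:pneg1-aUVreg}) are then obtained verbatim as in Corollary \ref{C5:welemUVreg}, since they depend only on the control of $p(F_\varepsilon(u^n_\varepsilon),1)^h$ just established and not on the particular form of the chemical variable. For (\ref{C5:pneg1-aUVreg})$_1$ I would insert the pointwise bound $\frac{1}{4}\varepsilon^{p-2}(u^n_{\varepsilon-})^2\leq F_\varepsilon(u^n_\varepsilon)$ coming from (\ref{C5:es11})$_1$, together with $(\Pi^h w)^2\leq\Pi^h(w^2)$, and then invoke (\ref{C5:weak01uv-aUSreg}); for (\ref{C5:pneg1-aUVreg})$_2$ I would use $\vert\Pi^h w\vert^p\leq\Pi^h(\vert w\vert^p)$ and Remark \ref{C5:OBSr} to write $\Vert u^n_\varepsilon\Vert_{L^p}^p\leq (K_1 F_\varepsilon(u^n_\varepsilon)+K_2,1)^h$, and again apply (\ref{C5:weak01uv-aUSreg}).

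There is no genuinely hard step here: the whole argument is a transcription of the \textbf{UV$\varepsilon$} proof, and the only point requiring a separate verification is the initial bound on ${\boldsymbol\sigma}^0$, which follows from the $L^2$-stability of the orthogonal projection $\widetilde{Q}^h$. If I had to anticipate a subtlety, it would be exactly ensuring that the initial data enter $C_0$ through $\Vert\nabla v_0\Vert_0$ (equivalently through $\Vert v_0\Vert_1$) rather than through $\Vert v_0\Vert_0$, so that the same constant $C_0$ as in Corollary \ref{C5:welemUVreg} can be retained.
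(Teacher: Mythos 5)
Your proposal is correct and follows essentially the same route as the paper: bound the initial energy via (\ref{C5:ee55}) using $(u^0,{\boldsymbol\sigma}^0)=(Q^h u_0,\widetilde{Q}^h(\nabla v_0))$, telescope the discrete energy law (\ref{C5:delusreg}) to obtain (\ref{C5:weak01uv-aUSreg}), and then derive (\ref{C5:pneg1-aUVreg})$_1$ exactly as in Corollary \ref{C5:welemUVreg}, which gives the uniform bound $\max_{n\geq 0}\Vert\Pi^h(u^n_{\varepsilon-})\Vert_0^2\leq \frac{C_0}{(p-1)^2}\varepsilon^{2-p}\rightarrow 0$. Your observation that, unlike the \textbf{UV$\varepsilon$} case, no appeal to Lemma \ref{C5:tmaD} is needed because the energy already controls $\Vert{\boldsymbol\sigma}^n_\varepsilon\Vert_0$ directly is precisely the simplification reflected in the paper's shorter proof.
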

\begin{proof}
Proceeding as in (\ref{C5:ee55uv}) (using the fact that $(u^{0},{\boldsymbol \sigma}^{0})=(Q^h u_0, \widetilde{Q}^h (\nabla v_0))$), we can deduce that
\begin{equation}\label{C5:ee55}
p\int_\Omega \Pi^h (F_\varepsilon(u^0)) +  \frac{1}{2}\Vert  {\boldsymbol\sigma}^0\Vert_{0}^{2} \leq  \frac{C_0}{(p-1)^2},
\end{equation}
where the constant $C_0>0$ depends on the data $(\Omega, u_0, v_0)$, but is independent of $k,h,n$ and $\varepsilon$. Therefore,  from the discrete energy law (\ref{C5:delusreg}) and estimate (\ref{C5:ee55}), we have
$$
\mathcal{E}_\varepsilon^h(u^n_\varepsilon, {\boldsymbol\sigma}^n_\varepsilon)+k \underset{m=1}{\overset{n}{\sum}}\left( p\varepsilon^{2-p}\Vert\nabla u^m_\varepsilon\Vert_0^2 +\Vert {\boldsymbol\sigma}^m_\varepsilon\Vert_1^2\right) \leq  \mathcal{E}_\varepsilon^h(u^0, {\boldsymbol\sigma}^0)\leq \frac{C_0}{(p-1)^2}, \ \ \ \ \
$$
which implies (\ref{C5:weak01uv-aUSreg}). Finally,  the estimates given in (\ref{C5:pneg1-aUVreg}) are proved as in Corollary \ref{C5:welemUVreg}.
\end{proof}

\subsubsection{Well-posedness}
The following two results are concerning to the well-posedness of the scheme \textbf{US$\varepsilon$}.
\begin{tma} {\bf (Unconditional existence)} 
There exists at least one solution $(u^n_\varepsilon,{\boldsymbol\sigma}^n_\varepsilon)$  of scheme \textbf{US$\varepsilon$}.
\end{tma}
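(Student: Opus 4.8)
The plan is to apply the Leray--Schauder fixed point theorem, mimicking the proof of Theorem \ref{uncond} for scheme \textbf{UV$\varepsilon$}. Given $(u^{n-1}_\varepsilon,{\boldsymbol\sigma}^{n-1}_\varepsilon)\in U_h\times{\boldsymbol\Sigma}_h$, I would define an operator $R:U_h\times{\boldsymbol\Sigma}_h\rightarrow U_h\times{\boldsymbol\Sigma}_h$ by $R(\widetilde{u},\widetilde{\boldsymbol\sigma})=(u,{\boldsymbol\sigma})$, where $(u,{\boldsymbol\sigma})$ solves the decoupled \emph{linear} problems
\begin{equation*}
\frac{1}{k}(u,\bar{u})^h+(\nabla u,\nabla\bar{u})=\frac{1}{k}(u^{n-1}_\varepsilon,\bar{u})^h-(\widetilde{u}\,\widetilde{\boldsymbol\sigma},\nabla\bar{u}),\quad\forall\,\bar{u}\in U_h,
\end{equation*}
\begin{equation*}
\frac{1}{k}({\boldsymbol\sigma},\bar{\boldsymbol\sigma})+(B_h{\boldsymbol\sigma},\bar{\boldsymbol\sigma})=\frac{1}{k}({\boldsymbol\sigma}^{n-1}_\varepsilon,\bar{\boldsymbol\sigma})+p(\widetilde{u}\,\nabla\Pi^h(F'_\varepsilon(\widetilde{u})),\bar{\boldsymbol\sigma}),\quad\forall\,\bar{\boldsymbol\sigma}\in{\boldsymbol\Sigma}_h,
\end{equation*}
so that any fixed point of $R$ is a solution of (\ref{C5:modelf02aUSreg}). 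Both nonlinearities have been frozen in the tilded data, leaving two uncoupled linear problems.

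That $R$ is well defined follows from Lax--Milgram: the form $\frac{1}{k}(\cdot,\cdot)^h+(\nabla\cdot,\nabla\cdot)$ is continuous and coercive on $U_h$ (using Remark \ref{C5:eqh25uv} and the equivalence $\Vert u\Vert_1^2=\Vert\nabla u\Vert_0^2+(\int_\Omega u)^2$), while $\frac{1}{k}(\cdot,\cdot)+(B_h\cdot,\cdot)$ is continuous and coercive on ${\boldsymbol\Sigma}_h$ with respect to $\Vert\cdot\Vert_1$ by (\ref{C5:H1div}); in both cases the right-hand sides are bounded linear functionals. Since $U_h\times{\boldsymbol\Sigma}_h$ is finite dimensional, continuity of $R$ already implies compactness, so it only remains to check continuity. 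This reduces to continuity of the maps $(\widetilde{u},\widetilde{\boldsymbol\sigma})\mapsto\widetilde{u}\,\widetilde{\boldsymbol\sigma}$ and $\widetilde{u}\mapsto\widetilde{u}\,\nabla\Pi^h(F'_\varepsilon(\widetilde{u}))$: the first is bilinear, hence continuous, and the second is continuous because $F'_\varepsilon\in C^1(\mathbb{R})$ and $\Pi^h$ is bounded on the finite-dimensional space $U_h$. I emphasize that, unlike scheme \textbf{UV$\varepsilon$}, no analogue of Lemma \ref{C5:lemconv} is needed here, since \textbf{US$\varepsilon$} does not involve the divided-difference matrix $\Lambda^2_\varepsilon$.

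It then remains to bound, uniformly in $\lambda\in(0,1]$, every fixed point of $\lambda R$. Such a pair $(u,{\boldsymbol\sigma})$ satisfies (\ref{C5:modelf02aUSreg}) with the nonlinear and previous-step data terms multiplied by $\lambda$. Testing the first equation by $\bar{u}=p\Pi^h(F'_\varepsilon(u))$ and the second by $\bar{\boldsymbol\sigma}={\boldsymbol\sigma}$ and adding, the cross terms $\lambda p(u\,\nabla\Pi^h(F'_\varepsilon(u)),{\boldsymbol\sigma})$ cancel exactly as in Theorem \ref{C5:estinc1usreg}. Using $\nabla\Pi^h(F'_\varepsilon(u))=\Lambda^1_\varepsilon(u)^{-1}\nabla u$ with the ellipticity bound (\ref{C5:D5uv}), the convexity of $F_\varepsilon$ to bound $(u-\lambda u^{n-1}_\varepsilon,F'_\varepsilon(u))^h$ below by $(F_\varepsilon(u)-F_\varepsilon(\lambda u^{n-1}_\varepsilon),1)^h$, the coercivity of $F_\varepsilon$ from Lemma \ref{C5:eFe} (so $\vert s\vert^p\leq K_1F_\varepsilon(s)+K_2$), and Young's inequality to absorb the $\lambda$-weighted fixed data, I obtain a bound on $p(F_\varepsilon(u),1)^h+\frac{1}{2}\Vert{\boldsymbol\sigma}\Vert_0^2$ and hence on $\Vert\nabla u\Vert_0$, $\Vert u\Vert_{L^p}$ and $\Vert{\boldsymbol\sigma}\Vert_1$; testing the first equation by $\bar{u}=1$ moreover gives $(u,1)=\lambda(u^{n-1}_\varepsilon,1)$, controlling $\int_\Omega u$. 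Since $U_h\times{\boldsymbol\Sigma}_h$ is finite dimensional, this bounds $(u,{\boldsymbol\sigma})$, and Leray--Schauder yields a fixed point, i.e.\ a solution of \textbf{US$\varepsilon$}.

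The only genuinely delicate point is this a priori estimate: the term $\frac{1}{k}(u,F'_\varepsilon(u))^h$ lacks a clean sign and must be handled through the convexity of $F_\varepsilon$, while the $\lambda$-scaled contributions of the previous step are absorbed using $\lambda\leq1$. Everything else is routine, being either a finite-dimensional Lax--Milgram/continuity argument or a direct replay of the energy computation of Theorem \ref{C5:estinc1usreg}; in particular, the continuity of $R$, which was the subtle part for \textbf{UV$\varepsilon$}, is immediate here.
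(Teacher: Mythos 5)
Your proposal is correct and takes essentially the same route as the paper: the paper proves this theorem by the Leray--Schauder fixed point theorem (deferring details to Theorem 4.5 of \cite{C5:FMD4}), with exactly the structure you lay out — a decoupled linearized operator $R$, well-posedness by Lax--Milgram, compactness and continuity in the finite-dimensional setting, and $\lambda$-uniform a priori bounds on fixed points of $\lambda R$ obtained by replaying the energy computation of Theorem \ref{C5:estinc1usreg} with the convexity of $F_\varepsilon$. Your remark that no analogue of Lemma \ref{C5:lemconv} is needed here is also consistent with the paper, which invokes that lemma only to get continuity of the operator for scheme \textbf{UV$\varepsilon$}.
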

\begin{proof}
The proof follows as in Theorem 4.5 of \cite{C5:FMD4}, by using the Leray-Schauder fixed point theorem. 
\end{proof}

\begin{lem}{\bf (Conditional uniqueness)}
If $k\, f(h,\varepsilon)<1$ (where $f(h,\varepsilon)\uparrow +\infty$ when $h\downarrow 0$ or $\varepsilon\downarrow 0$), then the solution $(u^n_\varepsilon,{\boldsymbol \sigma}_\varepsilon^n)$ of the scheme \textbf{US$\varepsilon$} is unique.
\end{lem}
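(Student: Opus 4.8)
The plan is to derive energy-type estimates for the difference of two solutions sharing the same previous time step. Suppose $(u_1,{\boldsymbol\sigma}_1)$ and $(u_2,{\boldsymbol\sigma}_2)$ both solve \eqref{C5:modelf02aUSreg} for the same datum $(u^{n-1}_\varepsilon,{\boldsymbol\sigma}^{n-1}_\varepsilon)$, and set $u:=u_1-u_2\in U_h$ and ${\boldsymbol\sigma}:={\boldsymbol\sigma}_1-{\boldsymbol\sigma}_2\in{\boldsymbol\Sigma}_h$. First I would subtract the two copies of the scheme, test the resulting $u$-equation with $\bar u=u$ and the resulting ${\boldsymbol\sigma}$-equation with $\bar{\boldsymbol\sigma}={\boldsymbol\sigma}$, and add. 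The left-hand side then yields the coercive quantity
$$\frac{1}{k}|u|_h^2+\|\nabla u\|_0^2+\frac{1}{k}\|{\boldsymbol\sigma}\|_0^2+\|{\boldsymbol\sigma}\|_1^2,$$
where I use that $(B_h{\boldsymbol\sigma},{\boldsymbol\sigma})=\|{\boldsymbol\sigma}\|_1^2\ge\|{\boldsymbol\sigma}\|_0^2$ by the equivalent norm \eqref{C5:H1div}. Everything then reduces to bounding the two nonlinear differences on the right.

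For the chemotaxis term I would split $u_1{\boldsymbol\sigma}_1-u_2{\boldsymbol\sigma}_2=u_1{\boldsymbol\sigma}+u\,{\boldsymbol\sigma}_2$, and for the production term
$$u_1\nabla\Pi^h(F'_\varepsilon(u_1))-u_2\nabla\Pi^h(F'_\varepsilon(u_2))=u_1\,\nabla\Pi^h\big(F'_\varepsilon(u_1)-F'_\varepsilon(u_2)\big)+u\,\nabla\Pi^h(F'_\varepsilon(u_2)).$$
Each of the four resulting terms is estimated by H\"older's inequality, always keeping a factor $\|\nabla u\|_0$ or $\|{\boldsymbol\sigma}\|_1$ available for later absorption while the remaining factors are controlled by a constant $f(h,\varepsilon)$ built from three ingredients: (i) the a priori bounds of Corollary \ref{C5:welemUSreg}, which control $\|u_i\|_{L^p}$, $\|\nabla u_i\|_0$ (of order $\varepsilon^{(p-2)/2}$) and $\|{\boldsymbol\sigma}_i\|_1$; (ii) inverse inequalities on $U_h$ and ${\boldsymbol\Sigma}_h$, which turn these into $L^\infty$-type bounds for $u_1$ and ${\boldsymbol\sigma}_2$ and, crucially, bound $\|\nabla\Pi^h(F'_\varepsilon(u_1)-F'_\varepsilon(u_2))\|_0$ by a factor $h^{-1}$ times the nodal differences; and (iii) the global Lipschitz bound of $F'_\varepsilon$ with constant $\varepsilon^{p-2}$ together with the spectral bound \eqref{C5:D5uv} for $\Lambda^1_\varepsilon(u_2)^{-1}$, recalling that $\nabla\Pi^h(F'_\varepsilon(u_2))=\Lambda^1_\varepsilon(u_2)^{-1}\nabla u_2$ by \eqref{C5:PL15uv}. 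Collecting these, the right-hand side is dominated by $f(h,\varepsilon)$ times products of $\|u\|_0,\|\nabla u\|_0,\|{\boldsymbol\sigma}\|_0,\|{\boldsymbol\sigma}\|_1$, where $f(h,\varepsilon)\uparrow+\infty$ as $h\downarrow0$ or $\varepsilon\downarrow0$ precisely because of the $h^{-1}$ from the inverse estimates and the negative powers $\varepsilon^{p-2}$.

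Applying Young's inequality to absorb the factors $\|\nabla u\|_0^2$ and $\|{\boldsymbol\sigma}\|_1^2$ into the coercive left-hand side, and using the norm equivalence $|u|_h\simeq\|u\|_0$ of Remark \ref{C5:eqh25uv} (giving a constant $c_0>0$), I would arrive at an inequality of the form
$$\Big(\frac{c_0}{k}-f(h,\varepsilon)\Big)\|u\|_0^2+\Big(\frac{1}{k}-f(h,\varepsilon)\Big)\|{\boldsymbol\sigma}\|_0^2\le 0.$$
Hence, whenever $k\,f(h,\varepsilon)<1$ (after renaming $f$ so as to absorb $c_0$), both coefficients are strictly positive and we conclude $u=0$ and ${\boldsymbol\sigma}=0$, i.e.\ $(u_1,{\boldsymbol\sigma}_1)=(u_2,{\boldsymbol\sigma}_2)$; uniqueness of the recovered $v^n_\varepsilon$ then follows from the Lax--Milgram argument already used for \eqref{C5:edovfreg}. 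The main obstacle is the production-difference term $u_1\,\nabla\Pi^h(F'_\varepsilon(u_1)-F'_\varepsilon(u_2))$: it simultaneously forces an $L^\infty$ control of $u_1$ (not available uniformly in $h$) and the gradient of a Lagrange interpolant of a difference, and it is exactly here that both the mesh factor $h^{-1}$ and the regularization factor $\varepsilon^{p-2}$ enter, dictating the structure of the restriction $k\,f(h,\varepsilon)<1$.
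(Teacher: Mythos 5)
Your proposal is correct and follows essentially the same route as the paper, which does not spell out the details but simply invokes Lemma 4.6 of \cite{C5:FMD4}: that argument is exactly the one you describe (subtract the two solutions, test by the differences, control the nonlinear differences via inverse inequalities and the $\varepsilon^{p-2}$-Lipschitz bound of $F'_\varepsilon$ together with (\ref{C5:PL15uv}) and (\ref{C5:D5uv}), then absorb via Young to reach $k\,f(h,\varepsilon)<1$). The only minor imprecision is that the gradient bound from Corollary \ref{C5:welemUSreg} is of order $k^{-1/2}\varepsilon^{(p-2)/2}$ rather than $\varepsilon^{(p-2)/2}$, but this $k^{-1/2}$ is harmlessly absorbed by the $\frac{1}{k}$-coercive terms in your final Young step, so $f$ remains a function of $(h,\varepsilon)$ only.
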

\begin{proof}
The proof follows as in Lemma 4.6 of \cite{C5:FMD4}.
\end{proof}

\subsection{Scheme US0}
In this section, we are going to study another unconditionally energy-stable fully discrete scheme associated to model (\ref{C5:modelf00}). With this aim, we consider the following reformulation of problem (\ref{C5:modelf00}): Find $u:\Omega\times [0,T]\rightarrow \mathbb{R}$ and ${\boldsymbol\sigma}:\Omega\times [0,T]\rightarrow \mathbb{R}^d$, with $u\geq 0$, such that
\begin{equation} \left\{
\begin{array}
[c]{lll}%
\partial_t u - \Delta u - \nabla\cdot (u{\boldsymbol \sigma})=0\ \ \mbox{in}\ \Omega,\ t>0,\\
\partial_t {\boldsymbol \sigma} + \mbox{rot(rot } {\boldsymbol \sigma} \mbox{)} -\nabla(\nabla \cdot {\boldsymbol \sigma}) + {\boldsymbol \sigma} = \nabla(u^p) \ \mbox{in}\ \ \Omega,\ t>0,\\
\displaystyle\frac{\partial u}{\partial \mathbf{n}}=0,\ \ {\boldsymbol \sigma}\cdot \mathbf{n}=0, \ \ \left[\mbox{rot }{\boldsymbol \sigma} \times \mathbf{n}\right]_{tang}=0 \quad \mbox{on}\ \partial\Omega,\ t>0,\\
u(\x ,0)=u_0(\x )\geq 0,\ {\boldsymbol \sigma}(\x ,0)=\nabla v_0(
\x ),\quad \mbox{in}\ \Omega.
\end{array}
\right.  \label{C5:modelf01}
\end{equation}
Once system (\ref{C5:modelf01}) is solved, we can recover $v$ from $u$ by solving
\begin{equation} \left\{
\begin{array}
[c]{lll}%
\partial_t v -\Delta v + v = u^{p}  \ \mbox{in}\ \ \Omega,\ t>0,\\
\frac{\partial v}{\partial \mathbf{n}}=0\ \ \mbox{on}\ \partial\Omega,\ t>0,\\
 v(\x,0)=v_0(\x)>0\ \ \mbox{in}\ \Omega.
\end{array}
\right.  \label{C5:modelf01eqv}
\end{equation}
Observe that (formally) multiplying (\ref{C5:modelf01})$_1$ by $\frac{p}{p-1} u^{p-1}$, (\ref{C5:modelf01})$_2$ by ${\boldsymbol\sigma}$, integrating over $\Omega$ and adding both equations, the terms $\frac{p}{p-1} (u{\boldsymbol \sigma},\nabla(u^{p-1}))$ and $(\nabla(u^p),{\boldsymbol \sigma})$ vanish, we obtain the following energy law
$$
\frac{d}{dt} \displaystyle \int_\Omega \Big( \frac{1}{p-1} \vert u\vert^p + \frac{1}{2} \vert {\boldsymbol\sigma}\vert^2\Big) d \x + \int_\Omega \frac{4}{p} \vert \nabla (u^{p/2}) \vert^2 d\x+\Vert {\boldsymbol\sigma}\Vert_1^2 = 0.
$$
In particular, the modified energy $\mathcal{E}(u,{\boldsymbol\sigma})=\displaystyle \int_\Omega \Big( \frac{1}{p-1} \vert u\vert^p + \frac{1}{2} \vert {\boldsymbol\sigma}\vert^2\Big) d \x$ is decreasing in time. Then, taking into account the reformulation (\ref{C5:modelf01})-(\ref{C5:modelf01eqv}), we consider a fully discrete approximation using a FE discretization in space and the backward
Euler discretization in time (considered for simplicity on a uniform partition of $[0,T]$ with time step
$k=T/N : (t_n = nk)_{n=0}^{n=N}$). Concerning the space discretization, we consider the triangulation as in the scheme \textbf{UV$\varepsilon$}, but in this case without imposing the constraint ({\bf H}) related with the right-angles simplices. We choose the following continuous FE spaces for $u$, ${\boldsymbol\sigma}$ and $v$:
$$(U_h, {\boldsymbol\Sigma}_h, V_h) \subset H^1(\Omega)^3,\quad \hbox{generated by $\mathbb{P}_1,\mathbb{P}_m,\mathbb{P}_{r}$ with $m,r\geq 1$.}
$$
Then, we consider the following first order in time, nonlinear and coupled scheme: 
\begin{itemize}
\item{\underline{\emph{Scheme \textbf{US0}}:}\\
{\bf Initialization}:  Let $(u^0,{\boldsymbol \sigma}^{0})=(Q^h u_0, \widetilde{Q}^h (\nabla v_0))\in  U_h\times {\boldsymbol\Sigma}_h$.\\
{\bf Time step} n: Given $(u^{n-1},{\boldsymbol \sigma}^{n-1})\in  U_h\times {\boldsymbol\Sigma}_h$, compute $(u^{n},{\boldsymbol \sigma}^n)\in  U_h \times {\boldsymbol\Sigma}_h$ solving
\begin{equation}
\left\{
\begin{array}
[c]{lll}%
(\delta_t u^n,\bar{u})^h +\frac{1}{p-1}
((u^{n}_+)^{2-p}\nabla(\Pi^h ((u^n_+)^{p-1})),\nabla\bar{u}) = -(u^{n}{\boldsymbol \sigma}^n,\nabla \bar{u}), \ \ \forall \bar{u}\in U_h,\\
(\delta_t {\boldsymbol \sigma}^n,\bar{\boldsymbol \sigma}) +
( B_h {\boldsymbol \sigma}^n,\bar{\boldsymbol
\sigma}) =
\frac{p}{p-1}(u^{n}\nabla (\Pi^h ((u^n_+)^{p-1})),\bar{\boldsymbol \sigma}),\ \ \forall
\bar{\boldsymbol \sigma}\in \Sigma_h,
\end{array}
\right.  \label{C5:modelf02aUS}
\end{equation}}
\end{itemize}
where $u^n_+:= \max\{ u^n, 0\}\geq 0$. Recall that $Q^h$ is the $L^2$-projection on $U_h$ defined in (\ref{C5:MLP25uv}), $\widetilde{Q}^h$ is the standard $L^2$-projection on ${\boldsymbol\Sigma}_h$, $\Pi^h: C(\overline{\Omega})\rightarrow U_h$ is the Lagrange interpolation operator, $(B_h {\boldsymbol \sigma}^n,\bar{\boldsymbol \sigma}) = (\mbox{rot } {\boldsymbol\sigma}^n,\mbox{rot } \bar{\boldsymbol\sigma}) + (\nabla \cdot {\boldsymbol\sigma}^n, \nabla \cdot \bar{\boldsymbol\sigma}) + ({\boldsymbol\sigma}^n, \bar{\boldsymbol\sigma})$ and the discrete semi-inner product $(\cdot,\cdot)^h$ was defined in (\ref{C5:mlumpreguv}). Once the scheme \textbf{US0} is solved, given $v^{n-1}\in V_h$, we can recover $v^n=v^n(u^n) \in V_h$ solving: 
\begin{equation}\label{C5:edovfus}
(\delta_t v^n,\bar{v})  +( \nabla v^n,\nabla\bar{v}) + (v^n,\bar{v})  =((u^n_+)^{p},\bar{v}), \ \ \forall
\bar{v}\in V_h.
\end{equation}

Given $u^n\in U_h$ and $v^{n-1}\in V_h$, Lax-Milgram theorem implies that there exists a unique $v^n \in V_h$ solution of (\ref{C5:edovfus}).

\begin{obs}{\bf (Positivity of $v^n$)} 
Imposing the geometrical property of the triangulation where the interior angles of the triangles or tetrahedra must be at most $\pi/2$, the result concerning to the positivity of $v^n$ stablished in Remark \ref{C5:posVuv} remains true for the scheme \textbf{US0}.
\end{obs}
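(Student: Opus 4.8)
The plan is to essentially reproduce the discrete-maximum-principle argument of Remark~\ref{C5:posVuv} (i.e.\ Remark~3.10 of \cite{C5:FMD4}) applied to the recovery equation (\ref{C5:edovfus}), the only genuinely new ingredient being the geometric hypothesis that must replace the right-angle constraint (\textbf{H}) used for the scheme \textbf{UV$\varepsilon$}. First I would rewrite (\ref{C5:edovfus}) in its mass-lumped form, applying the lumping $(\cdot,\cdot)^h$ to every term except the self-diffusion term $(\nabla v^n,\nabla\bar v)$, and approximate $V_h$ by $\mathbb{P}_1$-continuous finite elements; then I would test with $\bar v=\Pi^h(v^n_-)\in V_h$, where $v^n_-:=\min\{v^n,0\}$, and argue by induction on $n$, assuming $v^{n-1}\ge 0$.

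The core of the proof is a term-by-term sign analysis. Because the lumped products decouple nodewise and $\phi_i\ge 0$ for $\mathbb{P}_1$ elements, one has $(v^n,\Pi^h(v^n_-))^h=|v^n_-|_h^2\ge 0$, while the inductive hypothesis $v^{n-1}\ge 0$ gives $(v^{n-1},\Pi^h(v^n_-))^h\le 0$ and, crucially, the source is already nonnegative, so $((u^n_+)^p,\Pi^h(v^n_-))^h\le 0$ (here the situation is even cleaner than in Remark~\ref{C5:posVuv}, where the role of $(u^n_+)^p$ was played by the nonnegative $F_\varepsilon$). Collecting these, the identity obtained after testing reduces to
\begin{equation*}
\frac{1}{k}|v^n_-|_h^2 + (\nabla v^n,\nabla\Pi^h(v^n_-)) + |v^n_-|_h^2 = \frac{1}{k}(v^{n-1},\Pi^h(v^n_-))^h + ((u^n_+)^p,\Pi^h(v^n_-))^h \le 0,
\end{equation*}
so that, provided the diffusion term is nonnegative, one forces $|v^n_-|_h^2=0$, i.e.\ $v^n(\mathbf{a}_i)\ge 0$ at every node, whence $v^n=\sum_i v^n(\mathbf{a}_i)\phi_i\ge 0$ in $\Omega$.

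The main obstacle is precisely the diffusion term $(\nabla v^n,\nabla\Pi^h(v^n_-))$, which is where the geometry enters. Writing $k_{ij}=(\nabla\phi_i,\nabla\phi_j)$, the hypothesis that all interior angles of the simplices are at most $\pi/2$ forces the off-diagonal stiffness entries to satisfy $k_{ij}\le 0$ for $i\ne j$; combined with $\sum_j k_{ij}=0$, a standard computation gives the identity $(\nabla v^n,\nabla\Pi^h(v^n_-)) - \|\nabla\Pi^h(v^n_-)\|_0^2 = \sum_{i\ne j} v^n_+(\mathbf{a}_i)\,v^n_-(\mathbf{a}_j)\,k_{ij}\ge 0$, since each summand is a product of a nonnegative, a nonpositive and a nonpositive factor. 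Hence $(\nabla v^n,\nabla\Pi^h(v^n_-))\ge \|\nabla\Pi^h(v^n_-)\|_0^2\ge 0$. This nonobtuseness condition is strictly weaker than (\textbf{H}), and it is exactly the reason why, for the scheme \textbf{US0} (where (\textbf{H}) is not assumed), it must be imposed separately to preserve positivity of $v^n$; with it in hand the displayed chain closes and the conclusion follows for all $n\ge 1$.
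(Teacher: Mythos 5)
Your proof is correct and follows exactly the argument the paper intends: the remark simply invokes the technique of Remark \ref{C5:posVuv} (mass-lumping all terms of (\ref{C5:edovfus}) except the diffusion term, $\mathbb{P}_1$ elements, testing with $\bar v=\Pi^h(v^n_-)$ and induction), with the non-obtuse-angle hypothesis supplying the sign condition $k_{ij}=(\nabla\phi_i,\nabla\phi_j)\le 0$ on the off-diagonal stiffness entries that replaces the role of (\textbf{H}). You have merely written out in full the nodewise sign analysis that the paper delegates to the citation of Remark 3.10 in \cite{C5:FMD4}, so there is nothing to fault.
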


\subsubsection{Mass-conservation, Energy-stability and Solvability}\label{C5:ELus}
Since $\bar{u}=1\in U_h$ and $\bar{v}=1 \in V_h$, we deduce that the scheme \textbf{US0} is conservative in $u^n$, that is,
\begin{equation}\label{C5:conu1us}
(u^n,1)=(u^n,1)^h= (u^{n-1},1)^h=\cdot\cdot\cdot= (u^{0},1)^h=(u_0,1)=m_0,
\end{equation}
and we have the following behavior for $\int_\Omega v^n$: 
$$
\delta_t \left(\int_\Omega v^n \right)= \int_\Omega (u^n_+)^{p}  - \int_\Omega v^n.
$$

\begin{defi}\label{C5:enesfUS}
A numerical scheme with solution $(u^n,{\boldsymbol\sigma}^n)$ is called energy-stable with respect to the  energy 
\begin{equation}\label{C5:ENus}
\mathcal{E}^h(u,{\boldsymbol\sigma})= \frac{1}{p-1}( (u_+)^p,1)^h+ \frac{1}{2} \Vert {\boldsymbol\sigma}\Vert_0^2,
\end{equation}
if this energy is time decreasing, that is $\mathcal{E}^h(u^n,{\boldsymbol\sigma}^n)\leq \mathcal{E}^h(u^{n-1},{\boldsymbol\sigma}^{n-1})$, for all $n\geq 1$.
\end{defi}

\begin{tma} {\bf (Unconditional stability)} \label{C5:estinc1us}
The scheme \textbf{US0} is unconditionally energy stable with respect to $\mathcal{E}^h(u,{\boldsymbol\sigma})$. In fact, if $(u^n,{\boldsymbol\sigma}^n)$ is a solution of \textbf{US0}, then the following discrete energy law holds
\begin{equation}\label{C5:delus}
\delta_t \mathcal{E}^h(u^n,{\boldsymbol\sigma}^n)  + \frac{k}{2} \Vert \delta_t  {\boldsymbol\sigma}^n\Vert_0^2 +\frac{p}{(p-1)^2}\int_\Omega (u^{n}_+)^{2-p}\vert \nabla (\Pi^h ((u^n_+)^{p-1})) \vert^{2} d\x +\Vert  {\boldsymbol\sigma}^n\Vert_1^2 \leq 0.
\end{equation}
\end{tma}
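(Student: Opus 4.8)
The plan is to follow the scheme of the proof of Theorem \ref{C5:estinc1usreg}, but working directly with the power $(s_+)^p$ instead of the regularization $F_\varepsilon$. Concretely, I would test (\ref{C5:modelf02aUS})$_1$ with $\bar u=\frac{p}{p-1}\Pi^h((u^n_+)^{p-1})\in U_h$ and (\ref{C5:modelf02aUS})$_2$ with $\bar{\boldsymbol\sigma}={\boldsymbol\sigma}^n\in{\boldsymbol\Sigma}_h$, and add the two resulting identities. With this choice $\nabla\bar u=\frac{p}{p-1}\nabla\Pi^h((u^n_+)^{p-1})$, so the coupling term produced by the first equation, namely $-\frac{p}{p-1}(u^n{\boldsymbol\sigma}^n,\nabla\Pi^h((u^n_+)^{p-1}))$, is exactly the opposite of the production term $\frac{p}{p-1}(u^n\nabla\Pi^h((u^n_+)^{p-1}),{\boldsymbol\sigma}^n)$ coming from the second equation; hence they cancel. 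After cancellation the diffusion term of the first equation becomes $\frac{p}{(p-1)^2}\int_\Omega(u^n_+)^{2-p}|\nabla\Pi^h((u^n_+)^{p-1})|^2\,d\x$, which is precisely the third term in (\ref{C5:delus}), while $(B_h{\boldsymbol\sigma}^n,{\boldsymbol\sigma}^n)=\Vert{\boldsymbol\sigma}^n\Vert_1^2$ by the definition of $B_h$ together with the equivalent norm (\ref{C5:H1div}).

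For the time-derivative contribution of ${\boldsymbol\sigma}$ I would use the elementary identity $(a-b,a)=\frac12(\Vert a\Vert_0^2-\Vert b\Vert_0^2+\Vert a-b\Vert_0^2)$, which gives $(\delta_t{\boldsymbol\sigma}^n,{\boldsymbol\sigma}^n)=\delta_t\big(\frac12\Vert{\boldsymbol\sigma}^n\Vert_0^2\big)+\frac k2\Vert\delta_t{\boldsymbol\sigma}^n\Vert_0^2$, supplying the second term of (\ref{C5:delus}) and part of $\delta_t\mathcal E^h$.

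The only genuinely nonlinear term, and the crux of the argument, is $\frac{p}{p-1}(\delta_t u^n,\Pi^h((u^n_+)^{p-1}))^h$, for which I must prove the discrete ``chain-rule'' inequality $\frac{p}{p-1}(\delta_t u^n,\Pi^h((u^n_+)^{p-1}))^h\ge \delta_t\big(\frac{1}{p-1}((u^n_+)^p,1)^h\big)$. Since $(\cdot,\cdot)^h$ is built by mass lumping (see (\ref{C5:mlumpreguv})) and the Lagrange interpolation satisfies $\Pi^h((u^n_+)^{p-1})(\mathbf a_i)=(u^n(\mathbf a_i)_+)^{p-1}$, both sides of this inequality reduce to sums over the nodes $\mathbf a_i\in\mathcal N_h$ with positive quadrature weights. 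At each node I apply the convexity of the scalar function $g(s):=(s_+)^p$, which is $C^1$ and convex for $p>1$ with $g'(s)=p(s_+)^{p-1}$, in the form $g(a)-g(b)\le g'(a)(a-b)$ with $a=u^n(\mathbf a_i)$ and $b=u^{n-1}(\mathbf a_i)$; multiplying by the (positive) factor $\frac{1}{(p-1)k}$ times the nodal weight and summing over $i$ yields the claimed inequality. This is the discrete counterpart of the Taylor/convexity estimate (\ref{C5:I01creguv}) used for scheme \textbf{US$\varepsilon$}, but carried out directly on $(s_+)^p$. Substituting this lower bound into the added identity, which equals zero, turns it into the asserted inequality (\ref{C5:delus}).

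The main obstacle is establishing the nodal convexity inequality cleanly while accounting for the positive part $u_+$ and the mass lumping. Once one observes that $s\mapsto(s_+)^p$ is convex and $C^1$ on all of $\mathbb R$, the node-by-node argument is transparent and, crucially, no sign condition on $u^n$ is required; the delicate point is simply to verify that the discrete inner products collapse to the nodal sums, i.e.\ that interpolation commutes with nodal evaluation, which is exactly what the mass lumping in (\ref{C5:mlumpreguv}) provides.
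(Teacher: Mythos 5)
Your proposal is correct and follows essentially the same route as the paper's proof: the same test functions $\bar u=\frac{p}{p-1}\Pi^h((u^n_+)^{p-1})$ and $\bar{\boldsymbol\sigma}={\boldsymbol\sigma}^n$, the same cancellation of the chemotaxis and production terms, and the same convexity argument for $s\mapsto(s_+)^p$ (the paper phrases it pointwise via (\ref{C5:Gdif2}) applied to $F(s)=\frac1p(s_+)^p$ and then integrates $\Pi^h$ of the resulting inequality, which is exactly your node-by-node estimate with positive lumped weights). No gaps; the two arguments are the same in substance.
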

\begin{proof}
Testing (\ref{C5:modelf02aUS})$_1$ by $\bar{u}=\frac{p}{p-1} \Pi^h ((u^n_+)^{p-1})$,  (\ref{C5:modelf02aUS})$_2$ by $\bar{\boldsymbol\sigma}={\boldsymbol\sigma}^n$ and  adding, the terms\\
 $\frac{p}{p-1}(u^{n}\nabla (\Pi^h ((u^n_+)^{p-1})),{\boldsymbol \sigma}^n)$ cancel, and we obtain
\begin{eqnarray}\label{C5:modelele01us}
&\displaystyle \frac{p}{p-1}\int_\Omega  \Pi^h(\delta_t u^n \cdot (u^n_+)^{p-1}) d\x &\!\!\!\! + \frac{1}{2}\delta_t \Vert {\boldsymbol\sigma}^n \Vert_0^2+ \frac{k}{2}\Vert \delta_t {\boldsymbol\sigma}^n\Vert_0^2 \nonumber\\
&&\!\!\!\!\!\!\!+ \frac{p}{(p-1)^2}\int_\Omega
(u^{n}_+)^{2-p}\vert \nabla (\Pi^h ((u^n_+)^{p-1})) \vert^{2} d\x+ \Vert{\boldsymbol\sigma}^n \Vert_1^2 =0. 
\end{eqnarray}
Denoting by $F(u^n)=\displaystyle\frac{1}{p}(u^n_+)^{p}$, we have that $F$ is a differentiable and convex function, and then, from (\ref{C5:Gdif2}) we have that
\begin{equation*}
\delta_t u^n \cdot (u^n_+)^{p-1} =
\frac{1}{k} F'(u^n) (
u^n - u^{n-1})  \geq  \frac{1}{k} (F(u^n)-F(u^{n-1})) =\delta_t
 F(u^n) ,
\end{equation*}
and therefore, 
\begin{equation}\label{C5:modelele02us}
\displaystyle\int_\Omega \Pi^h(\delta_t u^n \cdot (u^n_+)^{p-1}) \geq  \delta_t
\left(\int_\Omega \Pi^h F(u^n) \right) = \frac{1}{p}\delta_t
\left(\int_\Omega \Pi^h ((u^n_+)^{p}) \right).
\end{equation}
Therefore, from (\ref{C5:modelele01us}) and (\ref{C5:modelele02us}) we deduce (\ref{C5:delus}).
\end{proof}

\begin{cor} \label{C5:welemUS} {\bf(Uniform estimates) }
Let $(u^n,{\boldsymbol\sigma}^n)$ be a solution of scheme \textbf{US0}. Then, it holds for all $n\geq 1$,
\begin{equation}\label{C5:weak01uv-aUS}
\frac{1}{p-1}( (u^n_+)^p,1)^h+ \frac{1}{2} \Vert {\boldsymbol\sigma}^n\Vert_0^2+k \underset{m=1}{\overset{n}{\sum}}\left(\frac{p}{(p-1)^2}\int_\Omega
(u^{m}_+)^{2-p}\vert \nabla (\Pi^h ((u^m_+)^{p-1})) \vert^{2} d\x +\Vert  {\boldsymbol\sigma}^m\Vert_1^2\right)\leq \frac{C_0}{p-1}, 
\end{equation}
\begin{equation}\label{C5:esul1}
\int_\Omega \vert u^n\vert \leq C_1,  
\end{equation}
with the constants $C_0,C_1>0$ depending on the data $(\Omega,u_0, v_0)$, but independent of $(k,h)$ and $n$. 
\end{cor}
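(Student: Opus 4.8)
The plan is to prove the accumulated estimate (\ref{C5:weak01uv-aUS}) by summing the discrete energy law (\ref{C5:delus}) in time, exactly as in Corollaries \ref{C5:welemUVreg} and \ref{C5:welemUSreg}, and then to deduce the $L^1$-bound (\ref{C5:esul1}) from (\ref{C5:weak01uv-aUS}) combined with the mass conservation (\ref{C5:conu1us}).

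First I would bound the initial energy $\mathcal{E}^h(u^0,{\boldsymbol\sigma}^0)$. Since $(u^0,{\boldsymbol\sigma}^0)=(Q^h u_0,\widetilde{Q}^h(\nabla v_0))$, the $L^2$-stability of $Q^h$ and $\widetilde{Q}^h$ (together with the equivalence of $\vert\cdot\vert_h$ and $\Vert\cdot\Vert_0$ from Remark \ref{C5:eqh25uv}), and the elementary bound $(u^0_+)^p\le\vert u^0\vert^p\le\vert u^0\vert^2+1$ valid for $1<p<2$, give, proceeding as in (\ref{C5:ee55uv})--(\ref{C5:ee55}),
$$\mathcal{E}^h(u^0,{\boldsymbol\sigma}^0)=\frac{1}{p-1}((u^0_+)^p,1)^h+\frac12\Vert{\boldsymbol\sigma}^0\Vert_0^2\le\frac{C}{p-1}\big(\Vert u_0\Vert_0^2+1\big)+\frac12\Vert v_0\Vert_1^2\le\frac{C_0}{p-1}.$$
Multiplying (\ref{C5:delus}) by $k$, summing over $m=1,\dots,n$ (so that $k\,\delta_t\mathcal{E}^h$ telescopes into $\mathcal{E}^h(u^n,{\boldsymbol\sigma}^n)-\mathcal{E}^h(u^0,{\boldsymbol\sigma}^0)$) and discarding the nonnegative terms $\tfrac{k}{2}\Vert\delta_t{\boldsymbol\sigma}^m\Vert_0^2$ then yields (\ref{C5:weak01uv-aUS}).

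For (\ref{C5:esul1}) the crucial idea is to use mass conservation to eliminate the negative part of $u^n$. Writing $u^n=u^n_+-\vert u^n_-\vert$ pointwise and recalling that for $\mathbb{P}_1$ functions $\int_\Omega u^n=m_0$ by (\ref{C5:conu1us}), one obtains $\int_\Omega\vert u^n\vert=2\int_\Omega u^n_+-m_0\le 2\int_\Omega u^n_+$. To control $\int_\Omega u^n_+$ I would note that on each simplex $K$ the function $u^n_+=\max\{u^n,0\}$ is convex (the composition of the convex map $t\mapsto\max\{t,0\}$ with the affine $u^n|_K$), so its $\mathbb{P}_1$-Lagrange interpolant dominates it, $u^n_+\le\Pi^h(u^n_+)$ on $\Omega$, whence $\int_\Omega u^n_+\le\int_\Omega\Pi^h(u^n_+)=(u^n_+,1)^h$. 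Finally, a discrete Hölder inequality for the lumped product $(\cdot,\cdot)^h$ (with exponents $p$ and $p/(p-1)$, using $(1,1)^h=\vert\Omega\vert$) gives $(u^n_+,1)^h\le\vert\Omega\vert^{1-1/p}\big[((u^n_+)^p,1)^h\big]^{1/p}$, while $((u^n_+)^p,1)^h\le C_0$ follows from the already proved (\ref{C5:weak01uv-aUS}); combining these bounds produces (\ref{C5:esul1}) with $C_1=2\vert\Omega\vert^{1-1/p}C_0^{1/p}$.

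The first estimate is routine, being essentially a repetition of the energy-accumulation argument already carried out for the regularized schemes. The delicate point is (\ref{C5:esul1}): because the scheme \textbf{US0} does not guarantee $u^n\ge 0$, the negative part cannot simply be discarded, and the bound $\int_\Omega u^n_+\le(u^n_+,1)^h$ is not immediate since $u^n_+\notin U_h$---it must be justified through the convexity/interpolation inequality on each element, after which mass conservation converts control of the positive part into control of the full $L^1$-norm.
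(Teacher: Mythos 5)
Your proposal is correct and takes essentially the same approach as the paper: the accumulated estimate (\ref{C5:weak01uv-aUS}) is obtained by bounding the initial energy and telescoping the discrete energy law (\ref{C5:delus}), and the $L^1$ bound (\ref{C5:esul1}) follows from mass conservation (\ref{C5:conu1us}) combined with the convexity/interpolant-dominance property of $\Pi^h$ and a H\"older-type inequality, exactly the ingredients of the paper's proof. The only cosmetic difference is in the second step: the paper applies interpolant dominance to $\vert u^n\vert$ and then uses a continuous H\"older--Young argument together with $(\Pi^h (u^n_+))^p\leq \Pi^h((u^n_+)^p)$, while you apply dominance to $u^n_+$ and conclude with a discrete H\"older inequality for the lumped product $(\cdot,\cdot)^h$ --- an equivalent reordering of the same ideas.
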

\begin{proof}
In order to obtain (\ref{C5:weak01uv-aUS}), by multiplying (\ref{C5:delus}) by $k$ and adding from $m=1$ to $m=n$, it suffices to bound the initial energy $\mathcal{E}^h(u^0,{\boldsymbol\sigma}^0)$. Taking into account that $(u^0,{\boldsymbol \sigma}^{0})=(Q^h u_0, \widetilde{Q}^h (\nabla v_0))$ and $u_0\geq 0$ (and therefore, $u^0\geq 0$), we have
\begin{equation*}
\mathcal{E}^h(u^0,{\boldsymbol\sigma}^0) \leq \frac{C}{p-1}\int_\Omega \Pi^h ((u^0)^2 + 1)+ \frac{1}{2} \Vert v_0\Vert_1^2 \leq \frac{C}{p-1} (\Vert u_0\Vert_0^2+ \Vert v_0\Vert_1^2 + 1).
\end{equation*}
On the other hand, by considering $u^n_-=\min\{u^n, 0 \}\geq 0$, taking into account that $\vert u^n\vert = 2 u^n_{+} - u^n$, using the H\"older and Young inequalities as well as (\ref{C5:conu1us}), we have
\begin{eqnarray}\label{C5:esup}
&\displaystyle\int_\Omega \vert u^n\vert&\!\!\!\! \leq \int_\Omega \Pi^h \vert u^n\vert = 2 \int_\Omega \Pi^h (u^n_+) -  \int_\Omega u^n\nonumber\\
&& \leq  C\Big(\int_\Omega (\Pi^h (u^n_+))^p +1\Big)\leq  C\Big(\int_\Omega \Pi^h ((u^n_+)^p) + 1\Big).
\end{eqnarray}
Therefore, from (\ref{C5:weak01uv-aUS}) and (\ref{C5:esup}), we deduce (\ref{C5:esul1}).
\end{proof}

\begin{tma} {\bf (Unconditional existence)} 
There exists at least one solution $(u^n,{\boldsymbol\sigma}^n)$  of scheme \textbf{US0}.
\end{tma}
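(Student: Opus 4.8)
The plan is to apply the Leray--Schauder fixed point theorem on the finite-dimensional space $W_h:=U_h\times{\boldsymbol\Sigma}_h$, in the same spirit as Theorem~\ref{uncond}. Fixing $n\geq 1$ and the data $(u^{n-1},{\boldsymbol\sigma}^{n-1})\in W_h$, I would define $R:W_h\rightarrow W_h$ by $R(\widetilde u,\widetilde{\boldsymbol\sigma})=(u,{\boldsymbol\sigma})$, where $(u,{\boldsymbol\sigma})$ solves the decoupled \emph{linear} problems obtained by freezing every nonlinear coefficient at $(\widetilde u,\widetilde{\boldsymbol\sigma})$:
\begin{equation*}
\frac{1}{k}(u,\bar u)^h=\frac{1}{k}(u^{n-1},\bar u)^h-\frac{1}{p-1}\big((\widetilde u_+)^{2-p}\nabla(\Pi^h((\widetilde u_+)^{p-1})),\nabla\bar u\big)-(\widetilde u\,\widetilde{\boldsymbol\sigma},\nabla\bar u),\quad\forall\bar u\in U_h,
\end{equation*}
\begin{equation*}
\frac{1}{k}({\boldsymbol\sigma},\bar{\boldsymbol\sigma})+(B_h{\boldsymbol\sigma},\bar{\boldsymbol\sigma})=\frac{1}{k}({\boldsymbol\sigma}^{n-1},\bar{\boldsymbol\sigma})+\frac{p}{p-1}\big(\widetilde u\,\nabla(\Pi^h((\widetilde u_+)^{p-1})),\bar{\boldsymbol\sigma}\big),\quad\forall\bar{\boldsymbol\sigma}\in{\boldsymbol\Sigma}_h.
\end{equation*}
Since $(\cdot,\cdot)^h$ is an inner product on $U_h$ (Remark~\ref{C5:eqh25uv}) and $\frac{1}{k}(\cdot,\cdot)+(B_h\cdot,\cdot)$ is coercive on ${\boldsymbol\Sigma}_h$ (because $(B_h{\boldsymbol\sigma},{\boldsymbol\sigma})=\Vert{\boldsymbol\sigma}\Vert_1^2\geq\Vert{\boldsymbol\sigma}\Vert_0^2$), both problems are uniquely solvable, so $R$ is well defined, and $(u,{\boldsymbol\sigma})$ solves \textbf{US0} if and only if it is a fixed point of $R$. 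As $W_h$ is finite-dimensional it suffices to check that $R$ is continuous (then it is automatically compact); this follows from the continuity of $s\mapsto(s_+)^{p-1}$ and $s\mapsto(s_+)^{2-p}$ on $\mathbb{R}$ (where $1<p<2$ is used), together with the fact that $\Pi^h$ acts through nodal values, so the maps $\widetilde u\mapsto\Pi^h((\widetilde u_+)^{p-1})$, the products and the $L^2$-projections depend continuously on $(\widetilde u,\widetilde{\boldsymbol\sigma})$.

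The remaining task is to bound, uniformly in $\lambda\in(0,1]$, every fixed point $(u,{\boldsymbol\sigma})$ of $\lambda R$. Such a pair satisfies the identities of \textbf{US0} in which the terms $(u^{n-1},\cdot)^h$, $({\boldsymbol\sigma}^{n-1},\cdot)$ are multiplied by $\lambda$ and the diffusion, chemotaxis and production terms carry an extra factor $\lambda$. Testing the first equation by $\bar u=\frac{p}{p-1}\Pi^h((u_+)^{p-1})\in U_h$ and the second by $\bar{\boldsymbol\sigma}={\boldsymbol\sigma}$ and adding, the chemotaxis and production contributions cancel exactly as in Theorem~\ref{C5:estinc1us}, leaving
\begin{equation*}
\frac{p}{(p-1)k}\big(u-\lambda u^{n-1},\Pi^h((u_+)^{p-1})\big)^h+\frac{1}{k}\Vert{\boldsymbol\sigma}\Vert_0^2+\frac{\lambda p}{(p-1)^2}\int_\Omega(u_+)^{2-p}\vert\nabla(\Pi^h((u_+)^{p-1}))\vert^2+\Vert{\boldsymbol\sigma}\Vert_1^2=\frac{\lambda}{k}({\boldsymbol\sigma}^{n-1},{\boldsymbol\sigma}).
\end{equation*}
Using the nodal convexity inequality for $F(s)=\frac{1}{p}(s_+)^p$ provided by (\ref{C5:Gdif2}), namely $(u-\lambda u^{n-1})(u_+)^{p-1}\geq\frac1p(u_+)^p-\frac{\lambda^p}{p}(u^{n-1}_+)^p$ at each node, dropping the two nonnegative terms, bounding $\frac{\lambda}{k}({\boldsymbol\sigma}^{n-1},{\boldsymbol\sigma})$ by Young's inequality, and invoking $\lambda\in(0,1]$ (so $\lambda^p\leq1$), I obtain
\begin{equation*}
\frac{1}{(p-1)}((u_+)^p,1)^h+\frac12\Vert{\boldsymbol\sigma}\Vert_0^2\leq\frac{1}{(p-1)}((u^{n-1}_+)^p,1)^h+\frac12\Vert{\boldsymbol\sigma}^{n-1}\Vert_0^2,
\end{equation*}
so that $((u_+)^p,1)^h$ and $\Vert{\boldsymbol\sigma}\Vert_0$ are bounded by the data, independently of $\lambda$.

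The main difficulty is that this energy estimate controls only the positive part $u_+$, and I must convert it into a bound on the full $u$ in $U_h$. This I would resolve through the (modified) mass identity: taking $\bar u=1$ in the $u$-equation of the $\lambda R$ fixed point and using $\nabla1=0$ gives $(u,1)^h=\lambda(u^{n-1},1)^h=\lambda m_0$, whence $\int_\Omega u=\lambda m_0$ is bounded uniformly in $\lambda$. Writing $\vert u\vert=2u_+-u$ and arguing exactly as in (\ref{C5:esup}), I get $\int_\Omega\vert u\vert\leq C(((u_+)^p,1)^h+1)$, again bounded uniformly in $\lambda$; since on the fixed mesh all norms on the finite-dimensional space $U_h$ are equivalent, this yields a uniform bound on $u$ in $U_h$, and together with the bound on $\Vert{\boldsymbol\sigma}\Vert_0$ a uniform bound of the fixed points in $W_h$. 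The Leray--Schauder theorem then furnishes a fixed point of $R$, which is the desired solution of \textbf{US0}.
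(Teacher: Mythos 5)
Your proposal is correct and follows essentially the same approach as the paper: the paper's proof is a one-line appeal to the Leray--Schauder fixed point theorem (citing Theorem 4.5 of \cite{C5:FMD4} for the details), and your argument is precisely that strategy carried out in full --- a decoupled linearized map $R$ on the finite-dimensional space $U_h\times{\boldsymbol\Sigma}_h$, the $\lambda$-fixed-point energy estimate obtained by testing with $\frac{p}{p-1}\Pi^h((u_+)^{p-1})$ and ${\boldsymbol\sigma}$ so that the chemotaxis and production terms cancel, and the conversion of the bound on $u_+$ into a bound on $u$ via the mass identity and $\vert u\vert=2u_+-u$, exactly as in the paper's own Theorem \ref{C5:estinc1us} and Corollary \ref{C5:welemUS}. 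In effect you have supplied, correctly, the details the paper delegates to its reference.
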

\begin{proof}
The proof follows as in Theorem 4.5 of \cite{C5:FMD4}, by using the Leray-Schauder fixed point theorem. 
\end{proof}

\section{Numerical simulations}\label{C5:S5C5:NSi}
In this section, we will compare the results of several numerical simulations using the schemes derived through the paper. We have chosen the 2D domain $[0,2]^2$ using a structured mesh (then the right-angled constraint ({\bf H}) holds and the scheme \textbf{UV$\varepsilon$} can be defined), the spaces for $u$ and ${\boldsymbol\sigma}$ have been generated by $\mathbb{P}_1$-continuous FE, and all the simulations have been carried out using \textbf{FreeFem++} software. We will also compare with the usual Backward Euler scheme for problem (\ref{C5:modelf00}), which is given for the following first order in time, nonlinear and coupled scheme: 
\begin{itemize}
\item{\underline{\emph{Scheme \textbf{UV}}:}\\
{\bf Initialization}: Let $(u^{0},{v}^{0})\in  U_h\times V_h$ an approximation of $(u_0,v_0)$ as $h\rightarrow 0$. \\
{\bf Time step} n: Given $(u^{n-1},{v}^{n-1})\in  U_h\times {V}_h$, compute $(u^n,{v}^{n})\in U_h \times {V}_h$  by solving
\begin{equation*}
\left\{
\begin{array}
[c]{lll}%
(\delta_t u^n,\bar{u}) + (\nabla u^n,\nabla \bar{u}) = -(u^{n}\nabla {v}^n,\nabla \bar{u}), \ \ \forall \bar{u}\in U_h,\\
(\delta_t {v}^n,\bar{v}) +(\nabla  v^n, \nabla\bar{v}) + (v^n,\bar{v})  = ((u^n_+)^{p} ,\bar{v}),\ \ \forall
\bar{v}\in V_h.
\end{array}
\right.  
\end{equation*}}
\end{itemize}
\begin{obs}\label{C5:RBE5}
The scheme \textbf{UV} has not been analyzed in the previous sections because it is not clear how to prove its energy-stability. In fact, observe that the scheme \textbf{UV$\varepsilon$} (which is the ``closest'' approximation to the scheme \textbf{UV} considered in this paper) differs from the scheme \textbf{UV} in the use of the regularized functions $F_\varepsilon$ and its derivatives (see Figure \ref{C5:fig:Fe}) and in the approximation of the cross-diffusion and production terms, $(u\nabla {v},\nabla \bar{u})$ and $(u^p,\bar{v})$ respectively, which are crucial for the proof of the energy-stability of the scheme \textbf{UV$\varepsilon$}. 
\end{obs}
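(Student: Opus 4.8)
The plan is to substantiate the remark by carrying out, formally, the energy-stability argument of Theorem \ref{C5:estinc1uvreg} for the scheme \textbf{UV}, and to pinpoint the exact step at which the cancellation of the cross-diffusion and production terms fails. I would fix the natural candidate energy $\mathcal{E}(u,v)=\frac{1}{p-1}((u_+)^p,1)^h+\frac{1}{2}\Vert\nabla v\Vert_0^2$, mimicking (\ref{C5:eneruva}), and select the test functions that work for \textbf{UV$\varepsilon$} and \textbf{US0}: testing the $u$-equation by $\bar u=\frac{p}{p-1}\Pi^h((u^n_+)^{p-1})$ controls the discrete time-derivative from below by $\delta_t\mathcal{E}$, via the convexity inequality for $s\mapsto\frac{1}{p}(s_+)^p$ exactly as in (\ref{C5:modelele02us}), while testing the $v$-equation by $\bar v=(A_h-I)v^n$ produces the dissipation $\Vert\nabla v^n\Vert_1^2$. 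The two contributions that must then cancel are the cross-diffusion term $-\frac{p}{p-1}(u^n\nabla v^n,\nabla\Pi^h((u^n_+)^{p-1}))$ and the production term $((u^n_+)^p,(A_h-I)v^n)$.

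Next I would recall why these cancel in \textbf{UV$\varepsilon$}, to make the contrast explicit. There the chemotaxis coefficient is the engineered matrix $\Lambda^2_\varepsilon(u^n_\varepsilon)$, which by (\ref{C5:PL16uv}) turns $\Lambda^2_\varepsilon(u^n_\varepsilon)\nabla\Pi^h(F'_\varepsilon(u^n_\varepsilon))$ into $(p-1)\nabla\Pi^h(F_\varepsilon(u^n_\varepsilon))$, and the production datum is $p(p-1)\Pi^h(F_\varepsilon(u^n_\varepsilon))$, which lies in $U_h\subseteq V_h$, so that $(\Pi^h(F_\varepsilon(u^n_\varepsilon)),(A_h-I)v^n_\varepsilon)=(\nabla\Pi^h(F_\varepsilon(u^n_\varepsilon)),\nabla v^n_\varepsilon)$. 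Both terms then reduce to the same pairing $\pm p(p-1)(\nabla\Pi^h(F_\varepsilon(u^n_\varepsilon)),\nabla v^n_\varepsilon)$ and telescope to zero.

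The breakdown for \textbf{UV} occurs on both sides. On the production side, $(u^n_+)^p$ is a genuine non-integer power of a $\mathbb{P}_1$ function, hence $(u^n_+)^p\notin V_h$; the rewriting $((u^n_+)^p,(A_h-I)v^n)=(\nabla(\cdot),\nabla v^n)$ used in \textbf{UV$\varepsilon$} is therefore unavailable, because $A_h-I$ can only be integrated by parts against arguments in $V_h$. Even if one replaced $(u^n_+)^p$ by its interpolant $\Pi^h((u^n_+)^p)\in U_h\subseteq V_h$ (turning \textbf{UV} into a variant of \textbf{UV$\varepsilon$}), cancellation against the cross-diffusion term would force the discrete identity $\frac{p}{p-1}\,u^n\,\nabla\Pi^h((u^n_+)^{p-1})=\nabla\Pi^h((u^n_+)^p)$, a discrete product/chain rule that is false in general, since the Lagrange interpolation $\Pi^h$ does not commute with the nonlinear maps $s\mapsto(s_+)^{p-1}$ and $s\mapsto(s_+)^p$; the continuous identity $\nabla(u^p)=\frac{p}{p-1}\,u\,\nabla(u^{p-1})$ is destroyed by nodal interpolation. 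This mismatch is exactly what $\Lambda^2_\varepsilon$ is designed to absorb. I would also note that the unregularized potential $F(s)=\frac{1}{p(p-1)}s^p$ has $F''(s)=s^{p-2}\to+\infty$ as $s\to0^+$, so neither the lower bound $F''_\varepsilon\geq\varepsilon^{2-p}$ exploited in (\ref{C5:I01creguv}) nor the spectral estimate (\ref{C5:D5uv}) for $\Lambda^1_\varepsilon$ survives the limit $\varepsilon\to0$, so even the dissipation terms lose the uniform positivity needed to close the estimate.

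The main obstacle — and indeed the content of the remark — is precisely this failure of a discrete chain rule: the telescoping that makes the chemotaxis and production terms cancel at the continuous level, and at the discrete level for \textbf{UV$\varepsilon$} through $\Lambda^2_\varepsilon$ together with the consistent use of $\Pi^h(F_\varepsilon(\cdot))$, has no counterpart for the naive terms $(u\nabla v,\nabla\bar u)$ and $(u^p,\bar v)$ of \textbf{UV}. No choice of admissible test functions in $U_h\times V_h$ seems to restore it, which is why the energy-stability of \textbf{UV} remains open while that of \textbf{UV$\varepsilon$} is provable.
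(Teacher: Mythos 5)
Your proposal is correct and substantiates the remark along exactly the lines the paper intends: the cancellation of the cross-diffusion and production terms in Theorem \ref{C5:estinc1uvreg} hinges on the pair $\Lambda^2_\varepsilon$/(\ref{C5:PL16uv}) and on the interpolated production $p(p-1)\Pi^h(F_\varepsilon(u^n_\varepsilon))\in U_h\subseteq V_h$ (so that $(A_h-I)$ can be integrated by parts), and you correctly identify that for \textbf{UV} the rewriting $((u^n_+)^p,(A_h-I)v^n)=(\nabla(\cdot),\nabla v^n)$ is unavailable and that the needed discrete chain rule $\frac{p}{p-1}u^n\nabla\Pi^h((u^n_+)^{p-1})=\nabla\Pi^h((u^n_+)^p)$ fails under Lagrange interpolation, while the degeneration of $F''_\varepsilon\geq\varepsilon^{2-p}$ and of (\ref{C5:D5uv}) as $\varepsilon\to 0$ explains why no limit argument rescues the estimate. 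One small inaccuracy, which only reinforces the remark: scheme \textbf{UV} uses the plain $L^2$ product $(\delta_t u^n,\bar u)$ rather than the mass-lumped $(\cdot,\cdot)^h$, so the convexity/telescoping step you import from (\ref{C5:modelele02us}) (which relies on nodal evaluation via $\Pi^h$) does not transfer verbatim either, giving yet another obstruction to energy-stability of \textbf{UV}.
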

The linear iterative methods used to approach the solutions of the nonlinear schemes \textbf{UV$\varepsilon$}, \textbf{US$\varepsilon$}, \textbf{US0} and \textbf{UV} are the following Picard methods:
\begin{enumerate}
\item[(i)]{Picard method to approach a solution $(u^{n}_\varepsilon,{v}^{n}_\varepsilon)$ of the scheme \textbf{UV$\varepsilon$}}:\\
{\bf Initialization ($l=0$):} Set $(u^{0}_\varepsilon,{v}^{0}_\varepsilon)=(u^{n-1}_\varepsilon,{v}^{n-1}_\varepsilon)\in  U_h\times V_h$.\\
{\bf Algorithm:} Given $(u^{l}_\varepsilon,{v}^{l}_\varepsilon)\in  U_h\times V_h$, compute $(u^{l+1}_\varepsilon,{v}^{l+1}_\varepsilon)\in  U_h\times V_h$ such that
$$
\left\{
\begin{array}
[c]{lll}%
\frac{1}{k}(u^{l+1}_\varepsilon,\bar{u})^h + (\nabla u^{l+1}_\varepsilon,\nabla \bar{u}) = \frac{1}{k}(u^{n-1}_\varepsilon,\bar{u})^h -(\Lambda_\varepsilon^2 (u^{l}_\varepsilon)\nabla {v}^{l}_\varepsilon,\nabla \bar{u}), \ \ \forall \bar{u}\in U_h,\\
\frac{1}{k}({v}^{l+1}_\varepsilon,\bar{v}) +(A_h  v^{l+1}_\varepsilon, \bar{v})  = \frac{1}{k}({v}^{n-1}_\varepsilon,\bar{v}) 
+p(p-1)(\Pi^h F_\varepsilon(u^{l+1}_\varepsilon),\bar{v}),\ \ \forall
\bar{v}\in V_h,
\end{array}
\right.  
$$
choosing the stopping criteria $\max\left\{\displaystyle\frac{\Vert
u^{l+1}_\varepsilon - u^{l}_\varepsilon\Vert_{0}}{\Vert u^{l}_\varepsilon\Vert_{0}},\displaystyle\frac{\Vert
v^{l+1}_\varepsilon - v^{l}_\varepsilon\Vert_{0}}{\Vert
v^{l}_\varepsilon\Vert_{0}}\right\}\leq tol$. 

\item[(ii)]{Picard method to approach a solution $(u^n_\varepsilon, {\boldsymbol\sigma}^n_\varepsilon)$ of the scheme \textbf{US$\varepsilon$}:}\\
{\bf Initialization ($l=0$):} Set $(u^{0}_\varepsilon,{\boldsymbol \sigma}_\varepsilon^{0})=(u^{n-1}_\varepsilon,{\boldsymbol \sigma}_\varepsilon^{n-1})\in  U_h\times {\boldsymbol \Sigma}_h$.\\
{\bf Algorithm:} Given $(u^{l}_\varepsilon,{\boldsymbol \sigma}_\varepsilon^{l})\in  U_h\times {\boldsymbol \Sigma}_h$, compute $(u^{l+1}_\varepsilon,{\boldsymbol \sigma}_\varepsilon^{l+1})\in  U_h\times {\boldsymbol \Sigma}_h$ such that
\begin{equation*}
\left\{
\begin{array}
[c]{lll}%
\frac{1}{k}(u^{l+1}_\varepsilon ,\bar{u})^h + (\nabla u^{l+1}_\varepsilon, \nabla \bar{u}) +(u^{l+1}_\varepsilon {\boldsymbol\sigma}^{l}_\varepsilon,\nabla \bar{u})  = \frac{1}{k}(u^{n-1}_\varepsilon ,\bar{u})^h, \ \ \forall \bar{u}\in U_h,\\
\frac{1}{k}({\boldsymbol \sigma}^{l+1}_\varepsilon,\bar{\boldsymbol \sigma}) + (B_h {\boldsymbol \sigma}^{l+1}_\varepsilon,\bar{\boldsymbol \sigma}) =\frac{1}{k}({\boldsymbol \sigma}^{n-1}_\varepsilon,\bar{\boldsymbol \sigma})+
p(u^{l+1}_\varepsilon \nabla  \Pi^h(F'_\varepsilon(u^{l+1}_\varepsilon)),\bar{\boldsymbol \sigma}),\ \ \forall
\bar{\boldsymbol \sigma}\in \Sigma_h,
\end{array}
\right. 
\end{equation*}
choosing the stopping criteria $\max\left\{\displaystyle\frac{\Vert
u^{l+1}_\varepsilon - u^{l}_\varepsilon\Vert_{0}}{\Vert u^{l}_\varepsilon\Vert_{0}},\displaystyle\frac{\Vert
{\boldsymbol \sigma}^{l+1}_\varepsilon - {\boldsymbol \sigma}^{l}_\varepsilon\Vert_{0}}{\Vert
{\boldsymbol \sigma}^{l}_\varepsilon\Vert_{0}}\right\}\leq tol$. 

\item[(iii)]{Picard method to approach a solution $(u^n, {\boldsymbol\sigma}^n)$ the scheme \textbf{US0}:}\\
{\bf Initialization ($l=0$):} Set $(u^{0},{\boldsymbol \sigma}^{0})=(u^{n-1},{\boldsymbol \sigma}^{n-1})\in  U_h\times {\boldsymbol \Sigma}_h$.\\
{\bf Algorithm:} Given $(u^{l},{\boldsymbol \sigma}^{l})\in  U_h\times {\boldsymbol \Sigma}_h$, compute $(u^{l+1},{\boldsymbol \sigma}^{l+1})\in  U_h\times {\boldsymbol \Sigma}_h$ such that
\begin{equation*}
\left\{
\begin{array}
[c]{lll}%
\frac{1}{k}(u^{l+1},\bar{u})^h  + (\nabla u^{l+1}, \nabla \bar{u}) -  (\nabla u^{l}, \nabla \bar{u})+(u^{l+1}{\boldsymbol \sigma}^{l},\nabla \bar{u}) \\
\hspace{2 cm} =\frac{1}{k}(u^{n-1},\bar{u})^h   -\frac{1}{p-1}
((u^{l}_+)^{2-p}\nabla(\Pi^h (u^{l}_+)^{p-1}),\nabla\bar{u}), \ \ \forall \bar{u}\in U_h,\\
\frac{1}{k}({\boldsymbol \sigma}^{l+1},\bar{\boldsymbol \sigma}) +
(B_h {\boldsymbol \sigma}^{l+1},\bar{\boldsymbol
\sigma}) = \frac{1}{k}({\boldsymbol \sigma}^{n-1},\bar{\boldsymbol \sigma}) + 
\frac{p}{p-1}(u^{l+1}\nabla (\Pi^h (u^{l+1}_+)^{p-1}),\bar{\boldsymbol \sigma}),\ \ \forall
\bar{\boldsymbol \sigma}\in \Sigma_h,
\end{array}
\right. 
\end{equation*}
choosing the stopping criteria $\max\left\{\displaystyle\frac{\Vert
u^{l+1} - u^{l}\Vert_{0}}{\Vert u^{l}\Vert_{0}},\displaystyle\frac{\Vert
{\boldsymbol \sigma}^{l+1} - {\boldsymbol \sigma}^{l}\Vert_{0}}{\Vert
{\boldsymbol \sigma}^{l}\Vert_{0}}\right\}\leq tol$. Observe that the resi\-dual term $(\nabla (u^{l+1} - u^{l}), \nabla \bar{u})$ is considered.

\item[(iv)]{Picard method to approach a solution $(u^n,v^n)$ of the scheme \textbf{UV}:}\\
{\bf Initialization ($l=0$):} Set $(u^{0},{v}^{0})=(u^{n-1},{v}^{n-1})\in  U_h\times V_h$.\\
{\bf Algorithm:} Given $(u^{l},{v}^{l})\in  U_h\times V_h$, compute $(u^{l+1},{v}^{l+1})\in  U_h\times V_h$ such that
\begin{equation*}
\left\{
\begin{array}
[c]{lll}%
\frac{1}{k}(u^{l+1},\bar{u}) + (\nabla u^{l+1},\nabla \bar{u}) + (u^{l+1}\nabla {v}^{l},\nabla \bar{u}) = \frac{1}{k}(u^{n-1},\bar{u}), \ \ \forall \bar{u}\in U_h,\\
\frac{1}{k}({v}^{l+1},\bar{v}) +(\nabla  v^{l+1}, \nabla\bar{v}) + (v^{l+1}, \bar{v})  = \frac{1}{k}({v}^{n-1},\bar{v}) + 
((u^{l+1}_+)^p ,\bar{v}),\ \ \forall
\bar{v}\in V_h,
\end{array}
\right.  
\end{equation*}
choosing the stopping criteria $\max\left\{\displaystyle\frac{\Vert
u^{l+1} - u^{l}\Vert_{0}}{\Vert u^{l}\Vert_{0}},\displaystyle\frac{\Vert
v^{l+1} - v^{l}\Vert_{0}}{\Vert
v^{l}\Vert_{0}}\right\}\leq tol$.

\end{enumerate}
\begin{obs}
In all cases, first we compute $u^{l+1}$ solving the $u$-equation, and then, inserting $u^{l+1}$ in the $v$-equation (resp. ${\boldsymbol\sigma}$-system), we compute $v^{l+1}$ (resp. ${\boldsymbol\sigma}^{l+1}$).
\end{obs}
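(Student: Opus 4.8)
The statement asserts only that the two equations of each Picard iteration \emph{decouple triangularly}, so the plan is to read off this structure scheme by scheme and then apply the Lax--Milgram theorem twice. First I would collect, in each $u$-equation, the terms carrying the current iterate $u^{l+1}$. In scheme \textbf{UV$\varepsilon$} these reduce to the mass term $\frac1k(u^{l+1}_\varepsilon,\bar u)^h$ and the stiffness term $(\nabla u^{l+1}_\varepsilon,\nabla\bar u)$, since the whole chemotaxis contribution $-(\Lambda^2_\varepsilon(u^l_\varepsilon)\nabla v^l_\varepsilon,\nabla\bar u)$ is evaluated at the previous iterate and is therefore data. In \textbf{US$\varepsilon$}, \textbf{US0} and \textbf{UV} there is in addition a convection term linear in $u^{l+1}$ whose transporting field is frozen at level $l$ (namely $(u^{l+1}{\boldsymbol\sigma}^l,\nabla\bar u)$ or $(u^{l+1}\nabla v^l,\nabla\bar u)$); moreover in \textbf{US0} the degenerate weighted diffusion $\frac1{p-1}((u^l_+)^{2-p}\nabla\Pi^h((u^l_+)^{p-1}),\nabla\bar u)$ is also lagged, the residual $-(\nabla u^l,\nabla\bar u)$ being added precisely so that the full Laplacian remains the principal part acting on $u^{l+1}$. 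In every case all coupling with the chemical variable enters through $v^l$ or ${\boldsymbol\sigma}^l$, so the right-hand side is a fixed bounded linear functional on $U_h$ and $u^{l+1}$ is the sole unknown.

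Next I would check that this $u$-problem is uniquely solvable. For \textbf{UV$\varepsilon$} the bilinear form is symmetric and coercive on $U_h$ (mass plus stiffness, with $|\cdot|_h$ equivalent to $\Vert\cdot\Vert_0$ by Remark \ref{C5:eqh25uv}), so Lax--Milgram applies at once. For \textbf{US$\varepsilon$}, \textbf{US0} and \textbf{UV} the convection term is nonsymmetric, and I would absorb it by a Young inequality, bounding the contribution $(u^{l+1}{\boldsymbol\sigma}^l,\nabla u^{l+1})$ (or $(u^{l+1}\nabla v^l,\nabla u^{l+1})$) by $\tfrac12\Vert\nabla u^{l+1}\Vert_0^2$ plus a zeroth-order term controlled by the mass term; since $U_h$ is finite dimensional, positive definiteness of the resulting form (which holds once $1/k$ dominates the constant produced by Young, i.e.\ in the regime $k\,f(h,\varepsilon)<1$ already used for conditional uniqueness) yields invertibility.

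Once $u^{l+1}$ is computed I would insert it as data into the chemical equation. In \textbf{UV$\varepsilon$}/\textbf{UV} the operator acting on $v^{l+1}$ is $\frac1k(\cdot,\cdot)+(A_h\,\cdot,\cdot)$, symmetric and coercive because $A_h$ is positive; in \textbf{US$\varepsilon$}/\textbf{US0} the operator acting on ${\boldsymbol\sigma}^{l+1}$ is $\frac1k(\cdot,\cdot)+(B_h\,\cdot,\cdot)$, symmetric positive definite (indeed $(B_h{\boldsymbol\sigma},{\boldsymbol\sigma})=\Vert{\boldsymbol\sigma}\Vert_1^2$ by (\ref{C5:H1div})). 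The right-hand sides depend only on the known $v^{n-1}$/${\boldsymbol\sigma}^{n-1}$ and on the freshly computed $u^{l+1}$, through $\Pi^h(F_\varepsilon(u^{l+1}))$, $u^{l+1}\nabla\Pi^h(F'_\varepsilon(u^{l+1}))$, $u^{l+1}\nabla\Pi^h((u^{l+1}_+)^{p-1})$ or $(u^{l+1}_+)^p$ respectively. A second application of Lax--Milgram then produces $v^{l+1}$ (resp.\ ${\boldsymbol\sigma}^{l+1}$), which is exactly the stated sequential procedure.

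The only genuine difficulty is the nonsymmetric convection term present in \textbf{US$\varepsilon$}, \textbf{US0} and \textbf{UV}: it obstructs unconditional coercivity of the $u$-operator and forces either the mild restriction $k\,f(h,\varepsilon)<1$ (via an inverse inequality bounding $\Vert{\boldsymbol\sigma}^l\Vert_{L^\infty}$ or $\Vert\nabla v^l\Vert_{L^\infty}$ on the finite element spaces) or, equivalently, the finite-dimensional injectivity argument. Everything else is a routine verification of continuity and coercivity of the two symmetric forms.
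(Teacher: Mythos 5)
Your proposal is correct, but it proves considerably more than the paper does: the statement you were given is a remark on implementation, and the paper offers no proof at all --- the triangular (Gauss--Seidel-type) structure of each Picard step is simply read off from the displayed algorithms (i)--(iv), where every coupling term in the $u$-equation is frozen at the iterate $l$ (namely $\Lambda^2_\varepsilon(u^l_\varepsilon)\nabla v^l_\varepsilon$, $u^{l+1}{\boldsymbol\sigma}^l$, $u^{l+1}\nabla v^l$, and in \textbf{US0} the lagged degenerate diffusion with the residual $(\nabla(u^{l+1}-u^l),\nabla\bar u)$), while the chemical equation receives the freshly computed $u^{l+1}$ only as data. Your identification of this structure in all four schemes is accurate, and your extra layer --- two applications of Lax--Milgram per step --- is sound where you apply it: for \textbf{UV$\varepsilon$} the $u$-form is unconditionally coercive via Remark \ref{C5:eqh25uv}, and the $v$/${\boldsymbol\sigma}$-forms are unconditionally coercive since $(A_h v,v)=\Vert v\Vert_1^2$ and $(B_h{\boldsymbol\sigma},{\boldsymbol\sigma})$ equals the right-hand side of (\ref{C5:H1div}) (note this identity presumes ${\boldsymbol\Sigma}_h\subset \H^1_\sigma(\Omega)$, i.e.\ the condition ${\boldsymbol\sigma}\cdot\mathbf{n}=0$, consistent with the paper's use of $\Vert{\boldsymbol\sigma}^n\Vert_1$ in (\ref{C5:delusreg})). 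You are also right, and honest, that the frozen convection term in \textbf{US$\varepsilon$}, \textbf{US0} and \textbf{UV} destroys unconditional coercivity of the $u$-subproblem, so its solvability is conditional; be aware, though, that the ``finite-dimensional injectivity argument'' you offer as an alternative is not genuinely independent --- injectivity of the homogeneous problem is established by the same Young-inequality estimate, so it carries the same smallness condition on $k$ --- and that the paper's condition $k\,f(h,\varepsilon)<1$ concerns uniqueness for the nonlinear scheme \textbf{US$\varepsilon$} itself, not the linear Picard subproblems, so the analogy you draw is plausible but not literally what the paper proves. In short: what the paper buys with its one-line remark is a description of the solver; what your argument buys is an actual well-posedness guarantee for each linear substep, at the price of a time-step restriction for three of the four schemes.
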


\subsection{Positivity of $u^n$}
In this subsection, we compare the positivity of the variable $u^n$ in the four schemes. Here, we choose the space for $v$  generated by $\mathbb{P}_2$-continuous FE. We recall that for the three schemes studied in this paper, namely schemes \textbf{UV$\varepsilon$}, \textbf{US$\varepsilon$} and \textbf{US0},  the positivity of the variable $u^n$ is not clear. Moreover, for the schemes \textbf{UV$\varepsilon$} and \textbf{US$\varepsilon$}, it was proved that $\Pi^h(u^n_{\varepsilon-})\rightarrow 0$ as $\varepsilon\rightarrow 0$ (see Remarks \ref{C5:NNuh1uv} and \ref{C5:NNuh1}). For this reason, in Figures \ref{C5:fig:MU111}-\ref{C5:fig:MU192} we compare the positivity of the variable $u^n_\varepsilon$ in the schemes, for different values of $p$, $1<p<2$, and taking $\varepsilon=10^{-3}$, $\varepsilon=10^{-5}$ and $\varepsilon=10^{-8}$ in the schemes \textbf{UV$\varepsilon$} and \textbf{US$\varepsilon$}. We consider $k=10^{-5}$, $h=\frac{1}{40}$, the tolerance parameter $tol=10^{-3}$ and the initial conditions (see Figure \ref{C5:fig:initcond5})
$$u_0\!\!=\!\!-10xy(2-x)(2-y)exp(-10(y-1)^2-10(x-1)^2)+10.0001,$$
$$v_0\!\!=\!\!100xy(2-x)(2-y)exp(-30(y-1)^2-30(x-1)^2)+0.0001.$$
\begin{figure}[htbp]
\centering 
\subfigure[Initial cell density $u_0$]{\includegraphics[width=65mm]{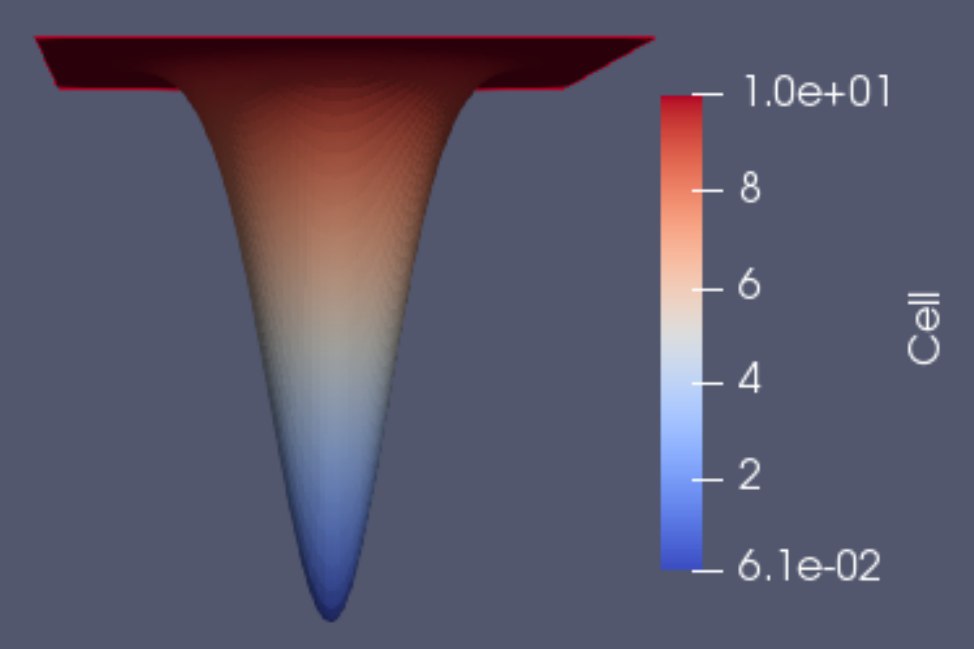}} \hspace{1,2 cm} 
\subfigure[Initial chemical concentration $v_0$]{\includegraphics[width=65mm]{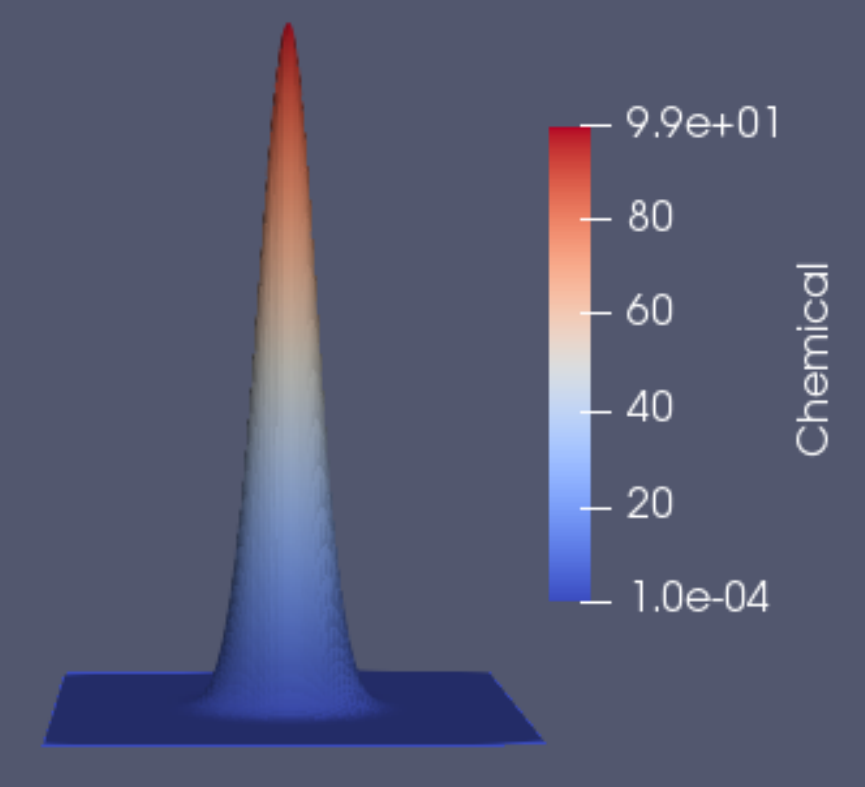}} 
\caption{Initial conditions.} \label{C5:fig:initcond5}
\end{figure}
Note that $u_0,v_0>0$ in $\Omega$, $\min(u_0)=u_0(1,1)=0.0001$ and $\max(v_0)=v_0(1,1)=100.0001$. 
We obtain that:
\begin{enumerate}
\item[(i)] All the schemes take negative values for the minimum of $u^n$ in different times $t_n\geq 0$, for the different values considered for $p$ and $\varepsilon$. However, in the case of the schemes \textbf{UV$\varepsilon$} and \textbf{US$\varepsilon$}, it is observed that these values are closer to $0$ as $\varepsilon\rightarrow 0$ (see Figures \ref{C5:fig:MU111}-\ref{C5:fig:MU192}).
\item[(ii)] In all cases, the scheme \textbf{UV$\varepsilon$} ``preserves'' better the positivity than the schemes \textbf{UV}, \textbf{US$\varepsilon$} and \textbf{US0} (see Figures \ref{C5:fig:MU111}-\ref{C5:fig:MU192}). 
\end{enumerate}
 \begin{figure}[h]
  \begin{center}
 {\includegraphics[height=0.5\linewidth]{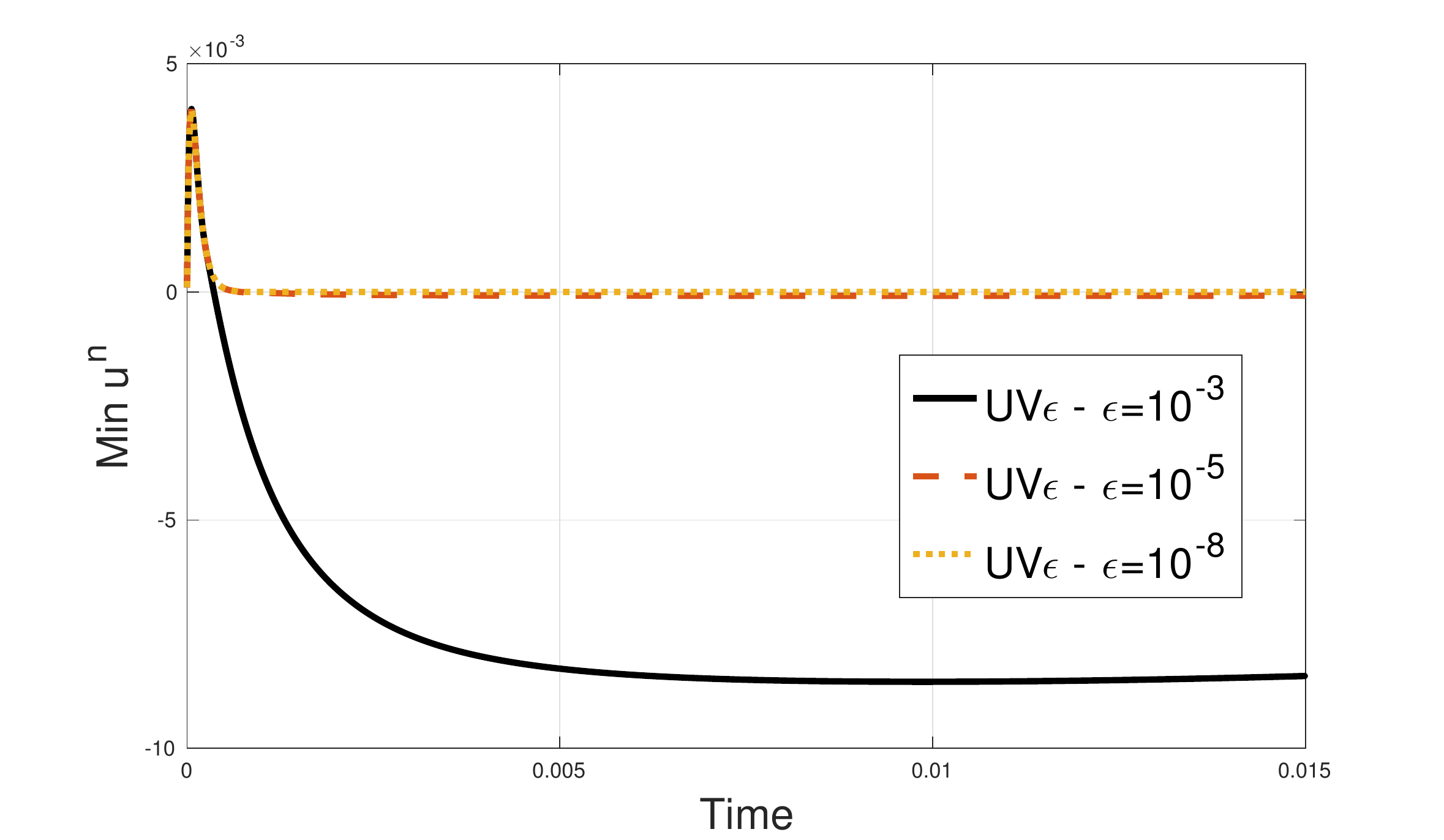}}
 \caption{Minimum values of $u^n_\varepsilon$ for $p=1.1$, computed using the scheme \textbf{UV$\varepsilon$}. We also obtain negative values for $\varepsilon=10^{-8}$ of order $10^{-8}$.
  \label{C5:fig:MU111}}
  \end{center}
\end{figure}
 \begin{figure}[h]
  \begin{center}
 {\includegraphics[height=0.5\linewidth]{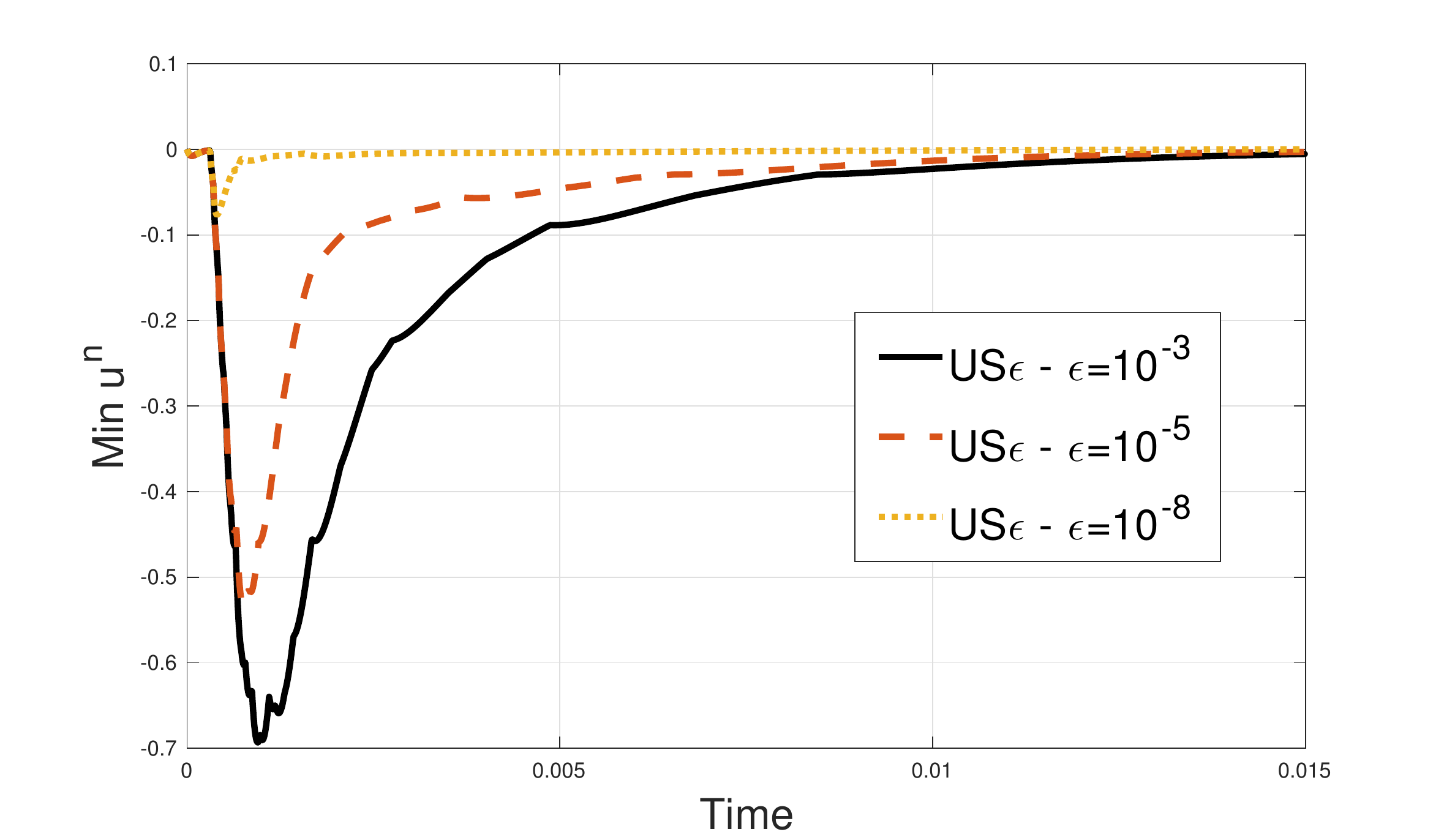}}
 \caption{Minimum values of $u^n_\varepsilon$ for $p=1.1$, computed using the scheme \textbf{US$\varepsilon$}.
  \label{C5:fig:MU112}}
  \end{center}
\end{figure}
 \begin{figure}[h]
  \begin{center}
 {\includegraphics[height=0.5\linewidth]{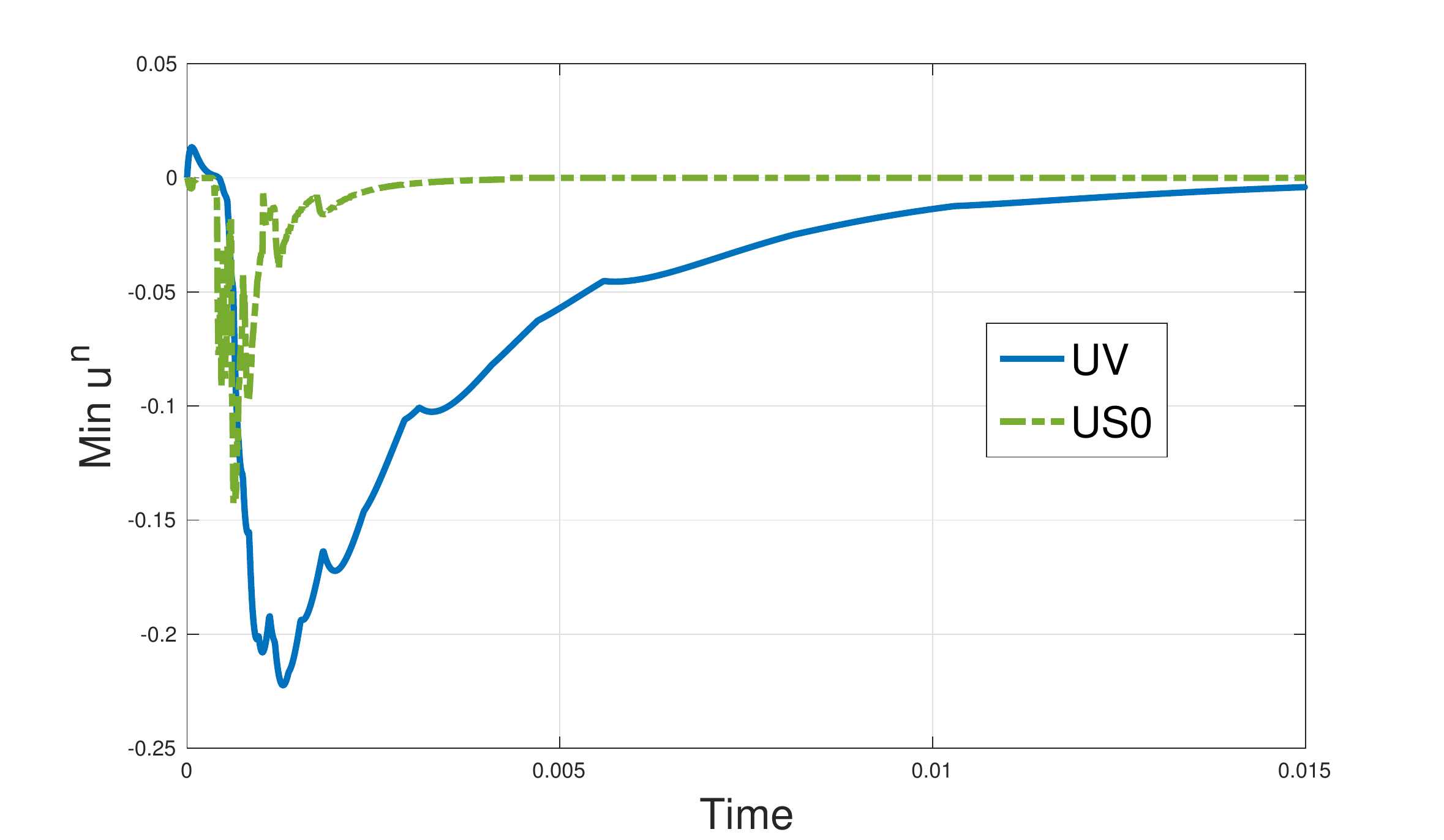}}
 \caption{Minimum values of $u^n_\varepsilon$ for $p=1.1$, computed using the schemes \textbf{UV} and \textbf{US0}.
  \label{C5:fig:MU113}}
  \end{center}
\end{figure}
 \begin{figure}[h]
  \begin{center}
  {\includegraphics[height=0.5\linewidth]{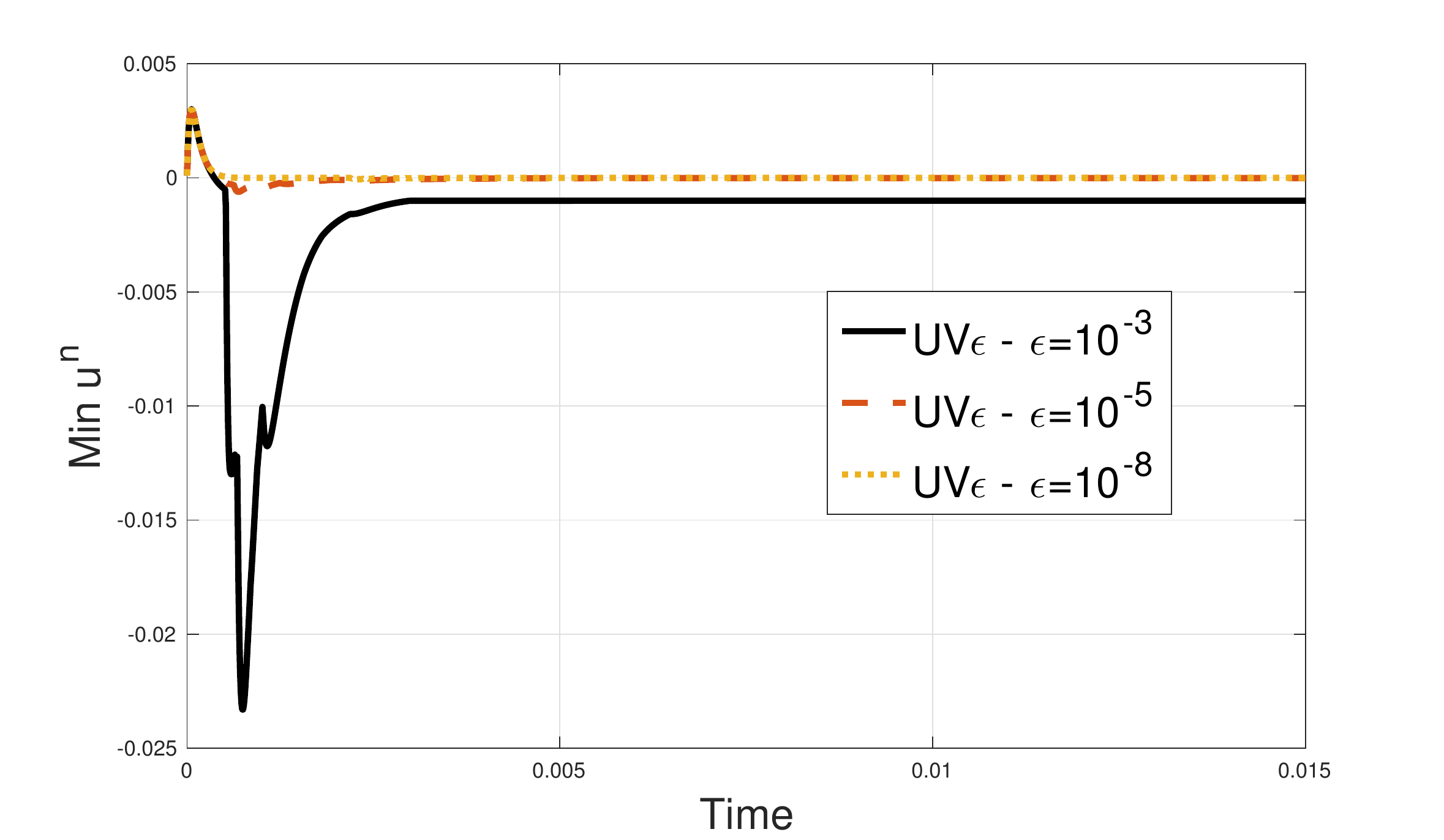}}
 \caption{Minimum values of $u^n_\varepsilon$ for $p=1.5$, computed using the scheme \textbf{UV$\varepsilon$}. We also obtain negative values for $\varepsilon=10^{-8}$ of order $10^{-5}$.
  \label{C5:fig:MU151}}
  \end{center}
\end{figure}
 \begin{figure}[h]
  \begin{center}
  {\includegraphics[height=0.5\linewidth]{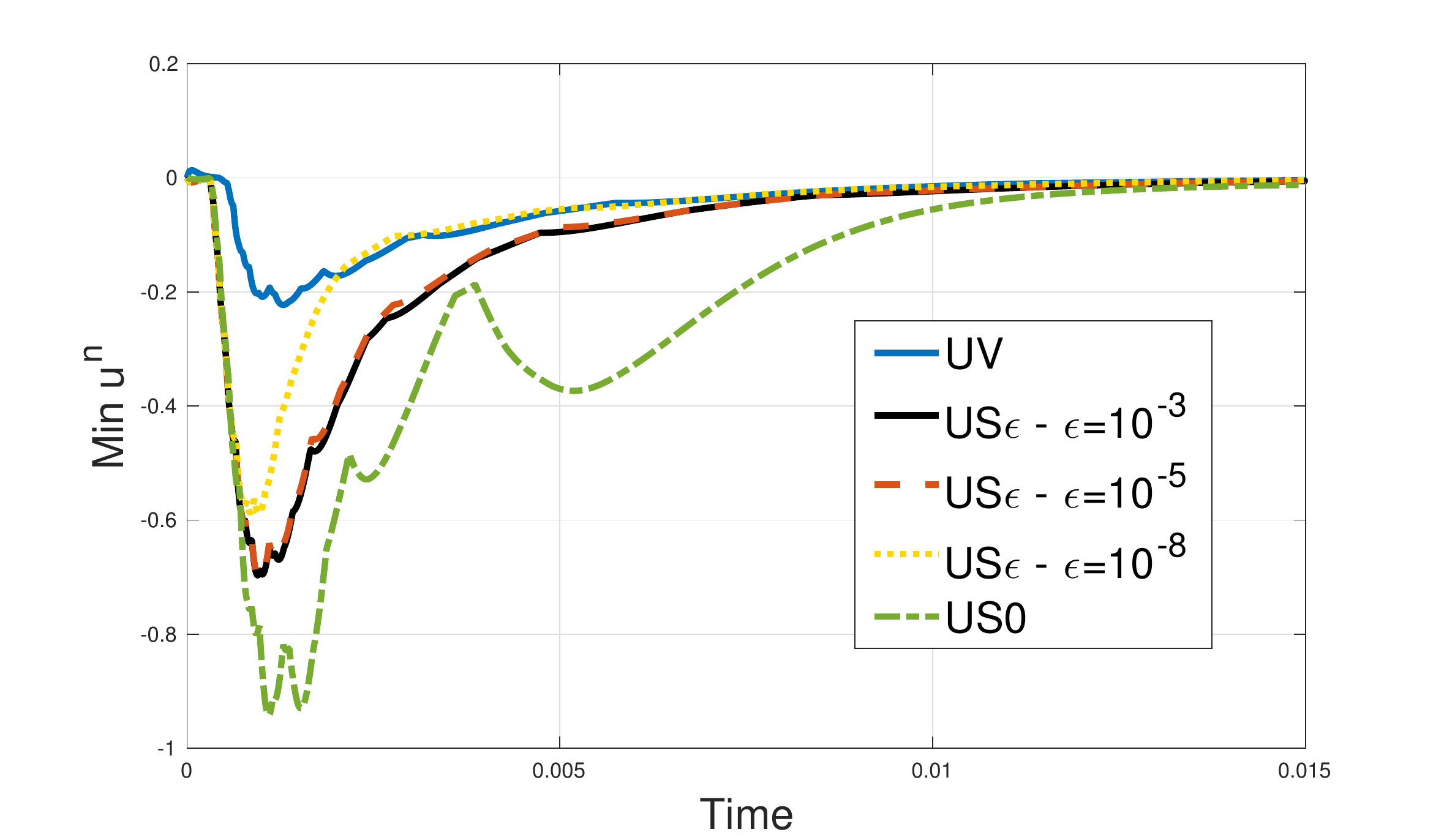}}
  \caption{Minimum values of $u^n$ for $p=1.5$, computed using the schemes \textbf{UV}, \textbf{US$\varepsilon$} and \textbf{US0}.
  \label{C5:fig:MU152}}
  \end{center}
\end{figure}
 \begin{figure}[h]
  \begin{center}
  {\includegraphics[height=0.5\linewidth]{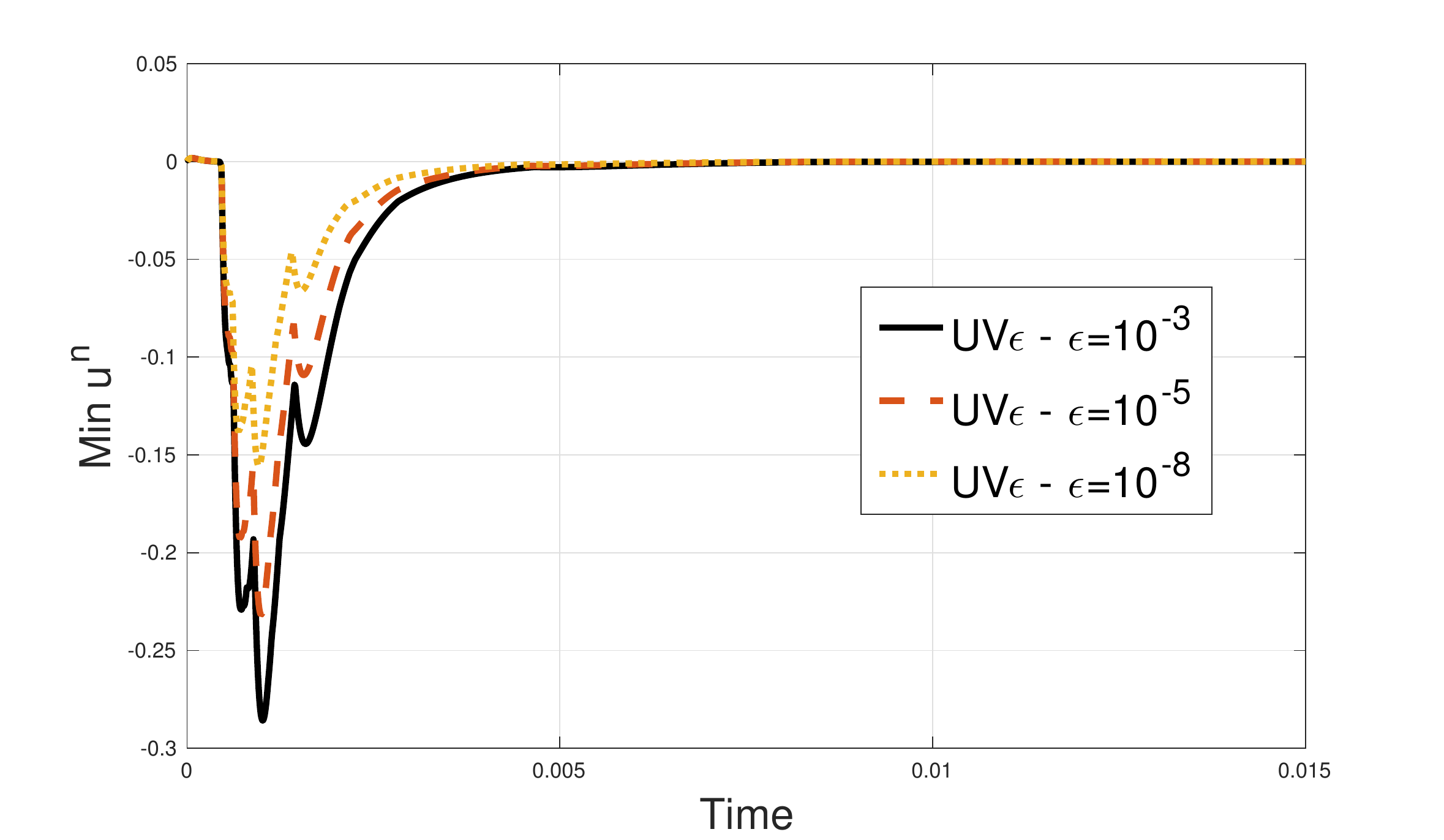}}
 \caption{Minimum values of $u^n_\varepsilon$ for $p=1.9$, computed using the scheme \textbf{UV$\varepsilon$}.
  \label{C5:fig:MU191}}
  \end{center}
\end{figure}
 \begin{figure}[h]
  \begin{center}
  {\includegraphics[height=0.5\linewidth]{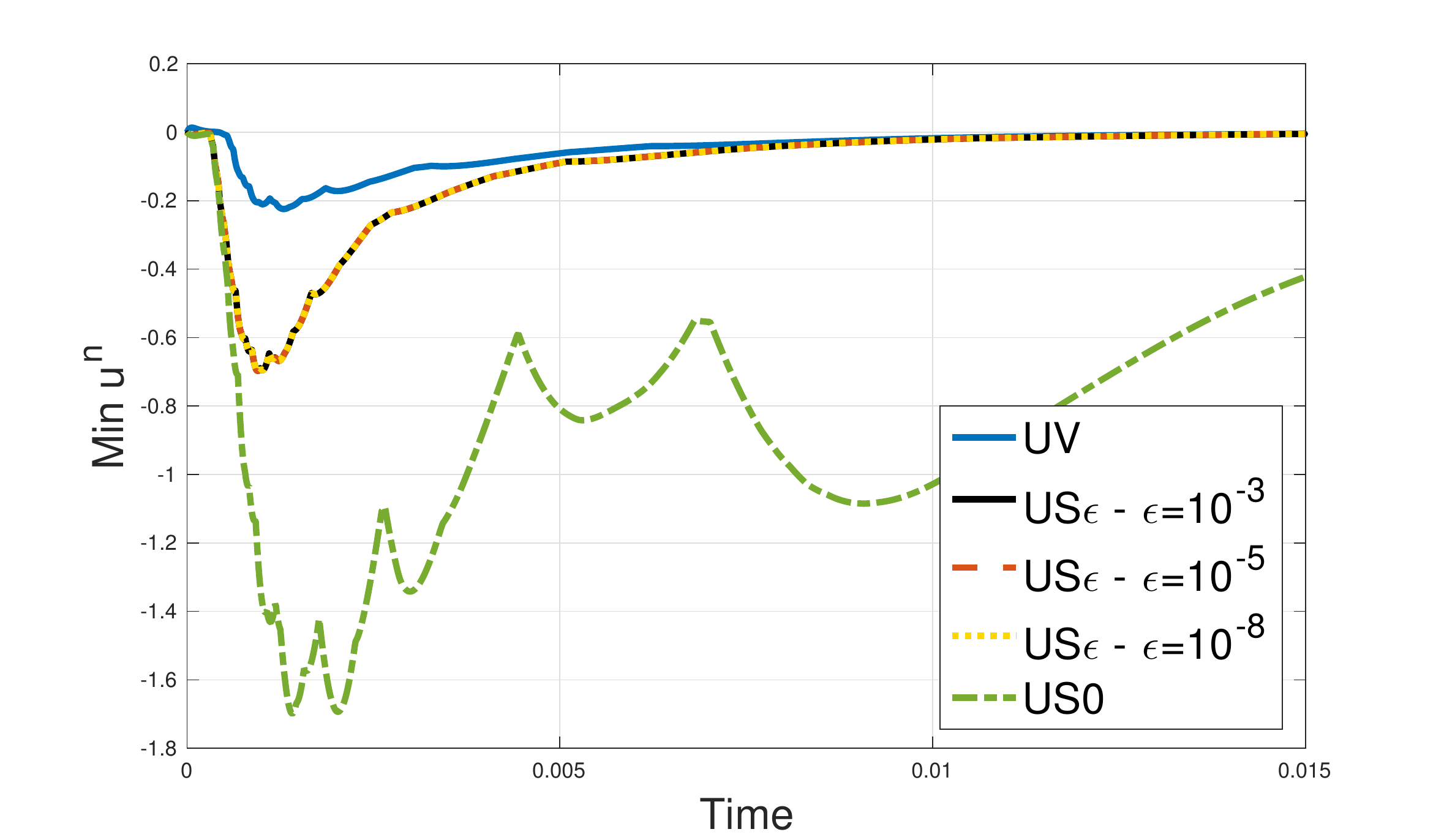}}
  \caption{Minimum values of $u^n$ for $p=1.9$, computed using the schemes \textbf{UV}, \textbf{US$\varepsilon$} and \textbf{US0}.
  \label{C5:fig:MU192}}
  \end{center}
\end{figure}

\subsection{Energy stability}
In this subsection, we compare numerically the stability of the schemes \textbf{UV$\varepsilon$}, \textbf{US$\varepsilon$}, $\textbf{US0}$ and $\textbf{UV}$ with respect to the ``exact'' energy
\begin{equation}\label{C5:EComun5}
 \mathcal{E}_e(u,v)= \displaystyle \int_\Omega  \frac{1}{p-1} (u_+)^p d\x + \frac{1}{2} \Vert \nabla {v}\Vert_0^2.
 \end{equation} 
It was proved that the schemes \textbf{UV$\varepsilon$}, \textbf{US$\varepsilon$} and $\textbf{US0}$ are unconditionally energy-stables with respect to modified energies defined in terms of the variables of each scheme, and some energy inequalities are satisfied (see Theorems  \ref{C5:estinc1uvreg}, \ref{C5:estinc1usreg} and \ref{C5:estinc1us}). However, it is not clear how to prove the energy-stability of these schemes with respect to the ``exact'' energy $ \mathcal{E}_e(u,v)$ given in (\ref{C5:EComun5}), which comes from the continuous problem (\ref{C5:modelf00}) (see (\ref{C5:wsd})-(\ref{C5:eneruva})). Therefore, it is interesting to compare numerically the schemes with respect to this energy $\mathcal{E}_e(u,v)$, and to study the behavior of the corresponding discrete energy law residual  
 \begin{equation}\label{C5:ns01-bb1}
RE_e(u^n,v^n):=\delta_t \mathcal{E}_e(u^n,v^n)+ \frac{4}{p}\int_\Omega \vert \nabla ((u^n_+)^{p/2})\vert^2 d\x
+\Vert \Delta_h v^{n}\Vert_{0}^{2}+\Vert \nabla v^{n}\Vert_{0}^{2}.
\end{equation}
We consider $k=10^{-5}$, $h=\frac{1}{25}$, $p=1.4$, $tol=10^{-3}$ and the initial conditions (see Figure \ref{C5:fig:initcond52})
$$u_0=14 cos(2\pi x) cos(2\pi y)+14.0001 \ \ \mbox{and} \ \ v_0=-14 cos(2\pi x) cos(2\pi y)+14.0001.$$ 
\begin{figure}[htbp]
\centering 
\subfigure[Initial cell density $u_0$]{\includegraphics[width=70mm]{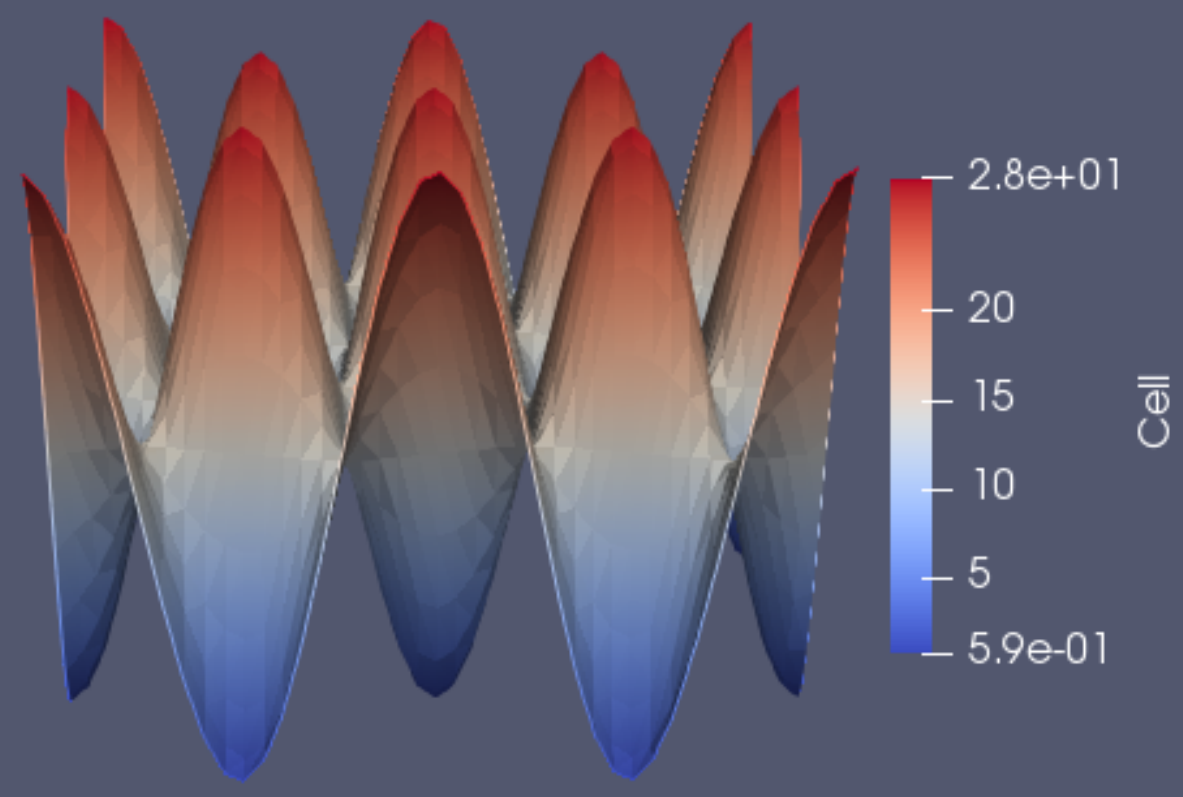}} \hspace{1,2 cm} 
\subfigure[Initial chemical concentration $v_0$]{\includegraphics[width=70mm]{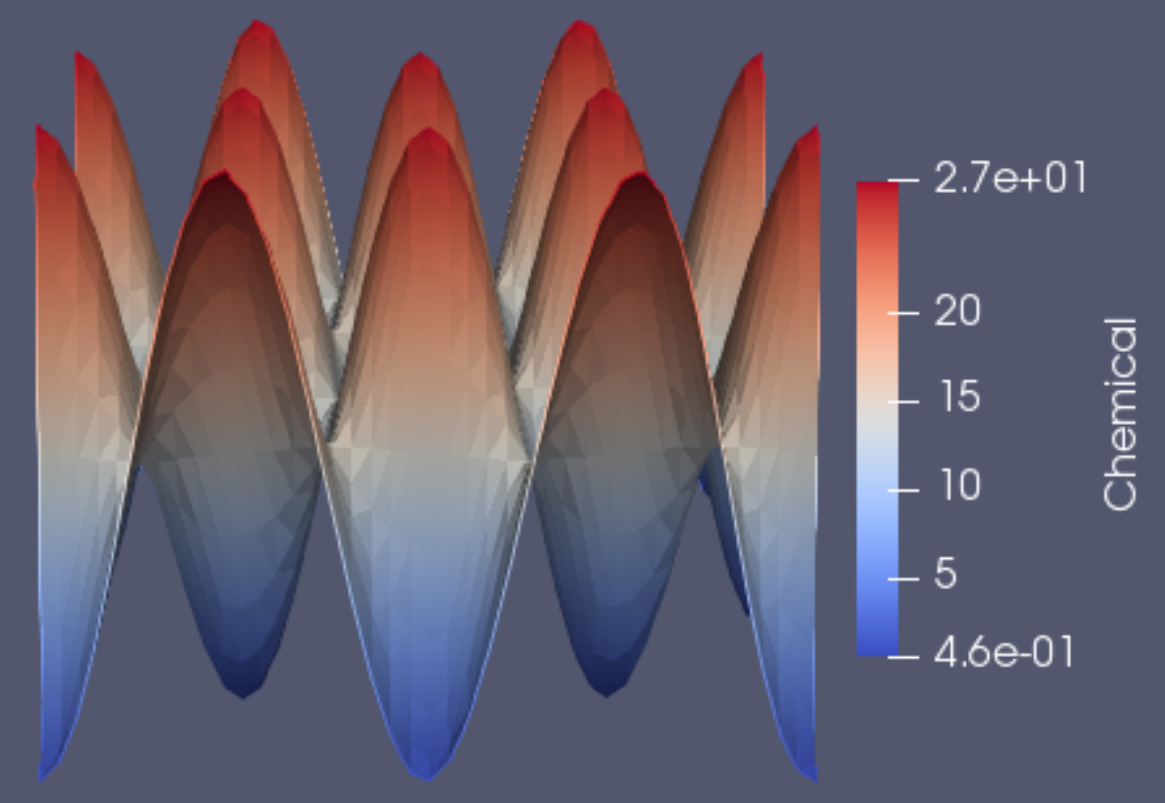}} 
\caption{Initial conditions.} \label{C5:fig:initcond52}
\end{figure}
We choose the space for $v$  generated by $\mathbb{P}_1$-continuous FE. Then, we obtain that: 
\begin{enumerate}
\item[(i)] All the schemes \textbf{UV$\varepsilon$}, \textbf{US$\varepsilon$}, $\textbf{UV}$ and $\textbf{US0}$ satisfy the energy decreasing in time property for the exact energy $\mathcal{E}_e(u,v)$ (see Figure \ref{C5:fig:Euv14}), that is, 
$$
 \mathcal{E}_e(u^n,v^n)\le \mathcal{E}_e(u^{n-1},v^{n-1}) \ \ \forall n.
$$
\item[(ii)] The schemes \textbf{US0} and \textbf{US$\varepsilon$} satisfy the discrete energy inequality $RE_e(u^n,v^n)\leq 0$, for $RE_e(u^n,v^n)$ defined in  (\ref{C5:ns01-bb1}), independently of the choice of $\varepsilon$; while the schemes $\textbf{UV}$ and \textbf{UV$\varepsilon$} have $RE(u^n,v^n)>0$ for some $t_n\geq 0$. However, it is observed that the scheme \textbf{UV$\varepsilon$} introduces lower numerical source than the scheme \textbf{UV}, and lower numerical dissipation than the schemes \textbf{US0} and \textbf{US$\varepsilon$} (see Figure \ref{C5:fig:REuv14}).
\end{enumerate}
 \begin{figure}[h]
  \begin{center}
  {\includegraphics[height=0.5\linewidth]{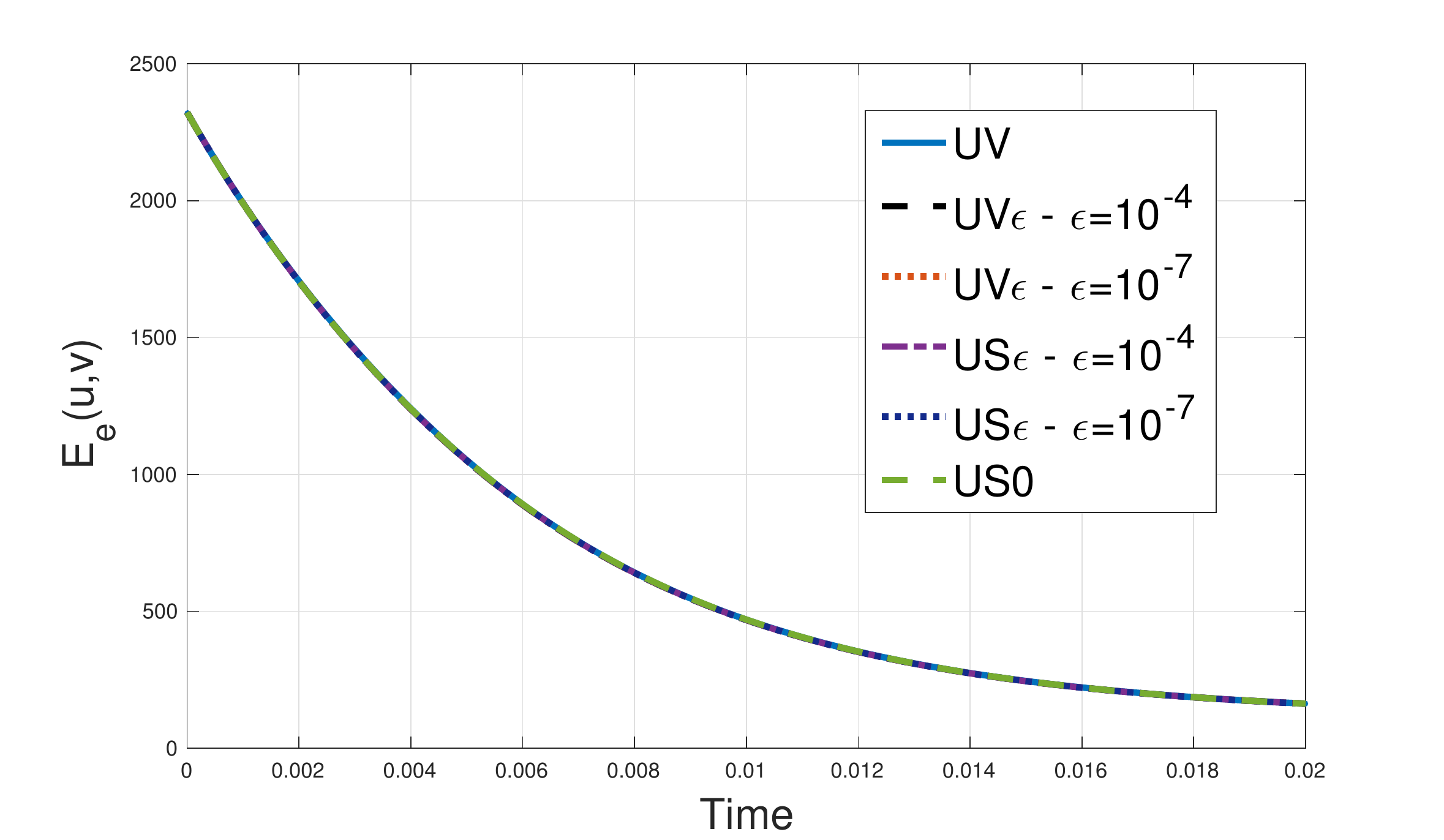}}
  \caption{$\mathcal{E}_e(u^n,v^n)$ of the schemes $\textbf{UV}$, $\textbf{US0}$,  \textbf{UV$\varepsilon$} and  \textbf{US$\varepsilon$} (for $\varepsilon=10^{-4},10^{-7}$).
  \label{C5:fig:Euv14}}
  \end{center}
\end{figure}
 \begin{figure}[h]
  \begin{center}
  {\includegraphics[height=0.5\linewidth]{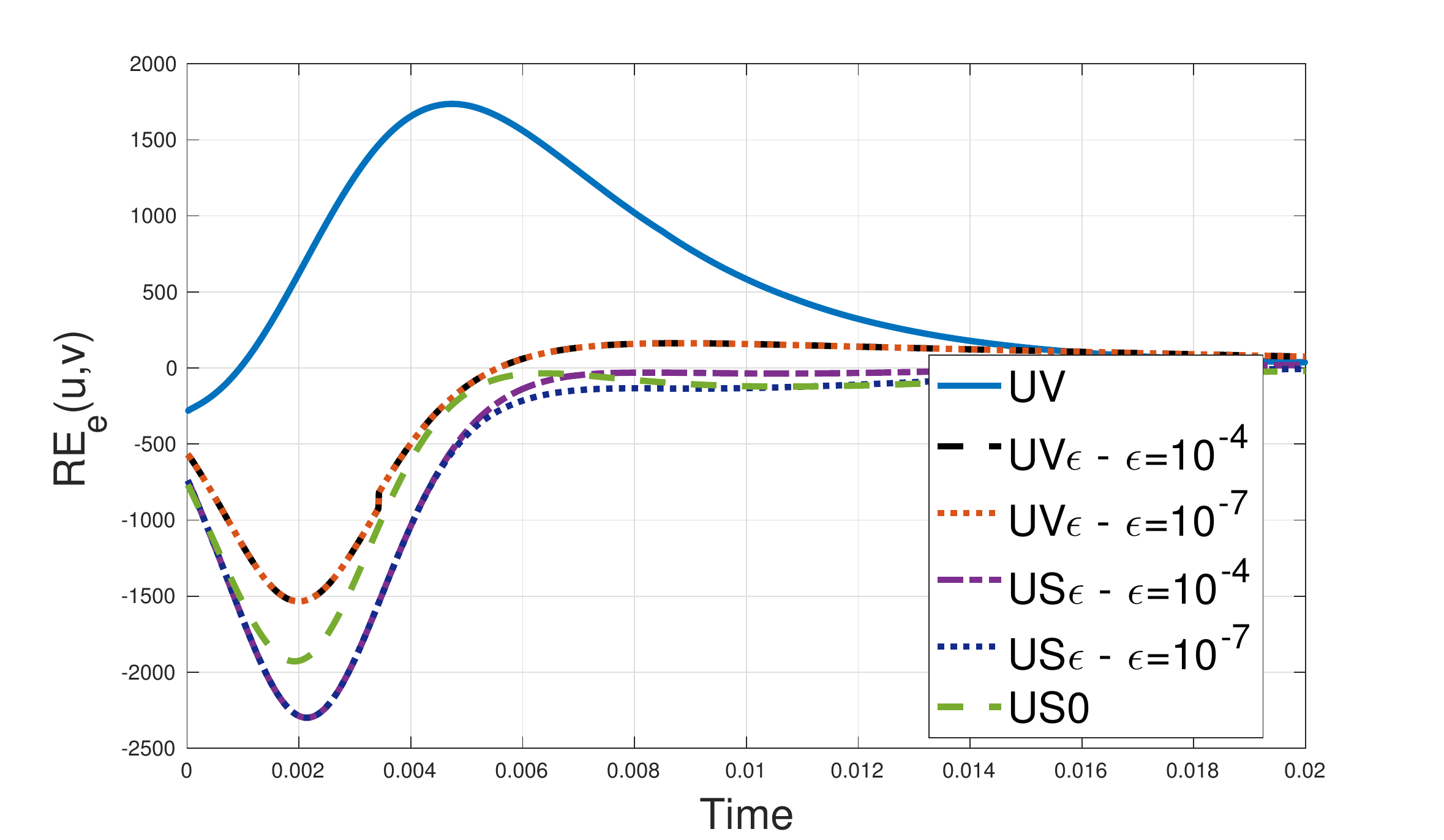}}
  \caption{$RE_e(u^n,v^n)$ of the schemes $\textbf{UV}$, $\textbf{US0}$,  \textbf{UV$\varepsilon$} and \textbf{US$\varepsilon$} (for $\varepsilon=10^{-4},10^{-7}$).
  \label{C5:fig:REuv14}}
  \end{center}
\end{figure}

\section{Conclusions}\label{C5:S6C5:C}
In this paper we have developed three new mass-conservative and unconditionally energy-stable fully discrete FE schemes for the chemorepulsion production model (\ref{C5:modelf00}), namely \textbf{UV$\varepsilon$}, \textbf{US$\varepsilon$} and \textbf{US0}.
From the theoretical point of view we have obtained: 
\begin{enumerate}
\item[(i)] The solvability of the nume\-ri\-cal schemes.
\item[(ii)] The schemes  \textbf{UV$\varepsilon$} and \textbf{US$\varepsilon$} are unconditionally energy-stables with respect to the modified energies $\mathcal{E}^h_\varepsilon(u,v)$ (given in (\ref{C5:ENuvreg})) and $\mathcal{E}^h_\varepsilon(u,{\boldsymbol\sigma})$ (given in (\ref{C5:ENusreg})) respectively, under the right-angles constraint ({\bf H}); while the scheme \textbf{US0} is unconditionally energy-stable with respect to the modified energy $\mathcal{E}^h(u,{\boldsymbol\sigma})$ given in (\ref{C5:ENus}), without this restriction ({\bf H}) on the mesh. 
\item[(iii)] It is not clear how to prove the energy-stability of the nonlinear scheme \textbf{UV} (see Remark \ref{C5:RBE5}).
\item[(iv)] In the schemes  \textbf{UV$\varepsilon$} and \textbf{US$\varepsilon$} there is a control for $\Pi^h (u^n_{\varepsilon-})$ in $L^2$-norm, which tends to $0$ as $\varepsilon\rightarrow 0$. This allows to conclude the nonnegativity of the solution $u^n_\varepsilon$ in the limit as $\varepsilon\rightarrow 0$. 
\end{enumerate}
On the other hand, from the numerical simulations, we can conclude:
\begin{enumerate}
\item[(i)] The four schemes have decreasing in time energy $\mathcal{E}_e(u,v)$, independently of the choice of $\varepsilon$.   
\item[(ii)] The schemes \textbf{US0} and \textbf{US$\varepsilon$} satisfy the discrete energy inequality $RE_e(u^n,v^n)\leq 0$, for $RE_e(u^n,v^n)$ defined in  (\ref{C5:ns01-bb1}), independently of the choice of $\varepsilon$; while the schemes $\textbf{UV}$ and \textbf{UV$\varepsilon$} have $RE(u^n,v^n)>0$ for some $t_n\geq 0$. However, it was observed that the scheme \textbf{UV$\varepsilon$} introduces lower numerical source than the scheme \textbf{UV}, and lower numerical dissipation than the schemes \textbf{US0} and \textbf{US$\varepsilon$}.
\item[(iii)] Finally, it was observed numerically that for the schemes \textbf{UV$\varepsilon$} and \textbf{US$\varepsilon$}, $\underset{\overline{\Omega}\times[0,T]}{\min}\ u^n_{\varepsilon} \rightarrow 0$ as $\varepsilon\rightarrow 0$.
\end{enumerate}

\section*{Acknowledgements}
The authors have been partially supported by MINECO grant MTM2015-69875-P
(Ministerio de Econom\'{\i}a y Competitividad, Spain) with the participation of FEDER.
The third author have also been supported by Vicerrector\'ia de Investigaci\'on y Extensi\'on of Universidad Industrial de Santander.


\begin{thebibliography}{99}

\bibitem{C5:GA} G. Allaire, {\sl Numerical analysis and optimization.  An introduction to mathematical modelling and numerical simulation.} Translated from the French by Alan Craig. Numerical Mathematics and Scientific Computation. Oxford University Press, Oxford (2007).

\bibitem{C5:Nour}  \textrm{C.\ Amrouche and N.E.H.\ Seloula,} $L^p$-theory for vector potentials and Sobolev's inequalities for vector fields: application to the Stokes equations with pressure boundary conditions.
{\it Math. Models Methods Appl. Sci.} \textbf{23} (2013), no. 1, 37--92. 

\bibitem{C5:BB} J. Barrett and J. Blowey, Finite element approximation of a nonlinear cross-diffusion population model. \emph{Numer. Math.} \textbf{98} (2004), no. 2, 195--221.

\bibitem{C5:BN} J. Barrett and R. N\"urnberg, Finite-element approximation of a nonlinear degenerate parabolic system describing bacterial pattern formation. {\it Interfaces and Free Boundaries } \textbf{4} (2002), no. 3, 277--307.

\bibitem{C5:PB} R. Becker, X. Feng and A. Prohl, Finite element approximations of the Ericksen-Leslie model for nematic liquid crystal flow. {\it SIAM J. Numer. Anal.} \textbf{46} (2008), 1704--1731.

\bibitem{C5:BJ} M. Bessemoulin-Chatard and A. J\"ungel, A finite volume scheme for a Keller-Segel model with additional cross-diffusion. {\it IMA J. Numer. Anal.} \textbf{34} (2014), no. 1, 96--122.

\bibitem{C5:Cristian} T. Cieslak, P. Laurençot and C. Morales-Rodrigo, Global existence and convergence to steady states in a chemorepulsion system. \emph{Parabolic and Navier-Stokes equations}. Part 1,  105-117, Banach Center Publ., 81, Part 1, Polish Acad. Sci. Inst. Math., Warsaw, 2008.

\bibitem{C5:Eps} Y. Epshteyn and A. Izmirlioglu, Fully discrete analysis of a discontinuous finite element method for the Keller-Segel chemotaxis model. {\it J. Sci. Comput.} \textbf{40} (2009), no. 1-3, 211--256.

\bibitem{C5:Filbet} F. Filbet, A finite volume scheme for the Patlak-Keller-Segel chemotaxis model. {\it Numer. Math.} \textbf{104} (2006), no. 4, 457--488. 

\bibitem{C5:Fe} E. Feireisl and A. Novotný, {\sl Singular limits in thermodynamics of viscous fluids. Advances in
Mathematical Fluid Mechanics}. Birkhäuser Verlag, Basel (2009).

\bibitem{C5:Gris} P. Grisvard, {\sl Elliptic Problems in Nonsmooth Domains}. Pitman Advanced
Publishing Program, Boston (1985).

\bibitem{C5:GR} G. Gr\"un and M. Rumpf, Nonnegativity preserving convergent schemes for the thin film equation. {\it Numer. Math.} \textbf{87} (2000), 113--152.

\bibitem{C5:FMD} F. Guill\'en-Gonz\'alez, M.A. Rodr\'iguez-Bellido and D.A. Rueda-G\'omez, Study of a chemo-repulsion model with quadratic production. Part I: Analysis of the continuous problem and time-discrete numerical schemes. (Submitted), arXiv:1803.02386 [math.NA].

\bibitem{C5:FMD2} F. Guill\'en-Gonz\'alez, M.A. Rodr\'iguez-Bellido and D.A. Rueda-G\'omez, Study of a chemo-repulsion model with quadratic production. Part II: Analysis of an unconditional energy-stable fully discrete scheme. (Submitted), arXiv:1803.02391 [math.NA]. 

\bibitem{C5:FMD4} F. Guill\'en-Gonz\'alez, M.A. Rodr\'iguez-Bellido and D.A. Rueda-G\'omez, Unconditionally energy stable fully discrete schemes for a chemo-repulsion model. (Submitted), arXiv:1807.01118 [math.NA].

\bibitem{C5:He} Y. He and K. Li, Asymptotic behavior and time discretization analysis for the non-stationary Navier-Stokes problem. \emph{Numer. Math.} \textbf{98} (2004), no. 4, 647--673.

\bibitem{C5:LM} J.L. Lions and E. Magenes, {\sl Probl\`emes aux limites non homog\`enes et applications,} Vol. 1. Travaux et Recherches Mathématiques, No. 17 Dunod, Paris (1968).

\bibitem{C5:Marrocco} A. Marrocco, Numerical simulation of chemotactic bacteria aggregation via mixed finite elements. \emph{M2AN Math. Model. Numer. Anal.} \textbf{37} (2003), no. 4, 617--630.

\bibitem{C5:necas} J. Necas, {\sl Les Méthodes Directes en Théorie des Equations Elliptiques}. Editeurs Academia, Prague (1967).

\bibitem{C5:Saito1}  N. Saito, Conservative upwind finite-element method for a simplified Keller-Segel system modelling chemotaxis. {\it IMA J. Numer. Anal.} \textbf{27} (2007), no. 2, 332--365. 

\bibitem{C5:Saito2} N. Saito, Error analysis of a conservative finite-element approximation for the Keller-Segel system of chemotaxis. {\it Commun. Pure Appl. Anal.} \textbf{11} (2012), no. 1, 339--364. 

\bibitem{C5:simon} J. Simon, Compact sets in the space $L^p(0,T;B)$. \emph{Ann. Mat. Pura Appl.} \textbf{146} (1987), no. 4, 65--96.

\bibitem{C5:Z1} J. Zhang, J. Zhu and R. Zhang, Characteristic splitting mixed finite element analysis of Keller-Segel chemotaxis models. \emph{Appl. Math. Comput. } \textbf{278} (2016), 33--44. 

\end{thebibliography}
\end{document}